\newcommand{\dis}{\displaystyle}
\newtheorem{theorem}{Theorem}
\newtheorem{definition}{Definition}
\newtheorem{lemma}{Lemma}
\newtheorem{proposition}[theorem]{Proposition}
\newtheorem{remark}{Remark}
\let\e=\varepsilon
\let\d=\delta
\let\h=v
\let\p=\partial
\let\O=\Omega
\numberwithin{equation}{section}
\let\hide\iffalse
\let\unhide\fi
\newcommand{\R}{\mathbb{R}}
\renewcommand{\P}{\mathbf{P}}
\newcommand{\be}{\begin{equation}}
\newcommand{\bm}{\begin{multline}}
\newcommand{\ee}{\end{equation}}
\newcommand{\dd}{\mathrm{d}}
\newcommand{\xb}{x_{\mathbf{b}}}
\newcommand{\tb}{t_{\mathbf{b}}}
\newcommand{\Bes}{\begin{eqnarray*}}
\newcommand{\Ees}{\end{eqnarray*}}
\newcommand{\Be}{\begin{equation} }
\newcommand{\Ee}{\end{equation}}
\def\p{\partial}
\def\O{\Omega}
\def\R{\mathbb{R}}
\def\d{\mathrm{d}}
\def\B{\begin{equation}}
\def\E{\end{equation}}
\def\BN{\begin{eqnarray*}}
\def\EN{\end{eqnarray*}}
\begin{document}
\title[Isothermal rarefied gas flows in an infinite layer]{Global dynamics of isothermal rarefied gas flows in an infinite layer}

\author[H.-X. Chen]{Hongxu Chen}
\address[HXC]{Department of Mathematics, The Chinese University of Hong Kong, Shatin, Hong Kong}
\email{hxchen@math.cuhk.edu.hk}

\author[R.-J. Duan]{Renjun Duan}
\address[RJD]{Department of Mathematics, The Chinese University of Hong Kong, Shatin, Hong Kong.}
\email{rjduan@math.cuhk.edu.hk}

\author[J.-H. Zhang]{Junhao Zhang}
\address[JHZ]{Department of Mathematics, The Chinese University of Hong Kong, Shatin, Hong Kong}
\email{jhzhang@math.cuhk.edu.hk}

\begin{abstract}
Let rarefied gas be confined in an infinite layer with diffusely reflecting boundaries that are isothermal and non-moving. The initial-boundary value problem on the nonlinear Boltzmann equation governing the rarefied gas flow in such setting is challenging due to unboundedness of both domain and its boundaries as well as the presence of physical boundary conditions. In the paper, we establish the global-in-time dynamics of such rarefied gas flows near global Maxwellians in three or two-dimensions. For the former case, we also prove that the solutions decay in time at a polynomial rate which is the same as that of solutions to the two-dimensional heat equation. This is the first result on global solutions of the Boltzmann equation with non-compact and diffuse boundaries.

\end{abstract}

\date{\today}
\subjclass[2020]{35Q20, 35B35}
\keywords{Boltzmann equation, infinite layer, diffuse reflection boundary, global in time solutions, large time behavior}
\maketitle

\thispagestyle{empty}
\tableofcontents

\section{Introduction}
There exist various fundamental physical problems of rarefied gas flows, such as Couette-flow and heat-transfer problems between two parallel plates, and Poiseuille flow and thermal transpiration through a channel or pipe by a pressure and temperature gradient along it, respectively, cf.~\cite{KoganBook,SoneBook}. The mathematical study of the nonlinear Boltzmann equation for such problems is significantly important and challenging in kinetic theory, cf.~\cite{ELM-94, ELM-95} and \cite{duan20243d,DLYZ-heat,DZ-nsf} as well as references therein. In this paper, we study the initial-boundary value problem on the Boltzmann equation in a three-dimensional infinite layer $\mathbb{R}^2 \times (-1,1)$ with diffuse reflection boundary conditions at the planes $x_3 = \pm 1$ that are also isothermal and non-moving. We construct global-in-time solutions that are close to global Maxwellians and analyze the large-time behavior of these solutions. Our result demonstrates that the solutions decay in time at a polynomial rate which is the same as that of solutions to the two-dimensional heat equation. Such long time behavior of solutions is consistent with the result by Kagei in his series work \cite{kagei2007asymptotic,kagei2007resolvent,kagei2008large} for the study of isentropic compressible Navier-Stokes equations in the infinite layer. We are devoted to developing an analogous theory for the Boltzmann equation. For the proof, we apply the Fourier transform in the horizontal variable $\bar{x}\in \R^2$ and utilize the $L^1_k \cap L^p_k$ approach in the Fourier space with $2 < p \leq \infty$, ensuring that the additional $L^p_k$ norm provides sufficient time decay. Meanwhile, we employ the interplay technique in $L^2_{x_3,v} \cap L^\infty_{x_3,v}$ to control the nonlinear terms. In particular, a key ingredient of our proof is to develop a dual argument for this infinite layer problem with physical boundaries to address the macroscopic dissipation estimates with respect to the mixed Fourier and physical variables $(k,x_3)$. We also study the problem in a two-dimensional infinite layer $\mathbb{R} \times (-1,1)$. The same method is not applicable due to the slower time decay property of solutions along the one-dimensional horizontal direction. Instead, we utilize a time-derivative combined with a direct $L^2 \cap L^\infty$ approach in physical space to establish the global existence of solutions, but the large-time behavior is left unknown. The current work also provides possible insights to further understand the problem on kinetic shear flow in such an infinite channel domain when the boundaries are moving along the tangent planes relative to each other.

\subsection{The problem}
We consider the initial-boundary value problem on the Boltzmann equation for rarefied gas contained in an infinite layer $\Omega=\mathbb{R}^2\times (-1,1)$ of three dimensions: 
\begin{equation}\label{proF}
\p_t F + v\cdot\nabla_x F=Q(F,F), \ (t,x,v)\in [0,\infty) \times \O \times \mathbb{R}^3. 
\end{equation}
Here, $F=F(t,x,v)\geq 0$ stands for the velocity distribution function of gas particles with velocity $v=(v_1,v_2,v_3)\in \R^3$ at time $t\geq 0$ and position $x=(x_1,x_2,x_3)\in \Omega\subset \R^3$, and the initial and boundary conditions are to be specified later. The Boltzmann collision term is a bilinear integral operator acting only on velocity variable and for the hard sphere model it reads as 
\begin{equation*}
Q(F,G)=\int_{\mathbb{R}^3}\int_{\mathbb{S}^2}|(v-u)\cdot \omega|[F(u')G(v')-F(u)G(v)]\,\d\omega \dd u,
\end{equation*}
where the velocity pairs $(v,u)$ and $(v',u')$ satisfy 
\begin{equation*}
  v'=v+[(u-v)\cdot\omega]\omega,\quad u'=u-[(u-v)\cdot\omega]\omega,\quad \omega\in\mathbb{S}^2,
\end{equation*}
that's the $\omega$-representation in terms of the conservation of momentum and energy for elastic collisions between molecules:
\begin{equation*}
  v+u=v'+u',\quad |v|^2+|u|^2=|v'|^2+|u'|^2.
\end{equation*}

Different from the case of the pure whole space, the gas particles also interact with the physical boundary at the infinite planes $\partial\Omega=\mathbb{R}^2 \times \{x_3=\pm 1\}$. To describe the boundary condition, we split the boundary phase space $\partial\Omega\times \R^3_v$ as
\begin{align*}
\gamma_{+}^\pm=&\{(x,v)\in\mathbb{R}^2 \times \{x_3=\pm 1\}\times\mathbb{R}^{3}: v_3 \gtrless 0 \} , \\
\gamma_{-}^\pm=&\{(x,v)\in\mathbb{R}^2 \times \{x_3=\pm 1\}\times\mathbb{R}^{3}:v_3 \lessgtr 0\}, \\
\gamma_{0}^\pm=&\{(x,v)\in\mathbb{R}^2 \times \{x_3=\pm 1\}\times\mathbb{R}^{3}:v_3=0\}.
\end{align*}
We are interested in the infinite layer problem with the isothermal diffuse reflection boundary condition:
\begin{equation}\label{F.bc}
F(t,x,v)|_{\gamma_-^\pm}=c_\mu \mu(v)\int_{u_3\gtrless 0}F(t,x,u)|u_3| \dd u,
\end{equation}
where
\[
\mu:=\frac{1}{(2\pi)^{3/2}}e^{-\frac{|v|^2}{2}}
\]
is a normalized global Maxwellian with zero bulk velocity and the constant $c_\mu=\sqrt{2\pi}$ is chosen to satisfy $\int_{v_3\gtrless 0}c_\mu\mu(v) |v_3|dv=1$ so that $c_\mu\mu(v) |v_3|$ is a probability measure on the half velocity spaces $\{\R^3: v_3\gtrless 0\}$. Note that the mass flux is vanishing at the boundaries, namely
\begin{equation*}
\int_{\R^3} v_3F(t,x_1,x_2,x_3=\pm 1, v)\d v=0.
\end{equation*}

In the standard perturbation framework, we seek for the solution of the form $F = \mu + \sqrt{\mu}f$. Then plugging this to \eqref{proF} and \eqref{F.bc}, the IBVP on $f$ is reformulated as
\begin{align}
\begin{cases}
     &\dis \p_t f + v\cdot \nabla_x f + \mathcal{L}f = \Gamma(f,f), \\
    &\dis  f(t,x,v)|_{\gamma_-^\pm } = c_\mu \sqrt{\mu(v)} \int_{u_3\gtrless 0} f(t,x,u)\sqrt{\mu(u)}|u_3|\dd u, \\   
    &\dis  f(0,x,v) = f_0(x,v) := (F(0,x,v)-\mu)/\sqrt{\mu}.  
\end{cases}\label{linear_f}
\end{align}
Here $\mathcal{L}f$ and $\Gamma(f,f)$ denote the linearized collision term and nonlinear term respectively:
\begin{align}
    &  \mathcal{L}f:=  -\mu^{-1/2}[Q(\mu,\sqrt{\mu}f) + Q(\sqrt{\mu}f,\mu)], \label{def.L}\\
    &   \Gamma(f,f) := \mu^{-1/2} Q(\sqrt{\mu}f,\sqrt{\mu}f).\label{def.Ga}
\end{align}

We aim at constructing the global in time solutions $f(t,x,v)$ to \eqref{linear_f} for suitably small initial data $f_0(x,v)$ and also obtaining the long time behavior of these solutions, in particular, the explicit time-decay rate. In what follows we provide a brief review of the literature with emphasis on most relevant works in the perturbation framework before stating our results. Since we are addressing the boundary value problem in the unbounded domain, we discuss the issue in both scenarios: with and without the presence of boundaries.\\

\begin{itemize}
    \item {\bf Whole space and torus:} Both cases are well understood; for instance, we may refer to a recent closely related work \cite{duan2021global} for a complete review. In \cite{duan2021global}, a class of low-regularity global in time solutions with exponential decay based on the Wiener algebra was constructed for the non-cutoff Boltzmann equation in the torus $\mathbb{T}^3$. Note that the idea of introducing the Wiener algebra or $L^1_k$ was motivated by Lei-Lin's work \cite{lei2011global} for the construction of global mild solutions to the incompressible Navier-Stokes equations.
    
    In the whole space $\mathbb{R}^3$, only polynomial decay rates are expected; see \cite{bouin2020hypocoercivity} for a general hypocoercivity approach. Guo \cite{guo2004boltzmann,guo2010bounded} constructed a global solution without time decay via a nonlinear energy method and entropy method,  respectively. Ukai and Yang \cite{ukai2006boltzmann} obtain the optimal decay rate through spectral analysis and semi-group method. In contrast with \cite{duan2021global},  an $L^1_k \cap L^p_k$ method in the Fourier frequency space was proposed in \cite{duan_SIMA} for obtaining the almost optimal decay rate for the non-cutoff Boltzmann equation without relying on the embedding $H^2(\mathbb{R}^3)\subset L^\infty(\mathbb{R}^3)$. Guo and Wang \cite{guo2012decay} obtained the optimal decay rate in high order Sobolev spaces for initial data in a negative Sobolev space. 
    
    When the domain exhibits both bounded and unbounded properties, one may consider a domain as an infinite channel $\mathbb{R}\times \mathbb{T}^2$. Wang and Wang \cite{Teng2019} investigated the Boltzmann equation, while \cite{duan20213d} studied the Vlasov-Poisson-Landau system, both utilizing high-order Sobolev energy methods in such domains.  However, it is important to note that most of these methods cannot be directly applied in the presence of physical boundaries, for instance, diffuse reflection boundary under consideration. \\

    \item {\bf Bounded domain:} The boundary effect plays an important role in kinetic theory, and there have been numerous contributions to the mathematical study of boundary value problems, including \cite{guiraud1975h, desvillettes2005trend, liu2007initial, yang2005half, desvillettes1990convergence, mischler2000initial, cercignani1992initial, hamdache1992initial} as well as recent progress \cite{ouyang2024conditional,bernou2022hypocoercivity}. In a general bounded domain, high-regularity solutions may not be expected in general due to the singularity near boundary, as noted in \cite{kim2011formation,GKTT,GKTT2,CK,chen2024gradient}. The geometric complexities make it challenging to apply Fourier transform techniques effectively. 
    
    In 2010, Guo \cite{G} proposed an $L^2-L^\infty$ framework to establish a global solution with exponential convergence rate under boundary conditions including the diffusive reflection and specular reflection. This breakthrough has led to substantial advancements in the study of boundary value problems within kinetic theory \cite{KL, CKL, duan2019effects, EGKM, EGKM2}. In the argument of \cite{G}, the dissipation estimate for the macroscopic component is obtained by crucially using the Poincar\'e inequality and the $L^\infty$ estimate is obtained by method of characteristics with repeated boundary interactions. 
    
    To adapt the $L^2-L^\infty$ argument in the infinite layer domain, one may first consider the scenario where the tangent variable $(x_1,x_2)$ is bounded with specular boundary condition, or where $(x_1,x_2)\in \mathbb{T}^2$, and then extend it to case of $(x_1,x_2)\in \mathbb{R}^2$. We mention \cite{chen_mixed} and \cite{duan20243d}. Here, the first work examines a scenario in which particles are specularly reflected between two parallel plates, while diffusive reflection occurs in the remaining area between these two specular regions. The second work investigates the Couette flow in the region $\mathbb{T}^2 \times (-1,1)$ with diffusive boundary condition. 
    
    Note that the approaches in those studies for the finite layer cannot be adapted to the situation where the domain becomes unbounded along the horizontal directions for which the exponential decay rate would be lost and only polynomial decay rate can be expected similar to the whole space case. \\

    \item {\bf Exterior problem:} Since we are concerning the boundary value problem in an unbounded domain, the exterior problem is also closely related. Ukai and Asano \cite{ukai1983steady,ukai1986steady} investigated the exterior problem in both steady and unsteady cases using the delicate spectral analysis together with the observation that the exterior problem can be viewed as a compact perturbation of the whole space problem. An alternative approach to studying the exterior problem involves obtaining an $L^6$ control of macroscopic quantities via the weak formulation, that was initially proposed by Esposito-Guo-Kim-Marra \cite{EGKM,EGKM2} for the problem in bounded domains. By crucially using the Sobolev embedding $W^{2,\frac{6}{5}}\subset H^1 \subset L^6$ and compactness of the boundary, one can obtain proper control over the trace in macroscopic estimate. We refer such an argument to \cite{esposito2018hydrodynamic} for the steady exterior problem with the stability of steady solutions in the time-evolutionary case left unknown; see also recent progress \cite{jung2023diffusive,jang2021incompressible,cao2023passage,guo2024diffusive} for the dynamical problems. \\

    \item {\bf Infinite layer problem for compressible viscous fluid:} It is well known that the compressible Navier-Stokes equations can be deduced via the Chapman-Enskog expansion from the Boltzmann equation. One can expect that solutions of both equations in an infinite layer may share some similar qualitative properties, in particular, the large time behavior of solutions. Indeed, Kagei \cite{kagei2007asymptotic,kagei2007resolvent,kagei2008large} studied the isentropic compressible Navier-Stokes equations in the infinite layer $\Omega=\mathbb{R}^2\times (-1,1)$ with no-slip boundary condition. In this series of work, through taking Fourier transform in the horizontal direction and the spectral analysis to the linearized Navier-Stokes operator, it was proved that the leading part of the solution satisfies a two-dimensional heat equation. \\

\end{itemize}

Although significant progress has been made on the boundary value problem of the Boltzmann equation mentioned above, the long-time asymptotic stability remains largely open, particularly when the boundary is not compact; even the question of global existence is still unresolved. In the current work, we address this issue by investigating the initial-boundary value problem in an infinite layer with diffuse boundary conditions. We develop a new approach of overcoming difficulties from unboundedness of both domain and its boundary as well as appearance of physical boundary conditions.    

\subsection{Main results}

Motivated by the series work of Kagei \cite{kagei2007asymptotic,kagei2007resolvent,kagei2008large} and the $L^1_k \cap L_k^p$ approach for whole space \cite{duan_SIMA}, we take Fourier transform to \eqref{linear_f} in the horizontal direction. To the end we denote $k: = (k_1,k_2)\in \R^2$ to be a two-dimensional Fourier variable of the tangent physical variable $\bar{x}:=(x_1,x_2)\in \R^2$. Thus, the Fourier transform of $f(t,x,v)$ is defined as
\begin{align*}
&  \hat{f}(t,k,x_3,v) = \int_{\mathbb{R}^2} f(t,x,v) e^{-i k\cdot \bar{x}} \dd \bar{x}.    
\end{align*}
Denote $\bar{v}:=(v_1,v_2)\in \mathbb{R}^2$ as the tangent velocity variables. The problem for $\hat{f}=\hat{f}(t,k,x_3,v)$ can be formulated as
\begin{align}
\begin{cases}
    & \dis    \p_t \hat{f} + i\bar{v}\cdot k \hat{f} + v_3 \p_{x_3} \hat{f} + \mathcal{L}\hat{f} = \hat{\Gamma}(\hat{f},\hat{f}),\\
    & \dis  \hat{f}(t,k,\pm 1,v)|_{v_3 \lessgtr 0} = c_\mu \sqrt{\mu(v)} \int_{u_3 \gtrless 0 } \hat{f}(t,k,x_3,u) |u_3| \sqrt{\mu(u)} \dd u, \\
    & \dis  \hat{f}(0,k,x_3,v) = \hat{f_0}(k,x_3,v).      
\end{cases} \label{f_eqn}
\end{align}
Here, recalling \eqref{def.Ga}, $\hat{\Gamma}(\hat{f},\hat{g})$ is the Fourier transform of the nonlinear term $\Gamma(f,g)$ with respect to $\bar{x}=(x_1,x_2)$:
\begin{align}
    &  \hat{\Gamma}(\hat{f},\hat{g}) := \int_{\mathbb{R}^3}\int_{\mathbb{S}^2} |(v-u)\cdot \omega|\sqrt{\mu(u)}[\hat{f}(x_3,v')*_k \hat{g}(x_3,u')-f(x_3,v)*_k g(x_3,u)]  \dd \omega \dd u,\label{gamma_hat}
\end{align}
namely, it can be explicitly written as
\begin{align}
     \hat{\Gamma}(\hat{f},\hat{g})(t,k,x_3,v)= \int_{\mathbb{R}^3}\int_{\mathbb{S}^2}\int_{\R^2} |(v-u)\cdot \omega|\sqrt{\mu(u)}[&\hat{f}(t,k-\ell,x_3,v') \hat{g}(t,\ell,x_3,u')\notag\\
     &-f(t,k-\ell,x_3,v) g(t,\ell,x_3,u)]\d \ell  \dd \omega \dd u.
     \label{gamma_hat1}
\end{align}

Then the original problem \eqref{linear_f} is reformulated to be the one-dimensional IBVP problem \eqref{f_eqn} in the bounded domain $(-1,1)$ involving the two-dimensional Fourier variable $k\in \R^2$ as an extra continuous parameter. Since the only physical variable $x_3$ is bounded, one can expect to employ the $L^2_{x_3,v}-L^\infty_{x_3,v}$ argument in \cite{G}. For this purpose, we define several notations. Denote the macroscopic component as $\mathbf{P}{\hat{f}}$, which represents the projection from $L^2_v$ to $\ker \mathcal{L}=\text{span} (\{\sqrt{\mu(v)},v\sqrt{\mu(v)},\frac{1}{2}(|v|^2-3)\sqrt{\mu(v)}\}) $:
\[\mathbf{P}\hat{f}:= \Big(\hat{a}+\hat{\mathbf{b}}\cdot v + \hat{c}\frac{|v|^2-3}{2} \Big)\sqrt{\mu(v)},\]
where $\hat{a}$, $\hat{\mathbf{b}}=(\hat{b}_1,\hat{b}_2,\hat{b}_3)$ and $\hat{c}$ are functions of $(t,k,x_3)$ for $\hat{f}=\hat{f}(t,k,x_3,v)$. Denote an exponential velocity weight as
\begin{align}\label{weight_w}
w(v) := e^{\theta |v|^2}, \ 0<\theta< \frac{1}{4}.
\end{align}
Denote the $P_\gamma {\hat{f}}$ as the projection to the diffuse reflection at the planes $x_3=\pm 1$:
\begin{align*}
    &   P_\gamma {\hat{f}} (t,k,\pm 1, v) := c_\mu \sqrt{\mu(v)} \int_{u_3\gtrless 1} \hat{f}(t,k,\pm 1,u) \sqrt{\mu(u)} |u_3| \dd u,\quad v_3 \lessgtr 0.
\end{align*}

Below we state the main results of this paper. We refer all norm notations to Section \ref{sec:notation} later on. 

\begin{theorem}\label{thm:l1k_lpk}
Let $2<p\leq \infty$ and  $\sigma = 2(1-\frac{1}{p})-2\e > 1$ with $\e>0$ small enough, then there exist constants $\delta>0$ and $C>0$ such that if the initial data $\hat{f}_0(k,x_3,v)$ with $F_0(x,v):=\mu+\sqrt{\mu}f_0(x,v)\geq 0$ satisfies
\begin{equation}
\Vert w\hat{f}_0\Vert_{L^1_k L^\infty_{x_3,v}} + \Vert \hat{f}_0\Vert_{L^p_k L^2_{x_3,v}} < \delta, \label{initial_assumption}
\end{equation}
then there exists a unique solution $\hat{f}(t,k,x_3,v)$ to \eqref{f_eqn} such that $F(t,x,v)=\mu+\sqrt{\mu}f(t,x,v)\geq 0$ 
and the following estimate is satisfied:
\begin{align}
&\Vert (1+t)^{\sigma/2} w \hat{f}\Vert_{L^1_k L^\infty_{T,x_3,v}} + \Vert \hat{f}\Vert_{L^p_k L^\infty_T L^2_{x_3,v}}  \leq C\Vert w\hat{f}_0\Vert_{L^1_k L^\infty_{x_3,v}} + C\Vert \hat{f}_0\Vert_{L^p_k L^2_{x_3,v}}, 
\label{f_estimate}
\end{align}
for any $T>0$. Moreover, it also holds that
\begin{align}
    & \Vert (1+t)^{\sigma/2} \hat{f}\Vert_{L^1_k L^\infty_T L^2_{x_3,v}} + \Vert (1+t)^{\sigma/2} (\mathbf{I}-\mathbf{P}) \hat{f} \Vert_{L^1_k L^2_{T,x_3,\nu}} + |(1+t)^{\sigma/2}(I-P_\gamma)\hat{f}|_{L^1_k L^2_{T,\gamma_+}} \notag \\
    &  + \Big\Vert (1+t)^{\sigma/2} \frac{|k|}{\sqrt{1+|k|^2}}(\hat{a},\hat{\mathbf{b}},\hat{c})\Big\Vert_{L^1_k L^2_{T,x_3}} \leq C\Vert w\hat{f}_0\Vert_{L^1_k L^\infty_{x_3,v}} + C\Vert \hat{f}_0\Vert_{L^p_k L^2_{x_3,v}}. \label{f_estimate_2}
\end{align}
\end{theorem}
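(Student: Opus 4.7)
The plan is to construct the solution through a linear iteration scheme that preserves the norms appearing on the left-hand sides of \eqref{f_estimate}--\eqref{f_estimate_2}. After the Fourier transform in the horizontal variable, the problem becomes, at each fixed $k\in\mathbb{R}^2$, a one-dimensional initial-boundary value problem on $x_3\in(-1,1)$ with diffuse reflection at $x_3=\pm 1$ in which $k$ enters only as a parameter through the transport term $i\bar{v}\cdot k$. For each $k$ I will run a Guo-type $L^2_{x_3,v}$--$L^\infty_{x_3,v}$ bootstrap, and then assemble the resulting pointwise-in-$k$ bounds into the global $L^1_k\cap L^p_k$ norms to extract the polynomial time decay.

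The heart of the linear step is the dissipation inequality at frequency $k$. The standard $L^2$ energy identity, together with the diffuse-boundary identity, yields
\begin{equation*}
\frac{1}{2}\frac{d}{dt}\|\hat{f}\|_{L^2_{x_3,v}}^2 + c_0\|(\mathbf{I}-\mathbf{P})\hat{f}\|_{L^2_\nu}^2 + |(I-P_\gamma)\hat{f}|_{L^2_{\gamma_+}}^2 \lesssim \mathrm{Re}\,\langle \hat{\Gamma}(\hat{f},\hat{f}),\hat{f}\rangle,
\end{equation*}
which controls only the microscopic and boundary parts. To produce dissipation on the macroscopic moments $(\hat{a},\hat{\mathbf{b}},\hat{c})$, I would develop the \emph{dual test-function argument} advertised in the introduction: for each moment, construct a $k$-dependent test function $\Psi(k,x_3,v)$ as the solution of an auxiliary elliptic problem on $(-1,1)$ whose boundary data are compatible with $P_\gamma\hat{f}$, and then pair $\Psi$ against the equation. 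The expected outcome is
\begin{equation*}
\frac{|k|}{\sqrt{1+|k|^2}}\,\|(\hat{a},\hat{\mathbf{b}},\hat{c})\|_{L^2_{x_3}} \lesssim \|(\mathbf{I}-\mathbf{P})\hat{f}\|_{L^2_\nu} + |(I-P_\gamma)\hat{f}|_{L^2_{\gamma_+}} + \text{(nonlinear)},
\end{equation*}
in which the multiplier $|k|/\sqrt{1+|k|^2}$ matches exactly the one in \eqref{f_estimate_2}: it records the fact that at $k=0$ the horizontal conservation laws preclude any macroscopic decay, while for $|k|\gtrsim 1$ a uniform dissipation is recovered.

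For the pointwise $L^\infty_{x_3,v}$ bound at each $k$, I would use the mild formulation along backward characteristics, iterate reflections off $x_3=\pm 1$ through the diffuse kernel, and use the splitting of $K=\nu-\mathcal{L}$ into a small part and a smoothing part in order to trade the smoothing contribution for the $L^2$ norm already controlled by the previous step; the condition $\theta<1/4$ guarantees that $w(v)$ is absorbed by the Gaussian weight in $P_\gamma$ and that the cumulative reflection factor is geometrically small. To obtain time decay, I would then derive a linear semigroup bound of the form $\|e^{tA_k}\hat{f}_0\|_{L^2_{x_3,v}}\lesssim e^{-c\lambda(k)t}\|\hat{f}_0\|_{L^2_{x_3,v}}$ with $\lambda(k)\sim|k|^2/(1+|k|^2)$, and split the $k$-integral into $|k|\leq 1$ and $|k|\geq 1$: the high-frequency piece decays exponentially, while on low frequencies a H\"older interpolation of $L^1_k$ against the $L^p_k$ data produces the rate $(1+t)^{-\sigma/2}$ with $\sigma=2(1-1/p)-2\epsilon$, the $-2\epsilon$ being the technical loss needed to guarantee $L^2_T$ integrability of the dissipation and to close the Gr\"onwall-type continuation.

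The main obstacle is precisely the macroscopic dissipation estimate in the second step. Unlike the whole-space or periodic case, where all spatial variables admit Fourier multipliers and the test functions follow from a Helmholtz-type decomposition, here the bounded $x_3$-direction coupled with diffuse reflections forces the construction of the test functions $\Psi$ as solutions of a $k$-parameterized elliptic problem on $(-1,1)$ with Neumann-type data tied to $P_\gamma\hat{f}$; calibrating the $k$-dependence to yield exactly the weight $|k|/\sqrt{1+|k|^2}$, while keeping the estimate compatible with the Wiener-algebra-type bound $\|\hat{\Gamma}(\hat{f},\hat{f})\|_{L^1_k}\lesssim \|\hat{f}\|_{L^1_k}\|\hat{f}\|_{L^1_k L^\infty_{x_3,v}}$ needed to close the nonlinearity, is the technically delicate part. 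Once these estimates are in place, a standard continuation argument with the small initial data \eqref{initial_assumption} closes the bootstrap, and the nonnegativity $F\geq 0$ follows from the usual positivity-propagation argument for the mild formulation of the Boltzmann equation.
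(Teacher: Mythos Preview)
Your overall architecture---the dual test-function method for the macroscopic dissipation with the $|k|/\sqrt{1+|k|^2}$ weight, the $L^\infty_{x_3,v}$ bootstrap via stochastic cycles on $x_3\in(-1,1)$, and the Wiener-algebra handling of the nonlinearity---matches the paper. The point of departure is how the time decay is produced. You propose to first establish a pointwise-in-$k$ semigroup bound $\|e^{tA_k}\hat f_0\|_{L^2_{x_3,v}}\lesssim e^{-c\lambda(k)t}\|\hat f_0\|_{L^2_{x_3,v}}$ with $\lambda(k)\sim |k|^2/(1+|k|^2)$ and then integrate in $k$. The paper never proves such a semigroup estimate; instead it runs a \emph{time-weighted energy method}: one works directly with $(1+t)^{\sigma}\hat f$, which generates the commutator term $\sigma(1+t)^{\sigma-1}|\hat f|^2$, and this extra term is absorbed by splitting $|k|\geq 1$ (where $1\lesssim |k|^2/(1+|k|^2)$ feeds it into the macroscopic dissipation already obtained) versus $|k|<1$ (where the interpolation $(1+t)^{\sigma-1}\lesssim o(1)(1+t)^{\sigma}|k|^2 + (1+t)^{\sigma-\eta}|k|^{-2(\eta-1)}$ and H\"older against the $L^p_k$ norm close the estimate). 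The nonlinear term is then controlled because $\sigma>1$ makes $(1+t)^{-\sigma}$ integrable, so $\|w\hat f\|_{L^1_kL^2_TL^\infty_{x_3,v}}\lesssim \|(1+t)^{\sigma/2}w\hat f\|_{L^1_kL^\infty_{T,x_3,v}}$.

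Your semigroup route is plausible in principle---the energy and macroscopic-dissipation inequalities you describe could be combined into a Lyapunov functional yielding the rate $\lambda(k)$---but you have not indicated how you would actually build that functional or handle the boundary contribution in it, and this is precisely the nontrivial step. The paper's time-weighted approach sidesteps the need for a spectral or hypocoercivity statement at fixed $k$ and is what makes the argument go through with only the soft $L^2$ estimates you already listed. A minor correction: the elliptic test-function problems on $(-1,1)$ carry Neumann data for $\hat a$ but \emph{Dirichlet} data for $\hat{\mathbf b}$ and $\hat c$; this distinction matters when you invoke Poincar\'e and trace inequalities.
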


\begin{remark}
The restriction that $p$ is strictly larger than 2 comes from the condition $\sigma = 2(1-\frac{1}{p})-\e > 1$ with $\e>0$. Indeed, it is used to guarantee the time integrability of $(1+t)^{-\sigma}$
for controlling the nonlinear collision term when the $L^1_k\cap L^p_k$ approach introduced in \cite{duan_SIMA} is applied. It will be interesting to construct the global in time solutions for initial data $\hat{f}_0\in (L^1_k \cap L^2_k) L^\infty_{x_3,v}$ without relying on the time-decay properties of solutions.
\end{remark}

\begin{remark}
We finally close the nonlinear estimate using $L^1_k L^\infty_{T,x_3,v}$ control in \eqref{f_estimate} in the spirit of the $L^2_{x_3,v}-L^\infty_{x_3,v}$ argument. We refer description of the other norms in \eqref{f_estimate_2} to Section \ref{sec:proof_strategy}.
\end{remark}

To the best of our knowledge, Theorem \ref{thm:l1k_lpk} provides the first result on the global decay-in-time solution to the Boltzmann equation with non-compact and diffuse reflection boundary condition. Moreover, the decay rate $t^{-(1-\frac{1}{p}-\frac{\e}{2})}$ of the $L^1_k$ norms in \eqref{f_estimate} or \eqref{f_estimate_2}  is almost optimal in the sense that solutions to the two dimensional heat equation decay in time with a polynomial rate as $t^{-(1-\frac{1}{p})}$ via the usual $L^\infty_x-L^{p'}_x$ time decay estimates in the physical variables, where $p'$ is the conjugate to $p$. Such decay rate is also consistent with that of solutions to the compressible Navier-Stokes equations in the infinite layer $\Omega=\R^2\times (-1,1)$ studied in \cite{kagei2007asymptotic,kagei2007resolvent,kagei2008large}, as mentioned in the previous paragraphs. In fact, through the spectral analysis, \cite{kagei2007asymptotic,kagei2007resolvent,kagei2008large} also proved a much stronger result on the large time behavior of solutions, namely, it turns out that the rate of convergence of Navier-Stokes solutions to heat equation solutions is much faster. Thus it would be interesting to further obtain an analogous result for the global decay-in-time Boltzmann solution obtained in Theorem \ref{thm:l1k_lpk}; this will be left for our future study.

The macroscopic dissipation estimate in \eqref{f_estimate_2} degenerates when $|k|\to 0$. Such degeneracy is justified in the case of the whole space by the previous literature \cite{duan_SIMA,ukai2006boltzmann}. Since Poincar\'e inequality holds in our domain, similar to the velocity field in the Navier-Stokes equation, we expect to control the dissipation estimate of $\hat{\mathbf{b}}=(\hat{b}_1,\hat{b}_2,\hat{b}_3)$ and $\hat{c}$ even in the low-frequency regime. 

To fully recover dissipation of $\hat{b}_3$ and $\hat{c}$, we need to leverage the time derivative estimate. We take the $t$-derivative to the original equation \eqref{f_eqn} and obtain
\begin{align}
\begin{cases}
    & \dis    \p_t (\p_t \hat{f}) + i\bar{v}\cdot k \p_t \hat{f} + v_3 \p_{x_3} \p_t \hat{f} + \mathcal{L}\p_t \hat{f} = \hat{\Gamma}(\p_t \hat{f},\hat{f}) + \hat{\Gamma}(\hat{f},\p_t \hat{f}), \\
    & \dis  \p_t \hat{f}(t,k,\pm 1, v) = c_\mu \sqrt{\mu(v)} \int_{u_3\gtrless 0} \p_t \hat{f}(t,k,x_3,u) |u_3|\sqrt{\mu(u)} \dd u, \\
    & \dis  \p_t \hat{f}(0,k,x_3,v) = \p_t \hat{f}_0(k,x_3,v) := - i\bar{v}\cdot k \hat{f}_0 - v_3 \p_{x_3} \hat{f}_0 - \mathcal{L}(\hat{f}_0) + \hat{\Gamma}(\hat{f}_0,\hat{f}_0),   
\end{cases} \label{p_t_f_eqn}
\end{align}
where initial data for $\p_t \hat{f}$ is defined in terms of the first equation of \eqref{f_eqn} at $t=0$.

In the next theorem, we address this issue by providing a refined $\hat{\mathbf{b}},\hat{c}$ dissipation estimate.

\begin{theorem}\label{thm:bc_refined}
Let all the assumptions of Theorem \ref{thm:l1k_lpk} be satisfied, then
for the macroscopic quantities $\hat{\mathbf{b}}=(\hat{b}_1,\hat{b}_2,\hat{b}_3)$ and $\hat{c}$, the time-weighted dissipation estimate in \eqref{f_estimate_2}, particularly when $k$ is near zero, can be refined as 
\begin{align}
\Vert (1+t)^{\sigma/2} (\hat{b}_1,\hat{b}_2) \Vert_{L^1_k L^2_{T,x_3}}\leq C\Vert w\hat{f}_0\Vert_{L^1_k L^\infty_{x_3,v}} + C\Vert \hat{f}_0\Vert_{L^p_k L^2_{x_3,v}},\label{b_1_b_2_refined}
\end{align}
\begin{align}\label{b3_refined}
    \Big\Vert (1+t)^{\sigma/2} \frac{\sqrt{|k|}}{(1+|k|^2)^{1/4}} \hat{b}_3 \Big\Vert_{L^1_k L^2_{T,x_3}} \leq C\Vert w\hat{f}_0\Vert_{L^1_k L^\infty_{x_3,v}} + C\Vert \hat{f}_0\Vert_{L^p_k L^2_{x_3,v}},
\end{align}
\begin{align}\label{c_refined}
     \Big\Vert (1+t)^{\sigma/2} \frac{|k|^{1/4}}{(1+|k|^2)^{1/8}} \hat{c} \Big\Vert_{L^1_k L^2_{T,x_3}} \leq C\Vert w\hat{f}_0\Vert_{L^1_k L^\infty_{x_3,v}} + C\Vert \hat{f}_0\Vert_{L^p_k L^2_{x_3,v}},
\end{align}
for any $T>0$, where $C>0$ is independent of $T$.

If it is further assumed that 
\begin{align}
    &   \Vert w\p_t \hat{f}_0\Vert_{L^1_k L^\infty_{x_3,v}} + \Vert \p_t\hat{f}_0\Vert_{L^p_k L^2_{x_3,v}} < \delta,  \ 2<p\leq \infty, \label{t_derivative_assumption}
\end{align}
then the estimates on $\hat{b}_3$ and $\hat{c}$ can be further refined as
\begin{align}
&   \Vert (1+t)^{\sigma/2} (\hat{b}_3,\hat{c})\Vert_{L^1_k L^2_{T,x_3}} \leq  C\Vert w\p_t\hat{f}_0\Vert_{L^1_k L^\infty_{x_3,v}} + C\Vert \p_t\hat{f}_0\Vert_{L^p_k L^2_{x_3,v}} \label{b3_c_refined}.
\end{align}

\end{theorem}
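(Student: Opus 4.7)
The plan is to refine the basic macroscopic dissipation of Theorem \ref{thm:l1k_lpk} by combining trace identities for the macroscopic moments of $\hat f$ at $x_3=\pm 1$, arising from the diffuse boundary condition, with the Poincar\'e inequality on $(-1,1)$; the fully non-degenerate bounds \eqref{b3_c_refined} then follow by a bootstrap using the equation \eqref{p_t_f_eqn} satisfied by $\partial_t \hat f$.

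First I would derive pointwise identities for the boundary traces of $(\hat a,\hat{\mathbf{b}},\hat c)$ at $x_3=\pm 1$. The incoming diffuse data is proportional to $\sqrt{\mu(v)}$, hence even in $v_1, v_2$, which forces
\[
\tfrac12\,\hat b_j(\pm 1) = \int_{v_3\gtrless 0} v_j\,(I-P)\hat f(\pm 1,v)\,\sqrt{\mu}\,\mathrm{d}v, \qquad j=1,2,
\]
so $|\hat b_j(\pm 1)| \lesssim |(I-P_\gamma)\hat f|_{L^2(\gamma_+)}$. The vanishing mass flux forces $\hat b_3(\pm 1)=0$ exactly, and a parallel computation controls $|\hat a(\pm 1)|,|\hat c(\pm 1)|$ by the same boundary dissipation. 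These identities are the key geometric ingredient that lets Poincar\'e close without a spurious boundary contribution. For \eqref{b_1_b_2_refined}, I would then apply the Poincar\'e inequality on $(-1,1)$ and bound $\partial_{x_3}\hat b_j$ via the $j$-th momentum moment of \eqref{f_eqn}: for $j=1,2$ the $P\hat f$ part of $\langle v_3 v_j\sqrt{\mu},\hat f\rangle$ vanishes by odd symmetry, so $\partial_{x_3}\hat b_j$ reduces to $(I-P)\hat f$ plus a $\partial_t$-contribution absorbed by an energy argument; combined with the trace bound this yields \eqref{b_1_b_2_refined} with no $|k|$-degeneracy.

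For \eqref{b3_refined} and \eqref{c_refined} I would interpolate between the degenerate estimate already available from \eqref{f_estimate_2} and a uniform-in-$k$ control of $\hat b_3,\hat c$ coming from the $L^\infty_T L^2_{x_3,v}$ bound on $\hat f$ in \eqref{f_estimate_2}. Writing, for each fixed $k$,
\[
\tfrac{|k|^{1/2}}{(1+|k|^2)^{1/4}}|\hat b_3|^2 = \bigl(\tfrac{|k|}{\sqrt{1+|k|^2}}|\hat b_3|^2\bigr)^{1/2}\,\bigl(|\hat b_3|^2\bigr)^{1/2},
\]
and applying Cauchy--Schwarz first in $\mathrm{d}x_3\,\mathrm{d}t$ and then in $\mathrm{d}k$, with the $(1+t)^{\sigma/2}$ weights split so that the factor $(1+t)^{-\sigma/2}$ coming from the $L^\infty_T$ decay is integrable thanks to $\sigma>1$, I obtain \eqref{b3_refined}. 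Iterating the same interpolation --- now using \eqref{b3_refined} as the improved input via the energy moment equation for $\hat c$ --- yields the quarter-power factor in \eqref{c_refined}.

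Finally, for \eqref{b3_c_refined}, under the extra assumption \eqref{t_derivative_assumption} I would apply Theorem \ref{thm:l1k_lpk} to $\partial_t\hat f$, which solves \eqref{p_t_f_eqn} with the same diffuse boundary condition and a bilinear source $\hat\Gamma(\partial_t\hat f,\hat f)+\hat\Gamma(\hat f,\partial_t\hat f)$ that can be treated as a small perturbation using the smallness of $\hat f$ already established in Theorem \ref{thm:l1k_lpk}. This delivers a dissipation bound on $\partial_t\hat a$ and $\partial_t\hat c$; substituting into the mass-conservation identity $\partial_{x_3}\hat b_3 = -\partial_t\hat a - ik\cdot(\hat b_1,\hat b_2)$ and applying Poincar\'e with $\hat b_3(\pm 1)=0$ removes the $|k|$-degeneracy entirely, and the analogous argument using the energy moment equation for $\hat c$ completes \eqref{b3_c_refined}. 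The main obstacle I foresee is in this last step: closing the nonlinear estimate for $\partial_t\hat f$ requires a careful bootstrap since the source is only bilinear in $(\hat f,\partial_t\hat f)$ rather than quadratic in $\partial_t\hat f$ alone, and the time-weighted $L^1_k\cap L^p_k$ framework must be verified to accommodate this mixed-order coupling uniformly on $[0,\infty)$.
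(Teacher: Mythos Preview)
Your interpolation strategy for \eqref{b3_refined} and \eqref{c_refined} has a genuine gap. Writing, for each $k$,
\[
\Big\|(1+t)^{\sigma/2}\tfrac{|k|^{1/2}}{(1+|k|^2)^{1/4}}\hat b_3\Big\|_{L^2_{T,x_3}}^2
=\int_0^T\!\!\int_{-1}^1(1+t)^{\sigma}\,\tfrac{|k|}{\sqrt{1+|k|^2}}\,|\hat b_3|^2
\]
and applying Cauchy--Schwarz produces as one factor $\|(1+t)^{\sigma/2}\hat b_3\|_{L^2_{T,x_3}}$, which is exactly the fully non-degenerate quantity in \eqref{b3_c_refined} that you do \emph{not} yet control. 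The $L^\infty_T$ decay from \eqref{f_estimate_2} only yields, via $\sigma>1$, a bound on $\|\hat b_3\|_{L^1_kL^2_{T,x_3}}$ \emph{without} the $(1+t)^{\sigma/2}$ weight; but interpolating this with the time-weighted degenerate estimate cannot recover the time-weighted half-degenerate one, because the time weight and the $k$-weight do not factor compatibly. The paper obtains \eqref{b3_refined} and \eqref{c_refined} by a different mechanism: in the dual (weak) formulation it chooses the auxiliary elliptic functions $\phi_b,\phi_c$ with right-hand sides $\tfrac{|k|}{\sqrt{1+|k|^2}}\bar{\hat b}_3$ and $\tfrac{|k|^{1/2}}{(1+|k|^2)^{1/4}}\bar{\hat c}$ respectively, and uses the Poincar\'e inequality on $(-1,1)$ (with Dirichlet data for $\phi$) to get $H^2_{x_3}$ control of $\phi$ by the weighted macroscopic quantity. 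The specific half-power on $\hat b_3$ and quarter-power on $\hat c$ arise because the momentum law for $\hat b_3$ contains $\partial_{x_3}\hat a$ (no gain in $k$), and the energy law for $\hat c$ contains $\partial_{x_3}\hat b_3$; each step of this iteration costs exactly a square-root loss in the $k$-weight.

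Your route to \eqref{b3_c_refined} has the same structural problem. Applying Theorem~\ref{thm:l1k_lpk} to $\partial_t\hat f$ gives only the \emph{degenerate} dissipation $\big\|\tfrac{|k|}{\sqrt{1+|k|^2}}\partial_t\hat a\big\|_{L^1_kL^2_{T,x_3}}$, so feeding $\partial_t\hat a$ into the mass law and Poincar\'e for $\hat b_3$ simply reproduces the degeneracy. The paper's resolution is to rewrite the weak formulation so that the time-derivative term is $\int\partial_t\hat f\,\psi$ rather than $\int\hat f\,\partial_t\psi$, and to choose the test functions $\psi_b,\psi_c$ orthogonal to $\ker\mathcal L$. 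Then only $(\mathbf{I}-\mathbf{P})\partial_t\hat f$ survives in that term, and the \emph{microscopic} dissipation $\|(1+t)^{\sigma/2}(\mathbf{I}-\mathbf{P})\partial_t\hat f\|_{L^1_kL^2_{T,x_3,\nu}}$ --- which is non-degenerate in $k$ --- closes the estimate. In short, the key object to control is $(\mathbf{I}-\mathbf{P})\partial_t\hat f$, not $\partial_t\hat a$.
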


\begin{remark}
The dissipation estimate for $\hat{b}_1$ and $\hat{b}_2$ \eqref{b_1_b_2_refined} holds true without relying on the assumption regarding the time derivative \eqref{t_derivative_assumption}, as the conservation law of $\hat{b}_1$ and $\hat{b}_2$ provides an additional $k$ factor on $\hat{a}$. However, we do not obtain this gain in $k$ for the conservation law of $\hat{b}_3$ and $\hat{c}$, and we need to include extra $k-$weight in the dissipation estimates \eqref{b3_refined} and \eqref{c_refined}. Notably, we can achieve an improvement in the $k$-weight in estimates of $\hat{b}_3$ and $\hat{c}$. 

It remains uncertain whether it is possible to recover the dissipation estimate in \eqref{b3_c_refined} without utilizing the time-derivative estimate.
\end{remark}

When the tangent variable is two-dimensional, we have obtained in Theorem \ref{thm:l1k_lpk} the global existence of solutions by using the time-decay of solutions with an extra smallness condition on initial data $\hat{f}_0$ in $L^p_kL^2_{x_3,v}$ for $2<p\leq \infty$. In what follows we consider the situation where the tangential direction is only one-dimensional. For this case the parameter $\sigma$ takes the form of $\sigma=(1-1/p)-\e$, then $\sigma$ is strictly less than $1$ for any $1\leq p\leq \infty$ and $\e>0$ small enough, and thus the time-weighted energy method fails to obtain a global existence result basing on time-decay of solutions. This is also similar to the situation in \cite{duan_SIMA} where the one-dimension case is left open. The main difficulty is that even if $p=\infty$, the decay rate $\sigma=(1-1/p)-\e < 1$ is too slow to control the nonlinear part $\hat{\Gamma}(\mathbf{P}\hat{f},\mathbf{P}\hat{f})$ contributed by the pure macro component in the large time scale. This motivates us to re-consider the two-dimensional problem when the tangent variable is one-dimensional:
\begin{equation}\label{2dprob}
\partial_t F+v_1\partial_{x_1}F+v_3\partial_{x_3}F=Q(F,F),
\end{equation}
for $F=F(t,x_1,x_3,v)$ with $t\geq 0$, $x_1\in \mathbb{R}$, $x_3\in (-1,1)$ and $v=(v_1,v_2,v_3)\in \mathbb{R}^3$. We still simply denote $x=(x_1,x_3)\in \O=\R\times (-1,1)$. The problem on the corresponding perturbation $f$ with $F=\mu+ \sqrt{\mu}f$ is given by
\begin{align}
\begin{cases}
     &\dis \p_t f + v_1 \p_{x_1}f+ v_3 \p_{x_3}f + \mathcal{L}f = \Gamma(f,f), \\
    &\dis  f(t,x,v)|_{\gamma_-^\pm } = c_\mu \sqrt{\mu(v)} \int_{u_3\gtrless 0} f(t,x,u)\sqrt{\mu(u)}|u_3|\dd u, \\   
    &\dis  f(0,x,v) = f_0(x,v) := (F(0,x,v)-\mu)/\sqrt{\mu}.  
\end{cases}\label{f_eqn_2d}
\end{align}

To obtain the macroscopic dissipation estimate in the physical space, similar to Theorem \ref{thm:bc_refined}, we turn to leverage the time derivative estimate. Then we take time derivative to \eqref{f_eqn_2d} and obtain
\begin{align}
\begin{cases}
    & \dis   \p_t (\p_t f) +  v_1 \p_{x_1}(\p_t f) + v_3 \p_{x_3}(\p_t f)+ \mathcal{L}\p_t f = \Gamma(\p_t f, f) + \Gamma(f, \p_t f), \\
    & \dis  \p_t f(t,x,v)|_{\gamma_-^\pm} = c_\mu\sqrt{\mu(v)} \int_{u_3 \gtrless 0} \p_t f(t,x,u)\sqrt{\mu(u)} |u_3| \dd u, \\
    & \dis  \p_t f(0,x,v):= \p_t f_0(x,v) = -v_1 \p_{x_1}f_0-v_3 \p_{x_3}f_0 - \mathcal{L}f_0 + \Gamma(f_0,f_0), 
\end{cases} \label{p_t_eqn_2d}
\end{align}
where initial data for $\p_t f$ is defined in terms of the first equation of \eqref{f_eqn_2d} at $t=0$.

Below we state the result on the global existence to the two-dimensional problem \eqref{f_eqn_2d} without taking the Fourier transform. $\mathbf{P}f$ and $P_\gamma f$ under this setting are defined as 
\begin{align*}
    & \mathbf{P}{f}:= \Big({a} + {\mathbf{b}} \cdot v + {c} \frac{|v|^2-3}{2}\Big)\sqrt{\mu(v)},
\end{align*}
where $a$, ${\mathbf{b}}=({b}_1,{b}_2,{b}_3)$ and $c$ are functions of $(t,x)$ for $f=f(t,x,v)$. 
\begin{align*}
    &   P_\gamma {f} (t,x_1,\pm 1, v) := c_\mu \sqrt{\mu(v)} \int_{u_3\gtrless 1} f(t,x_1,\pm 1,u) \sqrt{\mu(u)} |u_3| \dd u,\quad v_3 \lessgtr 0.
\end{align*}

We refer norm notations to Section \ref{sec:notation} later on.

\begin{theorem}\label{thm:2d}
There exist constants $\delta>0$ and $C$ such that if the initial data $f_0(x,v)$ with $F_0(x,v):=\mu+\sqrt{\mu}f_0(x,v)\geq 0$ satisfies
\begin{align}
    &        \Vert f_0\Vert_{L^2_{x,v}} + \Vert \p_t f_0\Vert_{L^2_{x,v}} + \Vert wf_0\Vert_{L^\infty_{x,v}} + \Vert w\p_t f_0\Vert_{L^\infty_{x,v}} <\delta, \label{f_initial_2d}
\end{align}
then there exists a unique solution $f(t,x,v)$ to \eqref{f_eqn_2d} such that $F(t,x,v)=\mu+\sqrt{\mu}f(t,x,v)\geq 0$ and the following estimate is satisfied
\begin{align}
    &    \Vert f\Vert_{L^\infty_T L^2_{x,v}} + \Vert \p_t f\Vert_{L^\infty_T L^2_{x,v}} + \Vert wf\Vert_{L^\infty_{T,x,v}} + \Vert w\p_t f\Vert_{L^\infty_{T,x,v}} \notag \\
    & \leq C[\Vert f_0\Vert_{L^2_{x,v}} + \Vert \p_t f_0\Vert_{L^2_{x,v}} + \Vert wf_0\Vert_{L^\infty_{x,v}} + \Vert w\p_t f_0\Vert_{L^\infty_{x,v}}],\label{thm:2d:e1}
\end{align}
for any $T>0$. Moreover, it holds that
\begin{align}
    &    |(I-P_\gamma)f|_{L^2_{T,\gamma_+}} + |(I-P_\gamma)\p_t f|_{ L^2_{T,\gamma_+}}+ \Vert (\mathbf{I}-\mathbf{P})f\Vert_{L^2_{T,x,\nu}} + \Vert (\mathbf{I}-\mathbf{P})\p_t f\Vert_{L^2_{T,x,\nu}} + \Vert ( \mathbf{b},c)\Vert_{L^2_{T,x}} \notag\\
    & \leq C[\Vert f_0\Vert_{L^2_{x,v}} + \Vert \p_t f_0\Vert_{L^2_{x,v}} + \Vert wf_0\Vert_{L^\infty_{x,v}} + \Vert w\p_t f_0\Vert_{L^\infty_{x,v}}].\label{thm:2d:e2}
\end{align}
\end{theorem}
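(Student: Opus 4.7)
The plan is to combine the $L^2$--$L^\infty$ framework of Guo \cite{G}, adapted to the infinite layer $\Omega=\mathbb{R}\times(-1,1)$, with a parallel estimate on $\partial_t f$ that replaces the missing time decay. As noted in the paragraph preceding Theorem \ref{thm:2d}, the Fourier-side decay rate $(1-1/p-\varepsilon)$ falls below $1$ in the one-dimensional tangential case and is therefore not time-integrable, preventing the nonlinear closure used in Theorem \ref{thm:l1k_lpk}. The natural substitute is to work in physical space, control the nonlinearity via $\Vert\Gamma(f,g)\Vert_{L^2_\nu}\lesssim \Vert wf\Vert_{L^\infty}\Vert g\Vert_{L^2_\nu}$, and recover the macroscopic dissipation of $(\mathbf{b},c)$ through the time-derivative equation \eqref{p_t_eqn_2d} (which makes sense thanks to the smallness of $\partial_t f_0$ assumed in \eqref{f_initial_2d}).

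First, I would derive the basic $L^2$ energy inequality. Testing \eqref{f_eqn_2d} against $f$, using the microscopic coercivity of $\mathcal{L}$ and the standard splitting of the boundary term into $P_\gamma f$ plus $(I-P_\gamma)f$, yields
\[
\tfrac{1}{2}\tfrac{d}{dt}\Vert f\Vert^2_{L^2_{x,v}}+\delta_0\Vert(\mathbf{I}-\mathbf{P})f\Vert^2_{L^2_{x,\nu}}+\delta_0|(I-P_\gamma)f|^2_{L^2_{\gamma_+}}\lesssim \langle\Gamma(f,f),f\rangle,
\]
and the analogue for $\partial_t f$ follows from \eqref{p_t_eqn_2d}, with forcing $\Gamma(\partial_t f,f)+\Gamma(f,\partial_t f)$.

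The core step is the macroscopic dissipation for $(\mathbf{b},c)$. Taking $1,v,|v|^2$-moments of \eqref{f_eqn_2d} gives the usual conservation-law system coupling $\partial_t(a,\mathbf{b},c)$ with spatial gradients of $(a,\mathbf{b},c)$ and higher-moment $(\mathbf{I}-\mathbf{P})f$ terms. Following Guo's test-function strategy adapted to the unbounded layer, I would construct scalar potentials $\Phi_b,\Phi_c$ on $\Omega$ solving elliptic problems driven respectively by $\mathbf{b}$ and $c$ with Neumann-type boundary data at $x_3=\pm 1$ chosen so that the boundary traces generated by testing \eqref{f_eqn_2d} against $v\sqrt{\mu}\cdot\nabla_x\Phi_b$ (and its analogue for $c$) vanish against the diffuse-reflection operator $P_\gamma$. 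Testing then produces $\Vert \mathbf{b}\Vert_{L^2_x}^2+\Vert c\Vert_{L^2_x}^2$ on the left, while on the right the terms containing $\partial_t\mathbf{P}f$ are absorbed by $\Vert\partial_t f\Vert_{L^2_{x,v}}$ from the previous step, precisely breaking the circular dependence. No dissipative estimate for $a$ is attempted, consistent with \eqref{thm:2d:e2}, since the Poincar\'e inequality is unavailable along $x_1\in\mathbb{R}$.

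With $L^2$ control of $(f,\partial_t f)$ and of $(\mathbf{b},c)$ in hand, I would bound $\Vert wf\Vert_{L^\infty_{T,x,v}}$ and $\Vert w\partial_t f\Vert_{L^\infty_{T,x,v}}$ by iterating the Duhamel representation along stochastic backward characteristics that undergo repeated diffuse reflections at $x_3=\pm 1$, as in \cite{G}: the iterated collision integral is split into a small-measure piece that is absorbed after sufficiently many reflections and a residual piece dominated by the $L^2_{x,v}$ norm already controlled; the nonlinear source is closed using the pointwise bound on $wf$. A continuation argument from local existence, together with the smallness \eqref{f_initial_2d}, then yields \eqref{thm:2d:e1}--\eqref{thm:2d:e2} globally. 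The main obstacle is the macroscopic step: constructing the elliptic test potentials on $\mathbb{R}\times(-1,1)$ with boundary data that kill the diffuse-reflection traces, while keeping the resulting coupling to $\partial_t f$ linear, is where the novelty of the 2D argument lies and where most of the technical work will concentrate.
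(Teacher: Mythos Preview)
Your overall architecture matches the paper: basic $L^2$ energy for $f$ and $\partial_t f$, a macroscopic dissipation estimate for $(\mathbf{b},c)$, and a Guo-type $L^\infty$ bootstrap along stochastic cycles. But the macroscopic step, which you correctly flag as the crux, contains a genuine gap.

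You propose testing against $v\sqrt{\mu}\cdot\nabla_x\Phi_b$. This function lies in $\ker\mathcal{L}$, so the term $\int\partial_t f\,\psi$ in the weak formulation is $\int\partial_t\mathbf{P}f\,\psi$, not the microscopic part. You say this is ``absorbed by $\Vert\partial_t f\Vert_{L^2_{x,v}}$ from the previous step,'' but the previous step only gives $\Vert\partial_t f\Vert_{L^\infty_T L^2_{x,v}}$; after time integration you pick up a factor of $T$ and the estimate does not close globally. If instead you try to eliminate $\partial_t\mathbf{b}$ via the momentum conservation law, the $b_3$ equation contains $\partial_{x_3}a$ with no gain in the horizontal frequency, and since you have (correctly) given up on $a$-dissipation, this term is uncontrolled in $L^2_T$.

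The paper's resolution (Lemma~\ref{lemma:b_c}) is to choose test functions $\psi_b,\psi_c$ that are \emph{orthogonal} to $\ker\mathcal{L}$: Burnett-type combinations built from $(v_iv_j-\tfrac{|v|^2}{3}\delta_{ij})\sqrt{\mu}$ and $v_i(|v|^2-5)\sqrt{\mu}$, paired with potentials $\phi$ solving Dirichlet (not Neumann) problems $-\Delta\phi=b_i$ or $c$ on $\mathbb{R}\times(-1,1)$ with $\phi|_{x_3=\pm1}=0$. Orthogonality forces $\int\partial_t f\,\psi=\int\partial_t(\mathbf{I}-\mathbf{P})f\,\psi$, which is bounded by the \emph{dissipation} $\Vert\partial_t(\mathbf{I}-\mathbf{P})f\Vert_{L^2_{T,x,v}}$ coming from the $\partial_t f$ energy estimate, and this is uniform in $T$. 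The Dirichlet condition is what makes the Poincar\'e inequality in $x_3$ available, yielding the needed $H^2_x$ bound $\Vert\phi\Vert_{H^2}\lesssim\Vert b_i\Vert_{L^2}$ despite the unbounded $x_1$-direction. Finally, the reason no $a$-dissipation is needed is not the failure of Poincar\'e along $x_1$ but rather that $\Gamma(a\sqrt{\mu},a\sqrt{\mu})=a^2\mu^{-1/2}Q(\mu,\mu)=0$, so only $\mathbf{b},c$ survive in $\Gamma(\mathbf{P}f,\mathbf{P}f)$.
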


\begin{remark}
We employ the $L^2_{x,v}-L^\infty_{x,v}$ argument in the physical space to both $f$ and $\p_t f$ for proving Theorem \ref{thm:2d}. In other words, our proof does not rely on the Fourier transform or any spatial regularity, or any Sobolev embedding. Thus this result also holds true for the three-dimensional problem when the tangent variable is two-dimension.     
\end{remark}

\begin{remark}
Based on the dissipation estimates on $(\mathbf{I}-\mathbf{P})f$ and its time derivative in \eqref{thm:2d:e2}, it follows that 
$$
\Vert (\mathbf{I}-\mathbf{P})f(t)\Vert_{L^2_{x,v}}\to 0,
$$ 
as $t\to \infty$.  However, Theorem \ref{thm:2d} does not establish any decay rate of the solution or even give the large time behavior of the macroscopic part $\mathbf{P} f$. It remains an open problem to prove that the Boltzmann equation in the two-dimensional infinite layer shares the same decay rate as the one-dimensional heat equation. This was noted by Kagei in \cite{kagei2007asymptotic, kagei2008large, kagei2007resolvent} for the compressible Navier-Stokes equations.
\end{remark}

\begin{remark}
The construction of the solution in Theorem \ref{thm:2d} requires the time derivative estimate and the assumption on $\p_t f_0$ in \eqref{f_initial_2d}. This differs from Theorem \ref{thm:l1k_lpk}, where the solution construction does not depend on any assumptions regarding the time derivative. The time-derivative estimate in Theorem \ref{thm:bc_refined} is solely utilized to obtain the refined estimate \eqref{b3_c_refined}.
\end{remark}

To the best of our knowledge, although time decay of solutions is left unknown, Theorem \ref{thm:2d} provides the first result on the global existence of a class of $L^2_{x,v}-L^\infty_{x,v}$ solutions for both two and three dimensional Boltzmann equation with unbounded boundaries. In the whole space $\mathbb{R}^3$, Guo used the $L^3-L^6$ Young's inequality and the Sobolev embedding $H^2(\mathbb{R}^3)\subset L^\infty(\mathbb{R}^3)$ in the $L^2_x$ framework in \cite{guo2004boltzmann}. The $H^2(\mathbb{R}^3)$ argument fails with the presence of the boundary and the $L^6$ estimate fails in the two-dimension problem. In the exterior problem with the domain as the exterior of a bounded domain $\Omega$ in $\R^3$,    
\cite{jung2023diffusive} proposed an $L^2-L^6-L^\infty$ argument to overcome the difficulty from the lack of Poincar\'e inequality. Similar to \cite{jang2021incompressible} and \cite{cao2023passage}, the derivation of the $L^6$ estimate in the unbounded domain heavily relies on the compactness of the boundary and makes use of the Sobolev embedding $W^{2,\frac{6}{5}}(\bar{\O}^c) \subset H^1(\bar{\O}^c) \subset L^6(\bar{\O}^c)$ in case of three dimensions. Such argument fails when the boundary is non-compact or when the problem becomes of two dimensions.

\subsection{Proof strategy}\label{sec:proof_strategy}

We sketch key points in the proof of the main results. For Theorem \ref{thm:l1k_lpk}, 
we begin with a basic $L^1_k L^\infty_T L^2_{x_3,v}$ energy estimate to the equation \eqref{f_eqn}. To control the pure macroscopic components $\hat{\Gamma}(\mathbf{P}\hat{f},\mathbf{P}\hat{f})$ in the nonlinear term, we need a time-dissipation estimate in $L^1_k L^2_T L^\infty_{x_3}L^2_\nu$. Motivated by \cite{kagei2007asymptotic,kagei2007resolvent,kagei2008large}, the macroscopic fluid part should behave as solutions to the two-dimensional heat equations. To control $L^2_T$ dissipation estimate for $\mathbf{P}\hat{f}$, we seek for the $(1+t)^{\sigma/2}$ time weighted $L^1_k L^\infty_T L^2_{x_3,v}$ estimate with $\sigma>1$, and this then causes us to further make the $L^p_k L^\infty_T L^2_{x_3,v}$ estimates with $2<p\leq \infty$.  The equation of $(1+t)^{\sigma/2}\hat{f}$ generates an extra term $(1+t)^{\sigma-1}\hat{f}$ to be controlled. In the low-frequency regime $|k|<1$, we apply the $L^1_k\cap L^p_k$ interpolation argument in \cite{duan_SIMA,kawashima2004lp}. In the high-frequency regime $|k|\geq 1$, we need to derive a time-weighted dissipation macroscopic estimate. 

Since the domain is bounded in the physical variable $x_3$ and we have imposed the diffuse boundary condition, we employ the test function method \eqref{weak_formulation} proposed in \cite{EGKM,EGKM2} for the macroscopic estimate with treating $k$ as extra variables. In this method, we crucially construct the test functions using an extra frequency weight $|k|^2/(1+|k|^2)$. For instance,
to estimate $\hat{a}$, we choose a test function as $\psi_a =   \sqrt{\mu} (|v|^2-10) (-i\bar{v}\cdot k  +  v_3 \p_{x_3}  )\phi_a$ with $\phi_a$ satisfying the elliptic boundary-value problem
\begin{align*}
\begin{cases}
    & \dis (|k|^2 - \p_{x_3}^2)\phi_a(k,x_3) = \bar{\hat{a}}(k,x_3) \frac{|k|^2}{1+|k|^2} , \ x_3\in (-1,1), \\
    &\dis    \p_{x_3} \phi_a(k, \pm 1) = 0.   
\end{cases}
\end{align*}
This leads to a time-weighted macroscopic dissipation estimate with weight in $|k|/\sqrt{1+|k|^2}$, which provides the desired $L^1_k L^\infty_TL^2_{x_3,v}$ control in the non-zero frequency regime. 

The test function method provides $L^2_{x_3,v}$ estimates. To obtain the $L^\infty_{x_3}$ control from the nonlinear operator, we apply the $L^\infty_{x_3,v}$ bootstrap argument, with treating $k$ as extra parameters. We apply method of characteristics with repeated interaction in one dimensional physical variable $x_3\in (-1,1)$ for diffuse boundary and thus obtain the $(1+t)^{\sigma/2}w(v)$ weighted $L^1_k L^\infty_{T,x_3,v}$ estimate. 

As for Theorem \ref{thm:bc_refined}, 
since Poincar\'e inequality holds true, we can construct test functions for $\hat{b}_1,\hat{b}_2$ without involving weights in $k$, see \eqref{phi_b}. We can achieve $H^2_{x_3}$ estimate for these test functions by applying the Poincar\'e inequality. In the estimate of $\hat{b}_1,\hat{b}_2$, there are extra $k$ factors to $\hat{a}$ in the conservation law \eqref{conservation_momentum}. These additional factors ensure control of $\hat{b}_1,\hat{b}_2$ in the low-frequency regimes, while the control in high-frequency regime has been already established. In contrast, the conservation law for $\hat{b}_3$ \eqref{momentum_conservation} is different, as the term $\p_{x_3} \hat{a}$ does not provide a gain in $|k|$. Additional weight in $k$ needs to be introduced to control this $\hat{a}$ factor. We have similar issue for $\hat{c}$ since the conservation law \eqref{energy_conservation} contains $\p_{x_3}\hat{b}_3$ without extra $k$. This leads to the refined estimates for $\hat{b}_3,\hat{c}$ in \eqref{b3_refined} and \eqref{c_refined}. 

To completely remove the degenerate $k$ factor, we observe that the difficulty originates from the extra $\hat{a}$ from the conservation law. Then we use a variant test function method \eqref{weak_formulation_2}, where time derivative only acts on $\p_t f$. With a proper choice of the test function $\psi$ orthogonal to $\ker \mathcal{L}$ for $\hat{b}_3,\hat{c}$, it suffices to obtain the dissipation estimate for $\p_t (\mathbf{I}-\mathbf{P})\hat{f}$. This estimate can be done using the same spirit of Theorem \ref{thm:l1k_lpk}, with an additional assumption on the initial condition $\p_t \hat{f}_0$.

For Theorem \ref{thm:2d} regarding the two-dimensional infinite layer problem, 
the method of treating Theorem \ref{thm:l1k_lpk} via the Fourier transform is no longer applicable as we explained before. Instead, we begin with the basic $L^\infty_T L^2_{x,v}$ energy estimate to \eqref{f_eqn_2d}. In the estimate of the nonlinear operator, we observe that only $\mathbf{b}$ and $c$ remain in the nonlinear term $\Gamma(\mathbf{P}f,\mathbf{P}f)$ when the pure macroscopic component is involved. In fact, with the help of Poincar\'e inequality, we can employ the similar test function method to obtain the $\mathbf{b}$ and $c$ dissipation estimates. Similar to the issue mentioned in the previous paragraph, the conservation law involves $a$. Therefore, we avoid the conservation law by using the weak formulation \eqref{weak_formula} and utilizing the estimate to $\p_t (\mathbf{I}-\mathbf{P})f$, namely, we obtain the dissipation estimate:
 \begin{align*}
    &    \Vert c\Vert_{L^2_{x}}^2+  \Vert \mathbf{b}\Vert_{L^2_{x}}^2 \lesssim \Vert (\mathbf{I}-\mathbf{P})f\Vert_{L^2_{x,v}}^2 + |(I-P_\gamma)f|_{L^2_{\gamma_+}}^2 + \Vert \nu^{-1/2}\Gamma(f,f)\Vert_{L^2_{x,v}}^2 + \Vert \p_t (\mathbf{I}-\mathbf{P})f\Vert_{L^2_{x,v}}^2.
\end{align*}
We close the nonlinear estimate by combining estimates of both $f$ and $\p_t f$ in $L^2_{x,v}\cap L^\infty_{x,v}$.

\subsection{Outline} In Section \ref{sec:prelim}, we list several properties of the linear and nonlinear collision operator. In Section \ref{sec:energy}, we employ the weak formulation with the frequency $k$ to obtain the crucial macroscopic dissipation estimate and the energy estimate. In Section \ref{sec:time_decay}, we derive the time decay property of the energy estimate and apply the $L^2_{x_3,v}-L^\infty_{x_3,v}$ argument in the one-dimensional physical space to derive the time-weighted $L^1_k L^\infty_{T,x_3,v}$ estimate. This estimate is then used to control the nonlinear operator within the energy estimate, leading to the conclusion of Theorem \ref{thm:l1k_lpk}. In Section \ref{sec:time_derivative}, we derive the refined macroscopic dissipation estimate for $\hat{\mathbf{b}},\hat{c}$ by leveraging the control of the $\p_t f$ and conclude Theorem \ref{thm:bc_refined}. In Section \ref{sec:2d}, we consider the two-dimensional problem and employ $L^2_{x,v}-L^\infty_{x,v}$ argument in the physical space, along with time-derivative estimates, to conclude Theorem \ref{thm:2d}.

\subsection{Notation}\label{sec:notation}

We use general norms:
\begin{align*}
    &  \Vert f\Vert_{L^2_\nu} := \Vert \nu^{1/2}f(v)\Vert_{L^2_v}=\Big(\int_{\R^3}\nu(v)|f(v)|^2\dd v\Big)^{1/2}, \\
    & \Vert f\Vert_{L^2_T} := \Big(\int_0^T |f(t)|^2 \dd t \Big)^{1/2}, \\ 
    & \Vert f\Vert_{L^\infty_T} := \sup_{0\leq t\leq T}|f(t)|. 
    \end{align*}
Moreover, $f \lesssim g$  means that there exists $C>1$ such that $f\leq C g$, and $f\leq o(1)g$ and $f \lesssim o(1)g$ both mean that there exists $0<\delta\ll 1$ such that $f\leq \delta g$. 

In Theorem \ref{thm:l1k_lpk} and Theorem \ref{thm:bc_refined} we use norms:
\begin{align*}
    & | \hat{f}|_{L^2_{\gamma_+}} := \Big(\int_{v_3>0} |f(k,1,v)|^2 |v_3| \dd v +  \int_{v_3<0} |f(k,-1,v)|^2 |v_3| \dd v\Big)^{1/2}, \\
    &  \Vert \hat{f}\Vert_{L^1_k L^\infty_{T,x_3,v}}:=      \int_{\mathbb{R}^2}  \sup_{0\leq t\leq T,x_3\in (-1,1),v\in \mathbb{R}^3} |\hat{f}(t,k,x_3,v)|  \dd k,          \\
    &  \Vert \hat{f} \Vert_{L^1_k L^2_{T,x_3,v}} := \int_{\mathbb{R}^2} \Big(\int_{0}^T \int_{-1}^1 \int_{\mathbb{R}^3} |\hat{f} (t,k,x_3,v)|^2 \dd v \dd x_3 \dd t \Big)^{1/2}  \dd k , \\
    &    \Vert \hat{f} \Vert_{L^1_k L^\infty_T L^2_{x_3,v}}:= \int_{\mathbb{R}^2}  \sup_{0\leq t \leq T} \Big(\int_{-1}^1 \int_{\mathbb{R}^3} |\hat{f}(t,k,x_3,v)|^2 \dd v \dd x_3 \Big)^{1/2}   \dd k,                  \\
    & |\hat{f}|_{L^1_k L^2_{T,\gamma_+}} :=  \int_{\mathbb{R}^2}  \Big( \int_0^T \int_{v_3>0} |\hat{f}(t,k,1,v)|^2 |v_3| \dd v \dd t +  \int_0^T \int_{v_3<0} |\hat{f}(t,k,-1,v)|^2 |v_3| \dd v \dd t \Big)^{1/2}   \dd k, 
\end{align*}
and 
\begin{align*}
    &   \Vert \hat{f}\Vert_{L^p_k L^\infty_T L^2_{x_3,v}}:= \Big(\int_{\mathbb{R}^2}  \sup_{0\leq t \leq T} \Big(\int_{-1}^1 \int_{\mathbb{R}^3} |\hat{f}(t,k,x_3,v)|^2 \dd v \dd x_3 \Big)^{p/2}   \dd k \Big)^{1/p}, \\
    & \Vert \hat{f}\Vert_{L^p_k L^2_{x_3,v}} := \Big(\int_{\mathbb{R}^2} \Big( \int_{-1}^1 \int_{\mathbb{R}^3} |\hat{f} (k,x_3,v)|^2 \dd v \dd x_3  \Big)^{p/2}  \dd k\Big)^{1/p},
\end{align*}
with $1\leq p<\infty$, and for $p=\infty$, the norms of $L^\infty_k L^\infty_T L^2_{x_3,v}$ and $L^\infty_k L^2_{x_3,v}$ are similarly defined in the standard way. 

In Theorem \ref{thm:2d} we use norms:
\begin{align*}
    &      | f|_{L^2_{\gamma_+}} := \Big(\int_{\mathbb{R}}\int_{v_3>0} |f(x_1,1,v)|^2 |v_3| \dd v \dd x_1 +  \int_{\mathbb{R}}\int_{v_3<0} |f(x_1,-1,v)|^2 |v_3| \dd v \dd x_1\Big)^{1/2}, \\
    &  \Vert f\Vert_{L^\infty_{T}L^2_{x,v}}:= \sup_{0\leq t\leq T}\Big(\int_{\O} \int_{\mathbb{R}^3} |f(t,x,v)|^2 \dd v \dd x \Big)^{1/2}, \\
    & \Vert f\Vert_{L^\infty_{T,x,v}}: = \sup_{(t,x,v)\in [0,T]\times \O\times \mathbb{R}^3} |f(t,x,v)|, \\
    & \Vert f\Vert_{L^2_{T,x,v}} := \Big(\int_{0}^T \int_{\O}\int_{\mathbb{R}^3} |f(t,x,v)|^2 \dd v \dd x \dd t \Big)^{1/2}, \\
    & |f|_{L^2_{T,\gamma_+}}:=    \Big(\int_0^T \int_{\mathbb{R}} \int_{v_3>0}|f(t,x_1,1,v)|^2|v_3| \dd v \dd x_1 \dd t + \int_0^T \int_{\mathbb{R}}\int_{v_3<0}|f(t,x_1,-1,v)|^2 |v_3| \dd v \dd x_1 \dd t \Big)^{1/2}   .
\end{align*}

\section{Preliminary}\label{sec:prelim}

In this section, we give basic estimates on the linearized collision operator $\mathcal{L}$ and nonlinear collision operator $\Gamma(\cdot,\cdot)$ as in \eqref{def.L} and \eqref{def.Ga}.

First of all, for $\mathcal{L}$, we have the following two lemmas.

\begin{lemma}[\cite{R}]
It holds that $\mathcal{L}=\nu(v)-K$, where 
\begin{equation*}
\nu(v)=\int_{\mathbb{R}^3}\int_{\mathbb{S}^2}|(v-u)\cdot \omega|\mu(u)\,\d\omega\d u,
\end{equation*} 
and
\begin{equation*}
Kf(v)=\int_{\mathbb{R}^3}\int_{\mathbb{S}^2}B(v-u,\omega)[\sqrt{\mu(v)\mu(u)}f(u)-\sqrt{\mu(u)\mu(u')}f(v')-\sqrt{\mu(u)\mu(v')}f(u')]\,\d\omega\d u.
\end{equation*}
Here, the collision frequency $\nu(v)$ satisfies
\begin{equation}\label{nu_bdd}
\nu(v) \geq \nu_0 \sqrt{|v|^2+1} \geq \nu_0
\end{equation}
for a positive constant $\nu_0>0$. The integral operator $K$ is given by
\[
Kf(x,v)=\int_{\mathbb{R}^3}\mathbf{k}(v,u)f(x,u)\,\dd u,
\]
with the integral kernel $\mathbf{k}(v,u)$ satisfying
\Be\notag
 |\mathbf{k}  (v,u)| \lesssim \mathbf{k}_\varrho (v,u), \   \ \mathbf{k}_\varrho (v,u) := e^{- \varrho |v-u|^2}/ |v-u|,
\Ee
for a constant $\varrho>0$.
\end{lemma}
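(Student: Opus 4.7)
The plan is to derive the decomposition directly from the definition $\mathcal{L}f = -\mu^{-1/2}[Q(\mu,\sqrt{\mu}f)+Q(\sqrt{\mu}f,\mu)]$ and then extract Grad's kernel estimate using the classical Carleman representation. First I would expand each of the two $Q$'s according to its integral definition, producing a gain-loss pair in each. Collecting the two loss terms (both proportional to $f(v)$) yields the prefactor $\mu^{-1/2}(v)\int\int|(v-u)\cdot\omega|\,[\mu(u)\sqrt{\mu(v)}+\sqrt{\mu(u)}\mu(v)]\,\mathrm{d}\omega\,\mathrm{d} u$, which after cancellation of $\sqrt{\mu(v)}$ coincides with $\nu(v)f(v)$ up to a remainder of the form $\int\int|(v-u)\cdot\omega|\sqrt{\mu(v)\mu(u)}f(u)\mathrm{d}\omega\mathrm{d} u$ that belongs to $Kf$. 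For the remaining gain terms, the identity $\mu(u)\mu(v)=\mu(u')\mu(v')$ (conservation of energy under the collision) lets me rewrite the weights $\mu^{-1/2}(v)\mu(u')\sqrt{\mu(v')}$ and $\mu^{-1/2}(v)\sqrt{\mu(u')}\mu(v')$ as the symmetric square roots $\sqrt{\mu(u)\mu(u')}$ and $\sqrt{\mu(u)\mu(v')}$, respectively, recovering exactly the form of $K$ stated in the lemma.

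The lower bound $\nu(v)\geq\nu_{0}\sqrt{|v|^{2}+1}$ is then a short direct computation: performing the $\omega$-integral produces a multiple of $\int_{\R^{3}}|v-u|\mu(u)\,\mathrm{d} u$, and splitting the $u$-integration into the regions $|u|\leq |v|/2$ and $|u|>|v|/2$ shows that this quantity is uniformly bounded below and grows linearly in $|v|$ for large $|v|$, giving the claimed lower bound.

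The main technical step is representing the two gain terms as $\int\mathbf{k}(v,u)f(u)\,\mathrm{d} u$ with Gaussian kernel bound. For the $f(u')$-term I would change variables from $(u,\omega)\in\R^{3}\times\S^{2}$ to new variables $(u',\eta)$ with $\eta$ ranging over the hyperplane orthogonal to $u'-v$ (after renaming $u'\mapsto u$), exploiting the collision relations $v'=v+[(u-v)\cdot\omega]\omega$ and $u'=u-[(u-v)\cdot\omega]\omega$; the Jacobian of this change produces a factor proportional to $|v-u|^{-2}$. For the $f(v')$-term the analogous substitution sends $v'\mapsto u$ and again yields an integration over a two-dimensional hyperplane with a $|v-u|^{-2}$ Jacobian factor. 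This is the bookkeeping-heavy step and the place where the argument is most delicate.

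To finish, I would bound each resulting integral. Using $|u|^{2}+|v|^{2}=|u'|^{2}+|v'|^{2}$ and $u+v=u'+v'$, the Gaussian weight $\sqrt{\mu(u)\mu(v')}$ or $\sqrt{\mu(u)\mu(u')}$ can be rewritten in terms of the new variables as an exponential of a non-negative quadratic form that separates into a piece dominating $\varrho'|v-u|^{2}$ and a piece that is Gaussian in the two-dimensional hyperplane variable $\eta$. Carrying out the Gaussian integral over $\eta$ produces a factor of $|v-u|$, and combining it with the $|v-u|^{-2}$ Jacobian yields the desired bound $|\mathbf{k}(v,u)|\lesssim e^{-\varrho|v-u|^{2}}/|v-u|$ for some $\varrho>0$ smaller than the initial exponent. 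The hardest point is organizing the quadratic forms after the change of variables so that the decaying Gaussian factor and the correct power of $|v-u|$ emerge simultaneously; this is a standard but intricate computation going back to Grad.
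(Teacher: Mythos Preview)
The paper does not prove this lemma at all; it simply cites it from \cite{R} (Glassey's monograph) and states the result without argument. Your sketch is the classical derivation going back to Grad, and it is correct in outline: the splitting of $\mathcal{L}$ into $\nu-K$ follows by expanding the bilinear form and using $\mu(u)\mu(v)=\mu(u')\mu(v')$, the linear growth of $\nu$ is a direct estimate, and the kernel bound comes from the Carleman-type change of variables that trades the $(u,\omega)$ integration for an integration over a post-collisional velocity and a hyperplane direction. Since the paper treats this as a black-box citation, there is no ``paper's own proof'' to compare against; your proposal is essentially the proof one finds in the cited reference.
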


\begin{lemma}\label{lemma:k_theta}
Let $0\leq \theta < \frac{1}{4}$, and $\mathbf{k}_\theta(v,u) := \mathbf{k}(v,u) \frac{e^{\theta |v|^2}}{e^{\theta |u|^2}}$, then there exists $C_\theta > 0$ such that
\begin{equation}\label{k_theta}
\int_{\mathbb{R}^3}  \mathbf{k}(v,u) \frac{e^{\theta |v|^2}}{e^{\theta |u|^2}}  \,\dd u  \leq \frac{C_\theta}{1+|v|}.
\end{equation}
Moreover, for $N\gg 1$, we have
\begin{equation}\label{k_N_upper_bdd}
\mathbf{k}_\theta(v,u) \mathbf{1}_{|v-u|> \frac{1}{N}} \leq C_N,
\end{equation}
and
\begin{equation}\label{K_N_small}
\int_{|u|>N \text{ or } |v-u|\leq \frac{1}{N}} \mathbf{k}_\theta(v,u) \,\dd u \lesssim \frac{1}{N} \leq o(1).
\end{equation}

\end{lemma}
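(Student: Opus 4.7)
The three assertions share the same core ingredient: the weight ratio $e^{\theta(|v|^2-|u|^2)}$ can be absorbed into the Gaussian factor of $\mathbf{k}(v,u)$ as long as $\theta<\frac{1}{4}$. My plan is to first upgrade the pointwise bound recorded in the preceding lemma to the sharpened Grad-type estimate
\[
\mathbf{k}(v,u)\lesssim \frac{1}{|v-u|(1+|v|+|u|)^{1/2}}\exp\!\left(-c|v-u|^2-c\frac{(|v|^2-|u|^2)^2}{|v-u|^2}\right),
\]
with $c>0$, which is classical for the hard-sphere kernel. Inserting this into $\mathbf{k}_\theta$ and expanding $|v|^2-|u|^2=-2v\cdot(u-v)-|u-v|^2$, one verifies by completion of the square in the variable $A:=|v|^2-|u|^2$ (with $L:=|v-u|^2$ held fixed) that the combined exponent $-cL-cA^2/L+\theta A$ is bounded above by $-(c-\theta^2/(4c))L$ up to a nonpositive remainder; the coefficient is strictly positive precisely because $\theta<\frac{1}{4}$ permits the choice $c>\theta/2$.

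For \eqref{k_theta} I introduce polar coordinates centered at $v$, writing $u=v+ry$ with $r>0$ and $y\in\mathbb{S}^2$, so that $|v|^2-|u|^2=-2r|v|\cos\alpha-r^2$, where $\alpha$ is the angle between $v$ and $y$. The sharpened pointwise bound then gives an exponent of the form $-(\varrho+\theta+c)r^2-c(2|v|\cos\alpha+r)^2-2\theta r|v|\cos\alpha$. After another completion of the square in $r$ the exponent reduces to $-c_1(r-a)^2-c_2|v|^2\cos^2\alpha$ with $c_1,c_2>0$ (using once more $\theta<\frac{1}{4}$) and $a=O(|v||\cos\alpha|)$. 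The radial integral (with volume element $r^2\,\dd r$ against the $|v-u|^{-1}=r^{-1}$ singularity) contributes $O(|v||\cos\alpha|+1)$, and the subsequent $\sin\alpha\,\dd\alpha$ integration against $e^{-c_2|v|^2\cos^2\alpha}$ produces a Gaussian-in-$|v|$ factor of order $(1+|v|)^{-1}$; combined with $(1+|v|+|u|)^{-1/2}$ this yields the claimed decay.

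Estimate \eqref{k_N_upper_bdd} is then immediate: on $\{|v-u|>1/N\}$ the singular factor is bounded by $N$, and the absorbed Gaussian $e^{-c|v-u|^2-c(|v|^2-|u|^2)^2/|v-u|^2+\theta(|v|^2-|u|^2)}$ is uniformly bounded in $(v,u)$ by the completion-of-the-square above, giving an explicit constant $C_N$.

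For \eqref{K_N_small} I split the integration region. On $\{|v-u|\leq 1/N\}$ the exponential factors are bounded and the local integrability of $|y|^{-1}$ in $\mathbb{R}^3$ yields $\int_{|y|\leq 1/N}|y|^{-1}\dd y\lesssim N^{-2}\lesssim N^{-1}$. On $\{|u|>N\}$ I split further based on $|v|$: if $|v|\leq N/2$ then $|v-u|\geq N/2$ and the Gaussian provides exponential smallness in $N^2$, which is stronger than $1/N$; if $|v|>N/2$ then \eqref{k_theta} already gives $\int\mathbf{k}_\theta(v,u)\dd u\lesssim (1+|v|)^{-1}\lesssim 1/N$. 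The main obstacle throughout is establishing the sharpened pointwise bound on $\mathbf{k}(v,u)$, which is not recorded in the preceding lemma but is a classical (and somewhat delicate) computation going back to Grad; once it is in hand, everything else is a direct consequence of polar coordinates and completion of the square.
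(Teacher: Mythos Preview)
Your proposal is correct and follows essentially the same route as the paper: both rest on the classical Grad-type pointwise bound for $\mathbf{k}$ (the paper records it as $\mathbf{k}_\theta(v,u)\leq[\frac{1}{|v-u|}+|v-u|]e^{-\varepsilon[|v-u|^2+|v\cdot(v-u)|]}$, citing Guo~[G]), absorb the weight $e^{\theta(|v|^2-|u|^2)}$ into the Gaussian via the condition $\theta<\frac14$, and then treat \eqref{K_N_small} by exactly the same splitting of $\{|u|>N\}$ into $|v|>N/2$ and $|v|\le N/2$. The only difference is presentational---you spell out the polar-coordinate and completion-of-the-square computations that the paper delegates to the reference.
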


\begin{proof}
The proof mostly follows from Lemma 3 in \cite{G}, where for $0\leq \theta < \frac{1}{4}$, we can find $\e = \e(\theta)$ such that
\begin{align}
    \mathbf{k}_\theta(v,u) \leq \big[\frac{1}{|v-u|}+ |v-u| \big]e^{-\e \big[|v-u|^2 + |v\cdot (v-u)| \big]}. \label{k_theta_bdd}
\end{align}    
Thus \eqref{k_theta} follows by the factor $e^{-\e|v\cdot (v-u)|}$.

Clearly, with the exponential decay in $|v-u|$, we conclude \eqref{k_N_upper_bdd}.

For \eqref{K_N_small}, directly applying \eqref{k_theta_bdd} we have
\begin{align*}
    &\int_{|v-u|\leq \frac{1}{N}} \mathbf{k}_\theta(v,u) \,\dd u \lesssim o(1)   .
\end{align*}
When $|u|>N$, we split the cases into $|v|>\frac{N}{2}$ and $|v|\leq \frac{N}{2}$. In the first case, \eqref{K_N_small} follows by applying \eqref{k_theta}. For the other case, we have $|v-u|>\frac{N}{2}$, then \eqref{K_N_small} follows from \eqref{k_theta_bdd}.
\end{proof}

The estimate for the nonlinear operator is given by the following lemma.

\begin{lemma}\label{lemma:gamma_est}
For $1\leq p\leq \infty$, we have the following estimates to the nonlinear operator $\hat{\Gamma}(\hat{f},\hat{g})$:
\begin{align}
    &   \Big| \int_{\mathbb{R}^3} \hat{\Gamma}(\hat{f},\hat{g}) \bar{\hat{h}}(k) \dd v \Big| \lesssim \int_{\mathbb{R}^2} \Vert \hat{f}(k-\ell)\Vert_{L^2_v} \Vert \hat{g}(\ell)\Vert_{L^2_\nu} \Vert (\mathbf{I}-\mathbf{P})\hat{h}\Vert_{L^2_\nu}  \dd \ell, 
    \label{gamma_product}
\end{align}
\begin{align}
    & \Big\Vert \Big|\int_0^T \int_{-1}^1 \int_{\mathbb{R}^3} \hat{\Gamma}(\hat{f},\hat{g})\bar{\hat{h}}(k) \dd v \dd x_3 \dd t \Big|^{1/2} \Big\Vert_{L^p_k} \lesssim o(1) \Vert (\mathbf{I}-\mathbf{P}) \hat{h}\Vert_{L^p_k L^2_{T,x_3,\nu}} + \Vert \hat{f} \Vert_{L^p_k L^\infty_T L^2_{x_3,v}} \Vert \hat{g}\Vert_{L^1_k L^2_T L^\infty_{x_3} L^2_\nu} \notag \\
    &\lesssim o(1) \Vert (\mathbf{I}-\mathbf{P}) \hat{h}\Vert_{L^p_k L^2_{T,x_3,\nu}} + \Vert \hat{f} \Vert_{L^p_k L^\infty_T L^2_{x_3,v}} \Vert w\hat{g}\Vert_{L^1_k L^2_T L^\infty_{x_3,v}}, \label{gamma_est}
\end{align}
\begin{align}
    &    \Big\Vert \Big|\int_0^T \int_{-1}^1 \int_{\mathbb{R}^3} (1+t)^\sigma \hat{\Gamma}(\hat{f},\hat{g})\bar{\hat{h}}(k) \dd v \dd x_3 \dd t \Big|^{1/2} \Big\Vert_{L^1_k}\notag \\
    &\lesssim o(1) \Vert (1+t)^{\sigma/2}(\mathbf{I}-\mathbf{P}) \hat{h}\Vert_{L^1_k L^2_{T,x_3,\nu}} + \Vert (1+t)^{\sigma/2} \hat{f}\Vert_{L^1_k L^\infty_T L^2_{x_3,v}} \Vert w\hat{g}\Vert_{L^1_k L^2_T L^\infty_{x_3,v}}, 
    \label{gamma_est_time}
\end{align}
and
\begin{align}
    &  \Vert \nu^{-1}(1+t)^{\sigma/2}w\hat{\Gamma}(\hat{f},\hat{g}) \Vert_{L^1_k L^\infty_{T,x_3,v}} \lesssim \Vert (1+t)^{\sigma/2} w \hat{f}\Vert_{L^1_k L^\infty_{T,x_3,v}} \Vert (1+t)^{\sigma/2} w \hat{g}\Vert_{L^1_k L^\infty_{T,x_3,v}}, 
    \label{gamma_est_time_linfty}
\end{align}
where all estimates are independent of $T>0$.
\end{lemma}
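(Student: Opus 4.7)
All four estimates stem from combining the classical bilinear Boltzmann estimate with the horizontal Fourier convolution structure visible in the explicit formula \eqref{gamma_hat1}.

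For \eqref{gamma_product}, I would first use the fact that $\Gamma(f,g)\in (\ker\mathcal{L})^\perp$ in $L^2_v$, so that the left-hand side equals $\int_{\R^3} \hat{\Gamma}\,\overline{(\mathbf{I}-\mathbf{P})\hat h}\,\dd v$. Applied inside the $\ell$-integral in \eqref{gamma_hat1}, the standard Boltzmann bilinear bound
$$\Bigl|\int_{\R^3}\Gamma(F,G)H\,\dd v\Bigr| \lesssim \|F\|_{L^2_v}\|G\|_{L^2_\nu}\|H\|_{L^2_\nu}$$
then yields \eqref{gamma_product} directly, with the factors $\hat f(k-\ell)$, $\hat g(\ell)$, $(\mathbf{I}-\mathbf{P})\hat h(k)$ playing the roles of $F,G,H$.

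For \eqref{gamma_est}, starting from \eqref{gamma_product} I would integrate in $x_3\in(-1,1)$ and $t\in[0,T]$, distributing the three factors by Hölder so that each $\ell$-integrand is controlled by
$$\|\hat f(k-\ell)\|_{L^\infty_T L^2_{x_3,v}}\,\|\hat g(\ell)\|_{L^2_T L^\infty_{x_3}L^2_\nu}\,\|(\mathbf{I}-\mathbf{P})\hat h(k)\|_{L^2_{T,x_3,\nu}}.$$
Taking square roots and applying the elementary inequality $\sqrt{ab}\leq \epsilon a + \epsilon^{-1}b$ with small $\epsilon$ absorbs the $\hat h$ factor with coefficient $o(1)$, while the remaining convolution $\int_\ell \|\hat f(k-\ell)\|\,\|\hat g(\ell)\|\,\dd\ell$ is estimated in $L^p_k$ by Young's inequality $L^p * L^1 \hookrightarrow L^p$, which forces $L^1_k$ on $\hat g$ and $L^p_k$ on $\hat f$. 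The second inequality in \eqref{gamma_est} follows from the pointwise domination $\|\hat g\|_{L^2_\nu}\lesssim \|w\hat g\|_{L^\infty_v}$, which is immediate from $\int_{\R^3}\nu(v)w(v)^{-2}\,\dd v<\infty$ for $w=e^{\theta|v|^2}$ with $0<\theta<1/4$. For \eqref{gamma_est_time} I would split $(1+t)^\sigma = (1+t)^{\sigma/2}\cdot(1+t)^{\sigma/2}$, attach one factor to $\hat f$ and the other to $(\mathbf{I}-\mathbf{P})\hat h$, and repeat the previous argument at $p=1$.

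For \eqref{gamma_est_time_linfty} I pass to a pointwise-in-$v$ bound. Using the elastic collision identity $|v|^2+|u|^2 = |v'|^2+|u'|^2$, the weight $w(v)=e^{\theta|v|^2}$ satisfies $w(v)\leq w(v')w(u')$, while trivially $w(v)\leq w(v)w(u)$ for the loss term in \eqref{gamma_hat1}; integrating the kernel via $\int|(v-u)\cdot\omega|\sqrt{\mu(u)}\,\dd\omega\,\dd u \lesssim \nu(v)$ then yields
$$w(v)\bigl|\hat{\Gamma}(\hat f,\hat g)(k,v)\bigr| \lesssim \nu(v)\int_{\R^2}\|w\hat f(k-\ell)\|_{L^\infty_v}\,\|w\hat g(\ell)\|_{L^\infty_v}\,\dd\ell.$$
Dividing by $\nu(v)$, using $(1+t)^{\sigma/2}\leq (1+t)^{\sigma/2}\cdot(1+t)^{\sigma/2}$ for $t\geq 0$ to distribute the time weight, taking $L^\infty_{T,x_3,v}$ and finally $L^1_k$ via Young's convolution $L^1 * L^1 \hookrightarrow L^1$ completes the proof. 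The main technical subtlety is in the exponent bookkeeping for \eqref{gamma_est}: since $\hat h$ sits outside the $\ell$-convolution while $\hat f,\hat g$ are paired through it, Young's inequality with triple $(p,1,p)$ is the only assignment that closes, forcing $\hat g$ into $L^1_k$ and $\hat f$ into $L^p_k$; swapping this assignment would fail for $p>1$.
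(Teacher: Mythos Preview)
Your proposal is correct and follows essentially the same route as the paper's proof: orthogonality of $\Gamma$ to $\ker\mathcal L$ plus the standard bilinear bound for \eqref{gamma_product}; Cauchy--Schwarz to peel off $(\mathbf I-\mathbf P)\hat h$, then Young's convolution in $k$ for \eqref{gamma_est} and \eqref{gamma_est_time}; and the pointwise weighted bound $w(v)\le w(v')w(u')$ together with $\int|(v-u)\cdot\omega|\sqrt{\mu(u)}\,\dd\omega\,\dd u\lesssim\nu(v)$ for \eqref{gamma_est_time_linfty}. The only cosmetic difference is that where you swap the $\ell$-integral out first by Fubini and then apply H\"older in $(t,x_3)$ for each fixed $\ell$, the paper instead applies Cauchy--Schwarz in $(t,x_3)$ first and then invokes Minkowski's inequality to pull the $\ell$-integral outside the $L^2_{T,x_3}$ norm; the two orderings are equivalent and lead to the same convolution structure.
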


\begin{proof}
From the definition of $\hat{\Gamma}(\hat{f},\hat{f})$ in \eqref{gamma_hat} or \eqref{gamma_hat1}, we compute that
\begin{align*}
    &  \Big| \int_{\mathbb{R}^3} \hat{\Gamma}(\hat{f},\hat{g}) \bar{\hat{h}}(k) \dd v \Big| = \Big| \int_{\mathbb{R}^3} \int_{\mathbb{R}^3\times \mathbb{S}^2} |(v-u)\cdot \omega| \sqrt{\mu(u)} (\hat{f}(v')*\hat{g}(u') - \hat{f}(v)*\hat{g}(u)) \dd u \dd \omega \bar{\hat{h}}(k) \dd v \Big| \\
    & = \Big|\int_{\mathbb{R}^3} \int_{\mathbb{R}^3\times \mathbb{S}^2} |(v-u)\cdot \omega| \sqrt{\mu(u)} \int_{\mathbb{R}^2} [\hat{f}(k-\ell,v')\hat{g}(\ell,u')-\hat{f}(k-\ell,v)\hat{g}(\ell,u)] \dd \ell \dd \omega \dd u \bar{\hat{h}}(k) \dd v \Big| \\
    & = \Big|\int_{\mathbb{R}^2} \int_{\mathbb{R}^3} \Gamma(\hat{f}(k-\ell) ,\hat{g}(\ell)) \bar{\hat{h}}(k) \dd v \dd \ell \Big| = \Big| \int_{\mathbb{R}^2} \int_{\mathbb{R}^3} \nu^{-1/2}\Gamma(\hat{f}(k-\ell),\hat{g}(\ell)) \nu^{1/2} (\mathbf{I}-\mathbf{P})\bar{\hat{h}}(k) \dd v \dd \ell \Big| \\
    & \lesssim \int_{\mathbb{R}^2} \Vert \nu^{-1/2}\Gamma(\hat{f}(k-\ell),\hat{g}(\ell)) \Vert_{L^2_v}  \Vert (\mathbf{I}-\mathbf{P})\hat{h}\Vert_{L^2_\nu}   \dd \ell    \\
    & \lesssim \int_{\mathbb{R}^2} \Vert \hat{f}(k-\ell)\Vert_{L^2_v} \Vert \hat{g}(\ell)\Vert_{L^2_\nu} \Vert (\mathbf{I}-\mathbf{P})\hat{h}\Vert_{L^2_\nu}  \dd \ell  .
\end{align*}
In the third line, we have used that $\mathbf{P}\hat{h}(k)$ is orthogonal to $\Gamma$. In the last line, we have used the standard estimate for the nonlinear operator:
\begin{align*}
    \Vert \nu^{-1/2}\Gamma(f,g)\Vert_{L^2_v}\lesssim \Vert f\Vert_{L^2_v} \Vert \nu^{1/2}g\Vert_{L^2_v}.
\end{align*}
This concludes \eqref{gamma_product}.

For proving \eqref{gamma_est}, we apply \eqref{gamma_product} to have
\begin{align}
&  \Big| \int_0^T \int_{-1}^1 \int_{\mathbb{R}^3} \hat{\Gamma}(\hat{f},\hat{g})\bar{\hat{h}}(k) \dd v \dd x_3 \dd t \Big|^{1/2} \notag \\
& \lesssim \Big(\int_0^T \int_{-1}^1 \int_{\mathbb{R}^2} \Vert \hat{f}(k-\ell)\Vert_{L^2_v} \Vert \hat{g}(\ell)\Vert_{L^2_\nu} \Vert (\mathbf{I}-\mathbf{P})\hat{h}\Vert_{L^2_\nu} \dd \ell \dd x_3 \dd t \Big)^{1/2} \notag \\
    & \lesssim \Big( \int_0^T \int_{-1}^1 \Big(\int_{\mathbb{R}^2} \Vert \hat{f}(k-\ell)\Vert_{L^2_v} \Vert \hat{g}(\ell)\Vert_{L^2_\nu} \dd \ell \Big)^2 \dd x_3 \dd t\Big)^{1/4}\Big(\int_0^T\int_{-1}^1 \Vert (\mathbf{I}-\mathbf{P})\hat{h}\Vert_{L^2_\nu}^2 \dd x_3 \dd t  \Big)^{1/4} \notag\\
    &  \lesssim o(1)\Vert (\mathbf{I}-\mathbf{P}) \hat{h}\Vert_{L^2_{T,x_3,\nu}} + \Big( \int_0^T \int_{-1}^1 \Big(\int_{\mathbb{R}^2} \Vert \hat{f}(k-\ell)\Vert_{L^2_v} \Vert \hat{g}(\ell)\Vert_{L^2_\nu} \dd \ell \Big)^2 \dd x_3 \dd t\Big)^{1/2} \notag\\
    & \lesssim o(1)\Vert (\mathbf{I}-\mathbf{P}) \hat{h}\Vert_{L^2_{T,x_3,\nu}} + \int_{\mathbb{R}^2} \Big( \int_0^T \int_{-1}^1 \Vert \hat{f}(k-\ell)\Vert^2_{L^2_v}  \Vert \hat{g}(\ell)\Vert^2_{L^2_\nu} \dd x_3 \dd t \Big)^{1/2} \dd \ell  \notag\\
    & \lesssim o(1)\Vert (\mathbf{I}-\mathbf{P}) \hat{h}\Vert_{L^2_{T,x_3,\nu}} + \int_{\mathbb{R}^2}   \Vert \hat{f}(k-\ell)\Vert_{L^\infty_T L^2_{x_3,v}} \Vert \hat{g}(\ell)\Vert_{L^2_T L^\infty_{x_3}L^2_\nu}    \dd \ell.  \label{minkovski}
\end{align}
In the second last line, we have used the Minkowski inequality.

Last, we take the $k$-integration and use the Young's convolution inequality to have
\begin{align}
&   \Big\Vert \Big|\int_0^T \int_{-1}^1 \int_{\mathbb{R}^3} \hat{\Gamma}(\hat{f},\hat{g})\bar{\hat{h}}(k) \dd v \dd x_3 \dd t \Big|^{1/2} \Big\Vert_{L^p_k} \notag \\
    &\lesssim o(1)\Vert (\mathbf{I}-\mathbf{P})\hat{h}\Vert_{L^p_k L^2_{T,x_3,\nu}} + \Big\Vert   \Vert \hat{f}(k)\Vert_{L^\infty_T L^2_{x_3,v}} *\Vert \hat{g}(k)\Vert_{L^2_T L^\infty_{x_3} L^2_\nu}  \Big\Vert_{L^p_k} \notag \\
& \lesssim o(1)\Vert (\mathbf{I}-\mathbf{P})\hat{h}\Vert_{L^p_k L^2_{T,x_3,\nu}} + \Vert \hat{f} \Vert_{L^p_k L^\infty_T L^2_{x_3,v}} \Vert \hat{g}\Vert_{L^1_k L^2_T L^\infty_{x_3} L^2_\nu}. \label{convolution}
\end{align}
This concludes the first inequality in \eqref{gamma_est}. 

The second inequality in \eqref{gamma_est} follows from the fact that $\Vert \nu^{1/2} \hat{f}\Vert_{L^2_v} \lesssim \Vert w\hat{f}\Vert_{L^\infty_v}$, where $w$ is defined in \eqref{weight_w}.

The proof of \eqref{gamma_est_time} is the same, with placing one $(1+t)^{\sigma/2}$ to $\hat{f}(k-\ell)$, and placing the other $(1+t)^{\sigma/2}$ to $\hat{h}(k)$. These two terms become $\Vert (1+t)^{\sigma/2} \hat{f}(k-\ell)\Vert_{L^\infty_T L^2_{x_3,v}}$ and $\Vert (1+t)^{\sigma/2}(\mathbf{I}-\mathbf{P})\hat{h}\Vert_{L^2_{T,x_3,\nu}}$ respectively.

In the end, we prove \eqref{gamma_est_time_linfty}. In fact, we compute that
\begin{align*}
    & |w(v)\hat{\Gamma}(\hat{f},\hat{g})| = w(v) \Big|\int_{\mathbb{R}^3}\int_{\mathbb{S}^2}|(v-u)\cdot \omega| \sqrt{\mu(u)}[\hat{f}(v')*_k\hat{g}(u') - \hat{f}(v)*_k \hat{g}(u)] \dd \omega \dd u  \Big| \\
    & \lesssim w(v) \Big| \int_{\mathbb{R}^3} |v-u|\sqrt{\mu(u)}w^{-1}(v)w^{-1}(u) \Vert w\hat{f} \Vert_{L^\infty_v}*_k \Vert w\hat{g}\Vert_{L^\infty_v} \dd u  \Big| \\
    & \lesssim \nu(v) \Vert w\hat{f} \Vert_{L^\infty_v}*_k \Vert w\hat{g}\Vert_{L^\infty_v} . 
\end{align*}
Then taking $L^\infty$ in $t$ and $x_3$ with the extra terms $\nu^{-1}$ and $(1+t)^{\sigma/2}$, we have
\begin{align*}
    & \Vert \nu^{-1}(1+t)^{\sigma/2} w\hat{\Gamma}(\hat{f},\hat{g})\Vert_{L^\infty_{T,x_3,v}} \lesssim \Vert (1+t)^{\sigma/2} w\hat{f}\Vert_{L^\infty_{T,x_3,v}}*_k\Vert (1+t)^{\sigma/2} w\hat{g}\Vert_{L^\infty_{T,x_3,v}}.
\end{align*}
Therefore, taking integration in $k$, we conclude \eqref{gamma_est_time_linfty} from the Young's convolution inequality.  
\end{proof}

\section{$L^p_k L^\infty_T L^2_{x_3,v}$ estimate and macroscopic dissipation estimate}\label{sec:energy}

In this section, we construct the following energy estimate. 
\begin{proposition}[\textbf{Energy estimate}]\label{prop:full energy}
Let $\hat{f}$ be the solution to \eqref{f_eqn}, with initial condition $f_0$ satisfying \eqref{initial_assumption}, then
\begin{align*}
    &   \Vert \hat{f}\Vert_{L^1_k L^\infty_T L^2_{x_3,v}} + \Vert (\mathbf{I}-\mathbf{P})\hat{f}\Vert_{L^1_k L^2_{T,x_3,\nu}} + |(I-P_\gamma)\hat{f}|_{L^1_k L^2_{T,\gamma_+}} + \Big\Vert \frac{|k|}{\sqrt{1+|k|^2}} (\hat{a},\hat{\mathbf{b}},\hat{c})\Big\Vert_{L^1_k L^2_{T,x_3}} \\
    &\lesssim \Vert \hat{f}_0\Vert_{L^1_k L^2_{x_3,v}} + \Vert \hat{f}\Vert_{L^1_k L^\infty_T L^2_{x_3,v}} \Vert w\hat{f}\Vert_{L^1_k L^2_T L^\infty_{x_3,v}}.
\end{align*}
\end{proposition}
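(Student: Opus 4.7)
The plan is to combine a basic $L^2_{x_3,v}$ energy identity with a $k$-dependent test-function construction for the fluid dissipation, and then integrate in $k$ against $\dd k$ (i.e.\ take the $L^1_k$ norm) at the end so that the convolution structure of $\hat{\Gamma}$ is handled by Young's inequality via Lemma \ref{lemma:gamma_est}. Throughout, I would regard $k\in \mathbb{R}^2$ as a parameter and work on the strip $x_3\in(-1,1)$.

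First, I would take the inner product of the first equation in \eqref{f_eqn} against $\bar{\hat f}$ in $L^2_{x_3,v}$ and take real parts. The transport term $i\bar v\cdot k\,\hat f$ is purely imaginary and vanishes; the $v_3\partial_{x_3}$ term produces boundary contributions at $x_3=\pm1$, which combined with the diffuse boundary condition and the identity for the boundary measure yield $\tfrac12 |(I-P_\gamma)\hat f|_{L^2_{\gamma_+}}^2$ (this is the standard computation used in $L^2$--$L^\infty$ arguments); the coercivity of $\mathcal L$ yields $\|(\mathbf I-\mathbf P)\hat f\|_{L^2_\nu}^2$. For the nonlinear side, I apply \eqref{gamma_product} after using orthogonality of $\mathbf P\hat h$ and $\hat \Gamma$. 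Integrating in $t\in[0,T]$ and taking square roots and then $L^1_k$, estimate \eqref{gamma_est} converts the nonlinear term into $\Vert \hat f\Vert_{L^1_kL^\infty_TL^2_{x_3,v}}\Vert w\hat f\Vert_{L^1_kL^2_TL^\infty_{x_3,v}}$ modulo a small multiple of the dissipation, which is absorbed.

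The main new step is the macroscopic dissipation with the frequency weight $|k|/\sqrt{1+|k|^2}$. Following the strategy outlined in Section \ref{sec:proof_strategy}, for each of $\hat a,\hat{\mathbf b},\hat c$ I would solve an elliptic boundary-value problem on $x_3\in(-1,1)$ with $k$ as a parameter, e.g.\ for $\hat a$,
\begin{equation*}
(|k|^2-\partial_{x_3}^2)\phi_a(k,x_3)=\bar{\hat a}(k,x_3)\frac{|k|^2}{1+|k|^2},\qquad \partial_{x_3}\phi_a(k,\pm 1)=0,
\end{equation*}
and set $\psi_a=\sqrt{\mu}(|v|^2-10)(-i\bar v\cdot k+v_3\partial_{x_3})\phi_a$; analogously for $\hat b_i$ and $\hat c$. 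Testing the equation for $\hat f$ against these $\psi$ and carrying out the standard integration by parts (in $x_3$ only, so only the planes $x_3=\pm 1$ contribute) produces, on the left, the combination $\int|k|^2\phi\bar{\hat a}\,\dd x_3+\int|\partial_{x_3}\phi|^2\cdot\tfrac{|k|^2}{1+|k|^2}\,\dd x_3$, which by standard elliptic regularity is equivalent to $\||k|/\sqrt{1+|k|^2}\,\hat a\|_{L^2_{x_3}}^2$ (and similarly for the other components), while the right-hand side is controlled by $\|(\mathbf I-\mathbf P)\hat f\|_{L^2_\nu}$, the boundary norm $|(I-P_\gamma)\hat f|_{L^2_{\gamma_+}}$, the nonlinear piece handled by \eqref{gamma_product}, and a time-derivative piece of the form $\partial_t\langle \hat f,\psi\rangle$ that contributes only to an $L^\infty_T$ term (hence to the left-hand side of the final estimate, after using the $H^2_{x_3}$ control of $\phi$ to bound $\|\psi\|\lesssim \|\hat a\|$ uniformly in $k$).

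Putting these ingredients together: the basic energy inequality gives $\|(\mathbf I-\mathbf P)\hat f\|_{L^2_{T,x_3,\nu}}^2 + |(I-P_\gamma)\hat f|_{L^2_{T,\gamma_+}}^2 + \|\hat f\|_{L^\infty_TL^2_{x_3,v}}^2$, while the test-function bounds give $\||k|/\sqrt{1+|k|^2}\,(\hat a,\hat{\mathbf b},\hat c)\|_{L^2_{T,x_3}}^2$, both in terms of initial data, small constants times the dissipation, and the nonlinear contribution. Taking square roots and then the $L^1_k$ norm, and applying \eqref{gamma_est} (noting that for the test-function identities the convolution bookkeeping in $k$ is identical), absorbs the $o(1)$ terms and yields the claimed inequality. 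The step I expect to be most delicate is the macroscopic test-function calculation: one must carefully match the factor $|k|^2/(1+|k|^2)$ in the right-hand side of the elliptic problem against the $|k|$ produced by the transport term $i\bar v\cdot k$ and the $\partial_{x_3}$ produced by the $v_3\partial_{x_3}$ term, and verify that the Neumann boundary condition (together with the diffuse condition on $\hat f$) makes the boundary contributions either vanish or be controllable by $|(I-P_\gamma)\hat f|_{L^2_{\gamma_+}}$, so that no uncontrolled trace remains.
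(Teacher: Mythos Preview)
Your overall architecture is right and matches the paper: basic $L^2_{x_3,v}$ energy estimate for the microscopic dissipation and boundary term, then a $k$-dependent test-function argument for the macroscopic dissipation, followed by $L^1_k$ integration and Young's inequality via Lemma~\ref{lemma:gamma_est}. The choice of $\psi_a$ and of the elliptic problem for $\phi_a$ with the weight $|k|^2/(1+|k|^2)$ are exactly what the paper does.

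There is, however, a genuine gap in your handling of the time-derivative term. You write that ``a time-derivative piece of the form $\partial_t\langle\hat f,\psi\rangle$ contributes only to an $L^\infty_T$ term''. This is only half of the story: since $\psi_a$ depends on $\phi_a$, which in turn depends on $\hat a(t,\cdot)$, the test function is time-dependent, and integrating $\int_0^T\langle\partial_t\hat f,\psi\rangle\,\dd t$ by parts in $t$ produces, in addition to the endpoint terms $\langle\hat f,\psi\rangle\big|_0^T$ (which are indeed $L^\infty_T$ controlled), a bulk term $-\int_0^T\langle\hat f,\partial_t\psi\rangle\,\dd t$ that you cannot simply bound by $L^\infty_T$ quantities. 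In the paper this is the term $\eqref{weak_formulation}_4$, and its treatment is one of the key technical points: one introduces an auxiliary potential $\Phi_a$ solving $(|k|^2-\partial_{x_3}^2)\Phi_a=-\partial_t\bar{\hat a}\,\tfrac{|k|^2}{1+|k|^2}$, then uses the local mass conservation law $\partial_t\hat a+ik\cdot(\hat b_1,\hat b_2)+\partial_{x_3}\hat b_3=0$ to replace $\partial_t\hat a$ by first-order expressions in $\hat{\mathbf b}$, so that $\||k|\Phi_a\|_{L^2_{T,x_3}}+\|\partial_{x_3}\Phi_a\|_{L^2_{T,x_3}}$ is bounded by the weighted $\hat{\mathbf b}$-dissipation and boundary terms. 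Analogous auxiliary problems $\Phi_b,\Phi_c$ (using the momentum and energy laws) are needed for the $\hat{\mathbf b}$ and $\hat c$ estimates. Because the $\hat a$ estimate then carries a full-size $\|\tfrac{|k|}{\sqrt{1+|k|^2}}\hat{\mathbf b}\|$ term on the right, while the $\hat{\mathbf b}$ and $\hat c$ estimates carry only $o(1)$ multiples of the other macroscopic norms, the three estimates must be combined with a specific choice of multipliers (the paper takes $\eqref{a_estimate_k}+2C_1\cdot\eqref{b_estimate_k}+\eqref{c_estimate_k}$) to close.

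Two smaller points you should also correct: (i) the elliptic problems for $\phi_b$ and $\phi_c$ take Dirichlet (not Neumann) boundary conditions at $x_3=\pm1$, which is what makes the boundary contributions from the conservation-law step vanish; (ii) your description of the left-hand side after testing is slightly off---the transport terms combine to $-5\int_0^T\int_{-1}^1(|k|^2-\partial_{x_3}^2)\phi_a\,\hat a\,\dd x_3\,\dd t$, which equals $5\|\tfrac{|k|}{\sqrt{1+|k|^2}}\hat a\|_{L^2_{T,x_3}}^2$ directly from the elliptic equation, without invoking ``elliptic regularity'' at that stage (the $H^2$ estimates on $\phi_a$ are used instead to control the remainder terms $E_1,E_2$ and the boundary traces).
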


The proof of Proposition \ref{prop:full energy} follows by combining the basic energy estimate to \eqref{f_eqn} in Lemma \ref{lemma: energy}, which controls $\Vert \hat{f}\Vert_{L^1_k L^\infty_T L^2_{x_3,v}}$, and a crucial macroscopic dissipation estimate in Lemma \ref{lemma:macroscopic}, which controls the macroscopic component $\Big\Vert \frac{|k|}{\sqrt{1+|k|^2}} (\hat{a},\hat{\mathbf{b}},\hat{c})\Big\Vert_{L^1_k L^2_{T,x_3}}$.

\subsection{Basic energy estimate}

\begin{lemma}\label{lemma: energy}
Let $\hat{f}$ satisfy the assumption in Proposition \ref{prop:full energy}, then it holds that
\begin{align}
    &     \Vert \hat{f}\Vert_{L^1_k L^\infty_T L^2_{x_3,v}} + \Vert (\mathbf{I}-\mathbf{P})\hat{f}\Vert_{L^1_k L^2_{T,x_3,\nu}} + |(I-P_\gamma)\hat{f}|^2_{L^1_k L^2_{T,\gamma_+}} \notag\\
    & \lesssim \Vert \hat{f}_0\Vert_{L^1_k L^2_{x_3,v}} + \Vert \hat{f}\Vert_{L^1_k L^\infty_T L^2_{x_3,v}} \Vert w\hat{f}\Vert_{L^1_k L^2_T L^\infty_{x_3,v}},   
    \label{L1k_energy}
\end{align}
and for $1\leq p\leq \infty$,
\begin{align}
    & \Vert \hat{f}\Vert_{L^p_k L^\infty_T L^2_{x_3,v}} + \Vert (\mathbf{I}-\mathbf{P})\hat{f}\Vert_{L^p_k L^2_{T,x_3,\nu}} + |(I-P_\gamma)\hat{f}|^2_{L^p_k L^2_{T,\gamma_+}} \notag\\
    & \lesssim \Vert \hat{f}_0\Vert_{L^p_k L^2_{x_3,v}} + \Vert \hat{f}\Vert_{L^p_k L^\infty_T L^2_{x_3,v}} \Vert w \hat{f}\Vert_{L^1_k L^2_T L^\infty_{x_3,v}}. \label{Lpk_energy}
\end{align}
\end{lemma}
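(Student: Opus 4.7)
The plan is to perform a pointwise-in-$k$ energy estimate on \eqref{f_eqn}, integrate in $t$, take $\sup_{t\in[0,T]}$, then take a square root and the $L^p_k$ (or $L^1_k$) norm at the very end. Nonlinear and boundary contributions are then absorbed using Lemma \ref{lemma:gamma_est} and smallness.

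First I would take the $L^2_{x_3,v}$ inner product of the first equation in \eqref{f_eqn} with $\bar{\hat f}$ and take the real part. The drift term $i\bar v\cd k\,\hat f$ pairs to a purely imaginary quantity and drops out. The $v_3\p_{x_3}\hat f$ term, after integration by parts, yields the pure boundary contribution
\[
\tfrac12\int_{\R^3} v_3\bigl(|\hat f(t,k,1,v)|^2-|\hat f(t,k,-1,v)|^2\bigr)\,\dd v.
\]
Splitting this into $v_3\gtrless 0$, substituting the diffuse-reflection condition on the incoming pieces, and using the orthogonal splitting $\hat f=P_\gamma\hat f+(I-P_\gamma)\hat f$ together with the normalization $\int_{v_3\gtrless 0}c_\mu\mu|v_3|\dd v=1$, one obtains the standard identity
\[
\tfrac12\int_{\R^3}v_3\bigl(|\hat f(1)|^2-|\hat f(-1)|^2\bigr)\dd v\;\gtrsim\;|(I-P_\gamma)\hat f(t,k)|^2_{L^2_{\gamma_+}}.
\]
The coercivity $\mathrm{Re}\langle\mathcal{L}\hat f,\hat f\rangle_{L^2_{x_3,v}}\gtrsim\|(\mathbf{I}-\mathbf{P})\hat f\|^2_{L^2_{x_3,\nu}}$ handles the linear collision term. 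Integrating in $t\in[0,T]$ and taking $\sup_{t\in[0,T]}$ of the kinetic term gives, for each fixed $k$,
\begin{align*}
&\|\hat f(k)\|^2_{L^\infty_T L^2_{x_3,v}}+|(I-P_\gamma)\hat f(k)|^2_{L^2_{T,\gamma_+}}+\|(\mathbf{I}-\mathbf{P})\hat f(k)\|^2_{L^2_{T,x_3,\nu}}\\
&\qquad\lesssim\|\hat f_0(k)\|^2_{L^2_{x_3,v}}+\Bigl|\int_0^T\!\!\int_{-1}^1\!\!\int_{\R^3}\hat\Gamma(\hat f,\hat f)\,\bar{\hat f}(k)\,\dd v\,\dd x_3\,\dd t\Bigr|.
\end{align*}

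Next I would take square roots (using $(x+y)^{1/2}\le x^{1/2}+y^{1/2}$) and then the $L^p_k$ norm. The initial-data contribution becomes $\|\hat f_0\|_{L^p_k L^2_{x_3,v}}$. The nonlinear contribution is exactly the quantity estimated in \eqref{gamma_est} of Lemma \ref{lemma:gamma_est} with $\hat g=\hat h=\hat f$, giving
\[
\lesssim o(1)\|(\mathbf{I}-\mathbf{P})\hat f\|_{L^p_k L^2_{T,x_3,\nu}}+\|\hat f\|_{L^p_k L^\infty_T L^2_{x_3,v}}\|w\hat f\|_{L^1_k L^2_T L^\infty_{x_3,v}}.
\]
The $o(1)$ piece is absorbed into the dissipation on the left-hand side by the smallness \eqref{initial_assumption}, yielding \eqref{Lpk_energy}; specializing to $p=1$ gives \eqref{L1k_energy}.

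The main technical point is the diffuse boundary calculation: one needs the orthogonality between $P_\gamma\hat f$ and $(I-P_\gamma)\hat f$ with respect to the probability measure $c_\mu\mu|v_3|\dd v$, together with Cauchy--Schwarz on the normalized half-space measure, to arrive at a clean sign with the residual $|(I-P_\gamma)\hat f|^2_{L^2_{\gamma_+}}$. Beyond that, everything is a Fourier-side repetition of the $L^2$ energy estimate of \cite{G}, except that the spatial convolution in the bilinear term \eqref{gamma_hat1} forces us to use Minkowski followed by Young's convolution inequality in $k$ — both are already packaged inside the bound \eqref{gamma_est}, so no further care is needed.
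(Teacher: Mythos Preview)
Your proposal is correct and follows essentially the same route as the paper's proof: pointwise-in-$k$ energy identity, time integration, square root, then $L^p_k$ norm with the nonlinear term handled by \eqref{gamma_est}. One small remark: the $o(1)$ in \eqref{gamma_est} comes from a Young-inequality parameter in the proof of Lemma~\ref{lemma:gamma_est}, not from the smallness assumption \eqref{initial_assumption}, so its absorption is unconditional.
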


\begin{proof}
    We multiply \eqref{f_eqn} by $\bar{\hat{f}}$, the complex conjugate of $\hat{f}$, and then take the real part: 
\begin{align*}
    &\p_t \int_{-1}^1 \int_{\mathbb{R}^3} |\hat{f}|^2 \dd v \dd x_3 + |(I-P_\gamma)\hat{f}|^2_{L^2_{\gamma_+}} + \textbf{Re}\int_{-1}^1 \int_{\mathbb{R}^3} \mathcal{L}\hat{f} \bar{\hat{f}} \dd v \dd x_3 = \textbf{Re}  \int_{-1}^1 \int_{\mathbb{R}^3} \bar{\hat{f}} \hat{\Gamma}(\hat{f},\hat{f})   \dd v \dd x_3 .
\end{align*}
Taking time integration on $[0,T]$ and taking square root we obtain
\begin{align*}
    &  \Vert \hat{f}(T)\Vert_{L^2_{x_3,v}} + \Vert (\mathbf{I}-\mathbf{P})\hat{f}\Vert_{L^2_{T,x_3,\nu}} + |(I-P_\gamma)\hat{f}|_{L^2_{T,\gamma_+}} \\
    &\lesssim \Vert \hat{f}_0 \Vert_{L^2_{x_3,v}} + \Big(\int_0^T \left|\int_{-1}^1 \int_{\mathbb{R}^3}  \hat{\Gamma}(\hat{f},\hat{f}) \bar{\hat{f}}  \dd v \dd x_3\right| \dd t \Big)^{1/2}  .
\end{align*}

Then, since $T>0$ can be arbitrary, taking $L^p_k$-norm we obtain
\begin{align*}
    &     \Vert \hat{f}\Vert_{L^p_k L^\infty_T L^2_{x_3,v}} + \Vert (\mathbf{I}-\mathbf{P})\hat{f}\Vert_{L^p_k L^2_{T,x_3,\nu}} + |(I-P_\gamma)\hat{f}|_{L^p_k L^2_{T,\gamma_+}} \\
    &   \lesssim  \Vert \hat{f}_0\Vert_{L^p_k L^2_{x_3,v}} + \Vert \hat{f}\Vert_{L^p_k L^\infty_T L^2_{x_3,v}} \Vert w\hat{f}\Vert_{L^1_k L^2_T L^\infty_{x_3,v}}.
\end{align*}
In the last line, we applied \eqref{gamma_est} in Lemma \ref{lemma:gamma_est}.

This concludes both \eqref{L1k_energy} and \eqref{Lpk_energy}.
\end{proof}

\subsection{Macroscopic dissipation estimate}

The macroscopic dissipation estimate is given by the following lemma. We employ the dual argument proposed in \cite{EGKM,EGKM2} for the macroscopic estimate with treating $k=(k_1,k_2)$ as an extra variable. Note that $k$ is the Fourier variable corresponding to the two-dimensional horizontal variable $\bar{x}=(x_1,x_2)\in \R^2$. In the method, we crucially construct the test functions using extra weight $\frac{|k|^2}{1+|k|^2}$, see \eqref{elliptic_a} later on for instance. This leads to the frequency-weighted macroscopic dissipation estimate with the weight function $\frac{|k|}{\sqrt{1+|k|^2}}$ that features that the macroscopic component behaves as two-dimensional diffusion waves in the infinite layer $\Omega=\R^2\times (-1,1)$.

\begin{lemma}\label{lemma:macroscopic}
Let $\hat{f}$ satisfy the assumption in Proposition \ref{prop:full energy}, then it holds that
\begin{align}
    &     \Big\Vert \frac{|k|}{\sqrt{1+|k|^2}}(\hat{a},\hat{\mathbf{b}},\hat{c})\Big\Vert_{L^1_k L^2_{T,x_3}} \lesssim \Vert (\mathbf{I}-\mathbf{P})\hat{f}\Vert_{L^1_k L^2_{T,x_3,v}}  \notag\\
    &+ \Vert \hat{f}\Vert_{L^1_k L^\infty_T L^2_{x_3,v}} \Vert w\hat{f}\Vert_{L^1_k L^2_T L^\infty_{x_3,v}} + |(I-P_\gamma)\hat{f}|_{L^1_k L^2_{T,\gamma_+}} + \Vert \hat{f}\Vert_{L^1_k L^\infty_T L^2_{x_3,v}} + \Vert \hat{f}_0\Vert_{L^1_k L^2_{x_3,v}}. 
    \notag
\end{align}

\hide
When $k=0$, we have
\begin{align*}
    &    \mathbf{1}_{k=0}\Vert (\hat{a},\hat{\mathbf{b}},\hat{c})\Vert^2_{L^2_{T,x_3}} \lesssim \mathbf{1}_{k=0}\Big[ \Vert (\mathbf{I}-\mathbf{P})\hat{f}\Vert^2_{L^2_{T,x_3,v}} + \Vert \nu^{-1/2}\hat{\Gamma}(\hat{f},\hat{f})\Vert^2_{L^2_{T,x_3,v}} \\
    & + |(I-P_\gamma)\hat{f}|^2_{L^2_{T,\gamma_+}} + \Vert \hat{f}\Vert^2_{L^\infty_{T}L^2_{x_3,v}} + \Vert \hat{f}_0\Vert_{L^2_{x_3,v}}   \Big].
\end{align*}

\unhide

\end{lemma}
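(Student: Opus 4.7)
The plan is to use the weak/dual formulation of the equation \eqref{f_eqn} tested against carefully chosen functions $\psi_a,\psi_{\mathbf{b}},\psi_c$ that are orthogonal to $\ker\mathcal{L}$, as outlined in the sketch in Section \ref{sec:proof_strategy}. For each macroscopic component I will introduce a scalar auxiliary $\phi$ solving a one-dimensional elliptic problem in $x_3$ with frequency weight $\frac{|k|^2}{1+|k|^2}$. For instance, for $\hat{a}$ I take
\begin{equation*}
(|k|^2-\partial_{x_3}^2)\phi_a(k,x_3)=\bar{\hat a}(k,x_3)\frac{|k|^2}{1+|k|^2},\qquad \partial_{x_3}\phi_a(k,\pm 1)=0,
\end{equation*}
and set $\psi_a:=\sqrt{\mu}(|v|^2-10)(-i\bar v\cdot k+v_3\partial_{x_3})\phi_a$; analogous choices will be made for $\hat{\mathbf{b}}$ and $\hat c$, all built out of polynomials in $v$ times $\sqrt{\mu}$ arranged to lie in $(\ker\mathcal L)^{\perp}$. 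Standard one-dimensional elliptic regularity in $(-1,1)$ gives a quantitative control on $\phi$ in $H^2_{x_3}$ that translates into an $L^2_{x_3,v}$ bound on $\psi$ of size $\frac{|k|}{\sqrt{1+|k|^2}}\|\hat a\|_{L^2_{x_3}}$ (respectively for $\hat{\mathbf b},\hat c$).

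Next I pair \eqref{f_eqn} with each test function $\bar\psi$, integrate in $x_3,v$, take the real part, and integrate in time over $[0,T]$. The four resulting pieces are handled as follows: (i) the time-derivative term yields boundary-in-time contributions $\mathrm{Re}\int\hat f\bar\psi\,dv\,dx_3\big|_0^T$, which are absorbed into $\|\hat f\|_{L^\infty_T L^2_{x_3,v}}+\|\hat f_0\|_{L^2_{x_3,v}}$ via the elliptic bound on $\psi$; (ii) the transport term $(i\bar v\cdot k+v_3\partial_{x_3})\hat f$ decomposes by $\hat f=\mathbf{P}\hat f+(\mathbf{I}-\mathbf{P})\hat f$: the $\mathbf{P}\hat f$ piece, combined with the elliptic equation for $\phi$, produces exactly the desired square $\big|\tfrac{|k|}{\sqrt{1+|k|^2}}\hat a\big|^2$ (and mixed terms coupling different macroscopic quantities, which are handled by summing the three test-function identities and using Cauchy--Schwarz to absorb couplings), while the $(\mathbf{I}-\mathbf{P})\hat f$ piece gives the acceptable right-hand side $\|(\mathbf{I}-\mathbf{P})\hat f\|_{L^2_{T,x_3,\nu}}\cdot\frac{|k|}{\sqrt{1+|k|^2}}\|(\hat a,\hat{\mathbf b},\hat c)\|_{L^2_{T,x_3}}$; (iii) the $\mathcal{L}\hat f$ term, since $\psi\in(\ker\mathcal L)^\perp$, contributes only $\|(\mathbf{I}-\mathbf{P})\hat f\|_{L^2_{T,x_3,\nu}}$ weighted factors; (iv) the nonlinear term is handled by \eqref{gamma_product} in Lemma \ref{lemma:gamma_est}, recalling $\mathbf{P}\hat f$ is orthogonal to $\hat\Gamma$ and using $\|\hat f\|_{L^\infty_T L^2_{x_3,v}}\|w\hat f\|_{L^1_k L^2_T L^\infty_{x_3,v}}$ after taking $L^1_k$.

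The main obstacle, and the step requiring real care, is item (ii) together with the boundary contributions at $x_3=\pm 1$ generated by integrating by parts in the $v_3\partial_{x_3}\hat f$ part of the transport term. These boundary integrals of the form $\int v_3\hat f\,\bar\psi\,dv$ at $x_3=\pm 1$ are split using $\hat f=P_\gamma\hat f+(I-P_\gamma)\hat f$. The Neumann boundary condition $\partial_{x_3}\phi(\pm 1)=0$ is chosen precisely so that the $P_\gamma\hat f$ contribution contains only even factors in $v_3$ against $v_3\bar\psi$ (which is odd in $v_3$ by construction), and therefore vanishes; the $(I-P_\gamma)\hat f$ contribution is bounded by $|(I-P_\gamma)\hat f|_{L^2_{T,\gamma_+}}$ times the trace norm of $\psi$, which is again controlled by $\frac{|k|}{\sqrt{1+|k|^2}}\|(\hat a,\hat{\mathbf b},\hat c)\|_{L^2_{T,x_3}}$ through the elliptic estimates.

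Finally, summing over the tests for $\hat a,\hat{\mathbf b},\hat c$, applying Cauchy--Schwarz with a small constant to absorb the factor $\frac{|k|}{\sqrt{1+|k|^2}}\|(\hat a,\hat{\mathbf b},\hat c)\|_{L^2_{T,x_3}}$ appearing on the right side into the left, and then taking $L^1_k$ norm produces the claimed inequality
\begin{equation*}
\Big\|\tfrac{|k|}{\sqrt{1+|k|^2}}(\hat a,\hat{\mathbf b},\hat c)\Big\|_{L^1_k L^2_{T,x_3}}\lesssim \|(\mathbf I-\mathbf P)\hat f\|_{L^1_k L^2_{T,x_3,v}}+|(I-P_\gamma)\hat f|_{L^1_k L^2_{T,\gamma_+}}+\|\hat f\|_{L^1_k L^\infty_T L^2_{x_3,v}}+\|\hat f_0\|_{L^1_k L^2_{x_3,v}}+\|\hat f\|_{L^1_k L^\infty_T L^2_{x_3,v}}\|w\hat f\|_{L^1_k L^2_T L^\infty_{x_3,v}}.
\end{equation*}
The use of the weight $\frac{|k|^2}{1+|k|^2}$ in the elliptic right-hand side is essential: it reduces to $|k|^2$ for small $|k|$ (giving the expected two-dimensional heat-type dissipation in the tangential variable) while saturating to $1$ for large $|k|$ (keeping the test function bounded and preventing loss at high frequencies).
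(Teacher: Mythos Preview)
Your overall strategy matches the paper's, but there is a genuine gap in item (i). The test function $\psi$ is \emph{time-dependent}: the auxiliary $\phi_a$ solves an elliptic problem whose right-hand side is $\bar{\hat a}(t,k,x_3)$, so $\psi_a$ (and likewise $\psi_{\mathbf b},\psi_c$) depends on $t$ through the solution itself. Hence pairing $\partial_t\hat f$ with $\bar\psi$ and integrating in time produces not only the boundary-in-time terms you wrote but also the bulk term
\[
-\int_0^T\!\!\int_{-1}^1\!\!\int_{\mathbb R^3}\hat f\,\partial_t\bar\psi\,\dd v\,\dd x_3\,\dd t,
\]
which is the paper's term $\eqref{weak_formulation}_4$. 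This term is not small and cannot be absorbed by Cauchy--Schwarz alone; it is where the genuine coupling between $\hat a,\hat{\mathbf b},\hat c$ actually lives (not in the transport piece as you suggest in (ii), where the paper's specific test functions make macroscopic cross-terms cancel exactly). A related minor point: $\psi_a$ as you wrote it is \emph{not} in $(\ker\mathcal L)^\perp$---it has a nonzero $v_j\sqrt\mu$ component since $\int v_j^2(|v|^2-10)\mu\,\dd v=-5$---and this is precisely what makes the coupling from $\eqref{weak_formulation}_4$ for $\hat a$ land on $\hat{\mathbf b}$ with a non-small constant.

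The paper handles $\eqref{weak_formulation}_4$ by introducing a second auxiliary $\Phi_a$ (resp.\ $\Phi_b,\Phi_c$) solving the same elliptic problem with $\partial_t\bar{\hat a}$ (resp.\ $\partial_t\bar{\hat b}_j,\partial_t\bar{\hat c}$) on the right-hand side, and then \emph{replacing these time derivatives via the local conservation laws}, e.g.\ $\partial_t\hat a+ik_1\hat b_1+ik_2\hat b_2+\partial_{x_3}\hat b_3=0$. After integrating by parts in $x_3$ (and treating the trace of $\hat b_3$ through $(I-P_\gamma)\hat f$), one bounds $\|\,|k|\Phi_a\|_{L^2}+\|\partial_{x_3}\Phi_a\|_{L^2}$ by $\frac{|k|}{\sqrt{1+|k|^2}}\|\hat{\mathbf b}\|_{L^2}$. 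Because $\psi_b,\psi_c\in(\ker\mathcal L)^\perp$ while $\psi_a$ is not, the resulting couplings are asymmetric: the $\hat a$ estimate picks up $\hat{\mathbf b}$ with an $O(1)$ constant, whereas the $\hat{\mathbf b}$ and $\hat c$ estimates pick up the other macroscopic quantities only with an $o(1)$ constant. This is why the paper closes with a weighted combination (roughly $\hat a + 2C_1\,\hat{\mathbf b}+\hat c$ with $C_1>10$ and $\delta_2=\delta_3=1/C_1^2$) rather than a plain sum. Finally, note that for $\hat{\mathbf b}$ and $\hat c$ the paper uses \emph{Dirichlet} conditions $\phi(\pm1)=0$, not Neumann; the $P_\gamma\hat f$ boundary contribution still vanishes there by oddness in $v_1,v_2$ (for $\psi_b$) or the moment identity $\int v_3^2(|v|^2-5)\mu\,\dd v=0$ (for $\psi_c$), and the Dirichlet choice is what kills the boundary traces in the $\Phi_b,\Phi_c$ integration-by-parts step.
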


\begin{proof}

In order to estimate the macroscopic component of $\hat{f}$, we use the following weak formulation of  \eqref{f_eqn} with a test function $\psi$:
\begin{align}
    &  \underbrace{\int_{-1}^1 \int_{\mathbb{R}^3} [\hat{f}\psi(T) - \hat{f}\psi (0)] \dd v \dd x_3 }_{\eqref{weak_formulation}_0} \notag\\
    &+  \underbrace{\int_0^{T}\int_{-1}^{1}\int_{\mathbb{R}^3} i\bar{v}\cdot k \hat{f} \psi \dd v \dd x_3 \dd t}_{\eqref{weak_formulation}_1} \underbrace{- \int_0^T \int_{-1}^1 \int_{\mathbb{R}^3} v_3 \hat{f} \p_{x_3}\psi \dd v \dd x_3 \dd t}_{\eqref{weak_formulation}_2} \notag\\
    &+ \underbrace{ \int_0^T \int_{\mathbb{R}^3} v_3 [\hat{f}(k,1)  \psi(1) - \hat{f}(k,-1) \psi(-1) ]\dd v \dd t}_{\eqref{weak_formulation}_3} \underbrace{- \int_0^T \int_{\mathbb{R}^3} \int_{-1}^1 \hat{f} \p_t \psi \dd v \dd x_3 \dd t}_{\eqref{weak_formulation}_4} \notag  \\
    &+ \underbrace{\int_0^T \int_{-1}^1 \int_{\mathbb{R}^3} \mathcal{L}(\hat{f}) \psi \dd v \dd x_3 \dd t}_{\eqref{weak_formulation}_5}  = \underbrace{\int_0^T\int_{-1}^1 \int_{\mathbb{R}^3} \hat{\Gamma}(\hat{f},\hat{f}) \psi \dd v \dd x_3 \dd t}_{\eqref{weak_formulation}_6}. \label{weak_formulation}
\end{align}

\hide
For conciseness, below we only discuss the case $k\neq 0$. The case of $k=0$ will be discussed in Remark \ref{rmk:k=0}. For ease of notations, in the proof we do not specify the condition $k\neq 0$ and just use the notation $L^1_k$.
\unhide

\textit{Estimate of $\hat{a}$}. 

For the estimate of $\hat{a}$, we choose a test function as
\begin{align}
    &  \psi_a =   \sqrt{\mu} (|v|^2-10) (-i\bar{v}\cdot k  +  v_3 \p_{x_3}  )\phi_a.  \label{test_a}
\end{align}
Here $\phi_a$ satisfies the elliptic equation
\begin{align}
\begin{cases}
    & \dis (|k|^2 - \p_{x_3}^2)\phi_a(k,x_3) = -\bar{\hat{a}}(k,x_3) \frac{|k|^2}{1+|k|^2} , \ x_3\in (-1,1), \\
    &\dis    \p_{x_3} \phi_a(k, \pm 1) = 0.    
\end{cases}\label{elliptic_a}
\end{align}
Here $\bar{\hat{a}}$ stands for the complex conjugate of $\hat{a}$.

Multiplying \eqref{elliptic_a} by $\bar{\phi}_a$, the complex conjugate of $\phi_a$, we obtain
\begin{align*}
    &  |k|^2 \Vert \phi_a \Vert_{L^2_{x_3}}^2 + \Vert \p_{x_3}\phi_a \Vert_{L^2_{x_3}}^2 \lesssim  \frac{|k|^2}{1+|k|^2} \Vert \hat{a}\Vert_{L^2_{x_3}}^2 + o(1)\frac{|k|^2}{1+ |k|^2} \Vert \phi_a \Vert_{L^2_{x_3}}^2, \\
    & \Vert |k| \phi_a\Vert_{L^2_{x_3}}^2 + \Vert \p_{x_3}\phi_a\Vert_{L^2_{x_3}}^2 \lesssim \frac{|k|^2}{1+|k|^2} \Vert \hat{a}\Vert_{L^2_{x_3}}^2.
\end{align*}

Multiplying \eqref{elliptic_a} by $|k|^2 \bar{\phi}_a$ we obtain
\begin{align*}
    &    |k|^4 \Vert \phi_a\Vert_{L^2_{x_3}}^2 + |k|^2 \Vert \p_{x_3}\phi_a\Vert_{L^2_{x_3}}^2  \lesssim o(1) |k|^4 \Vert \phi_a \Vert_{L^2_{x_3}}^2 + \frac{|k|^4}{(1+|k|^2)^2}\Vert \hat{a}\Vert_{L^2_{x_3}}^2, \\
    & \Vert |k|^2 \phi_a\Vert_{L^2_{x_3}}^2 + \Vert |k|\p_{x_3}\phi_a\Vert_{L^2_{x_3}}^2 \lesssim \frac{|k|^4}{1+|k|^4} \Vert \hat{a}\Vert_{L^2_{x_3}}^2 \lesssim \frac{|k|^2}{1+|k|^2}\Vert \hat{a}\Vert_{L^2_{x_3}}^2.
\end{align*}

This leads to the estimate that
\begin{align}
    &    \Vert \p_{x_3}^2 \phi_a \Vert_{L^2_{x_3}} \lesssim |k|^2 \Vert \phi_a\Vert_{L^2_{x_3}} + \frac{|k|^2}{1+|k|^2} \Vert \hat{a}\Vert_{L^2_{x_3}} \lesssim \frac{|k|}{\sqrt{1+|k|^2}} \Vert \hat{a}\Vert_{L^2_{x_3}},  \notag  \\
    & \Vert ( |k|+ |k|^2) \phi_a\Vert_{L^2_{x_3}} + \Vert (1+|k|) \p_{x_3}\phi_a\Vert_{L^2_{x_3}} + \Vert \p_{x_3}^2 \phi_a\Vert_{L^2_{x_3}} \lesssim \frac{|k|}{\sqrt{1+|k|^2}} \Vert \hat{a}\Vert_{L^2_{x_3}}.\label{H2_est_a}
\end{align}
By trace theorem, using \eqref{H2_est_a} we conclude that
\begin{align}
    &    | |k| \phi_a(k,\pm 1)| \lesssim \frac{|k|}{\sqrt{1+|k|^2}} \Vert \hat{a}\Vert_{L^2_{x_3}}, \label{L2_trace_a}\\
    & | \p_{x_3} \phi_a(k,\pm 1)| \lesssim    \frac{|k|}{\sqrt{1+|k|^2}} \Vert \hat{a}\Vert_{L^2_{x_3}}. \label{H1_trace_a}
\end{align}

\hide
For $k=0$, we select $\phi_a$ as
\begin{align*}
    &    -\p_{x_3}^2 \phi_a(k,x_3) = \hat{a}(0,x_3), \ x_3\in (-1,1), \\
    & \p_{x_3} \phi_a(k,\pm 1) = 0,  \ \int^1_{-1} \hat{a}(0,x_3) \dd x_3 = 0.
\end{align*}
Then by the Poincar\'e inequality, we obtain the $H^2$ estimate as
\begin{align*}
    &   \Vert \phi_a(0,x_3)\Vert_{H^2_{x_3}}^2 \lesssim \Vert \hat{a}(0,x_3)\Vert_{L^2_{x_3}}^2.
\end{align*}
By trace theorem, we obtain that
\begin{align*}
    &   \Vert \phi_a(0,\pm 1)\Vert_{L^2_{x_3}}^2 \lesssim \Vert \hat{a}(0,x_3)\Vert_{L^2_{x_3}}^2.
\end{align*}

\unhide

We substitute \eqref{test_a} into \eqref{weak_formulation}. We decompose $\hat{f} = \mathbf{P}\hat{f} + (\mathbf{I}-\mathbf{P})\hat{f}$. Then we have
\begin{align*}
    &     \eqref{weak_formulation}_1  =  -\int_0^T \int_{-1}^1 \int_{\mathbb{R}^3} i (v_1 k_1 + v_2 k_2) \hat{a} \mu (|v|^2-10)i (v_1 k_1 + v_2 k_2) \phi_a \dd v \dd x_3 \dd t\\
    & + \int_0^T \int^1_{-1} \int_{\mathbb{R}^3} i(v_1k_1 + v_2k_2) (\mathbf{I}-\mathbf{P})(\hat{f}) \sqrt{\mu} (|v|^2-10) (-iv_1 k_1 -i v_2 k_2 + v_3\p_{x_3}) \phi_a \dd v \dd x_3 \dd t.
\end{align*}
In the first line, the contribution of $v_3\p_{x_3}\phi_a$ and $\hat{\mathbf{b}},\hat{c}$ vanish from the oddness and 
\begin{align*}
    &  \int_{\mathbb{R}^3} v_i^2 (|v|^2-10)(\frac{|v|^2-3}{2}) \mu \dd v = 0.
\end{align*}
Then from $\int_{\mathbb{R}^3} v_i^2 (|v|^2-10)\mu \dd v = -5$, we further have
\begin{align*}
    &     \eqref{weak_formulation}_1 = -5\int_0^T \int_{-1}^1  |k|^2 \phi_a \hat{a}  \dd x_3 \dd t  \\
    &\underbrace{+ \int_0^T\int_{-1}^1 \int_{\mathbb{R}^3} i(\bar{v}\cdot k)(\mathbf{I}-\mathbf{P})\hat{f} \sqrt{\mu}(|v|^2-10) (-i\bar{v}\cdot k + v_3 \p_{x_3})\phi_a \dd v \dd x_3 \dd t}_{E_1}. 
\end{align*}

Next, from the oddness we have
\begin{align*}
    &  \eqref{weak_formulation}_2 = -\int_0^T \int_{-1}^1 \int_{\mathbb{R}^3} v_3  \hat{a}\mu (|v|^2-10) v_3 \p_{x_3}^2 \phi_a \dd v \dd x_3 \dd t\\
    & \underbrace{- \int_0^T \int_{-1}^1 \int_{\mathbb{R}^3} v_3 (\mathbf{I}-\mathbf{P})\hat{f} \sqrt{\mu} (|v|^2-10) (-i\bar{v}\cdot k + v_3 \p_{x_3}) \p_{x_3}\phi_a \dd v \dd x_3 \dd t}_{E_2}\\
    &  = 5\int_0^T \int_{-1}^1 \p_{x_3}^2 \phi_a \hat{a} \dd x_3 \dd t +E_2.
\end{align*}

Then $\eqref{weak_formulation}_1$ and $\eqref{weak_formulation}_2$ combine to be
\begin{align}
    & \eqref{weak_formulation}_1 + \eqref{weak_formulation}_2 = -5\int_0^T \int_{-1}^1 (|k|^2 - \p_{x_3}^2) \phi_a \hat{a} \dd x_3 \dd t + E_1 + E_2  \notag \\
    &= 5\Big\Vert \frac{|k|}{\sqrt{1+|k|^2}} \hat{a} \Big\Vert_{L^2_{T,x_3}}^2 + E_1 + E_2.   \label{est_a_LHS}
\end{align}
Here $E_1+E_2$ corresponds to the contribution of $(\mathbf{I}-\mathbf{P})\hat{f}$, which is bounded as
\begin{align}
    &  |E_1+E_2| \lesssim \Vert (\mathbf{I}-\mathbf{P})\hat{f}\Vert_{L^2_{T,x_3,v}}^2 + o(1)\Big[\Vert |k|^2 \phi_a\Vert_{L^2_{T,x_3}}^2 + \Vert |k| \p_{x_3}\phi_a \Vert_{L^2_{T,x_3}}^2 + \Vert \p_{x_3}^2 \phi_a \Vert_{L^2_{T,x_3}}^2  \Big] \notag \\
    &\lesssim o(1)\Big\Vert \frac{|k|}{\sqrt{1+|k|^2}} \hat{a} \Big\Vert_{L^2_{T,x_3}}^2  + \Vert (\mathbf{I}-\mathbf{P})\hat{f}\Vert_{L^2_{T,x_3,v}}^2.  \label{est_a_RHS_1}
\end{align}
Here we have used \eqref{H2_est_a}.

Then we compute the boundary term $\eqref{weak_formulation}_3$. For the contribution of $P_\gamma \hat{f}$, we have
\begin{align*}
    &   \int_0^T \int_{\mathbb{R}^3} v_3   P_\gamma\hat{f}(k,1) \psi_a(1) \dd v \dd t =  \int_0^T \int_{\mathbb{R}^3} v_3 P_\gamma \hat{f}(k,1) \sqrt{\mu} (|v|^2 -10) (-i\bar{v}\cdot k + v_3 \p_{x_3}) \phi_a  \dd v \dd t = 0.
\end{align*}
Here we have used the oddness to have
\begin{align*}
    &     \int_{\mathbb{R}^3} v_3 \mu (|v|^2 - 10) (-i \bar{v}\cdot k) \phi_a \dd v = 0,
\end{align*}
and the boundary condition $\p_{x_3} \phi_a = 0$ to have
\begin{align*}
    &   \int_{\mathbb{R}^3} v_3^2 \mu (|v|^2-10) \p_{x_3} \phi_a \dd v = 0.
\end{align*}
Thus, to estimate \eqref{weak_formulation}$_3$, from the trace estimates \eqref{L2_trace_a} and \eqref{H1_trace_a}, we derive that for $x_3=1$,
\begin{align}
    &     \Big|\int_0^T \int_{v_3>0} (I-P_\gamma)\hat{f}(k,1) \sqrt{\mu} (|v|^2-10) (-i\bar{v}\cdot k + v_3 \p_{x_3}) \phi_a \dd v \dd t  \Big|  \notag \\
    & \lesssim o(1) [\Vert |k| \phi_a(k,1) \Vert_{L^2_T}^2 +  \Vert \p_{x_3} \phi_a(k,1)\Vert_{L^2_T }^2  ]   +    |(I-P_\gamma)\hat{f}|_{L^2_{T,\gamma_+}}^2 \notag \\
    & \lesssim o(1) \Big\Vert \frac{|k|}{\sqrt{1+|k|^2}} \hat{a} \Big\Vert_{L^2_{T,x_3}}^2 + |(I-P_\gamma)\hat{f}|_{L^2_{T,\gamma_+}}^2.   \notag
\end{align}
Similarly, for $x_3=-1$ we have the same estimate. We conclude that
\begin{align}
    & | \eqref{weak_formulation}_3| \lesssim o(1) \Big\Vert \frac{|k|}{\sqrt{1+|k|^2}} \hat{a} \Big\Vert_{L^2_{T,x_3}}^2 + |(I-P_\gamma)\hat{f}|_{L^2_{T,\gamma_+}}^2.  \label{est_a_RHS_2}
\end{align}

Next, we compute the time derivative term $\eqref{weak_formulation}_4$. For this, we denote $\Phi_a$ as the solution to the elliptic equation
\begin{align*}
\begin{cases}
    &\dis    (|k|^2 - \p_{x_3}^2)\Phi_a(k,x_3) = -\p_t \bar{\hat{a}}(t,k,x_3) \frac{|k|^2}{1+|k|^2}, \ x_3 \in (-1,1) ,\\
    &\dis  \p_{x_3} \Phi_a(k,\pm 1) = 0.    
\end{cases}
\end{align*}
\hide
for $k\neq 0$, and
\begin{align*}
    & -\p_{x_3}^2 \Phi_a(0,x_3) = \p_t \hat{a}(t,0,x_3) , \ x_3 \in (-1,1) ,\\
    &\p_{x_3} \Phi_a(0,\pm 1) = 0, \ \int_{-1}^1 \p_t \hat{a}(t,0,x_3) \dd x_3 = 0.
\end{align*}
\unhide

Integration by part leads to
\begin{align}
    &   \int_0^T \int_{-1}^1 |k|^2 |\Phi_a|^2 \dd x_3 \dd t+ \int_0^T \int_{-1}^1 |\p_{x_3} \Phi_a|^2 \dd x_3 \dd t   = \int_0^T \int_{-1}^1 \frac{|k|^2}{1+|k|^2} \p_t \bar{\hat{a}}(t,k,x_3) \bar{\Phi}_a \dd x_3 \dd t.   \label{energy_p_t_a}
\end{align}
From the conservation of mass
\begin{align*}
    & \p_t \hat{a} + i k_1 \hat{b}_1 + ik_2 \hat{b}_2 + \p_{x_3} \hat{b}_3 = 0,
\end{align*}
we have
\begin{align}
    &    \int_0^T \int_{-1}^1 \frac{|k|^2}{1+|k|^2} \p_t \bar{\hat{a}}(t,k,x_3) \bar{\Phi}_a \dd x_3 \dd t \notag \\
    & = \int_0^T \int_{-1}^1 \frac{|k|^2}{1+|k|^2} \Big[ -(ik_1 \hat{b}_1 + ik_2 \hat{b}_2)\bar{\Phi}_a + \hat{b}_3 \p_{x_3}\bar{\Phi}_a   \Big] \dd x_3 \dd t  - \int_0^T  \frac{|k|^2}{1+|k|^2} \hat{b}_3 \bar{\Phi}_a \Big|_{-1}^1 \dd t. \label{energy_p_t_a_2}
\end{align}
The boundary term can be computed as
\begin{align*}
    & \Big|\int_0^T  \frac{|k|^2}{1+|k|^2} \bar{\Phi}_a \int_{\mathbb{R}^3} v_3 \hat{f} \sqrt{\mu} \dd v \Big|_{-1}^1 \dd t \Big|\\
    & =  \Big|\int_0^T  \frac{|k|^2}{1+|k|^2} \bar{\Phi}_a \Big(\Big[ \int_{v_3> 0} v_3 \sqrt{\mu} (P_\gamma \hat{f} + (I-P_\gamma)\hat{f})  \dd v + \int_{v_3<0} v_3 \sqrt{\mu} P_\gamma \hat{f} \dd v \Big](x_3=1)  \\
    & -   \Big[\int_{v_3< 0} v_3 \sqrt{\mu} (P_\gamma \hat{f} + (I-P_\gamma)\hat{f})  \dd v + \int_{v_3>0} v_3 \sqrt{\mu} P_\gamma \hat{f} \dd v \Big](x_3=-1) \Big) \dd t \Big| \\
    & \lesssim o(1)\Big\Vert \frac{|k|}{\sqrt{1+|k|^2}} \Phi_a(k,\pm 1)\Big\Vert^2_{L^2_T} + \Big|\frac{|k|}{\sqrt{1+|k|^2}} (I-P_\gamma)\hat{f}\Big|^2_{L^2_{T,\gamma_+}} \\
    & \lesssim o(1)\Vert |k|  \Phi_a \Vert^2_{L^2_{T,x_3}} + o(1)\Vert \p_{x_3} \Phi_a\Vert^2_{L^2_{T,x_3}} + \Big|\frac{|k|}{\sqrt{1+|k|^2}} (I-P_\gamma)\hat{f}\Big|^2_{L^2_{T,\gamma_+}}.
\end{align*}
In the third line, the contribution of $P_\gamma \hat{f}$ vanished from the oddness with $v_3$. In the fourth line, we applied the trace theorem.

The other term in \eqref{energy_p_t_a_2} is controlled as
\begin{align*}
    &   \Big| \int_0^T \int_{-1}^1 \frac{|k|^2}{1+|k|^2} \Big[ (ik_1 \hat{b}_1 + ik_2 \hat{b}_2)\bar{\Phi}_a - \hat{b}_3 \p_{x_3}\bar{\Phi}_a   \Big] \dd x_3 \dd t \Big|\\
    & \lesssim o(1) \Vert |k| \Phi_a \Vert_{L^2_{T,x_3}}^2 + o(1) \Vert \p_{x_3}\Phi_a\Vert_{L^2_{T,x_3}}^2 + \Big\Vert  \frac{|k|^2}{1+|k|^2} \hat{\mathbf{b}}\Big\Vert^2_{L^2_{T,x_3}}.
\end{align*}
Plugging the estimates to \eqref{energy_p_t_a}, we obtain
\begin{align}
    &   \Vert |k| \Phi_a \Vert_{L^2_{T,x_3}}^2 +  \Vert \p_{x_3}\Phi_a\Vert_{L^2_{T,x_3}}^2 \lesssim \Big\Vert  \frac{|k|^2}{1+|k|^2} \hat{\mathbf{b}}\Big\Vert^2_{L^2_{T,x_3}} + \Big|\frac{|k|}{\sqrt{1+|k|^2}} (I-P_\gamma)\hat{f}\Big|^2_{L^2_{T,\gamma_+}}.   \label{Phi_a_estimate}
\end{align}

Then we compute $\eqref{weak_formulation}_4$ as
\begin{align}
    &     |\eqref{weak_formulation}_4| = \Big|\int_0^T \int_{-1}^1 \int_{\mathbb{R}^3} \hat{f} \sqrt{\mu} (|v|^2-10) (-i\bar{v}\cdot k + v_3 \p_{x_3}) \Phi_a \dd x_3 \dd v \dd t \Big| \notag\\
    & \lesssim \int_0^T \int_{-1}^1  (|\hat{\mathbf{b}}| + \Vert (\mathbf{I}-\mathbf{P})\hat{f}\Vert_{L^2_v}  ) [|k\Phi_a| + |\p_{x_3}\Phi_a|] \dd x_3 \dd t  \notag\\
    & = \int_0^T \int_{-1}^1 \frac{|k|}{\sqrt{1+|k|^2}}(\hat{\mathbf{b}} + (\mathbf{I}-\mathbf{P})\hat{f})[|k\Phi_a| + |\p_{x_3}\Phi_a|] \frac{\sqrt{1+|k|^2}}{|k|} \dd x_3 \dd t \notag \\
    & \lesssim \Big\Vert  \frac{|k|}{\sqrt{1+|k|^2}} \hat{\mathbf{b}}  \Big\Vert^2_{L^2_{T,x_3}} + \Vert (\mathbf{I}-\mathbf{P})\hat{f}\Vert^2_{L^2_{T,x_3,v}} + \frac{1+|k|^2}{|k|^2} \Vert  |k\Phi_a| + |\p_{x_3}\Phi_a| \Vert_{L^2_{T,x_3}}^2  \notag\\
    & \lesssim \Big\Vert  \frac{|k|}{\sqrt{1+|k|^2}} \hat{\mathbf{b}}  \Big\Vert^2_{L^2_{T,x_3}} + \Vert (\mathbf{I}-\mathbf{P})\hat{f}\Vert^2_{L^2_{T,x_3,v}} + |(I-P_\gamma)\hat{f}|^2_{L^2_{T,\gamma_+}}.   \label{est_a_RHS_3}
\end{align}
In the second line, $\hat{a},\hat{c}$ vanish from oddness. In the last line, we have used \eqref{Phi_a_estimate}.

Next we compute $\eqref{weak_formulation}_5$ as
\begin{align}
    &    |\eqref{weak_formulation}_5| \lesssim o(1)[\Vert k \phi_a \Vert_{L^2_{T,x_3}}^2 + \Vert \p_{x_3} \phi_a \Vert_{L^2_{T,x_3}}^2 ] + \Vert (\mathbf{I}-\mathbf{P})\hat{f}\Vert_{L^2_{T,x_3,v}}^2   \notag \\
    & \lesssim o(1)\Big\Vert \frac{|k|}{\sqrt{1+|k|^2}} \hat{a}  \Big\Vert^2_{L^2_{T,x_3}}  + \Vert (\mathbf{I}-\mathbf{P})\hat{f}\Vert^2_{L^2_{T,x_3,v}}  .   \label{est_a_RHS_4}
\end{align}

Then we compute the contribution of the source term using the same computation in \eqref{minkovski}:
\begin{align}
    & |\eqref{weak_formulation}_6| =  \Big| \int_0^T   \int_{-1}^1 \int_{\mathbb{R}^3} \hat{\Gamma}(\hat{f},\hat{f})\psi_a \dd v \dd x_3 \dd t \Big|\notag \\
    &\lesssim o(1)[\Vert k \phi_a \Vert_{L^2_{T,x_3}}^2 + \Vert \p_{x_3} \phi_a \Vert_{L^2_{T,x_3}}^2 ] + \Big( \int_{\mathbb{R}^2} \Vert \hat{f}(k-\ell)\Vert_{L^\infty_T L^2_{x_3,v}} \Vert \hat{f}(\ell)\Vert_{L^2_T L^\infty_{x_3}L^2_\nu} \dd \ell \Big)^{1/2} \notag\\
    & \lesssim  o(1)\Big\Vert \frac{|k|}{\sqrt{1+|k|^2}} \hat{a}  \Big\Vert^2_{L^2_{T,x_3}} + \Big( \int_{\mathbb{R}^2} \Vert \hat{f}(k-\ell)\Vert_{L^\infty_T L^2_{x_3,v}} \Vert \hat{f}(\ell)\Vert_{L^2_T L^\infty_{x_3}L^2_\nu} \dd \ell \Big)^{2}.   \label{est_a_RHS_5}
\end{align}

Last we compute $\eqref{weak_formulation}_0$ as
\begin{align}
    &     \int_{-1}^1 \int_{\mathbb{R}^3} |\hat{f}(T) \psi_a(T) |\dd v \dd x_3  \notag\\
    & \lesssim \Vert \hat{f}(T) \Vert_{L^\infty_T L^2_{x_3,v}} [\Vert k\phi_a \Vert_{L^\infty_T L^2_{x_3}} + \Vert \p_{x_3}\phi_a \Vert_{L^\infty_T L^2_{x_3}} ]  \lesssim \Vert \hat{f}\Vert_{L^\infty_T L^2_{x_3,v}} \Vert \hat{a}\Vert_{L^\infty_{T}L^2_{x_3}} \lesssim \Vert \hat{f}\Vert_{L^\infty_T L^2_{x_3,v}}^2.  \label{est_a_t}
\end{align}
Similarly,
\begin{align}
    &  \int_{-1}^1 \int_{\mathbb{R}^3} |\hat{f}(0)\psi_a(0)| \dd v \dd x_3 \lesssim \Vert \hat{f}_0\Vert^2_{L^2_{x_3,v}}. \label{est_a_0}
\end{align}

We combine \labelcref{est_a_LHS,est_a_RHS_1,est_a_RHS_2} and \labelcref{est_a_RHS_3,est_a_RHS_4,est_a_RHS_5,est_a_t,est_a_0}  to conclude the estimate of $\hat{a}$: 
\begin{align*}
    &     \Big\Vert \frac{|k|}{\sqrt{1+|k|^2}} \hat{a}  \Big\Vert_{L^2_{T,x_3}} \lesssim \Vert (\mathbf{I}-\mathbf{P})\hat{f}\Vert_{L^2_{T,x_3,v}} + \int_{\mathbb{R}^2} \Vert \hat{f}(k-\ell)\Vert_{L^\infty_T L^2_{x_3,v}} \Vert \hat{f}(\ell)\Vert_{L^2_T L^\infty_{x_3}L^2_\nu} \dd \ell \\
    & + |(I-P_\gamma)\hat{f}|_{L^2_{T,\gamma_+}}+ \Big\Vert \frac{|k|}{\sqrt{1+|k|^2}} \hat{\mathbf{b}} \Big\Vert_{L^2_{T,x_3}} + \Vert \hat{f}\Vert_{L_{T}^\infty L^2_{x_3,v}} + \Vert \hat{f}_0\Vert_{L^2_{x_3,v}} .
\end{align*}
\hide

Note that when $k=0$, we have
\begin{align*}
    &     \Vert  \hat{a}(t,0,x_3)  \Vert^2_{L^2_{T,x_3}} \lesssim \Vert (\mathbf{I}-\mathbf{P})\hat{f}\Vert^2_{L^2_{T,x_3,v}} + \Vert \nu^{-1/2}\hat{\Gamma}(\hat{f},\hat{f})\Vert^2_{L^2_{T,x_3,v}} \\
    & + |(I-P_\gamma)\hat{f}|^2_{L^2_{T,\gamma_+}}+ \Vert  \hat{\mathbf{b}}(t,0,x_3) \Vert^2_{L^2_{T,x_3}} + \Vert \hat{f}\Vert^2_{L_{T}^\infty L^2_{x_3,v}} + \Vert \hat{f}_0\Vert_{L^2_{x_3,v}} .
\end{align*}
\unhide

Further taking integration in $k$, with the Young's convolution inequality and $\Vert f\Vert_{L^2_\nu}\lesssim \Vert wf\Vert_{L^\infty_v}$, we conclude that for some $C_1>0$,
\begin{align}
    &     \Big\Vert \frac{|k|}{\sqrt{1+|k|^2}} \hat{a}  \Big\Vert_{L^1_k L^2_{T,x_3}} \leq C_1\Big[ \Big\Vert \frac{|k|}{\sqrt{1+|k|^2}} \hat{\mathbf{b}} \Big\Vert_{L^1_k L^2_{T,x_3}} + \Vert \hat{f} \Vert_{L^1_k L^\infty_T L^2_{x_3,v}} \Vert w\hat{f}\Vert_{L^1_k L^2_T L^\infty_{x_3,v}} \notag \\
    & + |(I-P_\gamma)\hat{f}|_{L^1_k L^2_{T,\gamma_+}}+  \Vert \hat{f}\Vert_{L^1_k L_{T}^\infty L^2_{x_3,v}}+ \Vert (\mathbf{I}-\mathbf{P})\hat{f}\Vert_{L^1_k L^2_{T,x_3,v}}  + \Vert \hat{f}_0\Vert_{L^1_k L^2_{x_3,v}} \Big].   \label{a_estimate_k}
\end{align}

\medskip 
\textit{Estimate of $\hat{\mathbf{b}}$.}

We choose a test function as
\begin{align}
    & \psi_b =  -\frac{3}{2}\Big(|v_1|^2 - \frac{|v|^2}{3} \Big)\sqrt{\mu} ik_1 \phi_b  - v_1v_2 \sqrt{\mu} ik_2 \phi_b + v_1 v_3 \sqrt{\mu} \p_{x_3} \phi_b. \label{psi_b_k}
\end{align}

We let $\phi_b$ satisfy the elliptic system
\begin{align}
\begin{cases}
        & \dis  [2|k_1|^2 + |k_2|^2 - \p_{x_3}^2] \phi_b = \frac{|k|^2}{1+|k|^2}\bar{\hat{b}}_1 , \\
    &\dis  \phi_b = 0 \text{ when } x_3 = \pm 1. 
\end{cases}\label{phi_b_k}
\end{align}
Multiplying \eqref{phi_b_k} by $\bar{\phi}_b$ and taking integration in $x_3$ we obtain
\begin{align*}
    &   \Vert |k| \phi_b \Vert_{L^2_{x_3}}^2 + \Vert \p_{x_3} \phi_b \Vert_{L^2_{x_3}}^2 \lesssim o(1) \frac{|k|^2}{1+|k|^2}\Vert \phi_b\Vert_{L^2_{x_3}}^2 + \frac{|k|^2}{1+|k|^2}\Vert \hat{b}_1\Vert_{L^2_{x_3}}^2, \\
    & \Vert |k|\phi_b \Vert_{L^2_{x_3}}^2 + \Vert \p_{x_3}\phi\Vert^2_{L^2_{x_3}} \lesssim  \frac{|k|^2}{1+|k|^2} \Vert \hat{b}_1 \Vert^2_{L^2_{x_3}}.
\end{align*}

Multiplying \eqref{phi_b_k} by $|k|^2 \bar{\phi}_b$ we obtain
\begin{align*}
    &  \Vert |k|^2\phi_b \Vert_{L^2_{x_3}}^2 + \Vert |k|\p_{x_3}\phi_b\Vert^2_{L^2_{x_3}} \lesssim o(1) \Vert |k|^2 \phi_b\Vert_{L^2_{x_3}}^2 +  \frac{|k|^4}{(1+|k|^2)^2} \Vert \hat{b}_1\Vert_{L^2_{x_3}}^2.
\end{align*}
Thus we conclude
\begin{align}
    & \Vert (|k|+|k|^2) \phi_b\Vert_{L^2_{x_3}} + \Vert (1+|k|)\p_{x_3}\phi_b\Vert_{L^2_{x_3}} \lesssim  \frac{|k|}{\sqrt{1+|k|^2}}\Vert \hat{b}_1\Vert_{L^2_{x_3}}, \label{phib_l2_k}   \\
    & \Vert \p_{x_3}^2 \phi_b\Vert_{L^2_{x_3}} \lesssim \Vert |k|^2 \phi_b\Vert_{L^2_{x_3}} + \frac{|k|^2}{1+|k|^2}\Vert \hat{b}_1\Vert_{L^2_{x_3}} \lesssim \frac{|k|}{\sqrt{1+|k|^2}}\Vert \hat{b}_1\Vert_{L^2_{x_3}}.    \notag
\end{align}

Further by trace theorem, we have
\begin{align}
    & | |k|\phi_b(k,\pm 1) | \lesssim \frac{|k|}{\sqrt{1+|k|^2}} \Vert \hat{b}_1\Vert_{L^2_{x_3}}, \ |\p_{x_3}\phi_b(k,\pm 1)| \lesssim \frac{|k|}{\sqrt{1+|k|^2}}\Vert \hat{b}_1\Vert_{L^2_{x_3}}. \label{phib_trace_k}
\end{align}

We first compute
\begin{align*}
    & \eqref{weak_formulation}_1 =   \int_0^T \int_{-1}^1 \int_{\mathbb{R}^3} i \bar{v}\cdot k (\hat{\mathbf{b}}\cdot v)\sqrt{\mu} \psi_b  \dd v \dd x_3 \dd t  \underbrace{+\int_0^T \int_{-1}^1 \int_{\mathbb{R}^3} (\mathbf{I}-\mathbf{P})\hat{f} \psi_b \dd v \dd x_3 \dd t}_{E_3}  \\
    &  = \int_0^T \int_{-1}^1 \int_{\mathbb{R}^3} i (v_1k_1 + v_2k_2)(\hat{b}_1 v_1 + \hat{b}_2 v_2 + \hat{b}_3v_3)\sqrt{\mu}\psi_b \dd v \dd x_3 \dd t + E_3 \\
    & = \int_0^T \int_{-1}^1 \int_{\mathbb{R}^3} \Big[ \frac{3}{2}|k_1|^2 |v_1|^2 \hat{b}_1 \Big( |v_1|^2 - \frac{|v|^2}{3}\Big) \mu \phi_b + v_1^2 v_2^2 k_1 k_2 \hat{b}_2 \mu \phi_b + v_1^2 v_2^2 k_2^2 \hat{b}_1 \mu \phi_b \\
    & \ \ \ \ \ \ +\frac{3}{2} \Big(|v_1|^2 - \frac{|v|^2}{3} \Big) v_2^2 k_2 k_1 \hat{b}_2 \mu \phi_b \Big] \dd v \dd x_3 \dd t \\
    &+  \int_0^T \int_{-1}^1 \int_{\mathbb{R}^3} i k_1 \hat{b}_3 v_1^2 v_3^2 \mu \p_{x_3}\phi_b \dd v \dd x_3 \dd t + E_3 \\
    &  = \int_0^T \int_{-1}^1  [2 |k_1|^2 \hat{b}_1 + |k_2|^2 \hat{b}_1] \phi_b \dd x_3 \dd t + ik_1 \hat{b}_3 \phi_b  +  E_3.
\end{align*}
The contribution of $\hat{a},\hat{c}$ vanished from the oddness.

Here, by \eqref{phib_l2_k},
\begin{align}
    &   |E_3| \lesssim o(1)[\Vert |k|^2\phi_b\Vert_{L^2_{T,x_3}}^2 + \Vert |k|\p_{x_3}\phi_b \Vert_{L^2_{T,x_3}}^2] + \Vert (\mathbf{I}-\mathbf{P})\hat{f}\Vert_{L^2_{T,x_3}}^2  \notag\\
    & \lesssim o(1)\Big\Vert \frac{|k|}{\sqrt{1+|k|^2}} \hat{b}_1 \Big\Vert^2_{L^2_{T,x_3}} + \Vert (\mathbf{I}-\mathbf{P})\hat{f}\Vert^2_{L^2_{T,x_3,v}}. \label{est_b_E1_k}
\end{align}

Next, we compute
\begin{align*}
    &    \eqref{weak_formulation}_2 = -\int_0^T \int_{-1}^1 \int_{\mathbb{R}^3} v_3 (\hat{\mathbf{b}}\cdot v)\sqrt{\mu} \p_{x_3}\psi_b \dd v \dd x_3 \dd t \underbrace{-\int_0^T \int_{-1}^1 \int_{\mathbb{R}^3} v_3 (\mathbf{I}-\mathbf{P})\hat{f} \p_{x_3}\psi_b \dd v \dd x_3 \dd t}_{E_4} \\
    & = -\int_0^T \int_{-1}^1 \int_{\mathbb{R}^3} v_1^2 v_3^2 \hat{b}_1  \mu \p_{x_3}^2 \phi_b  \dd v \dd x_3 \dd t + \int_0^T \int_{-1}^1 \int_{\mathbb{R}^3}  \frac{3}{2}ik_1\hat{b}_3 v_3^2\Big( |v_1|^2- \frac{|v|^2}{3}\Big)\mu \dd v \dd x_3 \dd t + E_4 \\
    &= -\int_0^T \int_{-1}^1 \hat{b}_1 \p_{x_3}^2 \phi_b \dd x \dd t - ik_1 \hat{b}_3 \phi_b + E_4.
\end{align*}
The contribution of $\hat{a},\hat{c}$ and $v_2$ vanished from the oddness.

Here, by \eqref{phib_l2_k},
\begin{align}
    & |E_4| \lesssim o(1) [\Vert |k|\p_{x_3}\phi_b \Vert^2_{L^2_{T,x_3}} + \Vert \p_{x_3}^2 \phi_b\Vert^2_{L^2_{T,x_3}}] + \Vert (\mathbf{I}-\mathbf{P})\hat{f}\Vert^2_{L^2_{T,x_3,v}}  \notag \\
        & \lesssim o(1)\Big\Vert \frac{|k|}{\sqrt{1+|k|^2}} \hat{b}_1 \Big\Vert^2_{L^2_{T,x_3}} + \Vert (\mathbf{I}-\mathbf{P})\hat{f}\Vert^2_{L^2_{T,x_3,v}}.\label{est_b_E2_k}
\end{align}
Then we have
\begin{align}
    &     \eqref{weak_formulation}_1 + \eqref{weak_formulation}_2 = \int_0^T \int_{-1}^1 [2|k_1|^2 + |k_2|^2 - \p_{x_3}^2]\phi_b \hat{b}_1 \dd x_3 \dd t + E_3 + E_4  \notag\\
    & = \Big\Vert \frac{|k|}{\sqrt{1+|k|^2}} \hat{b}_1 \Big\Vert^2_{L^2_{T,x_3}} + E_3 + E_4. \label{est_b_LHS_k}
\end{align}

Then we compute the boundary term $\eqref{weak_formulation}_3$. For the contribution of $P_\gamma \hat{f}$, we have
\begin{align*}
    &   \int_0^T \int_{\mathbb{R}^3} v_3   P_\gamma \hat{f}(k,1) \psi_b(1) \dd v \dd t \\
    &=  \int_0^T \int_{\mathbb{R}^3} v_3 P_\gamma \hat{f}(k,1)\Big[ -\frac{3}{2}\Big(|v_1|^2 - \frac{|v|^2}{3} \Big)\sqrt{\mu} ik_1 \phi_b  - v_1v_2 \sqrt{\mu} ik_2 \phi_b + v_1 v_3 \sqrt{\mu} \p_{x_3} \phi_b \Big] \dd v \dd t = 0.
\end{align*}
Here we have used the oddness.

For the part with $(I-P_\gamma)\hat{f}$, we derive that
\begin{align}
    & \Big|\int_0^T \int_{v_3>0} (I-P_\gamma)\hat{f}(k,1) \Big[ -\frac{3}{2}\Big(|v_1|^2 - \frac{|v|^2}{3} \Big)\sqrt{\mu} ik_1 \phi_b  - v_1v_2 \sqrt{\mu} ik_2 \phi_b + v_1 v_3 \sqrt{\mu} \p_{x_3} \phi_b \Big] \dd v \dd t\Big| \notag \\
    & \lesssim o(1) [| |k| \phi_b(k,1) |^2_{L^2_T} +  | \p_{x_3} \phi_b(k,1)|^2_{L^2_T}  ]   +    |(I-P_\gamma)\hat{f}|_{L^2_{T,\gamma_+}}^2 \notag \\
    & \lesssim o(1) \Big\Vert \frac{|k|}{\sqrt{1+|k|^2}}\hat{b}_1 \Big\Vert_{L^2_{T,x_3}}^2 + |(I-P_\gamma)\hat{f}|_{L^2_{T,\gamma_+}}^2.  \notag
\end{align}
In the last line, we have used the trace estimate \eqref{phib_trace_k}.

Similarly, for $x_3=-1$ we have the same estimate. Thus we conclude that
\begin{align}
    &  |\eqref{weak_formulation}_3| \lesssim o(1)\Big\Vert \frac{|k|}{\sqrt{1+|k|^2}} \hat{b}_1 \Big\Vert^2_{L^2_{T,x_3}} + |(I-P_\gamma)\hat{f}|_{L^2_{T,\gamma_+}}^2.   \label{est_b_bdr_k}
\end{align}

Next, we compute the time derivative $\eqref{weak_formulation}_4$. We denote $\Phi_b$ as the solution to the elliptic equation
\begin{align*}
\begin{cases}
        &\dis    (2|k_1|^2 + |k_2|^2 - \p_{x_3}^2) \Phi_b(k,x_3) = \p_t \bar{\hat{b}}_1(t,k,x_3) \frac{|k|^2}{1+|k|^2}, \ x_3 \in (-1,1) ,\\
    & \dis \Phi_b(k,\pm 1) = 0.
\end{cases}
\end{align*}
Integration by part leads to
\begin{align}
    &   \int_0^T \int_{-1}^1 (2|k_1|^2 + |k_2|^2) |\Phi_b|^2 \dd x_3 \dd t+ \int_0^T \int_{-1}^1 |\p_{x_3} \Phi_b|^2 \dd x_3 \dd t   \notag\\
    & = \int_0^T \int_{-1}^1 \frac{|k|^2}{1+|k|^2} \p_t \bar{\hat{b}}_1(t,k,x_3) \bar{\Phi}_b \dd x_3 \dd t.   \label{energy_p_t_b}
\end{align}
Denote $\Theta_{ij}(f):= ((v_iv_j-1)\sqrt{\mu},f)_v$. From the conservation of momentum, we have
\begin{align}
    & \p_t \hat{b}_1 + ik_1 (\hat{a}+2\hat{c}) + i k_1 \Theta_{11}((\mathbf{I}-\mathbf{P})\hat{f}) + i k_2 \Theta_{12}((\mathbf{I}-\mathbf{P})\hat{f}) + \p_{x_3} \Theta_{13}((\mathbf{I}-\mathbf{P})\hat{f}) = 0.   \label{conservation_momentum}
\end{align}
Then \eqref{energy_p_t_b} becomes
\begin{align}
    &    \int_0^T \int_{-1}^1 \frac{|k|^2}{1+|k|^2} \p_t \bar{\hat{b}}_1(t,k,x_3) \bar{\Phi}_b \dd x_3 \dd t \notag \\
    & = \int_0^T \int_{-1}^1 \frac{|k|^2}{1+|k|^2} \Big[ -ik_1 (\hat{a}+2\hat{c} + \Theta_{11}((\mathbf{I}-\mathbf{P})\hat{f}))\bar{\Phi}_b - ik_2 \Theta_{12}((\mathbf{I}-\mathbf{P})\hat{f})\bar{\Phi}_b     \notag\\
    & \ \ \ \ \ \  + \Theta_{13}((\mathbf{I}-\mathbf{P})\hat{f})\p_{x_3}\bar{\Phi}_b \Big] \dd x_3 \dd t - \int_0^T  \frac{|k|^2}{1+|k|^2} \bar{\Phi}_b \Theta_{13}((\mathbf{I}-\mathbf{P})\hat{f}) \Big|_{-1}^1 \dd t. \label{energy_p_t_b_2}
\end{align}
The boundary term vanishes from the boundary condition $\Phi_b(k,\pm 1) = 0$:
\begin{align*}
    & \int_0^T  \frac{|k|^2}{1+|k|^2} \bar{\Phi}_b \Theta_{13}((\mathbf{I}-\mathbf{P})\hat{f}) \Big|_{-1}^1 \dd t = 0.
\end{align*}

The other term in \eqref{energy_p_t_b_2} is controlled as
\begin{align*}
    &   \Big| \int_0^T \int_{-1}^1 \frac{|k|^2}{1+|k|^2} \Big[ -ik_1 (\hat{a}+2\hat{c} + \Theta_{11}((\mathbf{I}-\mathbf{P})\hat{f}))\bar{\Phi}_b - ik_2 \Theta_{12}((\mathbf{I}-\mathbf{P})\hat{f})\bar{\Phi}_b     \notag\\
    & \ \ \ \ \ \  + \Theta_{13}((\mathbf{I}-\mathbf{P})\hat{f})\p_{x_3}\bar{\Phi}_b \Big]\dd x_3 \dd t \Big| \\
    & \lesssim o(1) \Vert |k| \Phi_b \Vert_{L^2_{T,x_3}}^2 + o(1) \Vert \p_{x_3}\Phi_b\Vert_{L^2_{T,x_3}}^2 \\
    &+ \Big\Vert  \frac{|k|^2}{1+|k|^2} \hat{a}\Big\Vert^2_{L^2_{T,x_3}}+ \Big\Vert  \frac{|k|^2}{1+|k|^2} \hat{c}\Big\Vert^2_{L^2_{T,x_3}}+ \Big\Vert  \frac{|k|^2}{1+|k|^2} (\mathbf{I}-\mathbf{P})\hat{f}\Big\Vert^2_{L^2_{T,x_3,v}}.
\end{align*}
Plugging this estimate to \eqref{energy_p_t_b}, we obtain
\begin{align}
    &   \Vert |k| \Phi_b \Vert_{L^2_{T,x_3}}^2 +  \Vert \p_{x_3}\Phi_b\Vert_{L^2_{T,x_3}}^2  \notag \\
    & \lesssim \Big\Vert  \frac{|k|^2}{1+|k|^2} \hat{a}\Big\Vert^2_{L^2_{T,x_3}}+ \Big\Vert  \frac{|k|^2}{1+|k|^2} \hat{c}\Big\Vert^2_{L^2_{T,x_3}}+ \Big\Vert  \frac{|k|^2}{1+|k|^2} (\mathbf{I}-\mathbf{P})\hat{f}\Big\Vert^2_{L^2_{T,x_3,v}}.    \label{Phi_b_estimate}
\end{align}

Then we compute $\eqref{weak_formulation}_4$,  as
\begin{align}
    &    | \eqref{weak_formulation}_4 |= \Big|\int_0^T \int_{-1}^1 \int_{\mathbb{R}^3} \hat{f} \sqrt{\mu} \Big[ -\frac{3}{2}\Big( |v_1|^2 - \frac{|v|^2}{3} \Big) i k_1 - v_1 v_2 i k_2 + v_1 v_3 \p_{x_3} \Big] \Phi_b \dd x_3 \dd v \dd t  \Big|\notag\\
    & \lesssim \int_0^T \int_{-1}^1  \Vert (\mathbf{I}-\mathbf{P})\hat{f}\Vert_{L^2_v}   [|k\Phi_b| + |\p_{x_3}\Phi_b|] \dd x_3 \dd t  \notag\\
    & \lesssim  \Vert (\mathbf{I}-\mathbf{P})\hat{f}\Vert^2_{L^2_{T,x_3,v}} + o(1) \Vert  |k\Phi_b| + |\p_{x_3}\Phi_b| \Vert_{L^2_{T,x_3}}^2  \notag\\
    & \lesssim  \Vert (\mathbf{I}-\mathbf{P})\hat{f}\Vert^2_{L^2_{T,x_3,v}}   + o(1)\Big\Vert  \frac{|k|}{\sqrt{1+|k|^2}} \hat{a}\Big\Vert^2_{L^2_{T,x_3}}+ o(1)\Big\Vert  \frac{|k|}{\sqrt{1+|k|^2}} \hat{c}\Big\Vert^2_{L^2_{T,x_3}} .   \label{est_b_RHS_3}
\end{align}
In the last line, we have used \eqref{Phi_b_estimate}.

Next we compute $\eqref{weak_formulation}_5$, $\eqref{weak_formulation}_6$ and $\eqref{weak_formulation}_0$ as
\begin{align}
    &    |\eqref{weak_formulation}_5 |\lesssim o(1)[\Vert k \phi_b \Vert_{L^2_{T,x_3}}^2 + \Vert \p_{x_3} \phi_b \Vert_{L^2_{T,x_3}}^2 ] + \Vert (\mathbf{I}-\mathbf{P})\hat{f}\Vert_{L^2_{T,x_3,v}}^2   \notag \\
    & \lesssim o(1)\Big\Vert \frac{|k|}{\sqrt{1+|k|^2}} \hat{b}_1  \Big\Vert^2_{L^2_{T,x_3}}  + \Vert (\mathbf{I}-\mathbf{P})\hat{f}\Vert^2_{L^2_{T,x_3,v}},   \label{est_b_RHS_4_k}
\end{align}
\begin{align}
    & |\eqref{weak_formulation}_6 |=  \Big| \int_0^T   \int_{-1}^1 \int_{\mathbb{R}^3} \hat{\Gamma}(\hat{f},\hat{f})\psi_b \dd v \dd x_3 \dd t \Big|\notag \\
    &\lesssim o(1)[\Vert k \phi_b \Vert_{L^2_{T,x_3}}^2 + \Vert \p_{x_3} \phi_b \Vert_{L^2_{T,x_3}}^2 ] + \Big(\int_{\mathbb{R}^2} \Vert \hat{f}(k-\ell)\Vert_{L^\infty_T L^2_{x_3,v}} \Vert \hat{f}(\ell)\Vert_{L^2_T L^\infty_{x_3}L^2_\nu} \dd \ell\Big)^2  \notag\\
    & \lesssim  o(1)\Big\Vert \frac{|k|}{\sqrt{1+|k|^2}} \hat{b}_1  \Big\Vert^2_{L^2_{T,x_3}} + \Big(\int_{\mathbb{R}^2} \Vert \hat{f}(k-\ell)\Vert_{L^\infty_T L^2_{x_3,v}} \Vert \hat{f}(\ell)\Vert_{L^2_T L^\infty_{x_3}L^2_\nu} \dd \ell\Big)^2,   \label{est_b_RHS_5_k}
\end{align}
and
\begin{align}
    &     \int_{-1}^1 \int_{\mathbb{R}^3} |\hat{f}(T) \psi_b(T)| \dd v \dd x_3   \lesssim \Vert \hat{f} \Vert_{L^\infty_T L^2_{x_3,v}} [\Vert k\phi_b \Vert_{L^\infty_T L^2_{x_3}} + \Vert \p_{x_3}\phi_b \Vert_{L^\infty_T L^2_{x_3}} ] \notag \\
    & \lesssim \Vert \hat{f}\Vert_{L^\infty_T L^2_{x_3,v}} \Vert \hat{b}_1\Vert_{L^\infty_{T}L^2_{x_3}} \lesssim \Vert \hat{f}\Vert_{L^\infty_T L^2_{x_3,v}}^2,  \label{est_b_t}\\
    &      \int_{-1}^1 \int_{\mathbb{R}^3} |\hat{f}(0)\psi_b(0)| \dd v \dd x_3 \lesssim \Vert \hat{f}_0\Vert^2_{L^2_{x_3,v}}. \label{est_b_0}
\end{align}

We combine \labelcref{est_b_E1_k,est_b_E2_k,est_b_LHS_k,est_b_bdr_k} and \labelcref{est_b_RHS_3,est_b_RHS_4_k,est_b_RHS_5_k,est_b_t,est_b_0} to conclude the estimate of $\hat{b}_1$: 
\begin{align*}
    &     \Big\Vert \frac{|k|}{\sqrt{1+|k|^2}} \hat{b}_1  \Big\Vert_{L^2_{T,x_3}} \lesssim \Vert (\mathbf{I}-\mathbf{P})\hat{f}\Vert_{L^2_{T,x_3,v}} +  \int_{\mathbb{R}^2} \Vert \hat{f}(k-\ell)\Vert_{L^\infty_T L^2_{x_3,v}} \Vert \hat{f}(\ell)\Vert_{L^2_T L^\infty_{x_3}L^2_\nu} \dd \ell \\
    & + |(I-P_\gamma)\hat{f}|_{L^2_{T,\gamma_+}}+ o(1)\Big\Vert \frac{|k|}{\sqrt{1+|k|^2}} \hat{a} \Big\Vert_{L^2_{T,x_3}}  + o(1)\Big\Vert \frac{|k|}{\sqrt{1+|k|^2}} \hat{c} \Big\Vert_{L^2_{T,x_3}} + \Vert \hat{f}\Vert_{L_{T}^\infty L^2_{x_3,v}} + \Vert \hat{f}_0\Vert_{L^2_{x_3,v}} .
\end{align*}
Further taking integration in $k$, with the same computation in \eqref{a_estimate_k} we conclude that
\begin{align*}
    &     \Big\Vert \frac{|k|}{\sqrt{1+|k|^2}} \hat{b}_1  \Big\Vert_{L^1_k L^2_{T,x_3}} \lesssim \Vert (\mathbf{I}-\mathbf{P})\hat{f}\Vert_{L^1_k L^2_{T,x_3,v}} + \Vert \hat{f} \Vert_{L^1_k L^\infty_T L^2_{x_3,v}} \Vert w\hat{f}\Vert_{L^1_k L^2_T L^\infty_{x_3,v}}   + |(I-P_\gamma)\hat{f}|_{L^1_k L^2_{T,\gamma_+}} \\
    &+ o(1)\Big\Vert \frac{|k|}{\sqrt{1+|k|^2}} \hat{a} \Big\Vert_{L^1_k L^2_{T,x_3}}+ o(1)\Big\Vert \frac{|k|}{\sqrt{1+|k|^2}} \hat{c} \Big\Vert_{L^1_k L^2_{T,x_3}} + \Vert \hat{f}\Vert_{L^1_k L_{T}^\infty L^2_{x_3,v}} + \Vert \hat{f}_0\Vert_{L^1_k L^2_{x_3,v}} .
\end{align*}

For $\hat{b}_2$, we choose the test function as
\begin{align*}
\begin{cases}
      &\dis  \psi_b = -v_1v_2 \sqrt{\mu} ik_1 \phi_b - \frac{3}{2}\Big( |v_2|^2 -\frac{|v|^2}{3} \Big) \sqrt{\mu} ik_2 \phi_b + v_2 v_3 \sqrt{\mu} \p_{x_3} \phi_b, \\
    &\dis  [|k_1|^2 + 2|k_2|^2 - \p_{x_3}^2]\phi_b = \frac{|k|^2}{1+|k|^2}\bar{\hat{b}}_2, \\
    &\dis  \phi_b = 0 \text{ when } x_3 = \pm 1.  
\end{cases}
\end{align*}
The estimate for $\hat{b}_2$ can be done by the same computation.

For $\hat{b}_3$, we choose the test function as
\begin{align*}
\begin{cases}
        &\dis \psi_b = -v_1 v_3 \sqrt{\mu} i k_1\phi_b - v_2 v_3 \sqrt{\mu} i k_2 \phi_b + \frac{3}{2} \Big( |v_3|^2 - \frac{|v|^2}{3} \Big) \sqrt{\mu} \p_{x_3} \phi_b, \\
    &\dis  [|k_1|^2 + |k_2|^2 - 2\p_{x_3}^2]\phi_b = \frac{|k|^2}{1+|k|^2} \bar{\hat{b}}_3, \\
    &\dis  \phi_b = 0 \text{ when } x_3 = \pm 1.
\end{cases}
\end{align*}
The difference of the estimate of $\hat{b}_3$ lies in the control of $\eqref{weak_formulation}_4$, since we have a different representation of the conservation law. We only compute this term, and the computation for the other terms can be done in the same manner.

We let $\Phi_b$ satisfy the elliptic equation
\begin{align*}
\begin{cases}
       & \dis   (|k|^2 - 2\p_{x_3}^2) \Phi_b(k,x_3) = \p_t \bar{\hat{b}}_3(t,k,x_3) \frac{|k|^2}{1+|k|^2}, \ x_3 \in (-1,1) ,\\
    &\dis \Phi_b(k,\pm 1) = 0. 
\end{cases}
\end{align*}
Integration by part leads to
\begin{align}
    &   \int_0^T \int_{-1}^1 |k|^2 |\Phi_b|^2 \dd x_3 \dd t+ 2\int_0^T \int_{-1}^1 |\p_{x_3} \Phi_b|^2 \dd x_3 \dd t   \notag\\
    & = \int_0^T \int_{-1}^1 \frac{|k|^2}{1+|k|^2} \p_t \bar{\hat{b}}_3(t,k,x_3) \bar{\Phi}_b \dd x_3 \dd t. \label{energy_p_t_b3}  
\end{align}
The conservation of momentum in $\hat{b}_3$ behaves different to $\hat{b}_1$ and $\hat{b}_2$:
\begin{align}
    & \p_t \hat{b}_3 + \p_{x_3} (\hat{a}+2\hat{c}) + i k_1 \Theta_{31}((\mathbf{I}-\mathbf{P})\hat{f}) + i k_2 \Theta_{32}((\mathbf{I}-\mathbf{P})\hat{f}) + \p_{x_3} \Theta_{33}((\mathbf{I}-\mathbf{P})\hat{f}) = 0.  \label{momentum_conservation}
\end{align}
Then \eqref{energy_p_t_b3} becomes
\begin{align}
    &    \int_0^T \int_{-1}^1 \frac{|k|^2}{1+|k|^2} \p_t \bar{\hat{b}}_3(t,k,x_3) \bar{\Phi}_b \dd x_3 \dd t \notag \\
    & = \int_0^T \int_{-1}^1 \frac{|k|^2}{1+|k|^2} \Big[  (-ik_1 \Theta_{31}((\mathbf{I}-\mathbf{P})\hat{f}))\bar{\Phi}_b - ik_2 \Theta_{32}((\mathbf{I}-\mathbf{P})\hat{f})\bar{\Phi}_b     \notag\\
    &  + [\hat{a} + 2\hat{c} + \Theta_{33}((\mathbf{I}-\mathbf{P})\hat{f})]\p_{x_3}\bar{\Phi}_b \Big] \dd x_3 \dd t - \int_0^T  \frac{|k|^2}{1+|k|^2} \bar{\Phi}_b [\hat{a}+2\hat{c}+\Theta_{33}((\mathbf{I}-\mathbf{P})\hat{f})] \Big|_{-1}^1 \dd t. \label{energy_p_t_b3_2}
\end{align}
The boundary term vanishes from the boundary condition $\Phi_b(k,\pm 1) = 0$:
\begin{align*}
    & \int_0^T  \frac{|k|^2}{1+|k|^2} \bar{\Phi}_b [\hat{a}+2\hat{c}+\Theta_{33}((\mathbf{I}-\mathbf{P})\hat{f})] \Big|_{-1}^1 \dd t = 0.
\end{align*}

The other term in \eqref{energy_p_t_b3_2} is controlled as
\begin{align*}
    &    \int_0^T \int_{-1}^1 \frac{|k|^2}{1+|k|^2} \Big| -ik_1 \Theta_{31}((\mathbf{I}-\mathbf{P})\hat{f})\bar{\Phi}_b - ik_2 \Theta_{32}((\mathbf{I}-\mathbf{P})\hat{f})\bar{\Phi}_b     \notag\\
    & \ \ \ \ \ \  + [ \hat{a} + \hat{c}  + \Theta_{33}((\mathbf{I}-\mathbf{P})\hat{f})]\p_{x_3}\bar{\Phi}_b \Big|\dd x_3 \dd t \\
    & \lesssim o(1) \Vert |k| \Phi_b \Vert_{L^2_{T,x_3}}^2 + o(1) \Vert \p_{x_3}\Phi_b\Vert_{L^2_{T,x_3}}^2 \\
    &+ \Big\Vert  \frac{|k|^2}{1+|k|^2} \hat{a}\Big\Vert^2_{L^2_{T,x_3}}+ \Big\Vert  \frac{|k|^2}{1+|k|^2} \hat{c}\Big\Vert^2_{L^2_{T,x_3}}+ \Big\Vert  \frac{|k|^2}{1+|k|^2} (\mathbf{I}-\mathbf{P})\hat{f}\Big\Vert^2_{L^2_{T,x_3,v}}.
\end{align*}
Plugging the estimates to \eqref{energy_p_t_b3}, we obtain
\begin{align}
    &   \Vert |k| \Phi_b \Vert_{L^2_{T,x_3}}^2 +  \Vert \p_{x_3}\Phi_b\Vert_{L^2_{T,x_3}}^2 \lesssim \Big|\frac{|k|}{\sqrt{1+|k|^2}} (I-P_\gamma)\hat{f}\Big|^2_{L^2_{T,\gamma_+}} \notag \\
    &+ \Big\Vert  \frac{|k|^2}{1+|k|^2} \hat{a}\Big\Vert^2_{L^2_{T,x_3}}+ \Big\Vert  \frac{|k|^2}{1+|k|^2} \hat{c}\Big\Vert^2_{L^2_{T,x_3}}+ \Big\Vert  \frac{|k|^2}{1+|k|^2} (\mathbf{I}-\mathbf{P})\hat{f}\Big\Vert^2_{L^2_{T,x_3,v}}.    \notag
\end{align}
Thus we can derive the same estimate as \eqref{est_b_RHS_3}.

In summary, we obtain the following estimate for $\hat{\mathbf{b}}$. For $0<\delta_2\ll 1$ and $C_2(\delta_2)>1$, it holds that
\begin{align}
    &     \Big\Vert \frac{|k|}{\sqrt{1+|k|^2}} \hat{\mathbf{b}}  \Big\Vert_{L^1_k L^2_{T,x_3}} \leq \delta_2\Big[ \Big\Vert \frac{|k|}{\sqrt{1+|k|^2}} \hat{a} \Big\Vert_{L^1_k L^2_{T,x_3}}+ \Big\Vert \frac{|k|}{\sqrt{1+|k|^2}} \hat{c} \Big\Vert_{L^1_k L^2_{T,x_3}}\Big] \notag \\
    &+ C_2\big[ \Vert (\mathbf{I}-\mathbf{P})\hat{f}\Vert_{L^1_k L^2_{T,x_3,v}} + \Vert \hat{f} \Vert_{L^1_k L^\infty_T L^2_{x_3,v}} \Vert w\hat{f}\Vert_{L^1_k L^2_T L^\infty_{x_3,v}}  \notag \\
    & \ \ \ \ + |(I-P_\gamma)\hat{f}|_{L^1_k L^2_{T,\gamma_+}} + \Vert \hat{f}\Vert_{L^1_k L_{T}^\infty L^2_{x_3,v}} + \Vert \hat{f}_0\Vert_{L^1_k L^2_{x_3,v}}\big] . \label{b_estimate_k}
\end{align}
Note that we can choose $\delta_2>0$ to be arbitrarily small.

\medskip
\textit{Estimate of $\hat{c}$.}

We choose the test function as
\begin{align}
    &\dis \psi_c = (-ik_1v_1 \phi_c - ik_2 v_2 \phi_c + v_3 \p_{x_3}\phi_c)(|v|^2-5)\sqrt{\mu}, \label{psi_c_k}
\end{align}
with $\phi_c$ satisfying
\begin{align}
\begin{cases}
    &\dis   |k|^2 \phi_c - \p_{x_3}^2 \phi_c = \frac{|k|^2}{1+|k|^2}\bar{\hat{c}},  \\
    & \phi_c = 0 \ \text{ when } x_3 = \pm 1.   
\end{cases}\label{phi_c_k}
\end{align}

Multiplying \eqref{phi_c_k} by $\bar{\phi}_c$ and taking integration in $x_3$ we obtain
\begin{align*}
    &   \Vert |k| \phi_c \Vert_{L^2_{x_3}}^2 + \Vert \p_{x_3} \phi_c \Vert_{L^2_{x_3}}^2 \lesssim o(1) \frac{|k|^2}{1+|k|^2}\Vert \phi_c\Vert_{L^2_{x_3}}^2 + \frac{|k|^2}{1+|k|^2}\Vert \hat{c}\Vert_{L^2_{x_3}}^2, \\
    & \Vert |k|\phi_c \Vert_{L^2_{x_3}}^2 + \Vert \p_{x_3}\phi_c\Vert^2_{L^2_{x_3}} \lesssim  \frac{|k|^2}{1+|k|^2} \Vert \hat{c} \Vert^2_{L^2_{x_3}}.
\end{align*}

Multiplying \eqref{phi_c_k} by $|k|^2 \bar{\phi}_c$ we obtain
\begin{align*}
    &  \Vert |k|^2\phi_c \Vert_{L^2_{x_3}}^2 + \Vert |k|\p_{x_3}\phi_c\Vert^2_{L^2_{x_3}} \lesssim o(1) \Vert |k|^2 \phi_c\Vert_{L^2_{x_3}}^2 +  \frac{|k|^4}{(1+|k|^2)^2} \Vert \hat{c}\Vert_{L^2_{x_3}}^2.
\end{align*}
Thus we conclude
\begin{align}
    & \Vert (|k|+|k|^2) \phi_c\Vert_{L^2_{x_3}} + \Vert (1+|k|)\p_{x_3}\phi_c\Vert_{L^2_{x_3}} \lesssim  \frac{|k|}{\sqrt{1+|k|^2}}\Vert \hat{c}\Vert_{L^2_{x_3}}, \label{phic_l2_k}   \\
    & \Vert \p_{x_3}^2 \phi_c\Vert_{L^2_{x_3}} \lesssim \Vert |k|^2 \phi\Vert_{L^2_{x_3}} + \frac{|k|^2}{1+|k|^2}\Vert \hat{c}\Vert_{L^2_{x_3}} \lesssim \frac{|k|}{\sqrt{1+|k|^2}}\Vert \hat{c}\Vert_{L^2_{x_3}}.    \notag
\end{align}

Further by trace theorem, we have
\begin{align}
    & | |k|\phi_c(k,\pm 1) | \lesssim \frac{|k|}{\sqrt{1+|k|^2}} \Vert \hat{c}\Vert_{L^2_{x_3}}, \ | \p_{x_3}\phi_c(k,\pm 1)| \lesssim \frac{|k|}{\sqrt{1+|k|^2}}\Vert \hat{c}\Vert_{L^2_{x_3}}. \label{phic_trace_k}
\end{align}

We first compute
\begin{align*}
    & \eqref{weak_formulation}_1  = \int_0^T \int_{-1}^1 \int_{\mathbb{R}^3} i \bar{v}\cdot k \Big( \hat{a}+\hat{c} \frac{|v|^2-3}{2}\Big) \sqrt{\mu} \psi_c \dd v \dd x_3 \dd t \underbrace{+ \int_0^T \int_{-1}^1 \int_{\mathbb{R}^3} i\bar{v}\cdot k (\mathbf{I}-\mathbf{P})\hat{f}\psi_c \dd v \dd x_3 \dd t}_{E_5} \\
    &  = \int_0^T \int_{-1}^1 \int_{\mathbb{R}^3}  (v_1k_1 + v_2k_2) \Big( \hat{a}+\hat{c} \frac{|v|^2-3}{2}\Big) \mu (v_1\phi_c + v_2 \phi_c)(|v|^2-5) \dd v \dd x_3 \dd t + E_5 \\
    & = \int_0^T \int_{-1}^1  5|k|^2 \hat{c}\phi_c \dd x_3 \dd t + E_5   .
\end{align*}
In the first equality, the contribution of $\hat{\mathbf{b}}$ vanished from the oddness. In the second equality, the contribution of $v_3$ in $\psi_c$ vanished from the oddness. In the third equality, we used $\int_{\mathbb{R}^3} v_i^2 \frac{|v|^2-3}{2}(|v|^2-5)\mu \dd v = 5$, and the contribution of $\hat{a}$ vanished by the orthogonality,
\begin{align*}
    &   \int_{\mathbb{R}^3} v_i^2 (|v|^2-5)\mu \dd v = 0, \ i=1,2,3.
\end{align*}

Note that $E_5$ corresponds to the contribution of $(\mathbf{I}-\mathbf{P})\hat{f}$. By \eqref{phic_l2_k}, it holds that
\begin{align}
    &   |E_5| \lesssim o(1)[\Vert |k|^2\phi_c\Vert_{L^2_{T,x_3}}^2 + \Vert |k|\p_{x_3}\phi_c \Vert_{L^2_{T,x_3}}^2] + \Vert (\mathbf{I}-\mathbf{P})\hat{f}\Vert_{L^2_{T,x_3,v}}^2  \notag\\
    & \lesssim o(1)\Big\Vert \frac{|k|}{\sqrt{1+|k|^2}}\hat{c}\Big\Vert^2_{L^2_{T,x_3}} + \Vert (\mathbf{I}-\mathbf{P})\hat{f}\Vert^2_{L^2_{T,x_3,v}}. \label{est_c_E1_k}
\end{align}

Next, we compute
\begin{align*}
    &    \eqref{weak_formulation}_2 = -\int_0^T \int_{-1}^1 \int_{\mathbb{R}^3} v_3 \Big(\hat{a} + \hat{c}\frac{|v|^2-3}{2} \Big)\sqrt{\mu} \p_{x_3}\psi_c \dd v \dd x_3 \dd t \underbrace{-\int_0^T \int_{-1}^1 \int_{\mathbb{R}^3} v_3 (\mathbf{I}-\mathbf{P})\hat{f} \p_{x_3}\psi_c \dd v \dd x_3 \dd t}_{E_6} \\
    & = -\int_0^T \int_{-1}^1 \int_{\mathbb{R}^3} v_3^2 \hat{c}\frac{|v|^2-3}{2} (|v|^2-5) \mu \p_{x_3}^2 \phi_c  \dd v \dd x_3 \dd t + E_6 \\
    &= -5\int_0^T \int_{-1}^1 \hat{c} \p_{x_3}^2 \phi_c \dd x \dd t + E_6.
\end{align*}
Here, by \eqref{phic_l2_k},
\begin{align}
    & |E_6| \lesssim o(1) [\Vert |k|\p_{x_3}\phi_c \Vert^2_{L^2_{T,x_3}} + \Vert \p_{x_3}^2 \phi_c \Vert^2_{L^2_{T,x_3}}] + \Vert (\mathbf{I}-\mathbf{P})\hat{f}\Vert^2_{L^2_{T,x_3,v}}  \notag \\
        & \lesssim o(1) \Big\Vert \frac{|k|}{\sqrt{1+|k|^2}}\hat{c}\Big\Vert^2_{L^2_{T,x_3}} + \Vert (\mathbf{I}-\mathbf{P})\hat{f}\Vert^2_{L^2_{T,x_3,v}}.\label{est_c_E2_k}
\end{align}
Then we have
\begin{align}
    &     \eqref{weak_formulation}_1 + \eqref{weak_formulation}_2 = 5\int_0^T \int_{-1}^1 [|k|^2  - \p_{x_3}^2]\phi_c \hat{c} \dd x_3 \dd t + E_5 + E_6  \notag\\
    & = 5\Big\Vert \frac{|k|}{\sqrt{1+|k|^2}}\hat{c} \Big\Vert_{L^2_{T,x_3}}^2 + E_5 + E_6. \label{est_c_LHS_k}
\end{align}

Then we compute the boundary term $\eqref{weak_formulation}_3$. For the contribution of $P_\gamma \hat{f}$, we have
\begin{align*}
    &   \int_0^T \int_{\mathbb{R}^3} v_3   P_\gamma \hat{f}(k,1) \psi_c(1) \dd v \dd t \\
    &=  \int_0^T \int_{\mathbb{R}^3} v_3 P_\gamma \hat{f}(k,1) (-ik_1v_1 \phi_c - ik_2 v_2 \phi_c + v_3 \p_{x_3}\phi_c)(|v|^2-5)\sqrt{\mu}  \dd v \dd t = 0.
\end{align*}
Here we have used the oddness and $\int_{\mathbb{R}^3} v_3^2 (|v|^2-5)\mu \dd v = 0$.

For the part with $(I-P_\gamma)\hat{f}$, we derive that
\begin{align}
    &\int_0^T \int_{v_3>0} |(I-P_\gamma)\hat{f}(k,1) (-ik_1v_1 \phi_c - ik_2 v_2 \phi_c + v_3 \p_{x_3}\phi_c)(|v|^2-5)\sqrt{\mu}| \dd v \dd t \notag \\
    & \lesssim o(1) [\Vert |k| \phi_c(k,1) \Vert_{L^2_{T}}^2 +  \Vert \p_{x_3} \phi_c(k,1)\Vert_{L^2_{T} }^2  ]   +    |(I-P_\gamma)\hat{f}|_{L^2_{T,\gamma_+}}^2 \notag \\
    & \lesssim o(1) \Big\Vert \frac{|k|}{\sqrt{1+|k|^2}}\hat{c} \Big\Vert_{L^2_{T,x_3}}^2 + |(I-P_\gamma)\hat{f}|_{L^2_{T,\gamma_+}}^2.  \notag
\end{align}
In the last line, we have used the trace estimate \eqref{phic_trace_k}.

Similarly, for $x_3=-1$ we have the same estimate. Thus we conclude that
\begin{align}
    &  |\eqref{weak_formulation}_3| \lesssim o(1) \Big\Vert \frac{|k|}{\sqrt{1+|k|^2}}\hat{c} \Big\Vert_{L^2_{T,x_3}}^2 + |(I-P_\gamma)\hat{f}|_{L^2_{T,\gamma_+}}^2.   \label{est_c_bdr_k}
\end{align}

Next, we compute the time derivative $\eqref{weak_formulation}_4$. We denote $\Phi_c$ as the solution to the elliptic equation
\begin{align*}
\begin{cases}
    &\dis    (|k|^2 - \p_{x_3}^2) \Phi_c(k,x_3) = \p_t \bar{\hat{c}}(t,k,x_3) \frac{|k|^2}{1+|k|^2}, \ x_3 \in (-1,1) ,\\
    & \Phi_c(k,\pm 1) = 0.    
\end{cases}
\end{align*}
Integration by part leads to
\begin{align}
    &   \int_0^T \int_{-1}^1 |k|^2 |\Phi_c|^2 \dd x_3 \dd t+ \int_0^T \int_{-1}^1 |\p_{x_3} \Phi_c|^2 \dd x_3 \dd t   \notag\\
    & = \int_0^T \int_{-1}^1 \frac{|k|^2}{1+|k|^2} \p_t \bar{\hat{c}}(t,k,x_3) \bar{\Phi}_c \dd x_3 \dd t.   \label{energy_p_t_c}
\end{align}
Denote $\Lambda_{j}(f):= \frac{1}{10}((|v|^2-5)v_j\sqrt{\mu},f)_v$. From the conservation of energy, we have
\begin{align}
    & \p_t \hat{c} + \frac{1}{3}(ik_1 \hat{b}_1 + ik_2 \hat{b}_2) + \frac{1}{3}\p_{x_3}\hat{b}_3 \notag\\
    &+ \frac{1}{6}\Big(ik_1 \Lambda_1((\mathbf{I}-\mathbf{P})\hat{f}) + ik_2 \Lambda_2((\mathbf{I}-\mathbf{P})\hat{f}) 
+ \p_{x_3} \Lambda_3((\mathbf{I}-\mathbf{P})\hat{f}) \Big) = 0. \label{energy_conservation}
\end{align}
Then \eqref{energy_p_t_c} becomes
\begin{align}
    &    \int_0^T \int_{-1}^1 \frac{|k|^2}{1+|k|^2} \p_t \bar{\hat{c}}(t,k,x_3) \bar{\Phi}_c \dd x_3 \dd t \notag \\
    & = \int_0^T \int_{-1}^1 \frac{|k|^2}{1+|k|^2} \Big[ -\frac{1}{3}i(k_1 \hat{b}_1 + k_2 \hat{b}_2)\bar{\Phi}_c   - \frac{1}{6}i(k_1 \Lambda_1((\mathbf{I}-\mathbf{P})\hat{f})+k_2 \Lambda_2((\mathbf{I}-\mathbf{P})\hat{f}))\bar{\Phi}_c      \notag\\
    & + \frac{1}{3}\hat{b}_3 \p_{x_3}\bar{\Phi}_c  +  \frac{1}{6} \Lambda_3((\mathbf{I}-\mathbf{P})\hat{f})\p_{x_3}\bar{\Phi}_c \Big] \dd x_3 \dd t \notag \\
    &- \int_0^T  \frac{|k|^2}{1+|k|^2} \Big(\frac{1}{3}\hat{b}_3 \bar{\Phi}_c + \frac{1}{6} \Lambda_3((\mathbf{I}-\mathbf{P})\hat{f}) \bar{\Phi}_c\Big) \Big|_{-1}^1 \dd t. \label{energy_p_t_c_2}
\end{align}
The boundary term vanishes from the boundary condition $\Phi_c(\pm 1) = 0$:
\begin{align*}
    & \int_0^T  \frac{|k|^2}{1+|k|^2} \Big(\frac{1}{3}\hat{b}_3 \Phi_c + \frac{1}{6} \Lambda_3((\mathbf{I}-\mathbf{P})\hat{f}) \bar{\Phi}_c\Big) \Big|_{-1}^1 \dd t = 0.
\end{align*}

The other term in \eqref{energy_p_t_c_2} is controlled as
\begin{align*}
    &    \int_0^T \int_{-1}^1 \frac{|k|^2}{1+|k|^2} \Big| -\frac{1}{3}i(k_1 \hat{b}_1 + k_2 \hat{b}_2)\bar{\Phi}_c   - \frac{1}{6}i(k_1 \Lambda_1((\mathbf{I}-\mathbf{P})\hat{f})+k_2 \Lambda_2((\mathbf{I}-\mathbf{P})\hat{f}))\bar{\Phi}_c      \notag\\
    & + \frac{1}{3}\hat{b}_3 \p_{x_3}\bar{\Phi}_c  +  \frac{1}{6} \Lambda_3((\mathbf{I}-\mathbf{P})\hat{f})\p_{x_3}\bar{\Phi}_c \Big| \dd x_3 \dd t \\
    &  \lesssim o(1)\Vert  |k|\Phi_c  \Vert_{L^2_{T,x_3}}^2 + o(1)\Vert \p_{x_3}\Phi_c \Vert_{L^2_{T,x_3}}^2 + \Big\Vert \frac{|k|^2}{1+|k|^2} \hat{\mathbf{b}}\Big\Vert_{L^2_{T,x_3}}^2 + \Big\Vert \frac{|k|^2}{1+|k|^2} (\mathbf{I}-\mathbf{P})\hat{f}\Big\Vert^2_{L^2_{T,x_3,v}}.
\end{align*}
Plugging the estimates to \eqref{energy_p_t_c}, we obtain
\begin{align}
    &   \Vert |k| \Phi_c \Vert_{L^2_{T,x_3}}^2 +  \Vert \p_{x_3}\Phi_c\Vert_{L^2_{T,x_3}}^2  \lesssim \Big\Vert  \frac{|k|^2}{1+|k|^2} \hat{\mathbf{b}}\Big\Vert^2_{L^2_{T,x_3}} + \Big\Vert  \frac{|k|^2}{1+|k|^2} (\mathbf{I}-\mathbf{P})\hat{f}\Big\Vert^2_{L^2_{T,x_3,v}}.    \label{Phi_c_estimate}
\end{align}

Then we compute $\eqref{weak_formulation}_4$ as
\begin{align}
    &     |\eqref{weak_formulation}_4| = \Big|\int_0^T \int_{-1}^1 \int_{\mathbb{R}^3} \hat{f}\sqrt{\mu}(|v|^2-5) (-ik_1v_1  - ik_2 v_2  + v_3 \p_{x_3})\Phi_c \dd x_3 \dd v \dd t \Big| \notag\\
    & \lesssim \int_0^T \int_{-1}^1  \Vert (\mathbf{I}-\mathbf{P})\hat{f}\Vert_{L^2_v}   [|k\Phi_c| + |\p_{x_3}\Phi_c|] \dd x_3 \dd t  \notag\\
    & \lesssim  \Vert (\mathbf{I}-\mathbf{P})\hat{f}\Vert^2_{L^2_{T,x_3,v}} + o(1) \Vert  |k\Phi_c| + |\p_{x_3}\Phi_c| \Vert_{L^2_{T,x_3}}^2  \notag\\
    & \lesssim  \Vert (\mathbf{I}-\mathbf{P})\hat{f}\Vert^2_{L^2_{T,x_3,v}}  + o(1)\Big\Vert  \frac{|k|}{\sqrt{1+|k|^2}} \hat{\mathbf{b}}\Big\Vert^2_{L^2_{T,x_3}} .   \label{est_c_RHS_3}
\end{align}
In the last line, we have used \eqref{Phi_c_estimate}.

Next we compute $\eqref{weak_formulation}_5$, $\eqref{weak_formulation}_6$ and $\eqref{weak_formulation}_0$ as
\begin{align}
    &    |\eqref{weak_formulation}_5| \lesssim o(1)[\Vert k \phi_c \Vert_{L^2_{T,x_3}}^2 + \Vert \p_{x_3} \phi_c \Vert_{L^2_{T,x_3}}^2 ] + \Vert (\mathbf{I}-\mathbf{P})\hat{f}\Vert_{L^2_{T,x_3,v}}^2   \notag \\
    & \lesssim o(1)\Big\Vert \frac{|k|}{\sqrt{1+|k|^2}} \hat{c}  \Big\Vert^2_{L^2_{T,x_3}}  + \Vert (\mathbf{I}-\mathbf{P})\hat{f}\Vert^2_{L^2_{T,x_3,v}},   \label{est_c_RHS_4_k}
\end{align}
\begin{align}
    & |\eqref{weak_formulation}_6| =  \Big| \int_0^T   \int_{-1}^1 \int_{\mathbb{R}^3} \hat{\Gamma}(\hat{f},\hat{f})\psi_c \dd v \dd x_3 \dd t \Big|\notag \\
    &\lesssim  o(1)[\Vert k \phi_c \Vert_{L^2_{T,x_3}}^2 + \Vert \p_{x_3} \phi_c \Vert_{L^2_{T,x_3}}^2 ] +  \Big(\int_{\mathbb{R}^2} \Vert \hat{f}(k-\ell)\Vert_{L^\infty_T L^2_{x_3,v}} \Vert \hat{f}(\ell)\Vert_{L^2_T L^\infty_{x_3}L^2_\nu} \dd \ell\Big)^2   \notag\\
    & \lesssim  o(1)\Big\Vert \frac{|k|}{\sqrt{1+|k|^2}} \hat{c}  \Big\Vert^2_{L^2_{T,x_3}} +  \Big(\int_{\mathbb{R}^2} \Vert \hat{f}(k-\ell)\Vert_{L^\infty_T L^2_{x_3,v}} \Vert \hat{f}(\ell)\Vert_{L^2_T L^\infty_{x_3}L^2_\nu} \dd \ell\Big)^2,   \label{est_c_RHS_5_k}
\end{align}
and
\begin{align}
    &     \int_{-1}^1 \int_{\mathbb{R}^3} |\hat{f}(T) \psi_c(T)| \dd v \dd x_3 \lesssim \Vert \hat{f} \Vert_{L^\infty_T L^2_{x_3,v}} [\Vert k\phi_c \Vert_{L^\infty_T L^2_{x_3}} + \Vert \p_{x_3}\phi_c \Vert_{L^\infty_T L^2_{x_3}} ] \notag \\
    & \lesssim \Vert \hat{f}\Vert_{L^\infty_T L^2_{x_3,v}} \Vert \hat{c}\Vert_{L^\infty_{T}L^2_{x_3}} \lesssim \Vert \hat{f}\Vert_{L^\infty_T L^2_{x_3,v}}^2,  \label{est_c_t} \\
    &\int_{-1}^1 \int_{\mathbb{R}^3} |\hat{f}(0)\psi_c(0)| \dd v \dd x_3 \lesssim \Vert \hat{f}_0\Vert^2_{L^2_{x_3,v}}. \label{est_c_0}
\end{align}

We combine \labelcref{est_c_E1_k,est_c_E2_k,est_c_LHS_k,est_c_bdr_k} and \labelcref{est_c_RHS_3,est_c_RHS_4_k,est_c_RHS_5_k,est_c_t,est_c_0} to conclude the estimate of $\hat{c}$: 
\begin{align*}
    &     \Big\Vert \frac{|k|}{\sqrt{1+|k|^2}} \hat{c}  \Big\Vert_{L^2_{T,x_3}} \lesssim \Vert (\mathbf{I}-\mathbf{P})\hat{f}\Vert_{L^2_{T,x_3,v}} +  \int_{\mathbb{R}^2} \Vert \hat{f}(k-\ell)\Vert_{L^\infty_T L^2_{x_3,v}} \Vert \hat{f}(\ell)\Vert_{L^2_T L^\infty_{x_3}L^2_\nu} \dd \ell \\
    & + |(I-P_\gamma)\hat{f}|_{L^2_{T,\gamma_+}}+ o(1)\Big\Vert \frac{|k|}{\sqrt{1+|k|^2}} \hat{\mathbf{b}} \Big\Vert_{L^2_{T,x_3}}   + \Vert \hat{f}\Vert_{L_{T}^\infty L^2_{x_3,v}} + \Vert \hat{f}_0\Vert_{L^2_{x_3,v}} .
\end{align*}
Further taking integration in $k$, by the same computation in \eqref{a_estimate_k} we conclude that for some $C_3>1$ and $\delta_3\ll 1$, 
\begin{align}
    &     \Big\Vert \frac{|k|}{\sqrt{1+|k|^2}} \hat{c}  \Big\Vert_{L^1_k L^2_{T,x_3}} \leq \delta_3 \Big\Vert \frac{|k|}{\sqrt{1+|k|^2}} \hat{\mathbf{b}} \Big\Vert_{L^1_k L^2_{T,x_3}}  +  C_3 \big[\Vert (\mathbf{I}-\mathbf{P})\hat{f}\Vert_{L^1_k L^2_{T,x_3,v}}  \notag\\
    &+  \Vert \hat{f} \Vert_{L^1_k L^\infty_T L^2_{x_3,v}} \Vert w\hat{f}\Vert_{L^1_k L^2_T L^\infty_{x_3,v}}   + |(I-P_\gamma)\hat{f}|_{L^1_k L^2_{T,\gamma_+}}+ \Vert \hat{f}\Vert_{L^1_k L_{T}^\infty L^2_{x_3,v}} + \Vert \hat{f}_0\Vert_{L^1_k L^2_{x_3,v}} \big] . \label{c_estimate_k}
\end{align}
Note that we can choose arbitrarily small $\delta_3>0$.

\textit{Conclusion.} Now we choose $\delta_2$ in \eqref{b_estimate_k} as $\delta_2 = \frac{1}{|C_1|^2}$. For $\delta_3$ in \eqref{c_estimate_k} we choose $\delta_3 = \frac{1}{|C_1|^2}$. Then we evaluate the  summation as $ \eqref{a_estimate_k} + 2C_1 \times \eqref{b_estimate_k} + \eqref{c_estimate_k} $. This leads to
\begin{align*}
    &  \Big\Vert \frac{|k|}{\sqrt{1+|k|^2}} \hat{a}  \Big\Vert_{L^1_k L^2_{T,x_3}} +  2C_1\Big\Vert \frac{|k|}{\sqrt{1+|k|^2}} \hat{\mathbf{b}}  \Big\Vert_{L^1_k L^2_{T,x_3}} +   \Big\Vert \frac{|k|}{\sqrt{1+|k|^2}} \hat{c}  \Big\Vert_{L^1_k L^2_{T,x_3}} \\
    & \leq C_1  \Big\Vert \frac{|k|}{\sqrt{1+|k|^2}} \hat{\mathbf{b}}  \Big\Vert_{L^1_k L^2_{T,x_3}} + \frac{2}{C_1} \Big[ \Big\Vert \frac{|k|}{\sqrt{1+|k|^2}} \hat{c}  \Big\Vert_{L^1_k L^2_{T,x_3}} +  \Big\Vert \frac{|k|}{\sqrt{1+|k|^2}} \hat{a}  \Big\Vert_{L^1_k L^2_{T,x_3}} \Big] \\
    & + \frac{1}{C_1^2}  \Big\Vert \frac{|k|}{\sqrt{1+|k|^2}} \hat{\mathbf{b}}  \Big\Vert_{L^1_k L^2_{T,x_3}} + C\Big[ \Vert (\mathbf{I}-\mathbf{P})\hat{f}\Vert_{L^1_k L^2_{T,x_3,v}}  \notag\\
    &+  \Vert \hat{f} \Vert_{L^1_k L^\infty_T L^2_{x_3,v}} \Vert w\hat{f}\Vert_{L^1_k L^2_T L^\infty_{x_3,v}}   + |(I-P_\gamma)\hat{f}|_{L^1_k L^2_{T,\gamma_+}}+ \Vert \hat{f}\Vert_{L^1_k L_{T}^\infty L^2_{x_3,v}} + \Vert \hat{f}_0\Vert_{L^1_k L^2_{x_3,v}}   \Big].
\end{align*}
We conclude the estimate in the lemma 
by setting $C_1>10$. 
\end{proof}

\hide
\begin{remark}\label{rmk:k=0}
In the case of $0$-frequency, the test function \eqref{elliptic_a}, \eqref{phi_b_k} and \eqref{phi_c_k} are constructed as follows:
\begin{align*}
    & \begin{cases}
        & - \p_{x_3}^2 \phi_a(0,x_3) = \hat{a}(0,x_3), \ x_3\in(-1,1), \\
    & \p_3 \phi_a (0,\pm 1) = 0, \ \int_{-1}^1 \phi_a(0,x_3) \dd x_3 = 0, \ \int_{-1}^{-1} \hat{a}(t,0,x_3) \dd x_3 = 0, 
    \end{cases}  \\
    &  \begin{cases}
       & -\p_{x_3}^2 \phi_b(0,x_3) = \hat{b}_i(0,x_3), \\
    & \phi_b(0, \pm 1) = 0, 
    \end{cases}\\
    & \begin{cases}
       & -\p_{x_3}^2 \phi_c (0,x_3) = \hat{c}(0,x_3),\\
    & \phi_c(0,\pm 1) = 0. 
    \end{cases}
\end{align*}
For the $\phi_a$ we have the compatibility condition from conservation of mass the $0$-average condition on $f_0$ \eqref{average_0}
\begin{align*}
    &   \int_{-1}^{-1} \hat{a}(t,0,x_3) \dd x_3  = \int_{\O} a(t,x_1,x_2,x_3) \dd x  = \int_{\O}\int_{\mathbb{R}^3} \sqrt{\mu}f(t,x,v) \dd v \dd x = \int_{\O}\int_{\mathbb{R}^3} \sqrt{\mu}f(0,x,v) \dd v \dd x = 0.
\end{align*}
Then Poincar\'e inequality holds for all these test functions in $x_3\in (-1,1)$, and we can obtain the $H^2_{x_3}$ estimate for them. Thus we can apply the same test function argument to obtain the macroscopic estimate at $k=0$:
\begin{align*}
    &\Vert \hat{a}(0)\Vert_{L^2_{T,x_3}}+\Vert \hat{\mathbf{b}}(0)\Vert_{L^2_{T,x_3}}+\Vert \hat{c}(0)\Vert_{L^2_{T,x_3}} \lesssim  \Vert (\mathbf{I}-\mathbf{P})\hat{f}(0)\Vert_{L^2_{T,x_3,v}}\\
    & + \Vert \hat{f}(0)\Vert_{L^\infty_T L^2_{x_3,v}}\Vert w\hat{f}(0)\Vert_{L^2_T L^\infty_{x_3,v}} + |(I-P_\gamma)\hat{f}(0)|_{L^2_{T,\gamma_+}} + \Vert \hat{f}(0)\Vert_{L^\infty_T L^2_{x_3,v}} + \Vert \hat{f}_0(0)\Vert_{L^2_{x_3,v}}.
\end{align*}
This control at $k=0$ leads to
\begin{align*}
    &   \Big\Vert \frac{|k|}{\sqrt{1+|k|^2}}(\hat{a},\hat{\mathbf{b}},\hat{c})\Big\Vert_{L^1_k L^2_{T,x_3}} \\
    & \leq  \int_{\mathbb{R}^2} \Big[ \mathbf{1}_{k=0} \Vert (\hat{a},\hat{\mathbf{b}},\hat{c})\Vert_{L^2_{T,x_3}} + \mathbf{1}_{k\neq 0}\Big\Vert  \frac{|k|}{\sqrt{1+|k|^2}}(\hat{a},\hat{\mathbf{b}},\hat{c})\Big\Vert_{ L^2_{T,x_3}} \Big]  \dd k \\
    & \lesssim \int_{\mathbb{R}^2}  \Vert (\mathbf{I}-\mathbf{P})\hat{f}\Vert_{L^2_{T,x_3,v}}  + \Vert \hat{f}\Vert_{L^\infty_T L^2_{x_3,v}}\Vert w\hat{f}\Vert_{L^2_T L^\infty_{x_3,v}} + |(I-P_\gamma)\hat{f}|_{L^2_{T,\gamma_+}} + \Vert \hat{f}\Vert_{L^\infty_T L^2_{x_3,v}} + \Vert \hat{f}_0\Vert_{L^2_{x_3,v}}   \dd k.
\end{align*}
Here the dissipation estimate for $k\neq 0$ is already given in the proof of Lemma \ref{lemma:macroscopic} above. This verifies Lemma \ref{lemma:macroscopic}.
\end{remark}

\unhide

\section{Time decay estimate and $L^1_k L^\infty_{T,x_3,v}$ estimate}\label{sec:time_decay}

In this section, we focus on proving two estimates. The first one is the time-weighted energy estimate in Proposition \ref{prop: full_energy_decay}. We will conclude this estimate in Section \ref{sec:energy_decay}. In Section \ref{sec:refined}, we provide a refined estimate of $\hat{\mathbf{b}},\hat{c}$ by leveraging the Poincar\'e inequality into the proof of the macroscopic dissipation estimate. The second estimate is the term $\Vert w\hat{f}\Vert_{L^1_k L^2_T L^\infty_{x_3,v}}$ in Proposition \ref{prop: full_energy_decay}, where such term originates from the nonlinear estimate \eqref{gamma_est_time}. We utilize the time decay factor in the energy estimate and further obtain a term to be controlled as $\Vert (1+t)^{\sigma/2} w\hat{f}\Vert_{L^1_k L^\infty_{T,x_3,v}}$, see \eqref{time_decay_nonlinear}. In Section \ref{sec:linfty}, we construct estimates in Proposition \ref{prop:linfty} by the method of characteristic in the physical space $x_3\in (-1,1)$. In Section \ref{sec:proof_thm1}, we collect both estimates and conclude Theorem \ref{thm:l1k_lpk}.

\subsection{Energy estimate and macroscopic dissipation estimate with time decay}\label{sec:energy_decay}

In this section, we include the time weight into the energy estimate obtained in Proposition \ref{prop:full energy}. We leverage the $L^p_k$ estimate to control the extra contribution from this time weight. We mainly prove the following result.

\begin{proposition}[\textbf{Energy estimate with time decay}]\label{prop: full_energy_decay}
Let $p>2$ and $\sigma = 2(1-1/p)-2\e$ with $\e>0$ small enough, then under the assumption in Proposition \ref{prop:full energy}, we have
\begin{align*}
    &     \Vert (1+t)^{\sigma/2} \hat{f}\Vert_{L^1_k L^\infty_T L^2_{x_3,v}} + \Vert (1+t)^{\sigma/2} (\mathbf{I}-\mathbf{P}) \hat{f} \Vert_{L^1_k L^2_T L^2_{x_3,\nu}} + |(1+t)^{\sigma/2}(I-P_\gamma)\hat{f}|^2_{L^1_k L^2_{T,\gamma_+}}  \\
    & +  \Big\Vert (1+t)^{\sigma/2} \frac{|k|}{\sqrt{1+|k|^2}}(\hat{a},\hat{\mathbf{b}},\hat{c})\Big\Vert_{L^1_k  L^2_{T,x_3}} \\
    & \lesssim \Vert \hat{f}_0\Vert_{L^1_k L^2_{x_3,v}} + \Vert (1+t)^{\sigma/2} \hat{f}\Vert_{L^1_k L^\infty_T L^2_{x_3,v}} \Vert w \hat{f}\Vert_{L^1_k L^2_T L^\infty_{x_3,v}} \\
    & + \Vert \hat{f}_0\Vert_{L^p_k L^2_{x_3,v}} + \Vert \hat{f}\Vert_{L^p_k L^\infty_T L^2_{x_3,v}} \Vert w \hat{f}\Vert_{L^1_k L^2_T L^\infty_{x_3,v}}. 
\end{align*}    
\end{proposition}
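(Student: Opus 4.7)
The plan is to combine a time-weighted version of the basic energy identity (extending Lemma~\ref{lemma: energy}) with a time-weighted version of the macroscopic dissipation estimate (extending Lemma~\ref{lemma:macroscopic}), absorbing the extra source produced by the time weight through an $L^1_k$--$L^p_k$ interplay in the spirit of \cite{duan_SIMA,kawashima2004lp}. First I would multiply \eqref{f_eqn} by $(1+t)^\sigma\bar{\hat f}$, take the real part, and mimic the computation of Lemma~\ref{lemma: energy}; after integrating over $[0,T]\times(-1,1)\times\mathbb{R}^3$, taking square roots and $L^1_k$ norms, and handling the nonlinear term via the time-weighted bound \eqref{gamma_est_time}, this reproduces every time-weighted $L^1_k$ norm on the LHS of the proposition except the macroscopic one, up to one new contribution
\[
J:=\Big\Vert \Big(\int_0^T\sigma(1+t)^{\sigma-1}\Vert\hat f(t,k)\Vert^2_{L^2_{x_3,v}}\,\dd t\Big)^{1/2}\Big\Vert_{L^1_k},
\]
coming from $\partial_t[(1+t)^\sigma]=\sigma(1+t)^{\sigma-1}$, which is the only new difficulty.

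Second, I would revisit the weak formulation \eqref{weak_formulation} underlying Lemma~\ref{lemma:macroscopic}, replacing the test functions $\psi_a,\psi_b,\psi_c$ of \eqref{test_a}, \eqref{psi_b_k}, \eqref{psi_c_k} by $(1+t)^\sigma\psi_a$, $(1+t)^\sigma\psi_b$, $(1+t)^\sigma\psi_c$. All the steps of that argument carry over verbatim with an extra $(1+t)^\sigma$ factor; the only new contribution comes from $\partial_t[(1+t)^\sigma\psi]=\sigma(1+t)^{\sigma-1}\psi+(1+t)^\sigma\partial_t\psi$, and the extra piece $\sigma(1+t)^{\sigma-1}\psi\hat f$ can be controlled, via Cauchy--Schwarz combined with the $H^2_{x_3}$ bounds \eqref{H2_est_a}, \eqref{phib_l2_k}, \eqref{phic_l2_k} on $\phi_{a,b,c}$, by a constant multiple of $J$. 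This produces the desired time-weighted macroscopic dissipation $\Vert(1+t)^{\sigma/2}\tfrac{|k|}{\sqrt{1+|k|^2}}(\hat a,\hat{\mathbf b},\hat c)\Vert_{L^1_kL^2_{T,x_3}}$ on the LHS, again up to the same source $J$.

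The main obstacle is therefore to control $J$, which I would split as $J\le J_{\rm hi}+J_{\rm lo}$ according to $|k|\ge 1$ and $|k|<1$. For $|k|\ge 1$, the factor $|k|^2/(1+|k|^2)\ge \tfrac12$ makes the time-weighted microscopic and macroscopic dissipations jointly coercive in $(1+t)^\sigma\Vert\hat f\Vert^2_{L^2_{x_3,v}}$, so that the bound $(1+t)^{\sigma-1}\le(1+t)^\sigma$ together with a Young's inequality with a small parameter lets $J_{\rm hi}$ be absorbed into the LHS, the short-time piece being bounded by $\Vert\hat f_0\Vert_{L^1_kL^2_{x_3,v}}$. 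For $|k|<1$, combining the basic energy identity with the macroscopic dissipation yields the pointwise-in-$k$ ODE $\tfrac{d}{dt}\Vert\hat f(t,k)\Vert^2_{L^2_{x_3,v}}+c|k|^2\Vert\hat f(t,k)\Vert^2_{L^2_{x_3,v}}\lesssim(\mathrm{nonlinear})$, whence Gronwall gives $\Vert\hat f(t,k)\Vert^2\lesssim e^{-c|k|^2 t}\Vert\hat f_0(k)\Vert^2+(\mathrm{NL})$; integrating against $(1+t)^{\sigma-1}$ in time then produces
\[
\Big(\int_0^T(1+t)^{\sigma-1}\Vert\hat f(t,k)\Vert^2_{L^2_{x_3,v}}\,\dd t\Big)^{1/2}\lesssim |k|^{-\sigma}\Vert\hat f_0(k)\Vert_{L^2_{x_3,v}}+(\mathrm{NL}).
\]
Because $\sigma=2(1-1/p)-2\varepsilon<2/p'$, one has $|k|^{-\sigma}\in L^{p'}_k(B_1)$, so H\"older in $k$ combined with the $L^p_k$ energy bound \eqref{Lpk_energy} of Lemma~\ref{lemma: energy}, which provides uniform control of $\Vert\hat f\Vert_{L^p_kL^\infty_TL^2_{x_3,v}}$ under the smallness assumption, yields
\[
J_{\rm lo}\lesssim \Vert\hat f_0\Vert_{L^p_kL^2_{x_3,v}}+\Vert\hat f\Vert_{L^p_kL^\infty_TL^2_{x_3,v}}\,\Vert w\hat f\Vert_{L^1_kL^2_TL^\infty_{x_3,v}}.
\]
Combining the time-weighted energy and macroscopic estimates with a small constant in front of the macroscopic dissipation on the RHS then closes the argument and proves Proposition~\ref{prop: full_energy_decay}.
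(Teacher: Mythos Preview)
Your overall architecture---time-weighted basic energy estimate, time-weighted macroscopic estimate via $(1+t)^\sigma\psi_{a,b,c}$, and isolation of the commutator term $J$---matches the paper's route exactly (this is the content of Lemmas~\ref{lemma:energy_decay} and~\ref{lemma:macro_time}). The high-frequency treatment of $J_{\rm hi}$ is also essentially what the paper does, though the paper is careful to split $(1+t)^{\sigma-1}\lesssim o(1)(1+t)^\sigma + C$ and then invoke the \emph{non-time-weighted} full energy estimate of Proposition~\ref{prop:full energy} to control the $C$-piece via \eqref{contribution_1}; your phrase ``the short-time piece being bounded by $\Vert\hat f_0\Vert_{L^1_kL^2_{x_3,v}}$'' is a bit vague but points in the right direction.

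The genuine gap is in your treatment of $J_{\rm lo}$. You assert that combining the basic energy identity with the macroscopic dissipation yields a pointwise-in-time ODE $\tfrac{d}{dt}\Vert\hat f(t,k)\Vert^2+c|k|^2\Vert\hat f(t,k)\Vert^2\lesssim(\mathrm{NL})$, but Lemma~\ref{lemma:macroscopic} does \emph{not} furnish an instantaneous coercivity of this type. Its proof is a time-integrated weak formulation \eqref{weak_formulation}: the boundary-in-time terms $\eqref{weak_formulation}_0$ contribute $\Vert\hat f\Vert_{L^\infty_TL^2_{x_3,v}}^2+\Vert\hat f_0\Vert_{L^2_{x_3,v}}^2$ on the right (see \eqref{est_a_t}--\eqref{est_a_0}), and the $\partial_t\psi$ term $\eqref{weak_formulation}_4$ is handled through auxiliary elliptic problems for $\Phi_{a,b,c}$ and the conservation laws---all after integrating over $[0,T]$. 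None of this packages into the differential inequality you need for Gronwall. To make your approach rigorous you would have to build a Lyapunov functional $\mathcal E(t,k)=\Vert\hat f\Vert^2+\kappa\,\mathrm{Re}\langle\hat f,\psi_{a,b,c}\rangle$ and verify both $\mathcal E\sim\Vert\hat f\Vert^2$ and a pointwise-in-$t$ dissipation identity for $\mathcal E$ including all boundary contributions on $x_3=\pm1$; that Kawashima-type construction, while plausible, is a substantial additional argument you have not supplied and that the paper deliberately avoids in the presence of diffuse boundaries.

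The paper's low-frequency device sidesteps this entirely with a purely algebraic interpolation (see \eqref{interpolation}): writing
\[
(1+t)^{\sigma-1}\lesssim o(1)(1+t)^{\sigma}|k|^2 + C\,(1+t)^{\sigma-\eta}|k|^{-2(\eta-1)},\qquad \eta=3-\tfrac{2}{p}-\varepsilon,
\]
the first piece is absorbed by the time-weighted dissipation, while for the second $\sigma-\eta=-1-\varepsilon<-1$ makes the time integral finite, and $p'(\eta-1)=p'(2/p'-\varepsilon)<2$ puts $|k|^{-(\eta-1)}\in L^{p'}(|k|<1)$, so H\"older in $k$ against $\Vert\hat f\Vert_{L^p_kL^\infty_TL^2_{x_3,v}}$ (controlled by \eqref{Lpk_energy}) closes the estimate as in \eqref{contribution_p} and \eqref{extra_a_k_leq_2_bdd}. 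This needs only the time-integrated macroscopic bound and the $L^p_k$ energy estimate---no ODE, no Gronwall.
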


The proof of Proposition \ref{prop: full_energy_decay} follows by combining Lemma \ref{lemma:energy_decay} and Lemma \ref{lemma:macro_time} below, which control $ \Vert (1+t)^{\sigma/2} \hat{f}\Vert_{L^1_k L^\infty_T L^2_{x_3,v}}$ and the macroscopic dissipation $\Big\Vert (1+t)^{\sigma/2} \frac{|k|}{\sqrt{1+|k|^2}}(\hat{a},\hat{\mathbf{b}},\hat{c})\Big\Vert_{L^1_k  L^2_{T,x_3}}$, respectively.

\begin{lemma}\label{lemma:energy_decay}
Let $p>2$ and $\sigma = 2(1-1/p)-2\e$, then under the assumption in Proposition \ref{prop: full_energy_decay}, we have
\begin{align*}
    &    \Vert (1+t)^{\sigma/2} \hat{f}\Vert_{L^1_k L^\infty_T L^2_{x_3,v}} + \Vert (1+t)^{\sigma/2} (\mathbf{I}-\mathbf{P}) \hat{f} \Vert_{L^1_k L^2_T L^2_{x_3,\nu}} + |(1+t)^{\sigma/2}(I-P_\gamma)\hat{f}|^2_{L^1_k L^2_{T,\gamma_+}} \\
& \lesssim \Vert \hat{f}_0\Vert_{L^1_k L^2_{x_3,v}} + o(1) \Big\Vert (1+t)^{\sigma/2} \frac{|k|}{\sqrt{1+|k|^2}}(\hat{a},\hat{\mathbf{b}},\hat{c})\Big\Vert_{L^1_k L^2_{T,x_3}} + \Vert (1+t)^{\sigma/2} \hat{f}\Vert_{L^1_k L^\infty_T L^2_{x_3,v}} \Vert w \hat{f}\Vert_{L^1_k L^2_T L^\infty_{x_3,v}} \\
& + \Vert \hat{f}_0\Vert_{L^p_k L^2_{x_3,v}} + \Vert \hat{f}\Vert_{L^p_k L^\infty_T L^2_{x_3,v}} \Vert w \hat{f}\Vert_{L^1_k L^2_T L^\infty_{x_3,v}}.
\end{align*}

\end{lemma}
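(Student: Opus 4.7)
The plan is to run a time-weighted $L^2_{x_3,v}$-energy estimate on \eqref{f_eqn} at each fixed frequency $k$, then take the $L^1_k$-norm of the resulting inequality, using the $L^1_k\cap L^p_k$ interpolation strategy of \cite{duan_SIMA} to control the low-frequency commutator produced by the time weight. First, multiply \eqref{f_eqn} by $\overline{\hat{f}}$, take the real part and integrate over $(-1,1)\times\mathbb{R}^3$ exactly as in Lemma~\ref{lemma: energy}. Multiplying by $(1+t)^\sigma$ and moving the time derivative inside, then integrating in $t$ on $[0,T']$ and taking $\sup_{T'\in[0,T]}$, at each $k$ I would obtain
\begin{align*}
&\sup_{T'}(1+T')^\sigma\|\hat{f}(T',k)\|_{L^2_{x_3,v}}^2+\int_0^T(1+s)^\sigma\bigl[\|(\mathbf{I}-\mathbf{P})\hat{f}\|_{L^2_\nu}^2+|(I-P_\gamma)\hat{f}|_{L^2_{\gamma_+}}^2\bigr]\,\dd s\\
&\qquad\lesssim \|\hat{f}_0(k)\|_{L^2_{x_3,v}}^2+\sigma\!\int_0^T(1+s)^{\sigma-1}\|\hat{f}(s,k)\|_{L^2_{x_3,v}}^2\,\dd s+\mathcal{N}_k(T),
\end{align*}
where $\mathcal{N}_k$ collects the time-weighted nonlinear contribution and, after square-rooting and taking $L^1_k$-norm, is controlled by \eqref{gamma_est_time}. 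The proof then reduces to bounding the commutator
\begin{align*}
\mathcal{C}:=\int_{\mathbb{R}^2}\Bigl(\int_0^T(1+s)^{\sigma-1}\|\hat{f}(s,k)\|_{L^2_{x_3,v}}^2\,\dd s\Bigr)^{1/2}\dd k.
\end{align*}

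I would estimate $\mathcal{C}$ through the decomposition $\hat{f}=\mathbf{P}\hat{f}+(\mathbf{I}-\mathbf{P})\hat{f}$ combined with the frequency split $|k|\ge 1$ vs.\ $|k|<1$. For the $(\mathbf{I}-\mathbf{P})\hat{f}$-piece, the pointwise bound $\|\cdot\|_{L^2}^2\le\nu_0^{-1}\|\cdot\|_{L^2_\nu}^2$ coming from \eqref{nu_bdd} together with a time-splitting $[0,t_0]\cup[t_0,T]$ does the job: on the late interval $(1+s)^{\sigma-1}\le(1+t_0)^{-1}(1+s)^\sigma$ absorbs into the time-weighted dissipation on the LHS once $t_0$ is large, while the early interval is bounded via the unweighted estimate \eqref{L1k_energy}. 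For $\mathbf{P}\hat{f}$ on $|k|\ge 1$, use $|k|/\sqrt{1+|k|^2}\ge 2^{-1/2}$ to write $\|\mathbf{P}\hat{f}\|_{L^2_{x_3,v}}\lesssim\bigl\|\tfrac{|k|}{\sqrt{1+|k|^2}}(\hat{a},\hat{\mathbf{b}},\hat{c})\bigr\|_{L^2_{x_3}}$ and rerun the same time-splitting argument, now absorbing into the time-weighted macroscopic dissipation that Proposition~\ref{prop: full_energy_decay} will produce through Lemma~\ref{lemma:macro_time}; this absorption is what generates the $o(1)$-prefactor in front of $\bigl\|(1+t)^{\sigma/2}\tfrac{|k|}{\sqrt{1+|k|^2}}(\hat{a},\hat{\mathbf{b}},\hat{c})\bigr\|_{L^1_kL^2_{T,x_3}}$ on the right-hand side of the lemma.

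The main obstacle is the remaining piece, $\mathbf{P}\hat{f}$ on $|k|<1$, where the macroscopic dissipation degenerates like $|k|$ as $|k|\to 0$, and I would handle it by the $L^1_k\cap L^p_k$ interpolation. The parabolic-type spectral decay hidden in the macroscopic dissipation yields, morally, $\|\hat{f}(s,k)\|_{L^2}^2\lesssim e^{-c|k|^2 s}\|\hat{f}_0(k)\|_{L^2}^2$, which gives $\bigl(\int_0^\infty(1+s)^{\sigma-1}\|\hat{f}(s,k)\|_{L^2}^2\,\dd s\bigr)^{1/2}\lesssim|k|^{-\sigma}\|\hat{f}_0(k)\|_{L^2_{x_3,v}}$. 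Integrating over $|k|<1$ and applying H\"older in $k$ with conjugate exponents $p$ and $p'=p/(p-1)$,
\begin{align*}
\int_{|k|<1}|k|^{-\sigma}\|\hat{f}_0(k)\|_{L^2_{x_3,v}}\,\dd k\lesssim\bigl\||k|^{-\sigma}\bigr\|_{L^{p'}(|k|<1)}\,\|\hat{f}_0\|_{L^p_kL^2_{x_3,v}},
\end{align*}
and $\||k|^{-\sigma}\|_{L^{p'}(|k|<1)}<\infty$ exactly when $\sigma p'<2$, i.e.\ $\sigma<2(1-1/p)$, which is built into the choice $\sigma=2(1-1/p)-2\e$ with $\e>0$. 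This strict loss $2\e>0$ is precisely what the hypothesis on $\sigma$ is designed to buy. In the actual energy implementation the spectral bound is replaced by a direct H\"older manipulation of the time integral combined with the unweighted $L^p_k$-energy estimate \eqref{Lpk_energy}, which is also how the nonlinear term $\|\hat{f}\|_{L^p_kL^\infty_TL^2_{x_3,v}}\|w\hat{f}\|_{L^1_kL^2_TL^\infty_{x_3,v}}$ appears on the RHS. Collecting all the pieces and absorbing the small coefficients into the LHS yields the claimed estimate.
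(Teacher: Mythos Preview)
Your overall strategy matches the paper's: time-weighted energy estimate, isolate the commutator $\mathcal{C}$, split frequencies, and use $L^p_k$-interpolation for low frequencies. Your time-splitting $[0,t_0]\cup[t_0,T]$ is equivalent to the paper's more compact device, the pointwise Young inequality $(1+t)^{\sigma-1}\lesssim o(1)(1+t)^\sigma+C$, which for $|k|\ge 1$ is multiplied through by $\tfrac{|k|^2}{1+|k|^2}\ge\tfrac12$; the unweighted remainder is then closed by Proposition~\ref{prop:full energy} just as you indicate via \eqref{L1k_energy}.

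The genuine gap is the low-frequency step for $\mathbf{P}\hat{f}$. The parabolic bound $\|\hat{f}(s,k)\|_{L^2}^2\lesssim e^{-c|k|^2s}\|\hat{f}_0(k)\|_{L^2}^2$ is never established in this framework (it would require a separate spectral or semigroup analysis of the linearized boundary-value problem, which the paper does not carry out), so it cannot be invoked even heuristically as an intermediate step, and your closing sentence ``the spectral bound is replaced by a direct H\"older manipulation'' does not actually name that manipulation. The paper's concrete device is again a Young-type interpolation in the time weight: for fixed $|k|<1$, with $\eta=3-\tfrac{2}{p}-\e>1$ and $\theta=\tfrac{1}{\eta}$,
\begin{align*}
(1+t)^{\sigma-1}=\bigl[(1+t)^{\sigma}|k|^{2}\bigr]^{1-\theta}\bigl[(1+t)^{\sigma-\eta}|k|^{-\frac{2(1-\theta)}{\theta}}\bigr]^{\theta}
\lesssim o(1)(1+t)^{\sigma}|k|^{2}+C\,|k|^{-2(\sigma+\e)}(1+t)^{-1-\e},
\end{align*}
since $\tfrac{1-\theta}{\theta}=\eta-1=\sigma+\e$ and $\sigma-\eta=-1-\e$. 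The first piece is absorbed into the time-weighted, frequency-weighted dissipation exactly as in your $|k|\ge 1$ argument (note $|k|^{2}\le\tfrac{2|k|^{2}}{1+|k|^{2}}$ for $|k|<1$). For the second piece the time integral is now uniformly bounded, and after $\|\hat{f}(s,k)\|_{L^2_{x_3,v}}\le\|\hat{f}(k)\|_{L^\infty_TL^2_{x_3,v}}$ one gets
\begin{align*}
\int_{|k|<1}|k|^{-(\sigma+\e)}\|\hat{f}(k)\|_{L^\infty_TL^2_{x_3,v}}\,\dd k
\le\bigl\||k|^{-(\sigma+\e)}\bigr\|_{L^{p'}(|k|<1)}\|\hat{f}\|_{L^p_kL^\infty_TL^2_{x_3,v}},
\end{align*}
with $p'(\sigma+\e)=2-p'\e<2$, and then \eqref{Lpk_energy} closes as you anticipated. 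The point your heuristic obscures is that part of the low-frequency commutator must still be fed back into the weighted dissipation; the exponential factor in your spectral ansatz was silently doing that job.
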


\begin{proof}
The equation of $(1+t)^{\sigma}\hat{f}$ satisfies
\begin{align}
    &    \p_t [(1+t)^{\sigma} \hat{f} ] + i\bar{v}\cdot k (1+t)^{\sigma} \hat{f} + v_3 \p_{x_3} [(1+t)^{\sigma} \hat{f}] + \mathcal{L}((1+t)^\sigma \hat{f}) \notag\\
    &= \sigma(1+t)^{\sigma - 1} \hat{f}+  (1+t)^{\sigma} \hat{\Gamma}(\hat{f},\hat{f}). \label{weight_f_energy}
\end{align}
We only need to compute one extra term in the energy estimate:
\begin{align*}
    &  \int_{\mathbb{R}^2} \Big( \int_0^T \int_{-1}^1 \int_{\mathbb{R}^3} (1+t)^{\sigma-1} |\hat{f}|^2 \dd v \dd x_3 \dd t \Big)^{1/2} \dd k.
\end{align*}
First, we consider the case of $|k|\geq 1$. In this case, we bound
\begin{align}
    & (1+t)^{\sigma-1} \lesssim o(1)(1+t)^\sigma + 1 \lesssim o(1)(1+t)^{\sigma}\frac{|k|^2}{1+|k|^2} + \frac{|k|^2}{1+|k|^2}.
    \label{add.pes1}
\end{align}
The contribution of $o(1)(1+t)^\sigma \frac{|k|^2}{1+|k|^2}$ is bounded by
\begin{align*}
    & o(1)\int_{\mathbb{R}^2}  \Big( \int_0^T \int_{-1}^1 \int_{\mathbb{R}^3} (1+t)^{\sigma} \frac{|k|^2}{1+|k|^2}|\hat{f}|^2 \dd v \dd x_3 \dd t  \Big)^{1/2} \dd k \\
    & \lesssim o(1) \Big\Vert (1+t)^{\sigma/2} \frac{|k|}{\sqrt{1+|k|^2}} (\hat{a},\hat{\mathbf{b}},\hat{c})\Big\Vert_{L^1_k L^2_{T,x_3}} + o(1) \Vert (1+t)^{\sigma/2} (\mathbf{I}-\mathbf{P})\hat{f}\Vert_{L^1_k L^2_T L^2_{x_3,\nu}}.
\end{align*}

The contribution of the part $\frac{|k|^2}{1+|k|^2}$ in \eqref{add.pes1} is bounded by
\begin{align}
    &   \int_{\mathbb{R}^2} \Big(\int_0^T \int_{-1}^1 \int_{\mathbb{R}^3} \frac{|k|^2}{1+|k|^2} |\hat{f}|^2 \dd v \dd x_3 \dd t   \Big)^{1/2} \dd k \notag\\
    & \lesssim \Big\Vert  \frac{|k|}{\sqrt{1+|k|^2}} (\hat{a},\hat{\mathbf{b}},\hat{c})\Big\Vert_{L^1_k L^2_{T,x_3}} +  \Vert  (\mathbf{I}-\mathbf{P})\hat{f}\Vert_{L^1_k L^2_T L^2_{x_3,\nu}}.  \label{contribution_1}
\end{align}

Next, we consider the case of $|k|\leq 1$. For this, we apply the interpolation
\begin{align*}
    & (1+t)^{\sigma-1} = (1+t)^{(1-\theta)\sigma} (|k|^2)^{1-\theta} (1+t)^{\theta(\sigma-\eta)} (|k|^2)^{-(1-\theta)} \\
    &\lesssim o(1)(1+t)^\sigma |k|^2 + (1+t)^{\sigma-\eta} (|k|^2)^{-\frac{1-\theta}{\theta}}.
\end{align*}
Here $\eta = 3-\frac{2}{p}-\e>1$, $\theta = \frac{1}{\eta}\in (0,1)$, and thus $\frac{1-\theta}{\theta} = \eta - 1 = \sigma+ \e = 2(1-1/p)-\e$.

The contribution of $o(1)(1+t)^\sigma |k|^2$ is bounded in the same way. For the contribution of $(1+t)^{\sigma - \eta} |k|^{-\frac{2(1-\theta)}{\theta}}$, since $-p'\frac{1-\theta}{\theta} >-2$, we have
\begin{align}
    &   \int_{|k|\leq 1} \Big( \int_0^T \int_{-1}^1 \int_{\mathbb{R}^3} (1+t)^{\sigma - \eta} (|k|^2)^{-\frac{1-\theta}{\theta}} |\hat{f}|^2 \dd v \dd x_3 \dd t\Big)^{1/2} \dd k \notag\\
    & \lesssim \Vert \hat{f}\Vert_{L_k^p L^\infty_T L^2_{x_3,v}} \Big( \int_{|k|\leq 1} |k|^{-p' \frac{1-\theta}{\theta}} \dd k \Big)^{1/p'} \Big(\int_0^T (1+t)^{\sigma-\eta} \dd t\Big)^{1/2} \lesssim \Vert \hat{f}\Vert_{L^p_k L^\infty_T L^2_{x_3,v}}.  \label{contribution_p}
\end{align}

Applying Proposition \ref{prop:full energy} to \eqref{contribution_1} and \eqref{Lpk_energy} to \eqref{contribution_p}, we conclude the lemma.
\end{proof}

\begin{lemma}\label{lemma:macro_time}
Let $p>2$ and $\sigma = 2(1-1/p)-2\e$, then under the assumption in Proposition \ref{prop: full_energy_decay}, we have
\begin{align*}
    &     \Big\Vert (1+t)^{\sigma/2} \frac{|k|}{\sqrt{1+|k|^2}}(\hat{a},\hat{\mathbf{b}},\hat{c}) \Big\Vert_{L^1_k L^2_{T,x_3}} \\
    &\lesssim \Vert (1+t)^{\sigma/2} (\mathbf{I}-\mathbf{P})\hat{f}\Vert_{L^1_k L^2_{T,x_3,v}} + \Vert \hat{f}_0\Vert_{L^1_k L^2_{x_3,v}} + \Vert (1+t)^{\sigma/2} \hat{f}\Vert_{L^1_k L^\infty_T L^2_{x_3,v}} \\
    & +\Vert (1+t)^{\sigma/2}\hat{f}\Vert_{L^1_k L^\infty_T L^2_{x_3,v}} \Vert w\hat{f}\Vert_{L^1_kL^2_T L^\infty_{x_3,v}}  + 
  |(1+t)^{\sigma/2}(I-P_\gamma)\hat{f}|_{L^2_{T,\gamma_+}} \\
  &  + \Vert \hat{f}_0\Vert_{L^p_k L^2_{x_3,v}} + \Vert \hat{f}\Vert_{L^p_k L^\infty_T L^2_{x_3,v}} \Vert w \hat{f}\Vert_{L^1_k L^2_T L^\infty_{x_3,v}}.
\end{align*}

\end{lemma}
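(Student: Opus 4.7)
The plan is to adapt the dual/test-function argument of Lemma \ref{lemma:macroscopic} to the time-weighted setting by essentially replacing the role of $\hat{f}$ by $(1+t)^{\sigma/2}\hat{f}$. For each macroscopic component $\hat{\alpha}\in\{\hat{a},\hat{b}_1,\hat{b}_2,\hat{b}_3,\hat{c}\}$ I would solve the same elliptic problem in $x_3$ as in Lemma \ref{lemma:macroscopic} but with the right-hand side scaled by an extra factor $(1+t)^{\sigma/2}$; denote the resulting test functions by $\phi_\alpha^\sigma$ and $\psi_\alpha^\sigma$. The elliptic estimates and traces carry through verbatim, yielding
\[
\|(|k|+|k|^2)\phi_\alpha^\sigma\|_{L^2_{x_3}}+\|(1+|k|)\p_{x_3}\phi_\alpha^\sigma\|_{L^2_{x_3}}+\|\p_{x_3}^2\phi_\alpha^\sigma\|_{L^2_{x_3}}\lesssim (1+t)^{\sigma/2}\frac{|k|}{\sqrt{1+|k|^2}}\|\hat{\alpha}\|_{L^2_{x_3}}.
\]
Plugging $\psi_\alpha^\sigma$ into the weak formulation \eqref{weak_formulation} puts the desired dissipation $\bigl\|(1+t)^{\sigma/2}\tfrac{|k|}{\sqrt{1+|k|^2}}\hat{\alpha}\bigr\|_{L^2_{T,x_3}}^2$ on the left-hand side, and the spatial, boundary, linearized-collision and nonlinear contributions $\eqref{weak_formulation}_1,\eqref{weak_formulation}_2,\eqref{weak_formulation}_3,\eqref{weak_formulation}_5,\eqref{weak_formulation}_6$ are bounded exactly as in Lemma \ref{lemma:macroscopic}, picking up a uniform factor $(1+t)^{\sigma/2}$; these produce the target terms $\|(1+t)^{\sigma/2}(\mathbf{I}-\mathbf{P})\hat{f}\|$, $|(1+t)^{\sigma/2}(I-P_\gamma)\hat{f}|$, the quadratic nonlinear piece via \eqref{gamma_est_time}, and $\|(1+t)^{\sigma/2}\hat{f}\|_{L^\infty_TL^2_{x_3,v}}$ from the boundary-in-time term $\eqref{weak_formulation}_0$ together with the initial data.

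The genuinely new contribution arises because the test function now depends on $t$ through $(1+t)^{\sigma/2}$: applying $\p_t$ in $\eqref{weak_formulation}_4$ generates an extra source
\[
\int_0^T\!\!\int_{-1}^1\!\!\int_{\mathbb{R}^3}\tfrac{\sigma}{2}(1+t)^{\sigma/2-1}\hat{f}\,\psi_\alpha^0\,\dd v\dd x_3\dd t,
\]
where $\psi_\alpha^0$ denotes the test function without the extra time weight. I would control this piece by the very same high/low-frequency interpolation used in Lemma \ref{lemma:energy_decay}: for $|k|\geq 1$ use $(1+t)^{\sigma/2-1}\lesssim o(1)(1+t)^{\sigma/2}\tfrac{|k|}{\sqrt{1+|k|^2}}+\tfrac{|k|}{\sqrt{1+|k|^2}}$ to absorb a small multiple of the target dissipation on the left and reduce the remainder to the $k$-weighted macroscopic norm already controlled by Lemma \ref{lemma:macroscopic}; for $|k|\leq 1$ use
\[
(1+t)^{\sigma/2-1}\lesssim o(1)(1+t)^{\sigma/2}|k|+(1+t)^{(\sigma-\eta)/2}|k|^{-(1-\theta)/\theta},
\]
with $\eta>1$ and $\theta=1/\eta$ chosen so that $p'(1-\theta)/\theta<2$, which holds precisely because $p>2$; H\"older in $k$ together with time integrability of $(1+t)^{\sigma-\eta}$ then converts the second term into $\|\hat{f}\|_{L^p_kL^\infty_TL^2_{x_3,v}}$, and by applying \eqref{Lpk_energy} it is further controlled by the initial data and the nonlinear quadratic term in the final bound.

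For the conservation-law step $\eqref{weak_formulation}_4$ itself, I would introduce time-weighted auxiliary potentials $\Phi_\alpha^\sigma$ defined exactly as $\Phi_a,\Phi_b,\Phi_c$ in Lemma \ref{lemma:macroscopic} but with $\p_t\overline{\hat{\alpha}}$ replaced by $\p_t[(1+t)^{\sigma/2}\overline{\hat{\alpha}}]$ on the right-hand sides, so that the mass/momentum/energy conservation identities from \eqref{conservation_momentum}, \eqref{momentum_conservation}, \eqref{energy_conservation} still close the argument; the piece $\tfrac{\sigma}{2}(1+t)^{\sigma/2-1}\hat{\alpha}$ generated when $\p_t$ hits the weight is again dispatched by the same high/low-frequency interpolation. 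Finally, I would combine the three resulting inequalities for $\hat{a}$, $\hat{\mathbf{b}}$, and $\hat{c}$ with the same cascading small-constant choice used at the end of the proof of Lemma \ref{lemma:macroscopic}, absorbing cross terms $\|\hat{a}\|,\|\hat{\mathbf{b}}\|,\|\hat{c}\|$ into one another to close the estimate. The main technical obstacle is precisely this interpolation of the time-weight derivative term against the $L^p_k$ norm while simultaneously keeping the macroscopic cross-absorption scheme intact; once that is handled the rest of the argument is a direct time-weighted rerun of Lemma \ref{lemma:macroscopic}.
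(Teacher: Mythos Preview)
Your overall plan matches the paper's: re-run the dual argument of Lemma~\ref{lemma:macroscopic} with a time weight, isolate the new term coming from the weight's time derivative, and handle it via the high/low-frequency interpolation of Lemma~\ref{lemma:energy_decay}. However, your time-weight accounting is off in a way that prevents the scheme from closing as written. With the elliptic right-hand side scaled by $(1+t)^{\sigma/2}$, linearity gives $\phi_\alpha^\sigma=(1+t)^{\sigma/2}\phi_\alpha$, and plugging $\psi_\alpha^\sigma$ into \eqref{weak_formulation} for $\hat f$ yields on the left only
\[
\int_0^T\!\!\int_{-1}^1(1+t)^{\sigma/2}\,\tfrac{|k|^2}{1+|k|^2}\,|\hat\alpha|^2\,\dd x_3\,\dd t
=\Big\|(1+t)^{\sigma/4}\tfrac{|k|}{\sqrt{1+|k|^2}}\hat\alpha\Big\|_{L^2_{T,x_3}}^2,
\]
not the claimed $\|(1+t)^{\sigma/2}\cdots\|^2$. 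The paper instead applies the \emph{un-weighted} test functions $\psi_a,\psi_b,\psi_c$ of Lemma~\ref{lemma:macroscopic} to the equation \eqref{weight_f_energy} for $(1+t)^{\sigma}\hat f$; the pairing then carries $(1+t)^{\sigma}$ throughout, and the single extra term is $\sigma\int(1+t)^{\sigma-1}\hat f\,\psi_\alpha$, so the interpolation runs with exponent $\sigma-1$ as in \eqref{interpolation}, not your $\sigma/2-1$. Scaling your elliptic right-hand side by $(1+t)^{\sigma}$ instead of $(1+t)^{\sigma/2}$ would repair the mismatch and make the two routes coincide.

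You also miss a simplification the paper exploits. Because $\psi_b,\psi_c\perp\ker\mathcal L$, the extra term $\int(1+t)^{\sigma-1}\hat f\,\psi_{b,c}$ sees only $(\mathbf{I}-\mathbf{P})\hat f$, and one simply uses $(1+t)^{\sigma-1}\le(1+t)^{\sigma}$ there (see \eqref{extra_b_c}); neither interpolation nor the $L^p_k$ norm is needed for the $\hat{\mathbf b}$ and $\hat c$ pieces. Interpolation enters only through the $\hat a$ extra term \eqref{extra_a}, where $\psi_a$ is not orthogonal to $\ker\mathcal L$ and the macroscopic contribution $\hat{\mathbf b}$ appears in the pairing (see \eqref{extra_a_k_geq_1_bdd}--\eqref{extra_a_bdd}). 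Likewise, introducing time-weighted auxiliaries $\Phi_\alpha^\sigma$ is unnecessary: the paper reuses the un-weighted $\Phi_\alpha$ from Lemma~\ref{lemma:macroscopic}, and the factor $(1+t)^{\sigma}$ passes through the pointwise-in-$t$ version of the estimate leading to \eqref{Phi_a_estimate}.
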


\begin{proof}
Note that the equation of $(1+t)^{\sigma}\hat{f}$ is given in \eqref{weight_f_energy}.

First, we estimate $\hat{a}$. Following the proof of Lemma \ref{lemma:macroscopic}, we use the test function $\psi_a$ defined in \eqref{test_a}. The contribution of the nonlinear term is bounded using \eqref{gamma_est_time} as 
\begin{align*}
    &  \Big\Vert  \Big| \int_0^T \int_{-1}^1 \int_{\mathbb{R}^3} (1+t)^\sigma \hat{\Gamma}(\hat{f},\hat{f}) \psi_a   \dd v \dd x_3 \dd t  \Big|^{1/2}\Big\Vert_{L^1_k}\\
    & \lesssim o(1) \Vert (1+t)^{\sigma/2}\psi_a\Vert_{L^1_k L^2_{T,x_3,\nu}} + \Vert (1+t)^{\sigma/2} \hat{f}\Vert_{L^1_k L^\infty_T L^2_{x_3,v}} \Vert w\hat{f}\Vert_{L^1_k L^2_T L^\infty_{x_3,v}}  \\
    & \lesssim o(1) \Big\Vert (1+t)^{\sigma/2}  \frac{|k|}{\sqrt{1+|k|^2}} \hat{a}  \Big\Vert_{L^2_{T,x_3}}^2 + \Vert (1+t)^{\sigma/2} \hat{f}\Vert_{L^1_k L^\infty_T L^2_{x_3,v}} \Vert w\hat{f}\Vert_{L^1_k L^2_T L^\infty_{x_3,v}}.
\end{align*}
Here we have used \eqref{H2_est_a}.

Then we only need to compute one extra term:
\begin{align}
    &   \int_0^T \int_{-1}^1 \int_{\mathbb{R}^3} (1+t)^{\sigma-1} |\hat{f} \psi_a| \dd v \dd x_3 \dd t \notag \\
    & \lesssim \int_0^T \int_{-1}^1  (1+t)^{\sigma-1} (|\hat{\mathbf{b}}| + \Vert (\mathbf{I}-\mathbf{P})\hat{f}\Vert_{L^2_v} )[|k\phi_a| + |\p_{x_3}\phi_a| ] \dd x_3 \dd t . \label{extra_a}
\end{align}

First, we consider the case $|k|\geq 1$. We bound
\begin{align*}
    &  (1+t)^{\sigma-1} \leq (1+t)^\sigma, \    \mathbf{1}_{|k|\geq 1} 1\lesssim \mathbf{1}_{|k|\geq 1}\frac{|k|}{\sqrt{1+|k|^2}}.
\end{align*}
Then by \eqref{H2_est_a}, we have
\begin{align}
    &     \eqref{extra_a} \mathbf{1}_{|k|\geq 1} \lesssim \int_0^T \int_{-1}^1 (1+t)^\sigma \frac{|k|}{\sqrt{1+|k|^2}} (|\hat{\mathbf{b}}| + \Vert (\mathbf{I}-\mathbf{P})\hat{f}\Vert_{L^2_v}) [|k\phi_a| + |\p_{x_3}\phi_a| ] \dd x_3 \dd t  \notag\\
    & \lesssim \Big\Vert  (1+t)^{\sigma/2} \frac{|k|}{\sqrt{1+|k|^2}} |\hat{\mathbf{b}}|  \Big\Vert^2_{L^2_{T,x_3}} + \Vert (1+t)^{\sigma/2} (\mathbf{I}-\mathbf{P})\hat{f}\Vert_{L^2_{T,x_3,v}}^2   \notag\\
    & + o(1)[\Vert (1+t)^{\sigma/2} k \phi_a \Vert^2_{L^2_{T,x_3}} + \Vert (1+t)^{\sigma/2} \p_{x_3}\phi_a\Vert_{L^2_{T,x_3}}^2 ]  \notag\\
    & \lesssim \Big\Vert  (1+t)^{\sigma/2} \frac{|k|}{\sqrt{1+|k|^2}} |\hat{\mathbf{b}}|  \Big\Vert^2_{L^2_{T,x_3}} + \Vert (1+t)^{\sigma/2} (\mathbf{I}-\mathbf{P})\hat{f}\Vert_{L^2_{T,x_3,v}}^2 \notag\\
    &+ o(1)\Big\Vert  (1+t)^{\sigma/2} \frac{|k|}{\sqrt{1+|k|^2}} |\hat{a}|  \Big\Vert^2_{L^2_{T,x_3}}.  \notag
\end{align}

And thus we conclude that
\begin{align}
    &     \int_{\mathbb{R}^2} \mathbf{1}_{|k|\geq 1} \Big(\int_0^T \int_{-1}^1 \int_{\mathbb{R}^3} (1+t)^{\sigma-1} |\hat{f}\psi_a| \dd v \dd x_3 \dd t  \Big)^{1/2} \dd k \notag \\
    & \lesssim  o(1)\Big\Vert  (1+t)^{\sigma/2} \frac{|k|}{\sqrt{1+|k|^2}} |\hat{a}|\Big\Vert_{L^1_k L^2_{T,x_3}} + \Big\Vert (1+t)^{\sigma/2} \frac{|k|}{\sqrt{1+|k|^2}} |\hat{\mathbf{b}}| \Big\Vert_{L^1_k L^2_{T,x_3}}  \notag\\
    &+ \Vert (1+t)^{\sigma/2} (\mathbf{I}-\mathbf{P})\hat{f}\Vert_{L^1_k L^2_{T,x_3,v}}. \label{extra_a_k_geq_1_bdd}
\end{align}

Next we consider the case $|k|<1$ in \eqref{extra_a}. For this, we apply the interpolation
\begin{align}
    & (1+t)^{\sigma-1} = (1+t)^{(1-\theta)\sigma} (|k|^2)^{1-\theta} (1+t)^{\theta(\sigma-\eta)} (|k|^2)^{-(1-\theta)} \notag \\
    &\lesssim o(1)(1+t)^\sigma |k|^2 + (1+t)^{\sigma-\eta} (|k|^2)^{-\frac{1-\theta}{\theta}}.  \label{interpolation}
\end{align}
Here $\eta = 3-\frac{2}{p}-\e>1$, $\theta = \frac{1}{\eta}\in (0,1)$, and thus $\frac{1-\theta}{\theta} = \eta - 1 = \sigma+ \e = 2(1-1/p)-\e$.

For the first term in \eqref{interpolation}, we apply the same computation as 
\eqref{extra_a_k_geq_1_bdd} and obtain a bound as
\begin{align}
    & \int_{\mathbb{R}^2}\mathbf{1}_{|k|\leq 1} \Big(\int_0^T \int_{-1}^1 \int_{\mathbb{R}^3} o(1)(1+t)^\sigma |k|^2 |\hat{f}\psi_a| \dd v \dd x_3 \dd t   \Big)^{1/2} \dd k \notag\\
    & \lesssim  o(1)\Big\Vert  (1+t)^{\sigma/2} \frac{|k|}{\sqrt{1+|k|^2}} |\hat{a}|\Big\Vert_{L^1_k L^2_{T,x_3}} + \Big\Vert (1+t)^{\sigma/2} \frac{|k|}{\sqrt{1+|k|^2}} |\hat{\mathbf{b}}| \Big\Vert_{L^1_k L^2_{T,x_3}} \notag\\
    &+ \Vert (1+t)^{\sigma/2} (\mathbf{I}-\mathbf{P})\hat{f}\Vert_{L^1_k L^2_{T,x_3,v}}.   \label{extra_a_k_leq_1_bdd}
\end{align}

For the second term in \eqref{interpolation}, first, we apply Young's inequality to \eqref{extra_a} and obtain a bound as
\begin{align*}
    &    \int_0^T \int_{-1}^1 (1+t)^{\sigma-1} \big[ |\hat{\mathbf{b}}|^2 + \Vert (\mathbf{I}-\mathbf{P})\hat{f}\Vert^2_{L^2_v} + o(1) (|k \phi_a|^2 + |\p_{x_3}\phi_a|^2)\big] \dd x_3 \dd t .
\end{align*}
Then we further take the $k$ integration to have
\begin{align}
    &   \int_{|k|\leq 1} \Big( \int_0^T \int_{-1}^1 (1+t)^{\sigma - \eta} (|k|^2)^{-\frac{1-\theta}{\theta}} \notag\\
    & \times \big[o(1) (|k \phi_a|^2 + |\p_{x_3}\phi_a|^2) + |\hat{\mathbf{b}}|^2 + \Vert (\mathbf{I}-\mathbf{P})\hat{f}\Vert_{L^2_v}^2 \big] \dd x_3 \dd t\Big)^{1/2} \dd k \notag\\
    & \lesssim  \int_{|k|\leq 1} \Big( \int_0^T  (1+t)^{\sigma - \eta} (|k|^2)^{-\frac{1-\theta}{\theta}}  \big[\Vert \hat{a}\Vert_{L^2_{x_3}}^2 + \Vert \hat{\mathbf{b}}\Vert_{L^2_{x_3}}^2 + \Vert (\mathbf{I}-\mathbf{P})\hat{f}\Vert_{L^2_{x_3,v}}^2  \big] \dd t\Big)^{1/2} \dd k  \notag \\
    & \lesssim \Vert \hat{f}\Vert_{L_k^p L^\infty_T L^2_{x_3,v}} \Big( \int_{|k|\leq 1} |k|^{-p' \frac{1-\theta}{\theta}} \dd k \Big)^{1/p'} \Big(\int_0^T (1+t)^{\sigma-\eta} \dd t\Big)^{1/2} \lesssim \Vert \hat{f}\Vert_{L^p_k L^\infty_T L^2_{x_3,v}}\notag \\
    & \lesssim   \Vert \hat{f}_0\Vert_{L^p_k L^2_{x_3,v}} + \Vert \hat{f}\Vert_{L^p_k L^\infty_T L^2_{x_3,v}} \Vert w \hat{f}\Vert_{L^1_k L^2_T L^\infty_{x_3,v}}. \label{extra_a_k_leq_2_bdd}
\end{align}
In the last line, we applied \eqref{Lpk_energy}. In the fourth line, we applied the H\"older inequality with $\frac{1}{p}+\frac{1}{p'}=1$ and used $-p'\frac{1-\theta}{\theta} = -p'(2\frac{1}{p'}-\e)>-2$. In the third line, we applied \eqref{H2_est_a} to have
\begin{align*}
    &     o(1)\Vert |k|\phi_a \Vert_{L^2_{x_3}} + o(1)\Vert \p_{x_3}\phi_a \Vert_{L^2_{x_3}} \lesssim \frac{|k|}{\sqrt{1+|k|^2}}\Vert \hat{a}\Vert_{L^2_{x_3}} \lesssim \Vert \hat{a}\Vert_{L^2_{x_3}}.
\end{align*}
We combine \eqref{extra_a_k_geq_1_bdd}, \eqref{extra_a_k_leq_1_bdd} and \eqref{extra_a_k_leq_2_bdd} to conclude that
\begin{align}
    &   \int_{\mathbb{R}^2}\Big(  \int_0^T \int_{-1}^1 \int_{\mathbb{R}^3} (1+t)^{\sigma-1} |\hat{f} \psi_a| \dd v \dd x_3 \dd t \Big)^{1/2} \dd k \notag \\
    & \lesssim  o(1)\Big\Vert  (1+t)^{\sigma/2} \frac{|k|}{\sqrt{1+|k|^2}} |\hat{a}|\Big\Vert_{L^1_k L^2_{T,x_3}} + \Big\Vert (1+t)^{\sigma/2} \frac{|k|}{\sqrt{1+|k|^2}} |\hat{\mathbf{b}}| \Big\Vert_{L^1_k L^2_{T,x_3}} \notag\\
    &+ \Vert (1+t)^{\sigma/2} (\mathbf{I}-\mathbf{P})\hat{f}\Vert_{L^1_k L^2_{T,x_3,v}} +  \Vert \hat{f}_0\Vert_{L^p_k L^2_{x_3,v}} + \Vert \hat{f}\Vert_{L^p_k L^\infty_T L^2_{x_3,v}} \Vert w \hat{f}\Vert_{L^1_k L^2_T L^\infty_{x_3,v}}.  \label{extra_a_bdd}  
\end{align}

Then we estimate $\hat{\mathbf{b}}$ and $\hat{c}$. Again, we only need to compute the extra contribution of $(1+t)^{\sigma-1}\hat{f}$ in the weak formulation \eqref{weak_formulation}. From the choice of $\psi_b$ in \eqref{psi_b_k} and $\psi_c$ in \eqref{psi_c_k}, we have
\begin{align}
    &   \int_0^T \int_{-1}^1 \int_{\mathbb{R}^3} (1+t)^{\sigma-1} (|\hat{f} \psi_b| + |\hat{f}\psi_c|) \dd v \dd x_3 \dd t \notag \\
    & \lesssim \int_0^T \int_{-1}^1  (1+t)^{\sigma}  \Vert (\mathbf{I}-\mathbf{P})\hat{f}\Vert_{L^2_v} [|k\phi_b| + |\p_{x_3}\phi_b| + |k\phi_c| + |\p_{x_3}\phi_c| ] \dd x_3 \dd t  \notag \\
    &\lesssim o(1) \big[\Vert (1+t)^{\sigma/2} k \phi_b \Vert^2_{L^2_{T,x_3}} + \Vert (1+t)^{\sigma/2} \p_{x_3}\phi_b\Vert_{L^2_{T,x_3}}  \notag\\
    &  + \Vert (1+t)^{\sigma/2} k \phi_c \Vert^2_{L^2_{T,x_3}} + \Vert (1+t)^{\sigma/2} \p_{x_3}\phi_c\Vert_{L^2_{T,x_3}}    \big] + \Vert (1+t)^{\sigma/2} (\mathbf{I}-\mathbf{P})\hat{f}\Vert_{L^2_{T,x_3,v}}^2 \notag \\
    &\lesssim o(1)\Big\Vert  (1+t)^{\sigma/2} \frac{|k|}{\sqrt{1+|k|^2}} |\hat{\mathbf{b}}|  \Big\Vert^2_{L^2_{T,x_3}} + o(1)\Big\Vert  (1+t)^{\sigma/2} \frac{|k|}{\sqrt{1+|k|^2}} |\hat{c}|  \Big\Vert^2_{L^2_{T,x_3}} \notag \\
    & + \Vert (1+t)^{\sigma/2} (\mathbf{I}-\mathbf{P})\hat{f}\Vert_{L^2_{T,x_3,v}}^2 . \label{extra_b_c}
\end{align}

We combine \eqref{extra_a_bdd} and \eqref{extra_b_c} to conclude the lemma.
\end{proof}

\subsection{Refined macroscopic estimate for $\hat{\mathbf{b}}$ and $\hat{c}$ with time decay}\label{sec:refined}

Since the Poincar\'e inequality holds in our domain $\O = \mathbb{R}^2 \times (-1,1)$, we expect a better control of $\hat{\mathbf{b}}$ and $\hat{c}$ in the low-frequency regime, as they essentially satisfy the elliptic equations, see \cite{guo2004boltzmann}. In the following lemma, we provide the refined estimate of $\hat{\mathbf{b}},\hat{c}$ in order to justify \eqref{b_1_b_2_refined}, \eqref{b3_refined} and \eqref{c_refined} in Theorem \ref{thm:bc_refined}.

\begin{lemma}\label{lemma:b_c_no_time_derivative}
For $i=1,2$, under the assumption in Proposition \ref{prop: full_energy_decay}, we have the time-weighted dissipation estimate for $\hat{b}_i$ as
\begin{align}
    & \Vert (1+t)^{\sigma/2} \hat{b}_i \Vert_{L^1_k L^2_{T,x_3}}   \notag \\
    &\lesssim \Vert (1+t)^{\sigma/2} (\mathbf{I}-\mathbf{P})\hat{f}\Vert_{L^1_k L^2_{T,x_3,v}} + \Vert \hat{f}_0\Vert_{L^1_k L^2_{x_3,v}} + \Vert (1+t)^{\sigma/2} \hat{f}\Vert_{L^1_k L^\infty_T L^2_{x_3,v}} \notag \\
    & +\Vert (1+t)^{\sigma/2}\hat{f}\Vert_{L^1_k L^\infty_T L^2_{x_3,v}} \Vert w\hat{f}\Vert_{L^1_kL^2_T L^\infty_{x_3,v}}  + 
  |(1+t)^{\sigma/2}(I-P_\gamma)\hat{f}|_{L^2_{T,\gamma_+}}  \notag\\
  &  + \Vert \hat{f}_0\Vert_{L^p_k L^2_{x_3,v}} + \Vert \hat{f}\Vert_{L^p_k L^\infty_T L^2_{x_3,v}} \Vert w \hat{f}\Vert_{L^1_k L^2_T L^\infty_{x_3,v}}. \label{b_1_b_2}
\end{align}

For $\hat{b}_3$, we have
\begin{align}
    & \Big\Vert (1+t)^{\sigma/2} \frac{\sqrt{|k|}}{(1+|k|^2)^{1/4}} \hat{b}_3 \Big\Vert_{L^1_k L^2_{T,x_3}}    \notag\\
    &\lesssim \Vert (1+t)^{\sigma/2} (\mathbf{I}-\mathbf{P})\hat{f}\Vert_{L^1_k L^2_{T,x_3,v}} + \Vert \hat{f}_0\Vert_{L^1_k L^2_{x_3,v}} + \Vert (1+t)^{\sigma/2} \hat{f}\Vert_{L^1_k L^\infty_T L^2_{x_3,v}} \notag\\
    & +\Vert (1+t)^{\sigma/2}\hat{f}\Vert_{L^1_k L^\infty_T L^2_{x_3,v}} \Vert w\hat{f}\Vert_{L^1_kL^2_T L^\infty_{x_3,v}}  + 
  |(1+t)^{\sigma/2}(I-P_\gamma)\hat{f}|_{L^2_{T,\gamma_+}} \notag\\
  &  + \Vert \hat{f}_0\Vert_{L^p_k L^2_{x_3,v}} + \Vert \hat{f}\Vert_{L^p_k L^\infty_T L^2_{x_3,v}} \Vert w \hat{f}\Vert_{L^1_k L^2_T L^\infty_{x_3,v}} . \label{b_3}
\end{align}

For $\hat{c}$, we have
\begin{align}
    & \Big\Vert (1+t)^{\sigma/2} \frac{|k|^{1/4}}{(1+|k|^2)^{1/8}} \hat{c} \Big\Vert_{L^1_k L^2_{T,x_3}}    \notag\\
    &\lesssim \Vert (1+t)^{\sigma/2} (\mathbf{I}-\mathbf{P})\hat{f}\Vert_{L^1_k L^2_{T,x_3,v}} + \Vert \hat{f}_0\Vert_{L^1_k L^2_{x_3,v}} + \Vert (1+t)^{\sigma/2} \hat{f}\Vert_{L^1_k L^\infty_T L^2_{x_3,v}} \notag\\
    & +\Vert (1+t)^{\sigma/2}\hat{f}\Vert_{L^1_k L^\infty_T L^2_{x_3,v}} \Vert w\hat{f}\Vert_{L^1_kL^2_T L^\infty_{x_3,v}}  + 
  |(1+t)^{\sigma/2}(I-P_\gamma)\hat{f}|_{L^2_{T,\gamma_+}} \notag\\
  &  + \Vert \hat{f}_0\Vert_{L^p_k L^2_{x_3,v}} + \Vert \hat{f}\Vert_{L^p_k L^\infty_T L^2_{x_3,v}} \Vert w \hat{f}\Vert_{L^1_k L^2_T L^\infty_{x_3,v}} . \label{c_less_k}
\end{align}

\end{lemma}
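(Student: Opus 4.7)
The plan is to rerun the test-function/weak-formulation argument of Lemma \ref{lemma:macroscopic} (in its time-weighted form from Lemma \ref{lemma:macro_time}), but with differently normalised elliptic problems. The key observation is that the degenerate weight $|k|^2/(1+|k|^2)$ appearing in \eqref{elliptic_a}, \eqref{phi_b_k} and \eqref{phi_c_k} was only needed for $\hat{a}$, where $\phi_a$ satisfies a Neumann problem and no Poincar\'e inequality is available. For $\hat{b}_1,\hat{b}_2,\hat{b}_3,\hat{c}$ the associated $\phi$ satisfies a \emph{Dirichlet} condition at $x_3=\pm 1$, so the Poincar\'e inequality furnishes $H^2_{x_3}$-bounds of the test function uniformly in $k$, and the $k$-weight can be chosen to match the term appearing in the corresponding conservation law.

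\textbf{Step 1 (refined estimate for $\hat{b}_1$ and $\hat{b}_2$).} For $\hat{b}_1$ I would use $\psi_b$ as in \eqref{psi_b_k} but replace the right-hand side of \eqref{phi_b_k} by $\bar{\hat b}_1$ (removing the $|k|^2/(1+|k|^2)$ factor). Since $\phi_b(k,\pm 1)=0$, testing against $\bar{\phi}_b$ and $|k|^2\bar{\phi}_b$ and invoking Poincar\'e in $x_3$ yields
\[
\Vert(1+|k|^2)\phi_b\Vert_{L^2_{x_3}}+\Vert(1+|k|)\partial_{x_3}\phi_b\Vert_{L^2_{x_3}}+\Vert\partial_{x_3}^2\phi_b\Vert_{L^2_{x_3}}\lesssim \Vert\hat{b}_1\Vert_{L^2_{x_3}},
\]
together with the analogous trace bounds. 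Running the weak formulation \eqref{weak_formulation} through \eqref{weak_formulation}$_1$--\eqref{weak_formulation}$_6$ exactly as in the proof of Lemma \ref{lemma:macroscopic}, everything goes through verbatim \emph{except} the time-derivative term \eqref{weak_formulation}$_4$, which we handle via the auxiliary elliptic problem for $\Phi_b$ with right-hand side $\partial_t\bar{\hat b}_1$ and the momentum conservation law \eqref{conservation_momentum}. The crucial point is that the $\hat{a}$ contribution in \eqref{conservation_momentum} enters multiplied by $ik_1$; paired with $\bar\Phi_b$ (itself controlled by Poincar\'e), this produces a factor $|k|\cdot\Vert\bar\Phi_b\Vert$ which is absorbed by $\Vert\tfrac{|k|}{\sqrt{1+|k|^2}}\hat{a}\Vert$ from Lemma \ref{lemma:macro_time}. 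Consequently the output estimate on $\hat{b}_1$ carries no negative $k$-weight, proving \eqref{b_1_b_2}. The same argument with $k_1\leftrightarrow k_2$ handles $\hat{b}_2$.

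\textbf{Step 2 (refined estimate for $\hat{b}_3$).} The momentum equation \eqref{momentum_conservation} for $\hat{b}_3$ contains $\partial_{x_3}(\hat{a}+2\hat{c})$ instead of $ik_i(\hat{a}+2\hat{c})$, so pairing with $\Phi_b$ in the time-derivative term only gains one $\partial_{x_3}$, not an $ik_j$. To balance, I would choose the elliptic problem for $\phi_b$ with right-hand side $\frac{|k|}{\sqrt{1+|k|^2}}\bar{\hat b}_3$ (i.e.\ an ``intermediate'' weight, square root of the one used in the proof of Lemma \ref{lemma:macroscopic}). Then Poincar\'e gives
\[
\Vert(1+|k|^2)\phi_b\Vert_{L^2_{x_3}}+\Vert\partial_{x_3}^2\phi_b\Vert_{L^2_{x_3}}\lesssim \frac{|k|^{1/2}}{(1+|k|^2)^{1/4}}\Vert\hat{b}_3\Vert_{L^2_{x_3}},
\]
and the left-hand side of the weak formulation produces the dissipation $\bigl\Vert\tfrac{|k|^{1/2}}{(1+|k|^2)^{1/4}}\hat{b}_3\bigr\Vert^2$. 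The remaining test-function problem for $\Phi_b$ is set up with right-hand side $\frac{|k|}{\sqrt{1+|k|^2}}\partial_t\bar{\hat b}_3$; using \eqref{momentum_conservation}, the $\hat{a}$ and $\hat{c}$ contributions are paired with $\partial_{x_3}\bar\Phi_b$, producing factors $\frac{|k|}{\sqrt{1+|k|^2}}\Vert\hat{a}\Vert$ and $\frac{|k|}{\sqrt{1+|k|^2}}\Vert\hat{c}\Vert$ that are absorbed by the dissipation norms already controlled in Lemma \ref{lemma:macro_time}.

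\textbf{Step 3 (refined estimate for $\hat{c}$).} Iterating the same idea, the energy conservation \eqref{energy_conservation} contains $\partial_{x_3}\hat{b}_3$ without an $ik_j$ factor. Choosing the Dirichlet problem for $\phi_c$ with right-hand side $\frac{|k|^{1/2}}{(1+|k|^2)^{1/4}}\bar{\hat c}$ produces the dissipation $\bigl\Vert\tfrac{|k|^{1/4}}{(1+|k|^2)^{1/8}}\hat{c}\bigr\Vert^2$ on the left; on the right, the $\hat{b}_3$ contribution from \eqref{energy_conservation} is now paired with $\partial_{x_3}\bar\Phi_c$ and produces a factor $\frac{|k|^{1/2}}{(1+|k|^2)^{1/4}}\Vert\hat{b}_3\Vert$, which is exactly the dissipation norm just established in Step~2 and can be absorbed.

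\textbf{Main obstacle.} The argument is structurally identical to Lemma \ref{lemma:macro_time}; the delicate bookkeeping is ensuring that each refined weight $w(k)\in\{1,\,\tfrac{|k|^{1/2}}{(1+|k|^2)^{1/4}},\,\tfrac{|k|^{1/4}}{(1+|k|^2)^{1/8}}\}$ is consistent with (i) the available $H^2_{x_3}$-bounds on $\phi$ after Poincar\'e, (ii) the weight on $\hat{a}$ in the conservation law producing $ik_j\hat{a}$ or $\partial_{x_3}\hat{a}$, and (iii) the weight on the already-estimated dissipation norms used on the right. The main source of loss is step 2: since $\partial_{x_3}\hat{a}$ cannot be replaced by $ik_j\hat{a}$, only a square-root gain is possible per iteration, producing the fractional exponents $\tfrac12$ and $\tfrac14$ in \eqref{b3_refined}--\eqref{c_refined}. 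This also explains why one cannot push the argument further to remove the $k$-weight entirely without invoking the time-derivative assumption \eqref{t_derivative_assumption}, which is precisely the content of \eqref{b3_c_refined}.
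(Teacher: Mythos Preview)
Your proposal is correct and follows essentially the same approach as the paper's proof: remove the $|k|^2/(1+|k|^2)$ weight from the right-hand side of the Dirichlet elliptic problems for $\phi_b,\phi_c$ (using Poincar\'e for the $H^2_{x_3}$ bounds), choose the intermediate weights $\frac{|k|}{\sqrt{1+|k|^2}}$ for $\hat b_3$ and $\frac{|k|^{1/2}}{(1+|k|^2)^{1/4}}$ for $\hat c$ to absorb the $\partial_{x_3}\hat a$ and $\partial_{x_3}\hat b_3$ terms from the respective conservation laws, and handle the time weight via the extra $(1+t)^{\sigma-1}\hat f$ term as in Lemma~\ref{lemma:macro_time}. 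Your identification of the ``square-root loss per iteration'' as the mechanism producing the exponents $\tfrac12$ and $\tfrac14$ matches the paper's reasoning exactly.
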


\begin{proof}

\textit{Proof of \eqref{b_1_b_2}.}

First, we prove \eqref{b_1_b_2}. Without loss of generality, we only prove the case $i=1$. As in the proof of Lemma \ref{lemma:macroscopic}, we use the weak formulation \eqref{weak_formulation}. We choose the same $\psi_b$ as \eqref{psi_b_k}, however, we choose a different $\phi_b$:
\begin{align}
\begin{cases}
    &   [2|k_1|^2 + |k_2|^2 - \p_{x_3}^2] \phi_b = \bar{\hat{b}}_1, \ \ -1<x_3<1,\\
    & \phi_b(k,\pm1) = 0.
\end{cases} \label{phi_b}  
\end{align}
Multiplying \eqref{phi_b} by $\bar{\phi}_b$ and taking integration in $x_3$ we obtain
\begin{align*}
    &   \Vert |k| \phi_b \Vert_{L^2_{x_3}}^2 + \Vert \p_{x_3} \phi_b \Vert_{L^2_{x_3}}^2 \lesssim o(1) \Vert \phi_b\Vert_{L^2_{x_3}}^2 + \Vert \hat{b}_1\Vert_{L^2_{x_3}}^2.
\end{align*}
From the Poincar\'e inequality, we further have
\begin{align}
    & \Vert (1+|k|)\phi_b\Vert^2_{L^2_{x_3}}+ \Vert \p_{x_3}\phi_b\Vert^2_{L^2_{x_3}} \lesssim \Vert \hat{b}_1\Vert^2_{L^2_{x_3}}.  \label{phib_l2_2}
\end{align}
Multiplying \eqref{phi_b} by $|k|^2 \bar{\phi}_b$ we obtain
\begin{align*}
    &  \Vert |k|^2\phi_b \Vert_{L^2_{x_3}}^2 + \Vert |k|\p_{x_3}\phi_b\Vert^2_{L^2_{x_3}} \lesssim o(1) \Vert |k|^2 \phi_b\Vert_{L^2_{x_3}}^2 +  \Vert \hat{b}_1\Vert_{L^2_{x_3}}^2.
\end{align*}
Thus we conclude
\begin{align}
    & \Vert (1+|k|+|k|^2)\phi_b \Vert_{L^2_{x_3}}^2 + \Vert (1+|k|)\p_{x_3}\phi_b\Vert^2_{L^2_{x_3}} \lesssim \Vert \hat{b}_1\Vert^2_{L^2_{x_3}},   \notag \\
    &  \Vert \p_{x_3}^2 \phi_b\Vert_{L^2_{x_3}}^2 \lesssim \Vert |k|^2 \phi_b\Vert_{L^2_{x_3}}^2 + \Vert \hat{b}_1\Vert_{L^2_{x_3}}^2 \lesssim \Vert \hat{b}_1\Vert^2_{L^2_{x_3}}.    \label{phib_h2_2}
\end{align}

By trace theorem, we have
\begin{align}
    & | |k|\phi_b(k,\pm 1) |^2 \lesssim \Vert \hat{b}_1\Vert^2_{L^2_{x_3}}, \ | \p_{x_3}\phi_b(k,\pm 1)|^2 \lesssim \Vert \hat{b}_1\Vert_{L^2_{x_3}}^2. \label{phib_trace_2}
\end{align}

By the same computation of the estimate of $\hat{\mathbf{b}}$ in Lemma \ref{lemma:macroscopic}, we compute
\begin{align*}
    & \eqref{weak_formulation}_1   = \int_0^T \int_{-1}^1  [2 |k_1|^2 \hat{b}_1 + |k_2|^2 \hat{b}_1] \phi_b \dd x_3 \dd t  +  \underbrace{\int_0^T \int_{-1}^1 \int_{\mathbb{R}^3} i \bar{v}\cdot k (\mathbf{I}-\mathbf{P})\hat{f}\psi_b \dd v \dd x_3 \dd t}_{E_1}.
\end{align*}
Here, by \eqref{phib_h2_2},
\begin{align}
    &   |E_1| \lesssim o(1)[\Vert |k|^2\phi_b\Vert_{L^2_{T,x_3}}^2 + \Vert |k|\p_{x_3}\phi_b \Vert_{L^2_{T,x_3}}^2] + \Vert (\mathbf{I}-\mathbf{P})\hat{f}\Vert_{L^2_{T,x_3,v}}^2  \notag\\
    & \lesssim o(1)\Vert \hat{b}_1\Vert^2_{L^2_{T,x_3}} + \Vert (\mathbf{I}-\mathbf{P})\hat{f}\Vert^2_{L^2_{T,x_3,v}}. \label{est_b_E1_2}
\end{align}

Next, by the same computation of Lemma \ref{lemma:macroscopic}, we compute
\begin{align*}
    &    \eqref{weak_formulation}_2 = -\int_0^T \int_{-1}^1 \hat{b}_1 \p_{x_3}^2 \phi_b \dd x \dd t  \underbrace{-\int_0^T \int_{-1}^1 \int_{\mathbb{R}^3} v_3 (\mathbf{I}-\mathbf{P})\hat{f} \p_{x_3}\psi_b \dd v \dd x_3 \dd t}_{E_2}.
\end{align*}
Here, by \eqref{phib_h2_2},
\begin{align}
    & |E_2| \lesssim o(1) [\Vert |k|\p_{x_3}\phi_b \Vert^2_{L^2_{T,x_3}} + \Vert \p_{x_3}^2 \phi_b\Vert^2_{L^2_{T,x_3}}] + \Vert (\mathbf{I}-\mathbf{P})\hat{f}\Vert^2_{L^2_{T,x_3,v}}  \notag \\
        & \lesssim o(1)\Vert \hat{b}_1\Vert^2_{L^2_{T,x_3}} + \Vert (\mathbf{I}-\mathbf{P})\hat{f}\Vert^2_{L^2_{T,x_3,v}}.\label{est_b_E2_2}
\end{align}
Then we have
\begin{align}
    &     \eqref{weak_formulation}_1 + \eqref{weak_formulation}_2 = \int_0^T \int_{-1}^1 [2|k_1|^2 + |k_2|^2 - \p_{x_3}^2]\phi_b \hat{b}_1 \dd x_3 \dd t + E_1 + E_2  \notag\\
    & = \Vert \hat{b}_1\Vert_{L^2_{T,x_3}}^2 + E_1 + E_2. \label{est_b_LHS_2}
\end{align}

Then we compute the boundary term $\eqref{weak_formulation}_3$. For the contribution of $P_\gamma \hat{f}$, by the same computation of Lemma \ref{lemma:macroscopic} we have
\begin{align*}
    &   \int_0^T \int_{\mathbb{R}^3} v_3   P_\gamma \hat{f}(k,1) \psi_b(1) \dd v \dd t = 0.
\end{align*}

For the part with $(I-P_\gamma)\hat{f}$, we derive that
\begin{align}
    & \Big|\int_0^T \int_{v_3>0} (I-P_\gamma)\hat{f}(k,1) \Big[ -\frac{3}{2}\Big(|v_1|^2 - \frac{|v|^2}{3} \Big)\sqrt{\mu} ik_1 \phi_b  - v_1v_2 \sqrt{\mu} ik_2 \phi_b + v_1 v_3 \sqrt{\mu} \p_{x_3} \phi_b \Big] \dd v \dd t\Big| \notag \\
    & \lesssim o(1) [[| |k| \phi_b(k,1) |^2_{L^2_T} +  | \p_{x_3} \phi_b(k,1)|^2_{L^2_T}  ]  ]   +    |(I-P_\gamma)\hat{f}|_{L^2_{T,\gamma_+}}^2  \lesssim o(1) \Vert \hat{b}_1 \Vert_{L^2_{T,x_3}}^2 + |(I-P_\gamma)\hat{f}|_{L^2_{T,\gamma_+}}^2.  \notag
\end{align}
In the last line, we have used the trace estimate \eqref{phib_trace_2}.

Similarly, for $x_3=-1$ we have the same estimate. Thus we conclude that
\begin{align}
    &  |\eqref{weak_formulation}_3 |\lesssim o(1) \Vert \hat{b}_1 \Vert_{L^2_{T,x_3}}^2 + |(I-P_\gamma)\hat{f}|_{L^2_{T,\gamma_+}}^2.   \label{est_b_bdr_2}
\end{align}

Next, we compute the time derivative $\eqref{weak_formulation}_4$. We denote $\Phi_b$ as the solution to the elliptic equation
\begin{align*}
\begin{cases}
    &    (2|k_1|^2 + |k_2|^2 - \p_{x_3}^2) \Phi_b(k,x_3) = \p_t \bar{\hat{b}}_1(t,k,x_3) , \ x_3 \in (-1,1) ,\\
    & \Phi_b(k,\pm 1) = 0.    
\end{cases}
\end{align*}
Integration by part leads to
\begin{align}
    &   \int_0^T \int_{-1}^1 (2|k_1|^2 + |k_2|^2) |\Phi_b|^2 \dd x_3 \dd t+ \int_0^T \int_{-1}^1 |\p_{x_3} \Phi_b|^2 \dd x_3 \dd t   \notag\\
    & = \int_0^T \int_{-1}^1  \p_t \bar{\hat{b}}_1(t,k,x_3) \bar{\Phi}_b \dd x_3 \dd t.   \label{energy_p_t_b_12}
\end{align}
From the conservation of momentum \eqref{conservation_momentum}, \eqref{energy_p_t_b_12} becomes
\begin{align}
    &    \int_0^T \int_{-1}^1  \p_t \bar{\hat{b}}_1(t,k,x_3) \bar{\Phi}_b \dd x_3 \dd t \notag \\
    & = \int_0^T \int_{-1}^1  \Big[ -ik_1 (\hat{a}+2\hat{c} + \Theta_{11}((\mathbf{I}-\mathbf{P})\hat{f}))\bar{\Phi}_b - ik_2 \Theta_{12}((\mathbf{I}-\mathbf{P})\hat{f})\bar{\Phi}_b     \notag\\
    & \ \ \ \ \ \  + \Theta_{13}((\mathbf{I}-\mathbf{P})\hat{f})\p_{x_3}\bar{\Phi}_b \Big] \dd x_3 \dd t - \int_0^T   \bar{\Phi}_b \Theta_{13}((\mathbf{I}-\mathbf{P})\hat{f}) \Big|_{-1}^1 \dd t. \label{energy_p_t_b_22}
\end{align}
The boundary term vanishes from the boundary condition $\bar{\Phi}_b(k,\pm 1) = 0$:
\begin{align*}
    & \int_0^T   \bar{\Phi}_b \Theta_{13}((\mathbf{I}-\mathbf{P})\hat{f}) \Big|_{-1}^1 \dd t = 0.
\end{align*}

The other term in \eqref{energy_p_t_b_22} is controlled as
\begin{align*}
    &    \int_0^T \int_{-1}^1  \Big| -ik_1 (\hat{a}+2\hat{c} + \Theta_{11}((\mathbf{I}-\mathbf{P})\hat{f}))\bar{\Phi}_b - ik_2 \Theta_{12}((\mathbf{I}-\mathbf{P})\hat{f})\bar{\Phi}_b    + \Theta_{13}((\mathbf{I}-\mathbf{P})\hat{f})\p_{x_3}\bar{\Phi}_b \Big|\dd x_3 \dd t \\
    & \lesssim o(1) \Vert (1+|k|) \Phi_b \Vert_{L^2_{T,x_3}}^2 + o(1) \Vert \p_{x_3}\Phi_b\Vert_{L^2_{T,x_3}}^2 + \Vert  |k|^2 \hat{a}\Vert^2_{L^2_{T,x_3}}+ \Vert  |k|^2 \hat{c}\Vert^2_{L^2_{T,x_3}}+ \Vert   (\mathbf{I}-\mathbf{P})\hat{f}\Vert^2_{L^2_{T,x_3,v}}.
\end{align*}
Plugging this estimates to \eqref{energy_p_t_b_12}, with the Poincar\'e inequality $\Vert \Phi_b\Vert_{L^2_{x_3}} \lesssim \Vert \p_{x_3}\Phi_b\Vert_{L^2_{x_3}}$, we obtain
\begin{align}
    &   \Vert (1+|k|) \Phi_b \Vert_{L^2_{T,x_3}}^2 +  \Vert \p_{x_3}\Phi_b\Vert_{L^2_{T,x_3}}^2  \lesssim \Vert  |k|^2 \hat{a}\Vert^2_{L^2_{T,x_3}}+ \Vert  |k|^2 \hat{c}\Vert^2_{L^2_{T,x_3}}+ \Vert   (\mathbf{I}-\mathbf{P})\hat{f}\Vert^2_{L^2_{T,x_3,v}}.    \label{Phi_b_estimate_2}
\end{align}

Then we compute $\eqref{weak_formulation}_4$ as
\begin{align}
    &     |\eqref{weak_formulation}_4 |\leq \int_0^T \int_{-1}^1 \int_{\mathbb{R}^3} \Big| \hat{f} \sqrt{\mu} \Big[ -\frac{3}{2}\Big( |v_1|^2 - \frac{|v|^2}{3} \Big) i k_1 - v_1 v_2 i k_2 + v_1 v_3 \p_{x_3} \Big] \Phi_b \Big| \dd x_3 \dd v \dd t  \notag\\
    & \lesssim \int_0^T \int_{-1}^1  \Vert (\mathbf{I}-\mathbf{P})\hat{f}\Vert_{L^2_v}   [|k\Phi_b| + |\p_{x_3}\Phi_b|] \dd x_3 \dd t  \notag\\
    & \lesssim  \Vert (\mathbf{I}-\mathbf{P})\hat{f}\Vert^2_{L^2_{T,x_3,v}} + o(1) \Vert  |k\Phi_b| + |\p_{x_3}\Phi_b| \Vert_{L^2_{T,x_3}}^2  \notag\\
    & \lesssim  \Vert (\mathbf{I}-\mathbf{P})\hat{f}\Vert^2_{L^2_{T,x_3,v}}   + o(1)\Vert  |k|^2 \hat{a}\Vert^2_{L^2_{T,x_3}}+ o(1)\Vert  |k|^2 \hat{c}\Vert^2_{L^2_{T,x_3}} .   \label{est_b_RHS_3_2}
\end{align}
In the last line, we have used \eqref{Phi_b_estimate_2}.

Next we compute $\eqref{weak_formulation}_5$, $\eqref{weak_formulation}_6$ and $\eqref{weak_formulation}_0$ as
\begin{align}
    &  |  \eqref{weak_formulation}_5 |\lesssim o(1)[\Vert k \phi_b \Vert_{L^2_{T,x_3}}^2 + \Vert \p_{x_3} \phi_b \Vert_{L^2_{T,x_3}}^2 ] + \Vert (\mathbf{I}-\mathbf{P})\hat{f}\Vert_{L^2_{T,x_3,v}}^2   \lesssim o(1)\Vert \hat{b}_1  \Vert^2_{L^2_{T,x_3}}  + \Vert (\mathbf{I}-\mathbf{P})\hat{f}\Vert^2_{L^2_{T,x_3,v}},   \label{est_b_RHS_5_2}
\end{align}
\begin{align}
    & |\eqref{weak_formulation}_6| \leq   \int_0^T   \int_{-1}^1 \int_{\mathbb{R}^3} |\hat{\Gamma}(\hat{f},\hat{f})\psi_b| \dd v \dd x_3 \dd t \notag \\
    &\lesssim  o(1)[\Vert k \phi_b \Vert_{L^2_{T,x_3}}^2 + \Vert \p_{x_3} \phi_b \Vert_{L^2_{T,x_3}}^2 ] + \Big(\int_{\mathbb{R}^2} \Vert \hat{f}(k-\ell)\Vert_{L^\infty_T L^2_{x_3,v}} \Vert \hat{f}(\ell)\Vert_{L^2_T L^\infty_{x_3}L^2_\nu} \dd \ell\Big)^2 \notag \\ 
    &\lesssim  o(1)\Vert \hat{b}_1  \Vert^2_{L^2_{T,x_3}} + \Big(\int_{\mathbb{R}^2} \Vert \hat{f}(k-\ell)\Vert_{L^\infty_T L^2_{x_3,v}} \Vert \hat{f}(\ell)\Vert_{L^2_T L^\infty_{x_3}L^2_\nu} \dd \ell\Big)^2,   \label{est_b_RHS_6_2}
\end{align}
and
\begin{align}
    &     \int_{-1}^1 \int_{\mathbb{R}^3} |\hat{f}(T) \psi_b(T)| \dd v \dd x_3   \lesssim \Vert \hat{f}(t) \Vert_{L^\infty_T L^2_{x_3,v}} [\Vert k\phi_b \Vert_{L^\infty_T L^2_{x_3}} + \Vert \p_{x_3}\phi_b \Vert_{L^\infty_T L^2_{x_3}} ] \notag \\
    & \lesssim \Vert \hat{f}\Vert_{L^\infty_T L^2_{x_3,v}} \Vert \hat{b}_3\Vert_{L^\infty_{T}L^2_{x_3}} \lesssim \Vert \hat{f}\Vert_{L^\infty_T L^2_{x_3,v}}^2,  \label{est_b_t_1}\\
        &  \int_{-1}^1 \int_{\mathbb{R}^3} |\hat{f}(0)\psi_b(0)| \dd v \dd x_3 \lesssim \Vert \hat{f}_0\Vert^2_{L^2_{x_3,v}}. \label{est_b_0_1}
\end{align}

We combine \labelcref{est_b_E1_2,est_b_E2_2,est_b_LHS_2,est_b_bdr_2} and \labelcref{est_b_RHS_3_2,est_b_RHS_5_2,est_b_RHS_6_2,est_b_t_1,est_b_0_1} to conclude the estimate for $\hat{b}_1$:
\begin{align}
    &  \Vert \hat{b}_1\Vert_{L^2_{T,x_3}} \lesssim |(I-P_\gamma)\hat{f}|_{L^2_{T,\gamma_+}} + \Vert (\mathbf{I}-\mathbf{P})\hat{f}\Vert_{L^2_{T,x_3,v}} + \int_{\mathbb{R}^2} \Vert \hat{f}(k-\ell)\Vert_{L^\infty_T L^2_{x_3,v}} \Vert \hat{f}(\ell)\Vert_{L^2_T L^\infty_{x_3}L^2_\nu} \dd \ell\notag \\
    &+ o(1)\Vert |k|^2 \hat{a}\Vert_{L^2_{T,x_3}} + o(1)\Vert |k|^2 \hat{c}\Vert_{L^2_{T,x_3}}  +  \Vert \hat{f}_0\Vert_{L^2_{x_3,v}} + \Vert \hat{f}\Vert_{L^\infty_T L^2_{x_3,v}}   \label{b1_bdd_small_k}   .
\end{align}

Now we take the $k-$integration in the following way:
\begin{align*}
    &    \Vert \hat{b}_1 \Vert_{L^1_k L^2_{T,x_3}} \lesssim \Vert \mathbf{1}_{|k|<1} \hat{b}_1 \Vert_{L^1_k L^2_{T,x_3}} + \Big\Vert \mathbf{1}_{|k|>1} \frac{|k|}{\sqrt{1+|k|^2}} \hat{b}_1 \Big\Vert_{L^1_k L^2_{T,x_3}} \\
    & \lesssim      \Vert  \mathbf{1}_{|k|<1}\eqref{b1_bdd_small_k} \Vert_{L^1_k} + \Big\Vert \frac{|k|}{\sqrt{1+|k|^2}} \hat{b}_1 \Big\Vert_{L^1_k L^2_{T,x_3}}  .
\end{align*}
For the first term, since $|k|<1$, in \eqref{b1_bdd_small_k}, we have
\begin{align*}
    & o(1)\Vert |k|^2 \hat{a}\Vert_{L^2_{T,x_3}} + o(1)\Vert |k|^2 \hat{c}\Vert_{L^2_{T,x_3}} \leq  \Big\Vert \frac{|k|}{\sqrt{1+|k|^2}} (\hat{a},\hat{c})\Big\Vert_{L^1_k L^2_{T,x_3}}.
\end{align*}

Applying Lemma \ref{lemma:macroscopic}, we conclude that
\begin{align*}
    & \Vert \hat{b}_1\Vert_{L^1_k L^2_{T,x_3}} \lesssim \Vert (\mathbf{I}-\mathbf{P})\hat{f}\Vert_{L^1_k L^2_{T,x_3,v}}  \notag\\
    &+ \Vert \hat{f}\Vert_{L^p_k L^\infty_T L^2_{x_3,v}} \Vert w\hat{f}\Vert_{L^1_k L^2_T L^\infty_{x_3,v}} + |(I-P_\gamma)\hat{f}|_{L^1_k L^2_{T,\gamma_+}} + \Vert \hat{f}\Vert_{L^1_k L^\infty_T L^2_{x_3,v}} + \Vert \hat{f}_0\Vert_{L^1_k L^2_{x_3,v}}. 
\end{align*}

To prove \eqref{b_1_b_2}, we include the time weight and use \eqref{weight_f_energy}. Similar to the proof of Lemma \ref{lemma:macro_time}, we only need to compute the contribution of $(1+t)^{\sigma-1}\hat{f}$. By the same computation in \eqref{extra_b_c}, such term is controlled as
\begin{align*}
    &  \int_0^T \int_1^1 \int_{\mathbb{R}^3} (1+t)^{\sigma-1} |\hat{f} \psi_b| \dd v \dd x_3 \dd t \\
    & \lesssim o(1) \Vert (1+t)^{\sigma/2} |\hat{b}_1| \Vert^2_{L^2_{T,x_3}} + \Vert (1+t)^{\sigma/2} (\mathbf{I}-\mathbf{P})\hat{f}\Vert_{L^2_{T,x_3,v}}^2.
\end{align*}
This concludes \eqref{b_1_b_2}.

\medskip
\textit{Proof of \eqref{b_3}.}

We choose 
\begin{align}
\begin{cases}
&\dis \psi_b = -v_1 v_3 \sqrt{\mu} i k_1\phi_b - v_2 v_3 \sqrt{\mu} i k_2 \phi_b + \frac{3}{2} \Big( |v_3|^2 - \frac{|v|^2}{3} \Big) \sqrt{\mu} \p_{x_3} \phi_b, \\
    &\dis    [|k_1|^2 + |k_2|^2 - 2\p_{x_3}^2] \phi_b = \frac{|k|}{\sqrt{1+|k|^2}}\bar{\hat{b}}_3, \\
    & \phi_b = 0 \text{ when } x_3 = \pm 1.   
\end{cases}\label{phi_b_3}    
\end{align}
Multiplying \eqref{phi_b_3} by $\bar{\phi}_b$ and taking integration in $x_3$ we obtain
\begin{align*}
    &\dis   \Vert |k| \phi_b \Vert_{L^2_{x_3}}^2 + \Vert \p_{x_3} \phi_b \Vert_{L^2_{x_3}}^2 \lesssim o(1) \Vert \phi_b\Vert_{L^2_{x_3}}^2 + \Big\Vert \frac{|k|}{\sqrt{1+|k|^2}}\hat{b}_3  \Big\Vert_{L^2_{x_3}}^2.
\end{align*}
From the Poincar\'e inequality, we further have
\begin{align}
    & \Vert (1+|k|)\phi_b\Vert^2_{L^2_{x_3}}+ \Vert \p_{x_3}\phi_b\Vert^2_{L^2_{x_3}} \lesssim \Big\Vert  \frac{\sqrt{|k|}}{(1+|k|^2)^{1/4}} \hat{b}_3 \Big\Vert^2_{L^2_{x_3}}.  \notag
\end{align}
Multiplying \eqref{phi_b_3} by $|k|^2 \bar{\phi}_b$ we obtain
\begin{align*}
    &  \Vert |k|^2\phi_b \Vert_{L^2_{x_3}}^2 + \Vert |k|\p_{x_3}\phi_b\Vert^2_{L^2_{x_3}} \lesssim o(1) \Vert |k|^2 \phi_b\Vert_{L^2_{x_3}}^2 +  \Big\Vert  \frac{\sqrt{|k|}}{(1+|k|^2)^{1/4}} \hat{b}_3 \Big\Vert_{L^2_{x_3}}^2.
\end{align*}
Thus we conclude
\begin{align}
    & \Vert |k|\p_{x_3}\phi_b\Vert^2_{L^2_{x_3}} \lesssim  \Big\Vert  \frac{\sqrt{|k|}}{(1+|k|^2)^{1/4}} \hat{b}_3 \Big\Vert_{L^2_{x_3}}^2,   \notag \\
    & \Vert \p_{x_3}^2 \phi_b\Vert_{L^2_{x_3}}^2 \lesssim \Vert |k|^2 \phi_b\Vert_{L^2_{x_3}}^2 +  \Big\Vert  \frac{\sqrt{|k|}}{(1+|k|^2)^{1/4}} \hat{b}_3 \Big\Vert_{L^2_{x_3}}^2 \lesssim  \Big\Vert  \frac{\sqrt{|k|}}{(1+|k|^2)^{1/4}} \hat{b}_3 \Big\Vert_{L^2_{x_3}}^2.    \notag
\end{align}

By trace theorem, we have
\begin{align}
    & | |k|\phi_b(k,\pm 1) |^2 + | \p_{x_3}\phi_b(k,\pm 1)|^2 \lesssim  \Big\Vert  \frac{\sqrt{|k|}}{(1+|k|^2)^{1/4}} \hat{b}_3 \Big\Vert_{L^2_{x_3}}^2. \notag
\end{align}

By the same computation of the estimate of $\hat{\mathbf{b}}$ in Lemma \ref{lemma:macroscopic}, we have
\begin{align}
    &     \eqref{weak_formulation}_1 + \eqref{weak_formulation}_2 = \int_0^T \int_{-1}^1 [|k_1|^2 + |k_2|^2 - 2\p_{x_3}^2]\phi_b \hat{b}_3 \dd x_3 \dd t \notag \\
    &\underbrace{+ \int_0^T\int_{-1}^1\int_{\mathbb{R}^3}i\bar{v}\cdot k (\mathbf{I}-\mathbf{P})\hat{f}\psi_b \dd v \dd x_3 \dd t}_{E_3} \underbrace{-  \int_0^T \int_{-1}^1 \int_{\mathbb{R}^3} v_3 (\mathbf{I}-\mathbf{P})\hat{f}\p_{x_3}\psi_b \dd v \dd x_3 \dd t}_{E_4} \notag\\
    & =  \Big\Vert  \frac{\sqrt{|k|}}{(1+|k|^2)^{1/4}} \hat{b}_3 \Big\Vert_{L^2_T L^2_{x_3}}^2 + E_3 + E_4, \label{est_b_LHS_3}
\end{align}
with
\begin{align}
    & |E_3| + |E_4| \lesssim o(1) [\Vert |k|^2 \phi_b\Vert_{L^2_{T,x_3}}^2 +\Vert |k|\p_{x_3}\phi_b \Vert^2_{L^2_{T,x_3}} + \Vert \p_{x_3}^2 \phi_b\Vert^2_{L^2_{T,x_3}}] + \Vert (\mathbf{I}-\mathbf{P})\hat{f}\Vert^2_{L^2_{T,x_3,v}}  \notag \\
        & \lesssim o(1)\Big\Vert  \frac{\sqrt{|k|}}{(1+|k|^2)^{1/4}} \hat{b}_3 \Big\Vert_{L^2_T L^2_{x_3}}^2 + \Vert (\mathbf{I}-\mathbf{P})\hat{f}\Vert^2_{L^2_{T,x_3,v}}.\label{est_b_E2_3}
\end{align}

Then we compute the boundary term $\eqref{weak_formulation}_3$. By the same computation of Lemma \ref{lemma:macroscopic}, 
\begin{align}
    &  |\eqref{weak_formulation}_3| \lesssim o(1)[| |k| \phi_b(k,\pm 1) |^2_{L^2_T} +  | \p_{x_3} \phi_b(k,\pm 1)|^2_{L^2_T}  ] + |(I-P_\gamma)\hat{f}|_{L^2_{T,\gamma_+}}^2 \notag \\
    &\lesssim o(1) \Big\Vert  \frac{\sqrt{|k|}}{(1+|k|^2)^{1/4}} \hat{b}_3 \Big\Vert_{L^2_T L^2_{x_3}}^2 + |(I-P_\gamma)\hat{f}|_{L^2_{T,\gamma_+}}^2.   \label{est_b_bdr_3}
\end{align}

Next, we compute the time derivative $\eqref{weak_formulation}_4$. We denote $\Phi_b$ as the solution to the elliptic equation
\begin{align*}
\begin{cases}
      &\dis    (|k_1|^2 + |k_2|^2 - 2\p_{x_3}^2) \Phi_b(k,x_3) = \frac{|k|}{\sqrt{1+|k|^2}}\p_t \bar{\hat{b}}_3(t,k,x_3) , \ x_3 \in (-1,1) ,\\
    & \Phi_b(k,\pm 1) = 0.  
\end{cases}
\end{align*}
Integration by part leads to
\begin{align}
    &   \int_0^T \int_{-1}^1 (|k_1|^2 + |k_2|^2) |\Phi_b|^2 \dd x_3 \dd t+2 \int_0^T \int_{-1}^1 |\p_{x_3} \Phi_b|^2 \dd x_3 \dd t   \notag\\
    & = \int_0^T \int_{-1}^1 \frac{|k|}{\sqrt{1+|k|^2}} \p_t \bar{\hat{b}}_3(t,k,x_3) \bar{\Phi}_b \dd x_3 \dd t.   \label{energy_p_t_b_13}
\end{align}
From the conservation of momentum in $\hat{b}_3$ in \eqref{momentum_conservation}, \eqref{energy_p_t_b_13} becomes
\begin{align*}
    &    \int_0^T \int_{-1}^1 \frac{|k|}{\sqrt{1+|k|^2}} |\p_t \bar{\hat{b}}_3(t,k,x_3) \bar{\Phi}_b| \dd x_3 \dd t \notag \\
    & \leq \int_0^T \int_{-1}^1 \frac{|k|}{\sqrt{1+|k|^2}} \Big|  (-ik_1 \Theta_{31}((\mathbf{I}-\mathbf{P})\hat{f}))\bar{\Phi}_b - ik_2 \Theta_{32}((\mathbf{I}-\mathbf{P})\hat{f})\bar{\Phi}_b     \notag\\
    &  + [\hat{a} + 2\hat{c} + \Theta_{33}((\mathbf{I}-\mathbf{P})\hat{f})]\p_{x_3}\bar{\Phi}_b \Big| \dd x_3 \dd t \lesssim o(1) \Vert |k|\Phi_b \Vert_{L^2_{T,x_3}}^2 + o(1) \Vert \p_{x_3}\Phi_b\Vert_{L^2_{T,x_3}}^2 \\
    &+ \Big\Vert  \frac{|k|}{\sqrt{1+|k|^2}}\hat{a}\Big\Vert^2_{L^2_{T,x_3}}+ \Big\Vert  \frac{|k|}{\sqrt{1+|k|^2}} \hat{c}\Big\Vert^2_{L^2_{T,x_3}}+ \Vert   (\mathbf{I}-\mathbf{P})\hat{f}\Vert^2_{L^2_{T,x_3,v}}.
\end{align*}
Plugging this estimates to \eqref{energy_p_t_b_12}, with the Poincar\'e inequality $\Vert \Phi_b\Vert_{L^2_{x_3}} \lesssim \Vert \p_{x_3}\Phi_b\Vert_{L^2_{x_3}}$, we obtain
\begin{align}
    &   \Vert (1+|k|) \Phi_b \Vert_{L^2_{T,x_3}}^2 +  \Vert \p_{x_3}\Phi_b\Vert_{L^2_{T,x_3}}^2  \lesssim \Big\Vert  \frac{|k|}{\sqrt{1+|k|^2}}\hat{a}\Big\Vert^2_{L^2_{T,x_3}}+ \Big\Vert  \frac{|k|}{\sqrt{1+|k|^2}} \hat{c}\Big\Vert^2_{L^2_{T,x_3}}+ \Vert   (\mathbf{I}-\mathbf{P})\hat{f}\Vert^2_{L^2_{T,x_3,v}}.    \label{Phi_b_estimate_3}
\end{align}

Then we compute $\eqref{weak_formulation}_4$ using the same computation as Lemma \ref{lemma:macroscopic} as
\begin{align}
    &     |\eqref{weak_formulation}_4|  \lesssim  \Vert (\mathbf{I}-\mathbf{P})\hat{f}\Vert^2_{L^2_{T,x_3,v}} + o(1) \Vert  |k\Phi_b| + |\p_{x_3}\Phi_b| \Vert_{L^2_{T,x_3}}^2  \notag\\
    & \lesssim  \Vert (\mathbf{I}-\mathbf{P})\hat{f}\Vert^2_{L^2_{T,x_3,v}}   +\Big\Vert  \frac{|k|}{\sqrt{1+|k|^2}}\hat{a}\Big\Vert^2_{L^2_{T,x_3}}+ \Big\Vert  \frac{|k|}{\sqrt{1+|k|^2}} \hat{c}\Big\Vert^2_{L^2_{T,x_3}} .   \label{est_b_RHS_3_3}
\end{align}
In the last line, we have used \eqref{Phi_b_estimate_3}.

Moreover, $\eqref{weak_formulation}_5$, $\eqref{weak_formulation}_6$ and $\eqref{weak_formulation}_0$ are computed similarly:
\begin{align}
    &    |\eqref{weak_formulation}_5| \lesssim o(1)[\Vert k \phi_b \Vert_{L^2_{T,x_3}}^2 + \Vert \p_{x_3} \phi_b \Vert_{L^2_{T,x_3}}^2 ] + \Vert (\mathbf{I}-\mathbf{P})\hat{f}\Vert_{L^2_{T,x_3,v}}^2   \lesssim o(1)\Vert \hat{b}_1  \Vert^2_{L^2_{T,x_3}}  + \Vert (\mathbf{I}-\mathbf{P})\hat{f}\Vert^2_{L^2_{T,x_3,v}},  
    \label{est_b_RHS_5_3}
\end{align}
\begin{align}
    & |\eqref{weak_formulation}_6| \leq   \int_0^T   \int_{-1}^1 \int_{\mathbb{R}^3} |\hat{\Gamma}(\hat{f},\hat{f})\psi_b| \dd v \dd x_3 \dd t \notag \\
    &\lesssim o(1)[\Vert k \phi_b \Vert_{L^2_{T,x_3}}^2 + \Vert \p_{x_3} \phi_b \Vert_{L^2_{T,x_3}}^2 ] + \Big(\int_{\mathbb{R}^2} \Vert \hat{f}(k-\ell)\Vert_{L^\infty_T L^2_{x_3,v}} \Vert \hat{f}(\ell)\Vert_{L^2_T L^\infty_{x_3}L^2_\nu} \dd \ell\Big)^2 \notag \\
    &\lesssim  o(1)\Vert \hat{b}_1  \Vert^2_{L^2_{T,x_3}}  + \Big(\int_{\mathbb{R}^2} \Vert \hat{f}(k-\ell)\Vert_{L^\infty_T L^2_{x_3,v}} \Vert \hat{f}(\ell)\Vert_{L^2_T L^\infty_{x_3}L^2_\nu} \dd \ell\Big)^2,   \label{est_b_RHS_6_3}
\end{align}
and
\begin{align}
    &     \int_{-1}^1 \int_{\mathbb{R}^3} |\hat{f}(T) \psi_b(T)| \dd v \dd x_3   \lesssim \Vert \hat{f}(t) \Vert_{L^\infty_T L^2_{x_3,v}} [\Vert k\phi_b \Vert_{L^\infty_T L^2_{x_3}} + \Vert \p_{x_3}\phi_b \Vert_{L^\infty_T L^2_{x_3}} ] \notag \\
    & \lesssim \Vert \hat{f}\Vert_{L^\infty_T L^2_{x_3,v}} \Vert \hat{b}_3\Vert_{L^\infty_{T}L^2_{x_3}} \lesssim \Vert \hat{f}\Vert_{L^\infty_T L^2_{x_3,v}}^2,  \label{est_b_t_3}\\
        &  \int_{-1}^1 \int_{\mathbb{R}^3} |\hat{f}(0)\psi_b(0)| \dd v \dd x_3 \lesssim \Vert \hat{f}_0\Vert^2_{L^2_{x_3,v}}. \label{est_b_0_3}
\end{align}

We combine \labelcref{est_b_LHS_3,est_b_E2_3,est_b_bdr_3} and \labelcref{est_b_RHS_3_3,est_b_RHS_5_3,est_b_RHS_6_3,est_b_t_3,est_b_0_3} to conclude the estimate for $\hat{b}_3$:
\begin{align*}
    &  \Big\Vert \frac{\sqrt{|k|}}{(1+|k|^2)^{1/4}}\hat{b}_3 \Big\Vert_{L^2_{T,x_3}} \lesssim |(I-P_\gamma)\hat{f}|_{L^2_{T,\gamma_+}} + \Vert (\mathbf{I}-\mathbf{P})\hat{f}\Vert_{L^2_{T,x_3,v}} + \int_{\mathbb{R}^2} \Vert \hat{f}(k-\ell)\Vert_{L^\infty_T L^2_{x_3,v}} \Vert \hat{f}(\ell)\Vert_{L^2_T L^\infty_{x_3}L^2_\nu} \dd \ell\notag \\
    &+ \Big\Vert  \frac{|k|}{\sqrt{1+|k|^2}}\hat{a}\Big\Vert_{L^2_{T,x_3}}+ \Big\Vert  \frac{|k|}{\sqrt{1+|k|^2}} \hat{c}\Big\Vert_{L^2_{T,x_3}}   +   \Vert \hat{f}_0\Vert_{L^2_{x_3,v}} + \Vert \hat{f}\Vert_{L^\infty_T L^2_{x_3,v}}  .
\end{align*}

Applying Lemma \ref{lemma:macroscopic}, we conclude that
\begin{align*}
    & \Big\Vert \frac{\sqrt{|k|}}{(1+|k|^2)^{1/4}}\hat{b}_3 \Big\Vert_{L^1_k L^2_{T,x_3}} \lesssim \Vert (\mathbf{I}-\mathbf{P})\hat{f}\Vert_{L^1_k L^2_{T,x_3,v}}  \notag\\
    &+ \Vert \hat{f}\Vert_{L^p_k L^\infty_T L^2_{x_3,v}} \Vert w\hat{f}\Vert_{L^1_k L^2_T L^\infty_{x_3,v}} + |(I-P_\gamma)\hat{f}|_{L^1_k L^2_{T,\gamma_+}} + \Vert \hat{f}\Vert_{L^1_k L^\infty_T L^2_{x_3,v}} + \Vert \hat{f}_0\Vert_{L^1_k L^2_{x_3,v}}. 
\end{align*}

To include the time weight, by the same computation in \eqref{extra_b_c}, the extra term is controlled as
\begin{align*}
    &  \int_0^T \int_1^1 \int_{\mathbb{R}^3} (1+t)^{\sigma-1} |\hat{f} \psi_b| \dd v \dd x_3 \dd t \\
    & \lesssim o(1) \Big\Vert (1+t)^{\sigma/2} \frac{\sqrt{|k|}}{(1+|k|^2)^{1/4}} \hat{b}_3 \Big\Vert^2_{L^2_{T,x_3}} + \Vert (1+t)^{\sigma/2} (\mathbf{I}-\mathbf{P})\hat{f}\Vert_{L^2_{T,x_3,v}}^2.
\end{align*}
This concludes \eqref{b_3}.

\medskip
\textit{Proof of \eqref{c_less_k}.}

We choose a test function as
\begin{align}
    & \psi_c = (-ik_1v_1 \phi_c - ik_2 v_2 \phi_c + v_3 \p_{x_3}\phi_c)(|v|^2-5)\sqrt{\mu}, \notag
\end{align}
with $\phi_c$ satisfying
\begin{align}
\begin{cases}
    & \dis  |k|^2 \phi_c - \p_{x_3}^2 \phi_c = \frac{|k|^{1/2}}{(1+|k|^2)^{1/4}}\bar{\hat{c}},  \\
    & \phi_c = 0 \ \text{ when } x_3 = \pm 1.   
\end{cases}\label{phi_c_2}
\end{align}

Multiplying \eqref{phi_c_2} by $\bar{\phi}_c$ and taking integration in $x_3$ we obtain
\begin{align*}
    &   \Vert |k| \phi_c \Vert_{L^2_{x_3}}^2 + \Vert \p_{x_3} \phi_c \Vert_{L^2_{x_3}}^2 \lesssim o(1) \Vert \phi_c\Vert_{L^2_{x_3}}^2 + \Big\Vert \frac{|k|^{1/4}}{(1+|k|^2)^{1/8}} \hat{c} \Big\Vert_{L^2_{x_3}}^2.
\end{align*}
From the Poincar\'e inequality, we further have
\begin{align}
    & \Vert (1+|k|)\phi_c\Vert^2_{L^2_{x_3}}+ \Vert \p_{x_3}\phi_c\Vert^2_{L^2_{x_3}} \lesssim \Big\Vert \frac{|k|^{1/4}}{(1+|k|^2)^{1/8}} \hat{c} \Big\Vert_{L^2_{x_3}}^2.  \label{phic_l2_2}
\end{align}
Multiplying \eqref{phi_c_2} by $|k|^2 \bar{\phi}_c$ we obtain
\begin{align*}
    &  \Vert |k|^2\phi_c \Vert_{L^2_{x_3}}^2 + \Vert |k|\p_{x_3}\phi_c\Vert^2_{L^2_{x_3}} \lesssim o(1) \Vert |k|^2 \phi_c\Vert_{L^2_{x_3}}^2 +  \Big\Vert \frac{|k|^{1/4}}{(1+|k|^2)^{1/8}} \hat{c} \Big\Vert_{L^2_{x_3}}^2.
\end{align*}
Thus we conclude
\begin{align}
    & \Vert |k|\p_{x_3}\phi_c\Vert^2_{L^2_{x_3}} \lesssim \Vert \hat{c}\Vert^2_{L^2_{x_3}},  \notag  \\
    & \Vert \p_{x_3}^2 \phi_c\Vert_{L^2_{x_3}}^2 \lesssim \Vert |k|^2 \phi_c\Vert_{L^2_{x_3}}^2 + \Vert \hat{c}\Vert_{L^2_{x_3}}^2 \lesssim \Big\Vert \frac{|k|^{1/4}}{(1+|k|^2)^{1/8}} \hat{c} \Big\Vert_{L^2_{x_3}}^2.    \label{phic_h2_2}
\end{align}

By trace theorem, we have
\begin{align}
    & | |k|\phi_c(k,\pm 1) |^2 + | \p_{x_3}\phi_c(k,\pm 1)|^2 \lesssim \Big\Vert \frac{|k|^{1/4}}{(1+|k|^2)^{1/8}} \hat{c} \Big\Vert_{L^2_{x_3}}^2. \label{phic_trace_2}
\end{align}

By the same computation of Lemma \ref{lemma:macroscopic}, we have
\begin{align}
    &     \eqref{weak_formulation}_1 + \eqref{weak_formulation}_2 = 5\int_0^T \int_{-1}^1 [|k|^2 - \p_{x_3}^2]\phi_c \hat{c} \dd x_3 \dd t \notag\\
    & \underbrace{+ \int_0^T\int_{-1}^1\int_{\mathbb{R}^3}i\bar{v}\cdot k (\mathbf{I}-\mathbf{P})\hat{f}\psi_c \dd v \dd x_3 \dd t}_{E_5} \underbrace{-  \int_0^T \int_{-1}^1 \int_{\mathbb{R}^3} v_3 (\mathbf{I}-\mathbf{P})\hat{f}\p_{x_3}\psi_c \dd v \dd x_3 \dd t}_{E_6}\notag\\
    & = 5\Big\Vert \frac{|k|^{1/4}}{(1+|k|^2)^{1/8}} \hat{c} \Big\Vert_{L^2_{T,x_3}}^2 + E_5 + E_6, \label{est_c_LHS_2}
\end{align}
where $E_5$ and $E_6$ are controlled by applying \eqref{phic_h2_2} as
\begin{align}
    & |E_5| + |E_6| \lesssim o(1) [\Vert |k|^2 \phi_c\Vert_{L^2_{T,x_3}}^2 + \Vert |k|\p_{x_3}\phi_c \Vert^2_{L^2_{T,x_3}} + \Vert \p_{x_3}^2 \phi_c\Vert^2_{L^2_{T,x_3}}] + \Vert (\mathbf{I}-\mathbf{P})\hat{f}\Vert^2_{L^2_{T,x_3,v}}  \notag \\
        & \lesssim o(1)\Big\Vert \frac{|k|^{1/4}}{(1+|k|^2)^{1/8}} \hat{c} \Big\Vert_{L^2_{T,x_3}}^2 + \Vert (\mathbf{I}-\mathbf{P})\hat{f}\Vert^2_{L^2_{T,x_3,v}}.\label{est_c_E2_2}
\end{align}

Then we compute the boundary term $\eqref{weak_formulation}_3$. By the same computation of Lemma \ref{lemma:macroscopic} we have
\begin{align}
    & | \eqref{weak_formulation}_3  |    \lesssim o(1) [[| |k| \phi_c(k,\pm 1) |^2_{L^2_T} +  | \p_{x_3} \phi_c(k,\pm 1)|^2_{L^2_T}  ]  ]   +    |(I-P_\gamma)\hat{f}|_{L^2_{T,\gamma_+}}^2 \notag \\
    & \lesssim o(1)\Big\Vert \frac{|k|^{1/4}}{(1+|k|^2)^{1/8}} \hat{c} \Big\Vert_{L^2_{T,x_3}}^2 + |(I-P_\gamma)\hat{f}|_{L^2_{T,\gamma_+}}^2.    \label{est_c_bdr_2}
\end{align}
Here we used \eqref{phic_trace_2}.

Moreover, $\eqref{weak_formulation}_5$, $\eqref{weak_formulation}_6$ and $\eqref{weak_formulation}_0$ are computed similarly using \eqref{phic_l2_2}:
\begin{align}
    &   | \eqref{weak_formulation}_5 | \lesssim o(1)[\Vert k \phi_c \Vert_{L^2_{T,x_3}}^2 + \Vert \p_{x_3} \phi_c \Vert_{L^2_{T,x_3}}^2 ] + \Vert (\mathbf{I}-\mathbf{P})\hat{f}\Vert_{L^2_{T,x_3,v}}^2 \notag \\
    &\lesssim o(1)\Big\Vert \frac{|k|^{1/4}}{(1+|k|^2)^{1/8}} \hat{c} \Big\Vert_{L^2_{T,x_3}}^2  + \Vert (\mathbf{I}-\mathbf{P})\hat{f}\Vert^2_{L^2_{T,x_3,v}},   \label{est_c_RHS_5_2}
\end{align}
\begin{align}
    & |\eqref{weak_formulation}_6| =   \int_0^T   \int_{-1}^1 \int_{\mathbb{R}^3} |\hat{\Gamma}(\hat{f},\hat{f})\psi_c| \dd v \dd x_3 \dd t \notag \\
    &\lesssim o(1)[\Vert k \phi_c \Vert_{L^2_{T,x_3}}^2 + \Vert \p_{x_3} \phi_c \Vert_{L^2_{T,x_3}}^2 ] + \Big(\int_{\mathbb{R}^2} \Vert \hat{f}(k-\ell)\Vert_{L^\infty_T L^2_{x_3,v}} \Vert \hat{f}(\ell)\Vert_{L^2_T L^\infty_{x_3}L^2_\nu} \dd \ell\Big)^2  \notag \\
    &\lesssim  \Big\Vert \frac{|k|^{1/4}}{(1+|k|^2)^{1/8}} \hat{c} \Big\Vert_{L^2_{T,x_3}}^2 + \Big(\int_{\mathbb{R}^2} \Vert \hat{f}(k-\ell)\Vert_{L^\infty_T L^2_{x_3,v}} \Vert \hat{f}(\ell)\Vert_{L^2_T L^\infty_{x_3}L^2_\nu} \dd \ell\Big)^2,   \label{est_c_RHS_6_2}
\end{align}
\begin{align}
    &     \int_{-1}^1 \int_{\mathbb{R}^3} |\hat{f}(T) \psi_c(T)| \dd v \dd x_3   \lesssim \Vert \hat{f}(t) \Vert_{L^\infty_T L^2_{x_3,v}} [\Vert k\phi_c \Vert_{L^\infty_T L^2_{x_3}} + \Vert \p_{x_3}\phi_c \Vert_{L^\infty_T L^2_{x_3}} ] \notag \\
    & \lesssim \Vert \hat{f}\Vert_{L^\infty_T L^2_{x_3,v}} \Vert \hat{c} \Vert_{L^\infty_{T}L^2_{x_3}} \lesssim \Vert \hat{f}\Vert_{L^\infty_T L^2_{x_3,v}}^2,  \label{est_c_t_3}
\end{align}
and 
\begin{align}
        &  \int_{-1}^1 \int_{\mathbb{R}^3} |\hat{f}(0)\psi_c(0)| \dd v \dd x_3 \lesssim \Vert \hat{f}_0\Vert^2_{L^2_{x_3,v}}. \label{est_c_0_3}
\end{align}

We focus on computing $\eqref{weak_formulation}_4$. We denote $\Phi_c$ as the solution to the elliptic equation
\begin{align*}
\begin{cases}
    &\dis    (|k|^2 - \p_{x_3}^2) \Phi_c(k,x_3) = \p_t \bar{\hat{c}}(t,k,x_3) \frac{\sqrt{|k|}}{(1+|k|^2)^{1/4}}, \ x_3 \in (-1,1) ,\\
    & \Phi_c(k,\pm 1) = 0.    
\end{cases}
\end{align*}
Integration by part leads to
\begin{align}
    &   \int_0^T \int_{-1}^1 |k|^2 |\Phi_c|^2 \dd x_3 \dd t+ \int_0^T \int_{-1}^1 |\p_{x_3} \Phi_c|^2 \dd x_3 \dd t    = \int_0^T \int_{-1}^1 \frac{\sqrt{|k|}}{(1+|k|^2)^{1/4}} \p_t \bar{\hat{c}}(t,k,x_3) \bar{\Phi}_c \dd x_3 \dd t.   \label{energy_p_t_c_22}
\end{align}
By the same computation of Lemma \ref{lemma:macroscopic}, \eqref{energy_p_t_c_22} becomes
\begin{align*}
    &    \int_0^T \int_{-1}^1 \frac{\sqrt{|k|}}{(1+|k|^2)^{1/4}} |\p_t \bar{\hat{c}}(t,k,x_3) \bar{\Phi}_c| \dd x_3 \dd t \notag \\
    & = \int_0^T \int_{-1}^1 \frac{\sqrt{|k|}}{(1+|k|^2)^{1/4}} \Big| -\frac{1}{3}i(k_1 \hat{b}_1 + k_2 \hat{b}_2)\bar{\Phi}_c   - \frac{1}{6}i(k_1 \Lambda_1((\mathbf{I}-\mathbf{P})\hat{f})+k_2 \Lambda_2((\mathbf{I}-\mathbf{P})\hat{f}))\bar{\Phi}_c      \notag\\
    & + \frac{1}{3}\hat{b}_3 \p_{x_3}\bar{\Phi}_c  +  \frac{1}{6} \Lambda_3((\mathbf{I}-\mathbf{P})\hat{f})\p_{x_3}\bar{\Phi}_c \Big| \dd x_3 \dd t \\
    &  \lesssim o(1)\Vert  |k|\Phi_c  \Vert_{L^2_{T,x_3}}^2 + o(1)\Vert \p_{x_3}\Phi_c \Vert_{L^2_{T,x_3}}^2 + \Big\Vert \frac{\sqrt{|k|}}{(1+|k|^2)^{1/4}} \hat{\mathbf{b}}\Big\Vert_{L^2_{T,x_3}}^2 + \Vert (\mathbf{I}-\mathbf{P})\hat{f}\Vert^2_{L^2_{T,x_3,v}}.
\end{align*}
Plugging the estimates to \eqref{energy_p_t_c_22}, we obtain
\begin{align}
    &   \Vert |k| \Phi_c \Vert_{L^2_{T,x_3}}^2 +  \Vert \p_{x_3}\Phi_c\Vert_{L^2_{T,x_3}}^2  \lesssim \Big\Vert \frac{\sqrt{|k|}}{(1+|k|^2)^{1/4}} \hat{\mathbf{b}}\Big\Vert_{L^2_{T,x_3}}^2 + \Vert   (\mathbf{I}-\mathbf{P})\hat{f}\Vert^2_{L^2_{T,x_3,v}}.   \notag
\end{align}

By the same computation in Lemma \ref{lemma:macroscopic}, we compute $\eqref{weak_formulation}_4$ as
\begin{align}
    &   |  \eqref{weak_formulation}_4 | \lesssim  \Vert (\mathbf{I}-\mathbf{P})\hat{f}\Vert^2_{L^2_{T,x_3,v}} + o(1) \Vert  |k\Phi_c| + |\p_{x_3}\Phi_c| \Vert_{L^2_{T,x_3}}^2  \notag\\
    & \lesssim  \Vert (\mathbf{I}-\mathbf{P})\hat{f}\Vert^2_{L^2_{T,x_3,v}}  + o(1)\Big\Vert \frac{\sqrt{|k|}}{(1+|k|^2)^{1/4}} \hat{\mathbf{b}}\Big\Vert_{L^2_{T,x_3}}^2 .   \label{est_c_RHS_3_2}
\end{align}

We combine \labelcref{est_c_LHS_2,est_c_E2_2,est_c_bdr_2,est_c_RHS_5_2,est_c_RHS_6_2,est_c_t_3,est_c_0_3} together with \eqref{est_c_RHS_3_2} to conclude the estimate for $\hat{c}$:
\begin{align}
    &  \Big\Vert \frac{|k|^{1/4}}{(1+|k|^2)^{1/8}} \hat{c} \Big\Vert_{L^2_{T,x_3}} \lesssim |(I-P_\gamma)\hat{f}|_{L^2_{T,\gamma_+}} + \Vert (\mathbf{I}-\mathbf{P})\hat{f}\Vert_{L^2_{T,x_3,v}} \notag \\
    &+ \Big\Vert \frac{\sqrt{|k|}}{(1+|k|^2)^{1/4}} \hat{\mathbf{b}}\Big\Vert_{L^2_{T,x_3}} + \int_{\mathbb{R}^2} \Vert \hat{f}(k-\ell)\Vert_{L^\infty_T L^2_{x_3,v}} \Vert \hat{f}(\ell)\Vert_{L^2_T L^\infty_{x_3}L^2_\nu} \dd \ell    .
\notag
\end{align}
Applying \eqref{b_3}, we conclude that
\begin{align*}
    & \Big\Vert \frac{|k|^{1/4}}{(1+|k|^2)^{1/8}}\hat{c} \Big\Vert_{L^1_k L^2_{T,x_3}} \lesssim \Vert (\mathbf{I}-\mathbf{P})\hat{f}\Vert_{L^1_k L^2_{T,x_3,v}}  \notag\\
    &+ \Vert \hat{f}\Vert_{L^p_k L^\infty_T L^2_{x_3,v}} \Vert w\hat{f}\Vert_{L^1_k L^2_T L^\infty_{x_3,v}} + |(I-P_\gamma)\hat{f}|_{L^1_k L^2_{T,\gamma_+}} + \Vert \hat{f}\Vert_{L^1_k L^\infty_T L^2_{x_3,v}} + \Vert \hat{f}_0\Vert_{L^1_k L^2_{x_3,v}}. 
\end{align*}
To include the time weight, by the same computation in \eqref{extra_b_c}, such term is controlled as
\begin{align*}
    &  \int_0^T \int_{-1}^1 \int_{\mathbb{R}^3} | (1+t)^{\sigma-1} \hat{f} \psi_b | \dd v \dd x_3 \dd t \\
    & \lesssim o(1) \Big\Vert (1+t)^{\sigma/2} \frac{|k|^{1/4}}{(1+|k|^2)^{1/8}} \hat{b}_3 \Big\Vert^2_{L^2_{T,x_3}} + \Vert (1+t)^{\sigma/2} (\mathbf{I}-\mathbf{P})\hat{f}\Vert_{L^2_{T,x_3,v}}^2.
\end{align*}
This concludes \eqref{c_less_k}. We then conclude Lemma \ref{lemma:b_c_no_time_derivative}.
\end{proof}

\subsection{$L^1_k L^\infty_{T,x_3,v}$ estimate with time decay}\label{sec:linfty}

In this section, based on Proposition \ref{prop: full_energy_decay}, we bootstrap the estimate from $L^1_k L^\infty_T L^2_{x_3,v}$ to  $L^1_k L^\infty_{T,x_3,v}$ with time decay.

\begin{proposition}[\textbf{$L^1_k L^\infty_{T,x_3,v}$ estimate with time decay}]\label{prop:linfty}
Let $\hat{f}$ be the solution to \eqref{f_eqn} with initial data $f_0$ satisfying \eqref{initial_assumption}, then we have the following $L^1_k L^\infty_{T,x_3,v}$ control with time decay:
\begin{align*}
    &     \Vert (1+t)^{\sigma/2} w\hat{f}\Vert_{L^1_k L^\infty_{T,x_3,v}} \lesssim \Vert w\hat{f}_0\Vert_{L^1_k L^\infty_{x_3,v}} + \Vert (1+t)^{\sigma/2} \hat{f}\Vert_{L^1_k L^\infty_T L^2_{x_3,v}} + \Vert (1+t)^{\sigma/2} w \hat{f}\Vert_{L^1_kL^\infty_{T,x_3,v}}^2 .
\end{align*}

\end{proposition}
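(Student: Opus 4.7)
Fix $k\in\mathbb{R}^2$ and denote $h=w\hat f$ and $H=(1+t)^{\sigma/2}h$. The plan is to derive an $L^\infty_{T,x_3,v}$ bound for $H$ by running the $L^2$--$L^\infty$ bootstrap of Guo \cite{G} with the extra complications of (i) the time weight, (ii) the oscillatory phase $i\bar v\cdot k$, and (iii) the convolution structure of the nonlinearity in the Fourier variable $k$. Starting from \eqref{f_eqn}, $h$ satisfies
\[
(\p_t+i\bar v\cdot k+v_3\p_{x_3}+\nu(v))h=K_w h+w\hat\Gamma(\hat f,\hat f),
\]
with diffuse reflection on $x_3=\pm1$, where $K_w$ is the weighted integral operator whose kernel $\mathbf{k}_\theta$ satisfies \eqref{k_theta}--\eqref{K_N_small}. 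Since $|e^{-i\bar v\cdot k(t-s)}|=1$, integrating along the one-dimensional characteristics in $x_3$ and using $\nu(v)\geq\nu_0$ produces the usual Duhamel representation in which the phase drops out of the modulus estimate. The time weight $(1+t)^{\sigma/2}$ is then passed inside each time integral via the elementary inequality
\[
(1+t)^{\sigma/2}e^{-\nu(t-s)/2}\lesssim (1+s)^{\sigma/2},\qquad 0\leq s\leq t,
\]
which is the main reason the final bound involves only $(1+s)^{\sigma/2}h(s,\cdot)=H(s,\cdot)$ on the right-hand side.

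First I would handle the boundary contribution by iterating the diffuse reflection identity $P_\gamma$ finitely many times, exactly in the manner of \cite{G,EGKM}. Because at each bounce $|h|\leq w(v)\sqrt{\mu(v)}\int c_\mu w(u)^{-1}\sqrt{\mu(u)}|u_3||h(u)|du$ and $w\sqrt\mu$ is integrable and uniformly bounded (thanks to $\theta<1/4$ in \eqref{weight_w}), after a sufficient number $k_0$ of reflections the contribution of $H$ on the remaining piece of the broken characteristic is absorbed by the $e^{-\nu_0 \tau}$ decay between bounces; the residual ``stochastic'' term can be made $\leq\tfrac12\|H\|_{L^\infty_{T,x_3,v}}$, see \cite{G,EGKM2}. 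The initial-data contribution along any such trajectory is controlled by $\|wf_0\|_{L^\infty_{x_3,v}}$ (with the extra harmless factor $(1+t)^{\sigma/2}e^{-\nu_0 t}\lesssim 1$), and the nonlinear contribution is bounded by \eqref{gamma_est_time_linfty}, producing the quadratic term $\|(1+t)^{\sigma/2}w\hat f\|_{L^1_k L^\infty_{T,x_3,v}}^2$ after the $L^1_k$ integration and Young's convolution inequality.

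The main technical step, and in my view the only genuine obstacle, is the conversion of the $K_w$ term along the trajectory into an $L^2_{x_3,v}$ quantity. I would apply the Duhamel formula a second time inside $K_w H$, producing a double integral with kernel $\mathbf{k}_\theta(v,v')\mathbf{k}_\theta(v',v'')$. Using \eqref{K_N_small} the region $\{|v-v'|<1/N\text{ or }|v'-v''|<1/N\text{ or }|v'|>N\}$ contributes at most $o(1)\|H\|_{L^\infty_{T,x_3,v}}$, and on the complementary region the kernel is pointwise bounded by $C_N$ (by \eqref{k_N_upper_bdd}). On that good region I would perform the classical change of variables $v'\mapsto y=x_3-v_3(t-s)-v_3'(s-s'')$ (using $|v_3'|>1/N$, which is ensured after splitting out the set $\{|v'_3|<1/N\}$ with small measure), so that the $v'$-integral becomes an $x_3$-integral on an interval of bounded length. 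Cauchy--Schwarz in $(s,x_3,v'')$ then produces
\[
\int_0^t\!\!\!\int_{-1}^1\!\!\int_{\mathbb R^3}|H(s'',y,v'')|^2\,dv''\,dy\,ds''\lesssim \|(1+t)^{\sigma/2}\hat f\|_{L^\infty_T L^2_{x_3,v}}^2,
\]
after converting $H=(1+t)^{\sigma/2}w\hat f$ back to $\hat f$ using $\|H\|_{L^2_{x_3,v}}\lesssim \|(1+s)^{\sigma/2}\hat f\|_{L^2_{x_3,v}}$ (the weight $w$ being absorbed into $L^2_v$ by a smallness argument using \eqref{k_theta}).

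Finally, collecting all contributions yields
\[
\|H\|_{L^\infty_{T,x_3,v}}\leq \tfrac12\|H\|_{L^\infty_{T,x_3,v}}+C\|wf_0\|_{L^\infty_{x_3,v}}+C\|(1+t)^{\sigma/2}\hat f\|_{L^\infty_T L^2_{x_3,v}}+C\|\nu^{-1}(1+t)^{\sigma/2}w\hat\Gamma\|_{L^\infty_{T,x_3,v}},
\]
pointwise in $k$. Absorbing the $\tfrac12\|H\|_{L^\infty_{T,x_3,v}}$ term, integrating in $k$, and invoking \eqref{gamma_est_time_linfty} for the last term gives exactly the conclusion. The primary obstacle is bookkeeping: ensuring the time weight propagates cleanly through the broken characteristics and through the $K_w$-$K_w$ iteration, and that the $L^2_{x_3,v}$ quantity that appears after the change of variables is genuinely the one controlled by Proposition \ref{prop: full_energy_decay}.
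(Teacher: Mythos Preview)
Your overall strategy matches the paper's: Duhamel along one–dimensional characteristics in $x_3$, stochastic cycles for the diffuse boundary, the double $K$-iteration with the standard change of variables to reach $L^2_{x_3,v}$, and \eqref{gamma_est_time_linfty} plus Young's inequality for the nonlinear term after $L^1_k$ integration. The time-weight manipulation $(1+t)^{\sigma/2}e^{-\nu(t-s)/2}\lesssim(1+s)^{\sigma/2}$ is exactly what the paper uses throughout (see e.g.\ \eqref{time_induction}).

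There is, however, one genuine gap. You claim that after a fixed number $k_0$ of reflections the residual stochastic term is $\leq\tfrac12\|H\|_{L^\infty_{T,x_3,v}}$ and then absorb it directly, with constants $C$ uniform in $T$. This does not work as stated: the number of reflections required to make the residual small is $n=C_1T_0^{5/4}$ (Lemma \ref{lemma:tk}), and the constants in the $K$-contribution and the $L^2$ conversion then scale like $C(n)=C(T_0)$ (see \eqref{bdr_K_0_bdd} and the choice $(\delta+1/N)n\leq o(1)$). The paper resolves this by a two-step procedure you omit: first prove the estimate only for $t\leq T_0$, obtaining \eqref{f_t_bdd} with constants $C(T_0)$, and then run an induction over the intervals $[mT_0,(m+1)T_0]$ using the exponential-in-$T_0$ decay of the initial-data contribution (see \eqref{mT0}), summing the resulting geometric series to get a constant independent of $T$. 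Without this induction your absorption argument is circular for large $T$.

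Two minor points. The Jacobian in the change of variables $u_3\mapsto y_3=x_3-(t-s)v_3-(s-s')u_3$ is $|s-s'|$, so the relevant cut-off is $s-s'>\delta$, not $|v_3'|>1/N$ (compare \eqref{cov}). And in your displayed $L^2$ bound you write an unweighted $\int_0^t\!\int\!\int|H|^2$; what actually appears carries the factor $e^{-\nu_0(\cdot)}$ from Duhamel, which is precisely what lets you pass to $\sup_s\|(1+s)^{\sigma/2}\hat f(s)\|_{L^2_{x_3,v}}$ rather than an $L^2_T$ norm---otherwise that step would also fail for large $T$.
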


To prove the proposition we first define the stochastic cycle.
We use standard notations for the backward exit time and backward exit position in the physical space $x_3\in (-1,1)$:
\begin{equation*}
\begin{split}
\tb(x_3,v) :   &  = \sup\{s\geq 0, x_3-sv_3 \in (-1,1)\} , \\
  \xb(x_3,v)  : & = x_3 - \tb(x_3,v)v_3 \in \{-1,1\}.
\end{split}
\end{equation*}

We denote $t^0=T_0$, a fixed starting time. First, we define the stochastic cycle as follows.

\begin{definition}
We define a stochastic cycles as $(x^0_3,v^0)= (x_3,v) \in (-1,1) \times \R^3$ and inductively
\begin{align}
&x^1_3:= \xb(x_3,v), \   v^1 \in \mathcal{V}_1 := \{v^1\in \mathbb{R}^3: v^1_3 \times  \text{sign}(x_3^1) > 0\} , \notag\\
&  v^{n}\in \mathcal{V}_n:=\{v^{n}\in \mathbb{R}^3: v^n_3 \times \text{sign}(x_3^n)>0\}, \ \ \text{for} \  n \geq 1,
\notag
\\
 &x^{n+1}_3 := \xb(x^n_3, v^n) \in \{-1,1\} , \ \tb^{n}:= \tb(x^n_3,v^n) \ \ \text{for} \  v^n_3 \gtrless 0, \ x^n_3 = \pm 1 , \notag \\
 & t^n =  t^0-  \{ \tb + \tb^1 + \cdots + \tb^{n-1}\},  \ \ \text{for} \  n \geq 1. \notag
\end{align}
\end{definition}

We rewrite \eqref{f_eqn} into the following formulation:
\begin{align*}
    &   \p_t \hat{f} + i\bar{v}\cdot k \hat{f} + v_3 \p_{x_3} \hat{f} + \nu(v) \hat{f} = K(\hat{f}) + \hat{\Gamma}(\hat{f},\hat{f}).
\end{align*}
We apply the method of characteristics to have
\begin{align}
    &   w(v) \hat{f}(t,k_1,k_2,x_3,v) \notag\\
    &= \mathbf{1}_{\tb>t} e^{-\nu(v) t - i(\bar{v}\cdot k)t} w(v)\hat{f}_0(k_1,k_2,x_3 - t v_3, v)  \label{chara:initial}  \\
    & + \mathbf{1}_{\tb \leq t} e^{-\nu(v)\tb - i(\bar{v}\cdot k)\tb}w(v)\hat{f}(t^1,k_1,k_2,x_3-\tb v_3,v)  \label{chara:bdr}\\
    & + \int^t_{\max\{0, t-\tb\}} e^{-\nu(v)(t-s) - i(\bar{v}\cdot k)(t-s)} w(v)\int_{\mathbb{R}^3} \mathbf{k}(v,u) \hat{f}(s,k_1,k_2,x_3-(t-s)v_3,u) \dd s  \label{chara:K}\\
    & + \int^t_{\max\{0,t-\tb\}} e^{-\nu(v)(t-s)-i(\bar{v}\cdot k)(t-s)} w(v)\hat{\Gamma}(\hat{f},\hat{f})(s,k_1,k_2,x_3-(t-s)v_3,v) \dd s. \label{chara:Gamma}
\end{align}
Here the boundary term is bounded as
\begin{align}
   & |\eqref{chara:bdr}|\leq   e^{-\nu(v) (t-t^1)} w(v)\sqrt{\mu(v)}  \notag\\
   & \times \int_{\prod_{j=1}^{n}\mathcal{V}_j} \bigg\{ \sum_{i=1}^{n}\mathbf{1}_{t^{i+1}\leq 0 < t^i} e^{-\nu(v^i) t^i} w(v^i)|\hat{f}_0(x^i_3 - t^i v^i_3, v^i)| \dd \Sigma_i     \label{bdr_initial}\\
  &\qquad\qquad\qquad + \mathbf{1}_{t^{n+1}>0}  w(v^{n})|\hat{f}(t^{n+1},x^{n+1}_3,v^{n})| \dd \Sigma_{n} \label{bdr_tk}\\
  & + \sum_{i=1}^{n} \mathbf{1}_{t^{i+1}\leq 0 < t^i}  \int^{t^i}_0 e^{-\nu(v^i) (t^i-s)}   w(v^i)  \int_{\mathbb{R}^3}  \mathbf{k}(v^i,u) |\hat{f}(s,x^i_3-(t^i-s)v^i_3, u)| \dd u \dd s \dd \Sigma_i   \label{bdr_K_0} \\
  & + \sum_{i=1}^{n} \mathbf{1}_{t^{i+1}>0} \int_{t^{i+1}}^{t^i} e^{-\nu(v^i) (t^i-s)}  w(v^i)
  \int_{\mathbb{R}^3} \mathbf{k}(v^i,u) |\hat{f}(s,x^i_3-(t^i-s)v^i_3, u)| \dd u \dd s \dd \Sigma_i   \label{bdr_K_i} \\
  &  + \sum_{i=1}^{n} \mathbf{1}_{t^{i+1}\leq 0 < t^i}  \int^{t^i}_0 e^{-\nu(v^i) (t^i-s)} w(v^i) |\hat{\Gamma}(\hat{f},\hat{f})(s,x^i_3-(t^i-s)v^i_3,v^i)| \dd s \dd \Sigma_i     \label{bdr_g_0}   \\
  &  + \sum_{i=1}^{n} \mathbf{1}_{t^{i+1}>0} \int_{t^{i+1}}^{t^i} e^{-\nu(v^i) (t^i-s)} w(v^i) |\hat{\Gamma}(\hat{f},\hat{f})(s,x^i_3-(t^i-s)v^i_3,v^i)| \dd s \dd \Sigma_i  \bigg\}.  \label{bdr_g_i}  
\end{align}
Here $\dd \Sigma_i$ is defined as
\begin{equation}\label{Sigma_i}
\dd \Sigma_i = \Big\{\prod_{j=i+1}^{n} \dd \sigma_j \Big\}  \times  \Big\{  \frac{1}{w(v^i)\sqrt{\mu(v^i)}}   \dd \sigma_i \Big\} \times \Big\{\prod_{j=1}^{i-1}  e^{-\nu(v^j)(t^j-t^{j+1})}   \dd \sigma_j \Big\} ,
\end{equation}
where $\dd \sigma_i$ is a probability measure on $\mathcal{V}_i$ given by
\begin{equation}\label{sigma_i}
\dd \sigma_i =  \sqrt{2\pi}\mu(v^i) |v^i_3|\dd v^i.
\end{equation}

Note that \eqref{bdr_tk} corresponds to the scenario in which the backward trajectory interacts with the boundary many times. Such term is controlled by the following lemma.

\begin{lemma}\label{lemma:tk}
For $T_0>0$ sufficiently large, there exist constants $C_1,C_2>0$ independent of $T_0$ such that for $n = C_1 T_0^{5/4}$, and $(t^0,x^0_3,v^0) = (t,x_3,v)\in [0,T_0]\times (-1,1)\times \mathbb{R}^3$,
\begin{equation*}
\int_{\prod_{j=1}^{n-1} \mathcal{V}_j}\mathbf{1}_{t^n>0}  \prod_{j=1}^{n-1} \dd \sigma_j \leq \Big( \frac{1}{2}\Big)^{C_2 T_0^{5/4}}.
\end{equation*}

\end{lemma}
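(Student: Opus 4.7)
The plan is to exploit the fact that if many bounces happen within time $T_0$, then nearly all of the intermediate velocities must have an abnormally large vertical component, which is a Gaussian-rare event under each probability measure $\dd\sigma_j$. First I would introduce a threshold $\delta>0$ and decompose each velocity space
\[
\mathcal{V}_j^\delta := \{v^j\in \mathcal{V}_j : |v_3^j|\leq 1/\delta\}, \qquad \mathcal{V}_j\setminus \mathcal{V}_j^\delta = \{v^j\in\mathcal{V}_j: |v_3^j|>1/\delta\}.
\]
Since the two plates sit at $x_3=\pm 1$, a direct crossing from one plate to the other takes time $\tb^j = 2/|v_3^j|$, so on $\mathcal{V}_j^\delta$ one has the uniform lower bound $\tb^j\geq 2\delta$, while an explicit computation of the Gaussian tail with the probability measure \eqref{sigma_i} yields, for some universal constants $C,c>0$,
\[
\int_{\mathcal{V}_j\setminus \mathcal{V}_j^\delta}\dd\sigma_j = \int_{|v_3^j|>1/\delta,\ v_3^j\cdot \text{sign}(x_3^j)>0}\sqrt{2\pi}\,\mu(v^j)|v_3^j|\dd v^j \leq Ce^{-c/\delta^2}.
\]

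Next I would translate the requirement $t^n>0$, i.e.\ $\tb+\tb^1+\cdots+\tb^{n-1}<T_0$, into a combinatorial constraint. Letting $K$ be the number of indices $j\in\{1,\dots,n-1\}$ with $v^j\in \mathcal{V}_j^\delta$, the lower bound $\tb^j\geq 2\delta$ forces $2\delta K\leq T_0$, so $K\leq T_0/(2\delta)$. Consequently, on the event $\{t^n>0\}$ at least $n-1-T_0/(2\delta)$ of the velocities $v^1,\dots,v^{n-1}$ must lie in the bad complement $\mathcal{V}_j\setminus \mathcal{V}_j^\delta$. Summing over every choice of such a bad subset, using that each $\dd\sigma_j$ is a probability measure and $\binom{n-1}{m}\leq 2^{n-1}$, I would obtain
\[
\int_{\prod_{j=1}^{n-1}\mathcal{V}_j}\mathbf{1}_{t^n>0}\prod_{j=1}^{n-1}\dd\sigma_j \;\leq\; 2^{n-1}\bigl(Ce^{-c/\delta^2}\bigr)^{n-1-T_0/(2\delta)}.
\]

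Finally I would balance the combinatorial growth against the Gaussian tail by taking $\delta = T_0^{-1/4}$, so that $1/\delta^2=T_0^{1/2}$ and $T_0/(2\delta)=T_0^{5/4}/2$. With $n = C_1 T_0^{5/4}$ for $C_1$ chosen large, the number of required bad indices is at least $(C_1-\tfrac12)T_0^{5/4}-1$, and the bound above becomes at most
\[
2^{C_1 T_0^{5/4}}\, C^{(C_1-1/2)T_0^{5/4}}\, \exp\!\bigl(-c(C_1-\tfrac12)T_0^{7/4}\bigr),
\]
in which the Gaussian factor, with exponent of order $T_0^{7/4}$, dominates the factors of order $T_0^{5/4}$ once $T_0$ is sufficiently large; this yields the desired estimate $(1/2)^{C_2 T_0^{5/4}}$ for some $C_2>0$. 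The one point where care is needed is the choice of $\delta$: it has to be small enough that $Ce^{-c/\delta^2}$ beats the combinatorial factor $2^{n-1}$, yet large enough that $T_0/(2\delta)$ does not swallow $n$. The scaling $\delta\sim T_0^{-1/4}$ is the natural balance, and the opening segment $\tb$ can safely be dropped from the lower bound since $\sum_{j=1}^{n-1}\tb^j\leq T_0$ already yields the constraint used above.
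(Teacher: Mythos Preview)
Your argument is correct and is precisely the threshold-and-Gaussian-tail argument from \cite{G} (Lemma 23 there) that the paper defers to; the paper's own proof is just the one-line remark that the domain is bounded in $x_3$, so your proposal is a faithful expansion of that citation. The only cosmetic point is that the tail probability computes exactly to $e^{-1/(2\delta^2)}$, so the constant $C$ in your $Ce^{-c/\delta^2}$ can be taken equal to $1$, which slightly cleans up the final display.
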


\begin{proof}
The proof is similar to \cite{G} since the domain is bounded in $x_3$.
\end{proof}

First, we control the boundary term \eqref{chara:bdr} in the following lemma.
\begin{lemma}\label{lemma:linfty_bdr}
There exists a constant $C=C(T_0)$ such that for $t\leq T_0$, the boundary term \eqref{chara:bdr} is controlled as
\begin{align*}
    &    |\eqref{chara:bdr}|   \leq 4e^{-\nu_0t}\Vert w\hat{f}_0\Vert_{ L^\infty_{x_3,v}}   + o(1)(1+t)^{-\sigma/2}  \sup_{0\leq s\leq t}\Vert (1+s)^{\sigma/2} w \hat{f}(s)\Vert_{ L^\infty_{x_3,v}} \\
    &+  C(1+t)^{-\sigma/2}  \big[\sup_{0\leq s\leq t}\Vert (1+s)^{\sigma/2} \hat{f}(s)\Vert_{L^2_{x_3,v}} + \sup_{0\leq s\leq t}\Vert \nu^{-1}(1+s)^{\sigma/2} w \hat{\Gamma}(\hat{f},\hat{f})(s)\Vert_{ L^\infty_{x_3,v}}  \big].
\end{align*}
\end{lemma}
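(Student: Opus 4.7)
The plan is to bound the seven contributions \eqref{bdr_initial}--\eqref{bdr_g_i} separately, using (i) the fact that each $\dd\sigma_j$ in \eqref{sigma_i} is a probability measure on $\mathcal{V}_j$, (ii) the telescoping cancellation $w(v^i)\cdot\frac{1}{w(v^i)\sqrt{\mu(v^i)}}\cdot\sqrt{2\pi}\mu(v^i)|v^i_3|=\sqrt{2\pi\mu(v^i)}|v^i_3|$, which keeps the $v^i$-integral finite, and (iii) the fact that the oscillating phases $e^{-i(\bar v\cdot k)\tau}$ are pure phases and drop out after taking absolute values.

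First I would dispose of the many-bounce remainder \eqref{bdr_tk}: pick $n=C_1T_0^{5/4}$ and apply Lemma \ref{lemma:tk}; the measure bound $(1/2)^{C_2T_0^{5/4}}$ beats every polynomial in $T_0$, so for $t\leq T_0$ this term is absorbed into $o(1)(1+t)^{-\sigma/2}\sup_{0\leq s\leq t}\Vert(1+s)^{\sigma/2}w\hat f(s)\Vert_{L^\infty_{x_3,v}}$. The initial-data contribution \eqref{bdr_initial} is handled by extracting $e^{-\nu_0 t^i}\leq e^{-\nu_0 t}$ (the product of $e^{-\nu(v^j)(t^j-t^{j+1})}$ telescopes) and pulling out $\Vert w\hat f_0\Vert_{L^\infty_{x_3,v}}$, which after integrating the probability measures $\dd\sigma_j$ yields the stated $4 e^{-\nu_0 t}\Vert w\hat f_0\Vert_{L^\infty_{x_3,v}}$ term. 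The nonlinear pieces \eqref{bdr_g_0}, \eqref{bdr_g_i} are controlled by writing $w\hat\Gamma=\nu\cdot(\nu^{-1}w\hat\Gamma)$, applying $\int_0^{t^i}\nu(v^i)e^{-\nu(v^i)(t^i-s)}\dd s\leq 1$, and pulling out $\sup_{s}\Vert\nu^{-1}(1+s)^{\sigma/2}w\hat\Gamma(s)\Vert_{L^\infty_{x_3,v}}$ via the time-decay splitting described below.

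The principal difficulty lies in the $K$-terms \eqref{bdr_K_0}, \eqref{bdr_K_i}: the na\"ive bound from Lemma \ref{lemma:k_theta} only controls $\int w(v^i)\mathbf{k}(v^i,u)|\hat f(\cdots,u)|\dd u$ in terms of $\Vert w\hat f\Vert_{L^\infty}$, whereas the lemma demands $L^2_{x_3,v}$ control of $\hat f$. I will resolve this by the double-iteration/change-of-variables argument of \cite{G}: substitute the representation \eqref{chara:initial}--\eqref{chara:Gamma} of $\hat f$ once more into the $u$-integrand in \eqref{bdr_K_i}. Using \eqref{K_N_small}, first split the $u$-region into $\{|u|>N\text{ or }|v^i-u|\leq 1/N\}$ (which contributes $o(1)\Vert w\hat f\Vert_{L^\infty}$) and its complement, on which $\mathbf{k}_\theta(v^i,u)\leq C_N$ and $|u|\leq N$. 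On the complement, the inner characteristic contributes $\mathbf{k}(u,u')$ along a trajectory $y_3(s')=x^i_3-(t^i-s)v_3^i-(s-s')u_3$; changing variables $(s,u_3)\mapsto(s,y_3)$ introduces a Jacobian of size $(s-s')^{-1}$, and a short-time cut-off $|s-s'|\leq \lambda$ (absorbed into $o(1)\Vert w\hat f\Vert_{L^\infty}$ by a short-time $K$ estimate) together with Cauchy--Schwarz in the remaining velocity variables converts the integral into $\Vert \hat f(s')\Vert_{L^2_{x_3,v}}$, which is bounded by $\sup_{s'}\Vert(1+s')^{\sigma/2}\hat f(s')\Vert_{L^2_{x_3,v}}$ after inserting the time weight. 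The initial-data and nonlinear pieces arising from substituting \eqref{chara:initial}, \eqref{chara:Gamma} in the inner step are estimated exactly as in the direct bounds above.

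Finally, the polynomial time weight $(1+t)^{-\sigma/2}$ is produced by the customary splitting at $s=t/2$: for $s\geq t/2$ one has $(1+s)^{\sigma/2}\geq c(1+t)^{\sigma/2}$ so the sup-norm absorbs the weight; for $s\leq t/2$ the collision-frequency decay $e^{-\nu_0(t-s)/2}\leq e^{-\nu_0 t/4}$ dominates any power of $1+t$ and allows one to replace $(1+s)^{\sigma/2}$ by $(1+t)^{\sigma/2}$ at the cost of a constant. I expect the main technical obstacle to be the bookkeeping of the double-iteration step: verifying that the Jacobian $(s-s')^{-1}$ from the change of variables $u_3\mapsto y_3$ behaves well uniformly in the boundary index $i\leq n$, that the Fourier phases do not interfere with the absolute-value estimates, and that the repeated boundary interactions in the bounded slab $x_3\in(-1,1)$ do not cause the change of variables to degenerate.
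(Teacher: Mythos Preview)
Your treatment of \eqref{bdr_initial}, \eqref{bdr_tk}, \eqref{bdr_g_0}, \eqref{bdr_g_i} matches the paper. The divergence is in the $K$-terms \eqref{bdr_K_0}, \eqref{bdr_K_i}, where your proposed double-iteration is \emph{not} what the paper does and, as written, has a gap.

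When you substitute the representation \eqref{chara:initial}--\eqref{chara:Gamma} for $\hat f(s,x^i_3-(t^i-s)v^i_3,u)$, you obtain not only the initial, double-$K$ and nonlinear pieces you mention, but also an inner \emph{boundary} piece of the form $\mathbf{1}_{t^u>0}\,e^{-\nu(u)(s-t^u)}w(u)\hat f(t^u,y^u_b,u)$. You do not address it. Bounding it crudely by $\sup_{s}\Vert (1+s)^{\sigma/2}w\hat f(s)\Vert_{L^\infty_{x_3,v}}$ yields, after the $u$- and $v^i$-integrations, a coefficient of order $O(1)$ per index $i$; summed over $i\leq n=C_1T_0^{5/4}$ this is $O(n)$, not $o(1)$, so it cannot be absorbed. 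Invoking Lemma~\ref{lemma:linfty_bdr} for this piece is circular, and re-expanding it into a second stochastic cycle would amount to nesting the whole argument inside itself.

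The paper avoids this entirely by \emph{not} iterating here. The point you are missing is that the stochastic-cycle measure $\dd\Sigma_i$ already carries an integration over $v^i\in\mathcal V_i$; hence in \eqref{bdr_K_i} one can change variables directly via $v^i_3\mapsto y_3=x^i_3-(t^i-s)v^i_3\in(-1,1)$ with Jacobian $(t^i-s)\ge\delta$ after a short-time cutoff, and then apply Cauchy--Schwarz in $(y_3,u)$ on the compact set $|u|\le N$ to land in $\Vert \hat f(s)\Vert_{L^2_{x_3,v}}$. No second characteristic expansion is needed for the boundary lemma. The double-iteration you describe is exactly what the paper uses \emph{later}, in the proof of Proposition~\ref{prop:linfty} for the bulk term \eqref{chara:K}; there the inner boundary piece \eqref{chara:K_bdr} is controlled by applying the already-established Lemma~\ref{lemma:linfty_bdr}, which explains why the lemma must be proved first by the direct $v^i_3\to y_3$ argument.
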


\begin{proof}
Since $\dd \sigma_i$ in~\eqref{sigma_i} is a probability measure on $\mathcal{V}_i$, \eqref{bdr_initial} is directly bounded as
\begin{equation}\label{bdr_initial_bdd}
|\eqref{bdr_initial}|\leq 4e^{-\nu_0 t^1}   \Vert w \hat{f}_0\Vert_{L^\infty_{x_3,v}}.
\end{equation}
Here the constant $4$ comes from $\sqrt{2\pi}\int_{\mathcal{V}_i}   |v^i_3| \sqrt{\mu(v^i)} w^{-1}(v^i) \dd v^i \leq 4.$
The exponential decay factor $e^{-\nu_0 t^1}$ comes from the decay factor in \eqref{Sigma_i}, and the computation
\begin{align*}
    & e^{-\nu_0 t^i} e^{-\nu_0 (t^{i-1}-t^i)} \leq e^{-\nu_0 t^{i-1}}, \ e^{-\nu_0 t^{i-1}} e^{-\nu_0 (t^{i-2}-t^{i-1})} \leq e^{-\nu_0t^{i-2}} \cdots.
\end{align*}

For~\eqref{bdr_tk}, since $t\leq T_0$, with $n=C_1 T_0^{5/4}$, and $t^{n+1}>0$ implies $t^n>0$, we apply Lemma \ref{lemma:tk} to have
\begin{equation}\label{bdr_tk_bdd}
\begin{split}
    &  |\eqref{bdr_tk}|\\
    &\leq  \int_{\prod_{j=1}^{n-1} \mathcal{V}_j}  \int_{\mathcal{V}_n} \mathbf{1}_{t^{n+1}>0} w(v^n)|\hat{f}(t^{n+1},x_3^{n+1},v^n)|w^{-1}(v^n)\sqrt{\mu(v^n)}|v^n_3| \dd v^n   \prod_{j=1}^{n-1} e^{-\nu(v^j)(t^j-t^{j+1})}  \dd \sigma_j   \\
 & \lesssim (1+t^1)^{-\sigma/2}\sup_{0\leq s\leq t}\Vert (1+s)^{\sigma/2} w \hat{f}(s)\Vert_{L^\infty_{x_3,v}} \int_{\prod^{n-1}_{j=1}\mathcal{V}_j} \mathbf{1}_{t^{n}>0}   \prod_{j=1}^{n-1} \dd \sigma_j   \\
 &\leq o(1) (1+t^1)^{-\sigma/2} \sup_{0\leq s\leq t}\Vert (1+s)^{\sigma/2} w \hat{f}(s)\Vert_{L^\infty_{x_3,v}}.
\end{split}
\end{equation}
Here the polynomial decay factor $(1+t^1)^{-\sigma/2}$ comes from the following computation:
\begin{align}
    & (1+t^{n+1})^{-\sigma/2} e^{-\nu_0 (t^{n}-t^{n+1})} e^{-\nu_0(t^{n-1}-t^n)}\cdots e^{-\nu_0(t^1-t^2)} = (1+t^{n+1})^{-\sigma/2} e^{-\nu_0(t^1-t^{n+1})} \notag\\
    &= (1+t^{1})^{-\sigma/2} \frac{(1+t^{1})^{\sigma/2}}{(1+t^{n+1})^{\sigma/2}} e^{-\nu_0(t^{1}-t^{n+1})}\notag \\
    &\lesssim (1+t^{1})^{-\sigma/2} [1+|t^{1}-t^{n+1}|^{\sigma/2}] e^{-\nu_0(t^{1}-t^{n+1})} \lesssim (1+t^{1})^{-\sigma/2}. \label{time_induction}
\end{align}

Then we estimate \eqref{bdr_g_0} and \eqref{bdr_g_i}. For each $i$, we compute
\begin{align*}
    &   \int_{\prod_{j=1}^{n}\mathcal{V}_j} \int^{t^i}_0 e^{-\nu(v^i)(t^i-s)}w(v^i) |\hat{\Gamma}(\hat{f},\hat{f})(s,x_3^i - (t^i-s)v_3^i,v^i)| \dd s \dd \Sigma_i \\
    & \lesssim \sup_{0\leq s\leq t} \Vert \nu^{-1} w (1+s)^{\sigma/2}\hat{\Gamma}(\hat{f},\hat{f})(s)\Vert_{L^\infty_{x_3,v}} \\
    &\times \int_{\prod_{j=1}^{n}\mathcal{V}_j} \int^{t^i}_0 e^{-\nu(v^i)(t^i-s)/2} e^{-\nu(v^i)(t^i-s)/2} (1+s)^{-\sigma/2} \nu(v^i) \dd s \dd \Sigma_i \\
    & \lesssim  \sup_{0\leq s\leq t} \Vert \nu^{-1} w (1+s)^{\sigma/2}\hat{\Gamma}(\hat{f},\hat{f})(s)\Vert_{L^\infty_{x_3,v}}  \int_{\prod_{j=1}^{n}\mathcal{V}_j} (1+t^i)^{-\sigma/2} \dd \Sigma_i \\
    & \lesssim \sup_{0\leq s\leq t} \Vert \nu^{-1} w (1+s)^{\sigma/2}\hat{\Gamma}(\hat{f},\hat{f})(s)\Vert_{L^\infty_{x_3,v}} (1+t^1)^{-\sigma/2}  .
\end{align*}
Here we used
\begin{align*}
    &  \int_0^{t^i} e^{-\nu(v^i)(t^i-s)/2} \nu(v^i) \dd s \lesssim 1,  \\
    & (1+s)^{-\sigma/2} e^{-\nu_0 (t^i-s)/2} \lesssim (1+t^{i})^{-\sigma/2}. 
\end{align*}
In the last line, we obtain the decay term $(1+t^1)^{-\sigma/2}$ by the same computation as \eqref{time_induction}.

Then we conclude that
\begin{equation}\label{bdr_g_bdd}
\begin{split}
|\eqref{bdr_g_0} + \eqref{bdr_g_i}| & \leq      Cn(1+t^1)^{-\sigma/2} \sup_{0\leq s\leq t} \Vert \nu^{-1} w (1+s)^{\sigma/2}\hat{\Gamma}(\hat{f},\hat{f})(s)\Vert_{L^\infty_{x_3,v}}.
\end{split}
\end{equation}

Then we estimate \eqref{bdr_K_i}. Recall the notation $\mathbf{k}_\theta(v,u)= \mathbf{k}(v,u)\frac{e^{\theta|v|^2}}{e^{\theta |u|^2}}$ in Lemma \ref{lemma:k_theta}. We focus on estimating
\begin{equation}\label{iteration_i}
\begin{split}
    & \int_{\prod_{j=1}^{i-1} \mathcal{V}_j}\mathbf{1}_{t^{i+1}>0 } \prod_{j=1}^{i-1}e^{-\nu(v^{j})(t^j-t^{j+1})}\dd \sigma_j \int_{\mathcal{V}_i} |v^i_3| w^{-1}(v^i) \dd v^i   \\
    & \times     \int_{t^{i+1}}^{t^i} e^{-\nu(v^i)(t^i -s)} \int_{\mathbb{R}^3} \dd u \mathbf{k}_\theta(v^i,u) w(u)|\hat{f}(s,x^i_3-(t^i-s)v^i_3,u)| \dd s .
\end{split}
\end{equation}

First we decompose the $\dd s$ integral into $\mathbf{1}_{s\geq t^i-\delta} + \mathbf{1}_{s< t^i-\delta}$. By \eqref{k_theta} in Lemma \ref{lemma:k_theta}, the contribution of the first term reads
\begin{align}
& |\eqref{iteration_i} \mathbf{1}_{s\geq t^i-\delta} |\notag \\
& \lesssim \int_{\prod_{j=1}^{i-1} \mathcal{V}_j}\mathbf{1}_{t^{i+1}>0 }  \prod_{j=1}^{i-1} e^{-\nu(v^{j})(t^j-t^{j+1})} \dd \sigma_j \int_{\mathcal{V}_i} |v^i_3| w^{-1}(v^i) \dd v^i \notag\\ 
 & \times \int^{t^i}_{\max\{t^{i+1},t^i-\delta\}} e^{-\nu(v^i) (t^i-s)} \int_{\mathbb{R}^3} \dd u\mathbf{k}_\theta(v^i,u) w(u) |\hat{f}(s,x^i_3-(t^i-s)v^i_3,u)| \dd s  \notag\\
 &\leq  \sup_{0\leq s\leq t}\Vert (1+s)^{\sigma/2} w \hat{f}(s)\Vert_{L^\infty_{x_3,v}} \int_{\prod_{j=1}^{i-1} \mathcal{V}_j}\mathbf{1}_{t^{i+1}>0 }  \prod_{j=1}^{i-1} e^{-\nu(v^{j})(t^j-t^{j+1})}\dd \sigma_j  \notag\\ 
 & \times \int_{\mathcal{V}_i} |v^i_3| w^{-1}(v^i) \dd v^i \int^{t^i}_{\max\{t^{i+1},t^i-\delta\}} e^{-\nu_0 (t^i-s)/2} e^{-\nu_0(t^i-s)/2} (1+s)^{-\sigma/2} \dd s    \notag\\
 &  \leq o(1) \sup_{0\leq s\leq t}\Vert (1+s)^{\sigma/2} w \hat{f}(s)\Vert_{L^\infty_{x_3,v}}  \int_{\prod_{j=1}^{i-1} \mathcal{V}_j}\mathbf{1}_{t^{i+1}>0 }  \prod_{j=1}^{i-1}e^{-\nu(v^{j})(t^j-t^{j+1})}\dd \sigma_j  (1+t^i)^{-\sigma/2} \notag  \\
 & \leq o(1) (1+t^1)^{-\sigma/2} \sup_{0\leq s\leq t}\Vert (1+s)^{\sigma/2} w \hat{f}(s)\Vert_{L^\infty_{x_3,v}} . \label{bdr_s_small_bdd}
\end{align}
In the last line, we applied the same computation \eqref{time_induction}.

Next, we decompose the $v^i$ integral into $\mathbf{1}_{|v^i|\geq N} + \mathbf{1}_{|v^i|<N}$. By \eqref{k_theta} in Lemma \ref{lemma:k_theta}, and using the computation \eqref{time_induction}, the contribution of the first term reads
\begin{align}
    & |\eqref{iteration_i} \mathbf{1}_{|v^i|\geq N}|  \leq \int_{\prod_{j=1}^{i-1} \mathcal{V}_j}\mathbf{1}_{t^{i+1}>0 }  \prod_{j=1}^{i-1}e^{-\nu(v^{j})(t^j-t^{j+1})}\dd \sigma_j \int_{|v^i|\geq N} |v^i_3| w^{-1}(v^i) \dd v^i   \notag \\
    &\ \ \ \ \ \ \ \ \ \ \ \ \ \ \ \ \  \times \int_{t^{i+1}}^{t^i} e^{-\nu(v^i)(t^i -s)} \int_{\mathbb{R}^3}\dd u \mathbf{k}_\theta(v^i,u) w(u) |\hat{f}(s,x^i_3-(t^i-s)v^i_3,u)| \dd s \notag\\
    & \leq o(1) (1+t^1)^{-\sigma/2} \sup_{0\leq s\leq t}\Vert (1+s)^{\sigma/2} w \hat{f}(s)\Vert_{L^\infty_{x_3,v}}.   \label{bdr_v_large_bdd}
\end{align}

Then we decompose the $u$ integral into $\mathbf{1}_{|u|\geq N \text{ or } |v^i-u|\leq \frac{1}{N}} + \mathbf{1}_{|u|<N, \ |v^i-u|> \frac{1}{N}}$. By \eqref{K_N_small} in Lemma \ref{lemma:k_theta}, and using the computation \eqref{time_induction}, the contribution of the first term reads
\begin{align}
    & |\eqref{iteration_i} \mathbf{1}_{|u|\geq N \text{ or } |v^i-u|\leq \frac{1}{N}} |   \notag \\
    &\leq \int_{\prod_{j=1}^{i-1} \mathcal{V}_j}\mathbf{1}_{t^{i+1}>0 }  \prod_{j=1}^{i-1}e^{-\nu(v^{j})(t^j-t^{j+1})}\dd \sigma_j \int_{\mathcal{V}_i} |v^i_3| w^{-1}(v^i) \dd v^i  \notag\\
    &\ \ \times  \int_{t^{i+1}}^{t^i} e^{-\nu(v^i)(t^i -s)} \int_{\mathbb{R}^3}  \dd u\mathbf{1}_{|u|\geq N \text{ or } |v^i-u|\leq \frac{1}{N}} \mathbf{k}_\theta(v^i,u) w(u) |\hat{f}(s,x^i_3-(t^i-s)v^i_3,u)| \dd s \notag\\
    & \leq o(1) (1+t^1)^{-\sigma/2} \sup_{0\leq s\leq t}\Vert (1+s)^{\sigma/2} w \hat{f}(s)\Vert_{L^\infty_{x_3,v}}.   \label{bdr_u_large_bdd}
\end{align}

Now we consider the intersection of all the other cases, where we have $|v^i|\leq N, \ s<t^i-\delta$, and $|u|<N, \ |v^i-u|>\frac{1}{N}$. The conditions of $v^i$ and $u$ imply that $\mathbf{k}(v^i,u)\leq C_N$ from \eqref{k_theta_bdd} in Lemma \ref{lemma:k_theta}.

In the last line, we have applied the change of variable $v^i_3 \to y_3 = x^i_3-(t^i-s)v^i_3 \in (-1,1)$ with Jacobian
\[\Big|\det\Big(\frac{\p (x^i_3-(t^i-s)v^i_3)}{\p v^i_3} \Big) \Big| = (t^i-s) \geq \delta .\]

Then we apply this change of variable with the H\"older inequality to have
\begin{align}
 & |\eqref{iteration_i} \mathbf{1}_{|u|< N , |v^i-u|> \frac{1}{N}, s<t^i-\delta, |v^i|\leq N}|     \notag\\
 & \leq \frac{1}{\delta}\int_{\prod_{j=1}^{i-1}\mathcal{V}_j}  \prod_{j=1}^{i-1}e^{-\nu(v^{j})(t^j-t^{j+1})}\dd \sigma_j  \int^{t^i-\delta}_{0}  e^{-\nu_0(t^i-s)}    \int_{-1}^1 \int_{|u|\leq N}  |\hat{f}(s,y_3,u)| \dd u \dd y \dd s   \notag\\
 & \leq  C_{N,\delta,T_0}   \int_{\prod_{j=1}^{i-1}\mathcal{V}_j}  \prod_{j=1}^{i-1}e^{-\nu(v^{j})(t^j-t^{j+1})}\dd \sigma_j \times    \int_{0}^{t^i} e^{-\nu_0(t^i-s)} (1+s)^{-\sigma/2} \Vert (1+s)^{\sigma/2} \hat{f}(s)\Vert_{L^2_{x_3,v}} \dd s \notag\\
  & \leq  C_{N,\delta,T_0} (1+t^1)^{-\sigma/2} \sup_{0\leq s\leq t}\Vert (1+s)^{\sigma/2} \hat{f}(s)\Vert_{L^2_{x_3,v}}  . \label{other_case_bdd}
\end{align}

Collecting \labelcref{bdr_s_small_bdd,bdr_v_large_bdd,bdr_u_large_bdd,other_case_bdd}, we conclude that
\begin{equation*}
\begin{split}
    & |\eqref{bdr_K_i}| \leq o(1)n (1+t^1)^{-\sigma/2} \sup_{0\leq s\leq t} \Vert (1+s)^{\sigma/2} w \hat{f}(s)\Vert_{L^\infty_{x_3,v}} \\
    & +   C_{N,\delta,n,T_0} (1+t^1)^{-\sigma/2}   \sup_{0\leq s\leq t}\Vert (1+s)^{\sigma/2}\hat{f}(s)\Vert_{L^2_{x_3,v}}. 
\end{split}
\end{equation*}
Here, $o(1)$ corresponds to $\delta$ and $\frac{1}{N}$. Since $n=n(T_0)$ is fixed, we choose $\delta$ and $\frac{1}{N}$ to be small enough such that $\Big(\delta+\frac{1}{N}\Big) n \leq o(1).$

By the same computation, we have the same bound for \eqref{bdr_K_0}. Thus we conclude that
\begin{equation}\label{bdr_K_0_bdd}
\begin{split}
    & |\eqref{bdr_K_0} + \eqref{bdr_K_i}| \leq o(1) (1+t^1)^{-\sigma/2} \sup_{0\leq s\leq t} \Vert (1+s)^{\sigma/2} w \hat{f}(s)\Vert_{L^\infty_{x_3,v}} \\
    &+ C_{N,\delta,n,T_0} (1+t^1)^{-\sigma/2} \sup_{0\leq s\leq t}  \Vert (1+s)^{\sigma/2}\hat{f}(s)\Vert_{L^2_{x_3,v}}.
\end{split}
\end{equation}

Summarizing \eqref{bdr_initial_bdd}, \eqref{bdr_tk_bdd}, \eqref{bdr_g_bdd} and \eqref{bdr_K_0_bdd}, with
\begin{align*}
    &   e^{-\nu_0 t^1} e^{-\nu_0(t-t^1)} = e^{-\nu_0 t}, \ e^{-\nu_0 (t-t^1)}(1+t^1)^{-\sigma/2} \lesssim (1+t)^{-\sigma/2}, \ w(v)\sqrt{\mu(v)} \lesssim 1,
\end{align*}
we conclude the lemma.
\end{proof}

\begin{proof}[\textbf{Proof of Proposition \ref{prop:linfty}}]

We first fix $t\leq T_0$.

The first term \eqref{chara:initial} is controlled as
\begin{align}
    &    |\eqref{chara:initial}| \leq  e^{-\nu_0 t} \Vert w\hat{f}_0\Vert_{L^\infty_{x_3,v}} .   \label{chara:initial_bdd}
\end{align}

By Lemma \ref{lemma:linfty_bdr} with $t\leq T_0$, we control the second term as
\begin{align}
    &    |\eqref{chara:bdr}|  \leq 4e^{-\nu_0t}\Vert w\hat{f}_0\Vert_{ L^\infty_{x_3,v}}   + o(1)(1+t)^{-\sigma/2}  \sup_{0\leq s\leq t}\Vert (1+s)^{\sigma/2} w \hat{f}(s)\Vert_{ L^\infty_{x_3,v}} \notag\\
    &+  C(T_0)(1+t)^{-\sigma/2}  \big[\sup_{0\leq s\leq t}\Vert (1+t)^{\sigma/2} \hat{f}(s)\Vert_{L^2_{x_3,v}} + \sup_{0\leq s\leq t}\Vert \nu^{-1}(1+s)^{\sigma/2} w \hat{\Gamma}(\hat{f},\hat{f})(s)\Vert_{ L^\infty_{x_3,v}}  \big].  \label{chara:bdr_bdd}
\end{align}

For \eqref{chara:K}, we apply the method of characteristic \labelcref{chara:initial,chara:bdr,chara:K,chara:Gamma} to iterate $\hat{f}(s,k_1,k_2,x_3-(t-s)v_3,u)$ along the velocity $u$. We denote $t_u: = s-\tb(x_3-(t-s)v_3,u)$, $y_3=x_3-(t-s)v_3$. We have
\begin{align}
    & |\eqref{chara:K}| \leq  \int^{t}_{\max\{0,t^1\}} \dd s e^{-\nu(v) (t -s)}  \int_{\mathbb{R}^3} \dd u \frac{w(v)}{w(u)}\mathbf{k}(v,u) \notag \\
    &\times \Big\{ \mathbf{1}_{t_u \leq 0} e^{-\nu(u) s} w(u) |\hat{f}(0,y_3 -su_3,u)|       \label{chara:K_initial} \\
    &  + \int_{\max\{0,t_u\}}^{s}  e^{-\nu(u)(s-s')} \dd s'\int_{\mathbb{R}^3} w(u)\mathbf{k}(u,u') |\hat{f}(s',y_3-(s-s')u_3, u') |   \dd u' \label{chara:K_K} \\
    & +  \int_{\max\{0,t_u\}}^{s} e^{-\nu(u)(s-s')} w(u)|\hat{\Gamma}(\hat{f},\hat{f})(s',y_3-(s-s')u_3,u)| \dd s' \label{chara:K_gamma} \\
    & + \mathbf{1}_{t_u > 0} e^{-\nu(u)(s-t_u)} w(u) |\hat{f}(t_u,y-\tb(y_3,u)u_3,u)| \Big\}.   \label{chara:K_bdr}
\end{align}

We first compute the contribution of the initial condition as
\begin{align}
    &   |\eqref{chara:K_initial}| \leq \Vert w\hat{f}_0\Vert_{L^\infty_{x_3,v}} \int^t_{\max\{0,t^1\}} \dd s e^{-\nu(v)(t-s)/2} e^{-\nu(v)(t-s)/2} e^{-\nu_0 s} \int_{\mathbb{R}^3} \dd u    \mathbf{k}_\theta(v,u) \notag\\
    & \leq C_\theta e^{-\nu_0 t/2} \Vert w \hat{f}_0\Vert_{L^\infty_{x_3,v}}. \label{chara:K_initial_bdd}
\end{align}

Then we compute the contribution of $\hat{\Gamma}$ as
\begin{align}
    &   |\eqref{chara:K_gamma}|  \leq \sup_{0\leq s\leq t}\Vert \nu^{-1}w (1+s)^{\sigma/2}\hat{\Gamma}(\hat{f},\hat{f})\Vert_{L^\infty_{x_3,v}} \int^t_{\max\{0,t^1\}} \dd s e^{-\nu_0(t-s)} \int_{\mathbb{R}^3} \dd u \mathbf{k}_\theta(v,u) \notag\\
    & \times \int^s_{\max\{0,t_u\}} \dd s' e^{-\nu(u)(s-s')/2} \nu(u)  e^{-\nu(u)(s-s')/2} (1+s')^{-\sigma/2}   \notag\\
    & \leq C_\theta  \sup_{0\leq s\leq t}\Vert \nu^{-1}w (1+s)^{\sigma/2}\hat{\Gamma}(\hat{f},\hat{f})\Vert_{L^\infty_{x_3,v}} \int^t_{\max\{0,t^1\}} \dd s e^{-\nu_0(t-s)} (1+s)^{-\sigma/2}  \notag\\
    & \leq C_\theta  (1+t)^{-\sigma/2} \sup_{0\leq s\leq t}\Vert \nu^{-1}w (1+s)^{\sigma/2}\hat{\Gamma}(\hat{f},\hat{f})\Vert_{L^\infty_{x_3,v}}. \label{chara:K_gamma_bdd}
\end{align}

We control the contribution of the boundary term by applying Lemma \ref{lemma:linfty_bdr} to $w(u)\hat{f}(t_u,y-\tb(y_3,u)u_3,u)$:
\begin{align}
    &   | \eqref{chara:K_bdr}| \leq   \int^t_{\max\{0,t^1\}} \dd s e^{-\nu_0(t-s)} \int_{\mathbb{R}^3} \dd u \mathbf{k}_\theta(v,u) e^{-\nu_0(s-t_u)} \notag\\
    & \times   \Big[ 4 e^{-\nu_0 t_u}\Vert w\hat{f}_0\Vert_{ L^\infty_{x_3,v}}   + o(1)(1+t_u)^{-\sigma/2}  \sup_{0\leq s\leq t}\Vert (1+s)^{\sigma/2} w \hat{f}(s)\Vert_{ L^\infty_{x_3,v}} \notag\\
    &+  C(T_0)(1+t_u)^{-\sigma/2}  \big[\Vert (1+t)^{\sigma/2} \hat{f}\Vert_{L^\infty_T L^2_{x_3,v}} + \sup_{0\leq s\leq t}\Vert \nu^{-1}(1+s)^{\sigma/2} w \hat{\Gamma}(\hat{f},\hat{f})(s)\Vert_{ L^\infty_{x_3,v}}  \big]  \Big]  \notag\\
    & \leq \int^t_{\max\{0,t^1\}} \dd s e^{-\nu_0(t-s)/2} e^{-\nu_0(t-s)/2} \notag\\
    &\times \Big[ 4 e^{-\nu_0 s}\Vert w\hat{f}_0\Vert_{ L^\infty_{x_3,v}}   + o(1)(1+s)^{-\sigma/2}  \sup_{0\leq s\leq t}\Vert (1+s)^{\sigma/2} w \hat{f}(s)\Vert_{ L^\infty_{x_3,v}} \notag\\
    &+  C(T_0)(1+s)^{-\sigma/2}  \big[\sup_{0\leq s\leq t}\Vert (1+s)^{\sigma/2} \hat{f}(s)\Vert_{L^2_{x_3,v}} + \sup_{0\leq s\leq t}\Vert \nu^{-1}(1+s)^{\sigma/2} w \hat{\Gamma}(\hat{f},\hat{f})(s)\Vert_{ L^\infty_{x_3,v}}  \big]  \Big] \notag\\
    & \leq C_{\nu_0} e^{-\nu_0t/2}\Vert w\hat{f}_0\Vert_{ L^\infty_{x_3,v}}   + o(1)(1+t)^{-\sigma/2}  \sup_{0\leq s\leq t}\Vert (1+s)^{\sigma/2} w \hat{f}(s)\Vert_{ L^\infty_{x_3,v}} \notag\\
    &+  C(T_0)(1+t)^{-\sigma/2}  \big[\sup_{0\leq s\leq t}\Vert (1+s)^{\sigma/2} \hat{f}(s)\Vert_{L^2_{x_3,v}} + \sup_{0\leq s\leq t}\Vert \nu^{-1}(1+s)^{\sigma/2} w \hat{\Gamma}(\hat{f},\hat{f})(s)\Vert_{ L^\infty_{x_3,v}}  \big]. \label{chara:K_bdr_bdd}
\end{align}

Next, we compute \eqref{chara:K_K}. We consider several cases. When $s-\e<s'<s$, we have
\begin{align}
    &   |\eqref{chara:K_K}| \leq C  (1+t)^{-\sigma/2} \sup_{0\leq s\leq t}\Vert (1+s)^{\sigma/2} w\hat{f}(s)\Vert_{L^\infty_{x_3,v}} \notag\\
    &\times \int^t_{\max\{0,t-\tb\}}\dd s e^{-\nu(v)(t-s)} \frac{(1+t)^{\sigma/2}}{(1+s)^{\sigma/2}} \int_{\mathbb{R}^3}\dd u \mathbf{k}_\theta(v,u)  \notag\\
    & \times   \int_{\max\{0,s-\e\}}^s \dd s' e^{-\nu(u)(s-s')/2} e^{-\nu(u)(s-s')/2} \frac{(1+s)^{\sigma/2}}{(1+s')^{\sigma/2}} \int_{\mathbb{R}^3}\dd u' \mathbf{k}_\theta(u,u')  \notag\\
    & \leq    C \e (1+t)^{-\sigma/2} \sup_{0\leq s\leq t}\Vert (1+s)^{\sigma/2} w\hat{f}(s)\Vert_{L^\infty_{x_3,v}} \int^t_{\max\{0,t-\tb\}} e^{-\nu(v)(t-s)} \frac{(1+t)^{\sigma/2}}{(1+s)^{\sigma/2}}\dd s \notag\\ 
    &  \leq   o(1)  (1+t)^{-\sigma/2} \sup_{0\leq s\leq t}\Vert (1+s)^{\sigma/2} w\hat{f}(s)\Vert_{L^\infty_{x_3,v}}   .   \label{chara_K_K_1_bdd}
\end{align}

When $|u|>N$ or $|u-v|\leq \frac{1}{N}$, we have
\begin{align}
    &   |\eqref{chara:K_K}| \leq C  (1+t)^{-\sigma/2} \sup_{0\leq s\leq t}\Vert (1+s)^{\sigma/2} w\hat{f}(s)\Vert_{L^\infty_{x_3,v}} \notag\\
    &\times \int^t_{\max\{0,t-\tb\}}\dd s e^{-\nu(v)(t-s)} \frac{(1+t)^{\sigma/2}}{(1+s)^{\sigma/2}} \int_{|u|>N \text{ or }|u-v|\leq \frac{1}{N}}\dd u \mathbf{k}(v,u) \frac{w(v)}{w(u)} \notag\\
    & \times   \int_{\max\{0,s-\tb(y_3,u)\}}^s \dd s' e^{-\nu(u)(s-s')} \frac{(1+s)^{\sigma/2}}{(1+s')^{\sigma/2}} \int_{\mathbb{R}^3}\dd u' \mathbf{k}(u,u')\frac{w(u)}{w(u')}  \notag\\
    &  \leq \frac{C}{N}  (1+t)^{-\sigma/2} \sup_{0\leq s\leq t}\Vert (1+s)^{\sigma/2} w\hat{f}(s)\Vert_{L^\infty_{x_3,v}} \leq o(1) (1+t)^{-\sigma/2} \sup_{0\leq s\leq t}\Vert (1+s)^{\sigma/2} w\hat{f}(s)\Vert_{L^\infty_{x_3,v}}   .   \label{chara_K_K_2_bdd}
\end{align}

When $|u'|>N$ or $|u'-u|\leq \frac{1}{N}$, we have
\begin{align}
    &   |\eqref{chara:K_K}| \leq C  (1+t)^{-\sigma/2} \sup_{0\leq s\leq t}\Vert (1+s)^{\sigma/2} w\hat{f}(s)\Vert_{L^\infty_{x_3,v}} \notag\\
    &\times \int^t_{\max\{0,t-\tb\}}\dd s e^{-\nu(v)(t-s)} \frac{(1+t)^{\sigma/2}}{(1+s)^{\sigma/2}} \int_{\mathbb{R}^3}\dd u \mathbf{k}(v,u) \frac{w(v)}{w(u)} \notag\\
    & \times   \int_{\max\{0,s-\tb(y,u)\}}^s \dd s' e^{-\nu(u)(s-s')} \frac{(1+s)^{\sigma/2}}{(1+s')^{\sigma/2}} \int_{|u'|>N \text{ or } |u-u'|\leq \frac{1}{N}}\dd u' \mathbf{k}(u,u')\frac{w(u)}{w(u')} \notag \\
    &  \leq   o(1)  (1+t)^{-\sigma/2} \sup_{0\leq s\leq t}\Vert (1+s)^{\sigma/2} w\hat{f}(s)\Vert_{L^\infty_{x_3,v}}    .   \label{chara_K_K_3_bdd}
\end{align}

Then we consider the rest case, where we have $s'<s-\e$, $|u'|\leq N$, $|u'-u|>\frac{1}{N}$, $|u|\leq N$ and $|u-v|> \frac{1}{N}$. We apply the change of variable
\begin{align}
    &  u_3 \to z_3 = x_3 - (t-s)v_3 - (s-s')u_3 \in (-1,1). \label{cov}
\end{align}
The Jacobian is given by
\begin{align*}
    &   \Big| \frac{\p (x_3 - (t-s)v_3 - (s-s')u_3)}{\p u_3}\Big| = (s-s')>\e.
\end{align*}
Since $|u'|\leq N$, $|u'-u|>\frac{1}{N}$, $|u|\leq N$ and $|u-v|> \frac{1}{N}$, from Lemma \ref{lemma:k_theta}, we have 
\begin{align*}
    &   \mathbf{k}(u,u')\frac{w(u)}{w(u')} \lesssim 1, \ \mathbf{k}(v,u)\frac{w(u)}{w(v)} \lesssim 1.
\end{align*}

Then we compute
\begin{align}
    & |\eqref{chara:K_K}| \leq C \int^t_{\max\{0,t-\tb\}} \dd s e^{-\nu(v)(t-s)} \int_{|u|\leq N,|v-u|>\frac{1}{N}} \dd u \mathbf{k}(v,u) \frac{w(v)}{w(u)} \notag\\
    &\times   \int^s_{\max\{0,t-\tb(y_3,u)\}} \dd s' e^{-\nu(u)(s-s')} \int_{|u'|\leq N, |u-u'|>\frac{1}{N}} \dd u'\mathbf{k}(u,u')\frac{w(u)}{w(u')} w(u') \notag\\
    &\times |\hat{f}(s',k_1,k_2,y_3-(s-s')u_3,u')|  \notag \\
    &\leq C(\e) \int^t_{\max\{0,t-\tb\}} \dd s e^{-\nu(v)(t-s)} \int_{-1}^1 \dd z_3 \notag \\
    & \times \int^s_{\max\{0,s-\tb(y_3,u)\}} \dd s' e^{-\nu_0(s-s')} \int_{|u'|\leq N} \dd u' |\hat{f}(s',k_1,k_2,z_3,u')| \notag\\
    &  \leq C(\e)  (1+t)^{-\sigma/2} \int^t_{\max\{0,t-\tb\}} \dd s e^{-\nu(v)(t-s)}\frac{(1+t)^{\sigma/2}}{(1+s)^{\sigma/2}}  \notag\\
    & \times \sup_{0\leq s\leq t}\Vert (1+s)^{\sigma/2} \hat{f}(s)\Vert_{L^2_{x_3,v}}\int^s_{\max\{0,s-\tb(y,u)\}} \dd s' e^{-\nu_0(s-s')} \frac{(1+s)^{\sigma/2}}{(1+s')^{\sigma/2}} \notag\\
    & \leq C(\e) (1+t)^{-\sigma/2} \sup_{0\leq s\leq t}\Vert (1+s)^{\sigma/2} \hat{f}(s)\Vert_{L^2_{x_3,v}}.   \label{chara_K_K_4_bdd}
\end{align}
In the fourth line, we have applied the change of variable \eqref{cov}. In the sixth line, we applied the H\"older inequality. 

We combine \eqref{chara_K_K_1_bdd}, \eqref{chara_K_K_2_bdd}, \eqref{chara_K_K_3_bdd} and \eqref{chara_K_K_4_bdd} to conclude that
\begin{align}
    &   |\eqref{chara:K_K}| \leq (1+t)^{-\sigma/2} \big[o(1)\sup_{0\leq s\leq t}\Vert (1+s)^{\sigma/2} w\hat{f}(s) \Vert_{L^\infty_{x_3,v}}+ C(\e)\sup_{0\leq s\leq t}\Vert (1+s)^{\sigma/2} \hat{f}(s)\Vert_{L^2_{x_3,v}} \big]. \label{chara:K_K_bdd}
\end{align}

Now we can conclude the estimate of \eqref{chara:K} by combining \eqref{chara:K_initial_bdd}, \eqref{chara:K_gamma_bdd}, \eqref{chara:K_bdr_bdd} and \eqref{chara:K_K_bdd}:
\begin{align}
    &    |\eqref{chara:K}| \leq (C_\theta + C_{\nu_0}) e^{-\nu_0 t/ 2} \Vert w\hat{f}_0\Vert_{L^\infty_{x_3,v}} + o(1)(1+t)^{-\sigma/2}\sup_{0\leq s\leq t}\Vert (1+s)^{\sigma/2} w \hat{f}(s)\Vert_{L^\infty_{x_3,v}} \notag\\
    & + C(T_0,\e,\theta) (1+t)^{-\sigma/2} \big[\sup_{0\leq s\leq t}\Vert (1+s)^{\sigma/2} \hat{f}(s)\Vert_{L^\infty L^2_{x_3,v}} + \sup_{0\leq s\leq t}\Vert \nu^{-1}(1+s)^{\sigma/2} w \hat{\Gamma}(\hat{f},\hat{f})(s)\Vert_{ L^\infty_{x_3,v}}  \big].  \label{chara:K_bdd}
\end{align}

Last we compute \eqref{chara:Gamma} as
\begin{align}
    & |\eqref{chara:Gamma}| \lesssim \int^t_{\max\{0,t-\tb\}} \dd s e^{-\nu(v)(t-s)} w(v)|\hat{\Gamma}(\hat{f},\hat{f})(s,k_1,k_2,x_3-(t-s)v_3,v) | \notag\\
    & \leq C (1+t)^{-\sigma/2}   \int^t_{\max\{0,t-\tb\}} \dd s e^{-\nu(v)(t-s)} \nu(v) \frac{(1+t)^{\sigma/2}}{(1+s)^{\sigma/2}} \notag\\
    &\times \sup_{0\leq s\leq t}\Vert \nu^{-1}w(1+s)^{\sigma/2}\hat{\Gamma}(\hat{f},\hat{f})\Vert_{L^\infty_{x_3,v}} \notag\\
    & \leq C (1+t)^{-\sigma/2} \sup_{0\leq s\leq t} \Vert \nu^{-1}w(1+s)^{\sigma/2}\hat{\Gamma}(\hat{f},\hat{f})\Vert_{L^\infty_{x_3,v}}. \label{chara:gamma_bdd}
\end{align}

Now we collect \eqref{chara:initial_bdd}, \eqref{chara:bdr_bdd}, \eqref{chara:K_bdd} and \eqref{chara:gamma_bdd} to obtain that, for any $0<t\leq T_0$,
\begin{align*}
    &   \Vert w\hat{f}(t)\Vert_{L^\infty_{x_3,v}} \leq [C_\theta + C_{\nu_0}+5] e^{-\nu_0 t/ 2} \Vert w\hat{f}_0\Vert_{L^\infty_{x_3,v}} + o(1)(1+t)^{-\sigma/2}\sup_{0\leq s\leq t}\Vert (1+s)^{\sigma/2} w \hat{f}(s)\Vert_{L^\infty_{x_3,v}} \notag\\
    & + C(T_0,\e,\theta) (1+t)^{-\sigma/2} \big[\sup_{0\leq s\leq t}\Vert (1+s)^{\sigma/2} \hat{f}\Vert_{L^2_{x_3,v}} + \sup_{0\leq s\leq t}\Vert \nu^{-1}(1+s)^{\sigma/2} w \hat{\Gamma}(\hat{f},\hat{f})(s)\Vert_{ L^\infty_{x_3,v}}  \big]. 
\end{align*}
We absorb the term with $o(1)$ and further obtain
\begin{align}
    &   \Vert w\hat{f}(t)\Vert_{L^\infty_{x_3,v}} \leq C_{\theta,\nu_0} e^{-\nu_0 t/ 2} \Vert w\hat{f}_0\Vert_{L^\infty_{x_3,v}}  \notag\\
    & + C(T_0,\e,\theta) (1+t)^{-\sigma/2} \big[\sup_{0\leq s\leq t}\Vert (1+s)^{\sigma/2} \hat{f}(s)\Vert_{L^2_{x_3,v}} + \sup_{0\leq s\leq t}\Vert \nu^{-1}(1+s)^{\sigma/2} w \hat{\Gamma}(\hat{f},\hat{f})(s)\Vert_{ L^\infty_{x_3,v}}  \big].  \label{f_t_bdd}
\end{align}
Since $T_0\gg 1$, we further simplify the upper bound at time $T_0$:
\begin{align*}
    &   \Vert w\hat{f}(T_0)\Vert_{L^\infty_{x_3,v}} \leq e^{-\nu_0 t/4} \Vert w\hat{f}_0\Vert_{L^\infty_{x_3,v}}  \notag\\
    & + C(T_0,\e,\theta) (1+T_0)^{-\sigma/2} \big[\sup_{0\leq s\leq T_0}\Vert (1+s)^{\sigma/2} \hat{f}\Vert_{L^\infty_T L^2_{x_3,v}} + \sup_{0\leq s\leq T_0}\Vert \nu^{-1}(1+s)^{\sigma/2} w \hat{\Gamma}(\hat{f},\hat{f})(s)\Vert_{ L^\infty_{x_3,v}}  \big]. 
\end{align*}

For $t=mT_0$, we inductively compute
\begin{align}
  &\Vert w\hat{f}(mT_0)\Vert_{L^\infty_{x_3,v}} \leq e^{-\frac{\nu_0 T_0}{4}} \Vert w \hat{f}((m-1)T_0)\Vert_{L^\infty_{x_3,v}}  \notag\\
  &   + C(T_0,\e,\theta) (1+T_0)^{-\sigma/2} \sup_{0\leq s\leq T_0} \Vert (1+s)^{\sigma/2} \hat{f}((m-1)T_0+s)\Vert_{L^2_{x_3,v}}  \notag\\
  &  + C(T_0,\e,\theta)(1+T_0)^{-\sigma/2} \sup_{0 \leq s\leq T_0} \Vert \nu^{-1}(1+s)^{\sigma/2} w \hat{\Gamma}(\hat{f},\hat{f})((m-1)T_0+s)\Vert_{ L^\infty_{x_3,v}} \notag\\
  &  \leq e^{-\frac{\nu_0 T_0}{4}} \Vert w \hat{f}((m-1)T_0)\Vert_{L^\infty_{x_3,v}}  + C(T_0,\e,\theta) (1+mT_0)^{-\sigma/2} \sup_{0\leq s\leq mT_0}\Vert (1+s)^{\sigma/2} \hat{f}(s)\Vert_{L^2_{x_3,v}}  \notag\\
  &  + C(T_0,\e,\theta)(1+m T_0)^{-\sigma/2} \sup_{0 \leq s\leq mT_0} \Vert \nu^{-1}(1+s)^{\sigma/2} w \hat{\Gamma}(\hat{f},\hat{f})(s)\Vert_{ L^\infty_{x_3,v}} \notag\\
  & \leq e^{-2\frac{\nu_0 T_0}{4}} \Vert w\hat{f}((m-2)T_0)\Vert_{L^\infty_{x,v}}  +   C(T_0,\e,\theta)(1+mT_0)^{-\sigma/2} \times \big[1 + e^{-\frac{T_0}{4}} (1+T_0)^{\sigma/2} \big] \notag \\
  & \times \Big[\sup_{0\leq s\leq mT_0}\Vert (1+s)^{\sigma/2} \hat{f}(s)\Vert_{L^2_{x_3,v}} + \sup_{0 \leq s\leq mT_0} \Vert \nu^{-1}(1+s)^{\sigma/2} w \hat{\Gamma}(\hat{f},\hat{f})(s)\Vert_{ L^\infty_{x_3,v}} \Big] \notag\\
  & \leq \cdots \leq e^{-\frac{m\nu_0 T_0}{4}} \Vert w \hat{f}_0\Vert_{L^\infty_{x_3,v}} +  C(T_0,\e,\theta)(1+mT_0)^{-\sigma/2} \times \sum_{j=0}^{m-1} e^{-j\nu_0 T_0/4} (1+T_0)^{j\sigma/2} \notag\\
  & \times  \Big[\sup_{0\leq s\leq mT_0}\Vert (1+s)^{\sigma/2} \hat{f}(s)\Vert_{L^2_{x_3,v}} + \sup_{0 \leq s\leq mT_0} \Vert \nu^{-1}(1+s)^{\sigma/2} w \hat{\Gamma}(\hat{f},\hat{f})(s)\Vert_{ L^\infty_{x_3,v}} \Big] \notag\\
  & \leq e^{-\frac{m\nu_0 T_0}{4}} \Vert w \hat{f}_0\Vert_{L^\infty_{x_3,v}} + C(T_0,\e,\theta)(1+mT_0)^{-\sigma/2}  \notag\\
  & \times \Big[\sup_{0\leq s\leq mT_0}\Vert (1+s)^{\sigma/2} \hat{f}(s)\Vert_{L^2_{x_3,v}} + \sup_{0 \leq s\leq mT_0} \Vert \nu^{-1}(1+s)^{\sigma/2} w \hat{\Gamma}(\hat{f},\hat{f})(s)\Vert_{ L^\infty_{x_3,v}} \Big] . \label{mT0}
\end{align}
In the fourth line, we applied the following computation
\begin{align*}
    & (1+T_0)^{-\sigma/2} \frac{(1+mT_0)^{\sigma/2}}{(1+mT_0)^{\sigma/2}} (1+s)^{\sigma/2}  \leq \frac{(1+(m-1)T_0+s)^{\sigma/2}}{(1+mT_0)^{\sigma/2}}.
\end{align*}
Such inequality holds since
\begin{align*}
    &  (1+(m-1)T_0+s)(1+T_0) - (1+mT_0)(1+s)  \\
    &= 1+mT_0+s+(m-1)T_0^2 + T_0 s - 1 - mT_0 -s -smT_0\\
    & = (m-1)T_0^2 + T_0 s - smT_0 = (m-1)T_0^2 - (m-1)sT_0 \geq 0.
\end{align*}

In the second-last line in \eqref{mT0}, the summation $\sum_{j=0}^{m-1} e^{-j\nu_0 T_0/4} (1+T_0)^{j\sigma/2}$ converges since $T_0\gg 1$ and thus $e^{-\nu_0 T_0/4}(1+T_0)^{\sigma/2} \leq e^{-\nu_0 T_0/8}$.

For any $t>0$, we can find $m\in \mathbb{Z}^+$ such that $t=mT_0 +t'$ for $0\leq t'\leq T_0$. Then we apply \eqref{f_t_bdd} to have
\begin{align*}
  &\Vert w\hat{f}(mT_0+t')\Vert_{L^\infty_{x_3,v}} \leq e^{-\frac{\nu_0 t'}{4}} \Vert w \hat{f}(mT_0)\Vert_{L^\infty_{x_3,v}}  \notag\\
  &   + C(T_0,\e,\theta) (1+t')^{-\sigma/2} \sup_{0\leq s\leq t'} \Vert (1+s)^{\sigma/2} \hat{f}(mT_0+s)\Vert_{L^2_{x_3,v}}  \notag\\
  &  + C(T_0,\e,\theta)(1+t')^{-\sigma/2} \sup_{0 \leq s\leq t'} \Vert \nu^{-1}(1+s)^{\sigma/2} w \hat{\Gamma}(\hat{f},\hat{f})(mT_0+s)\Vert_{ L^\infty_{x_3,v}} \notag\\
  & \leq e^{-\frac{\nu_0(mT_0+t')}{4}} \Vert w\hat{f}_0\Vert_{L^\infty_{x_3,v}} + C(T_0,\e)(1+mT_0+t')^{-\sigma/2} \\
  & \times \Big[\sup_{0\leq s\leq mT_0+t'}\Vert (1+s)^{\sigma/2} \hat{f}(s)\Vert_{L^2_{x_3,v}} + \sup_{0 \leq s\leq mT_0+t'} \Vert \nu^{-1}(1+s)^{\sigma/2} w \hat{\Gamma}(\hat{f},\hat{f})(s)\Vert_{ L^\infty_{x_3,v}} \Big] .
\end{align*}
Here we applied \eqref{mT0} to $\Vert w \hat{f}(mT_0)\Vert_{L^\infty_{x_3,v}}$, and also applied the following computations:
\begin{align*}
    &    (1+t')^{-\sigma/2} \frac{(1+mT_0+t')^{\sigma/2}}{(1+mT_0+t')^{\sigma/2}} (1+s)^{\sigma/2} \leq \frac{(1+mT_0+s)^{\sigma/2}}{(1+mT_0+t')^{\sigma/2}}, \ s\leq t' \\
    & \text{ for } (1+t')^{-\sigma/2} \sup_{0\leq s\leq t'} \Vert (1+s)^{\sigma/2} \hat{f}(mT_0+s)\Vert_{L^2_{x_3,v}};\\
    & e^{-\frac{\nu_0 t'}{4}} (1+mT_0)^{-\sigma/2} = (1+mT_0+t')^{-\sigma/2} e^{-\frac{\nu_0 t'}{4}} \Big( 1+ \frac{t'}{1+mT_0} \Big)^{\sigma/2} \leq C(1+mT_0+t')^{-\sigma/2}.
\end{align*}

Finally, we conclude that, for any $T$ and some constant $C$ that does not depend on $T$, 
\begin{align*}
    &   \Vert (1+t)^{\sigma/2} w\hat{f}\Vert_{L^\infty_{T,x_3,v}} \leq C \Vert w\hat{f}_0\Vert_{L^\infty_{x_3,v}}  \notag\\
    & + C\big[\Vert (1+t)^{\sigma/2} \hat{f}\Vert_{L^\infty_T L^2_{x_3,v}} + \Vert \nu^{-1}(1+t)^{\sigma/2} w \hat{\Gamma}(\hat{f},\hat{f})\Vert_{  L^\infty_{T,x_3,v}}  \big]. 
\end{align*}
We take the $k$ integration and apply \eqref{gamma_est_time_linfty} to conclude Proposition \ref{prop:linfty}.
\end{proof}

\subsection{Proof of Theorem \ref{thm:l1k_lpk}}\label{sec:proof_thm1}

To prove Theorem \ref{thm:l1k_lpk}, we collect all previous estimates to obtain the following a priori estimate.
\begin{proposition}\label{prop:aprioi_f}
Let $\hat{f}$ be the solution to \eqref{f_eqn} such that the initial condition $f_0$ satisfy \eqref{initial_assumption}, and 
\begin{align}\label{ad.prop.ap}
    &\Vert \hat{f}\Vert_{L^p_k L^\infty_T L^2_{x_3,v}}+\Vert (1+t)^{\sigma/2} w\hat{f}\Vert_{L^1_k  L^\infty_{T,x_3,v}}<\infty,
\end{align}
then for some $C>1$,
\begin{align*}
    &    \Vert (1+t)^{\sigma/2} w\hat{f} \Vert_{L^1_k L^\infty_{T,x_3,v}} + \Vert \hat{f}\Vert_{L^p_k L^\infty_T L^2_{x_3,v}} \\
& \leq C\big[ \Vert w\hat{f}_0\Vert_{L^1_k L^\infty_{x_3,v}} + \Vert \hat{f}_0\Vert_{L^p_k L^2_{x_3,v}} + \Vert (1+t)^{\sigma/2} w\hat{f}\Vert_{L^1_k  L^\infty_{T,x_3,v}}^2 + \Vert \hat{f}\Vert_{L^p_k L^\infty_T L^2_{x_3,v}} \Vert (1+t)^{\sigma/2} w\hat{f}\Vert_{L^1_k  L^\infty_{T,x_3,v}}\big].
\end{align*}
We also have the following estimate:
\begin{align*}
    & \Vert (1+t)^{\sigma/2} \hat{f}\Vert_{L^1_k L^\infty_T L^2_{x_3,v}} + \Vert (1+t)^{\sigma/2} (\mathbf{I}-\mathbf{P}) \hat{f} \Vert_{L^1_k L^2_T L^2_{x_3,\nu}} + |(1+t)^{\sigma/2}(I-P_\gamma)\hat{f}|^2_{L^1_k L^2_{T,\gamma_+}}  \\
    &  + \Big\Vert (1+t)^{\sigma/2} \frac{|k|}{\sqrt{1+|k|^2}}(\hat{a},\hat{\mathbf{b}},\hat{c})\Big\Vert_{L^1_k L^2_{T,x_3}} \\
    & \leq C \big[ \Vert w\hat{f}_0\Vert_{L^1_k L^\infty_{x_3,v}} + \Vert \hat{f}_0\Vert_{L^p_k L^2_{x_3,v}} + \Vert (1+t)^{\sigma/2} w\hat{f}\Vert_{L^1_k  L^\infty_{T,x_3,v}}^2 + \Vert \hat{f}\Vert_{L^p_k L^\infty_T L^2_{x_3,v}} \Vert (1+t)^{\sigma/2} w\hat{f}\Vert_{L^1_k  L^\infty_{T,x_3,v}} \big].
\end{align*}

\end{proposition}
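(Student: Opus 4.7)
The plan is to assemble the proposition directly from the three building blocks already in hand: Proposition~\ref{prop: full_energy_decay} (the time-weighted $L^1_k L^\infty_T L^2_{x_3,v}$ estimate with full dissipation), Proposition~\ref{prop:linfty} (the $L^1_k L^\infty_{T,x_3,v}$ estimate), and the $L^p_k$ bound \eqref{Lpk_energy} from Lemma~\ref{lemma: energy}. The glue between these three is a single Cauchy--Schwarz reduction in time, which is available because $\sigma=2(1-1/p)-2\e>1$ ensures that $(1+t)^{-\sigma/2}\in L^2_t(0,\infty)$.

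The first step is this key reduction: for every $T>0$,
\begin{equation*}
\Vert w\hat{f}\Vert_{L^1_k L^2_T L^\infty_{x_3,v}} \leq \big\Vert (1+t)^{-\sigma/2}\big\Vert_{L^2_t(0,T)}\,\Vert (1+t)^{\sigma/2}w\hat{f}\Vert_{L^1_k L^\infty_{T,x_3,v}} \lesssim \Vert (1+t)^{\sigma/2} w\hat{f}\Vert_{L^1_k L^\infty_{T,x_3,v}},
\end{equation*}
uniformly in $T$. This converts every nonlinear factor $\Vert w\hat{f}\Vert_{L^1_k L^2_T L^\infty_{x_3,v}}$ appearing on the right-hand sides of the preceding propositions into the weighted $L^\infty$ norm that sits on the left of the target estimate. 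A second, easier embedding to record is $\Vert \hat{f}_0\Vert_{L^1_k L^2_{x_3,v}}\lesssim \Vert w\hat{f}_0\Vert_{L^1_k L^\infty_{x_3,v}}$, which is immediate from $x_3\in(-1,1)$ and integrability of $w^{-2}(v)=e^{-2\theta|v|^2}$, and which handles the only initial-data contribution in Proposition~\ref{prop: full_energy_decay} that is not already listed in the target right-hand side.

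Next I would apply the three estimates in sequence. Proposition~\ref{prop: full_energy_decay} gives a bound for $\Vert (1+t)^{\sigma/2}\hat{f}\Vert_{L^1_kL^\infty_TL^2_{x_3,v}}$ together with the full dissipation package (the $(\mathbf{I}-\mathbf{P})\hat{f}$ term, the boundary trace $(I-P_\gamma)\hat{f}$, and the frequency-weighted macroscopic quantities $(\hat{a},\hat{\mathbf{b}},\hat{c})$) in terms of initial data plus products, each containing the factor $\Vert w\hat{f}\Vert_{L^1_kL^2_TL^\infty_{x_3,v}}$. Substituting the key reduction, the two nonlinear products become $\Vert (1+t)^{\sigma/2}\hat{f}\Vert_{L^1_kL^\infty_TL^2_{x_3,v}}\Vert (1+t)^{\sigma/2}w\hat{f}\Vert_{L^1_kL^\infty_{T,x_3,v}}$ and $\Vert \hat{f}\Vert_{L^p_kL^\infty_TL^2_{x_3,v}}\Vert (1+t)^{\sigma/2}w\hat{f}\Vert_{L^1_kL^\infty_{T,x_3,v}}$. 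Plugging the resulting bound on $\Vert (1+t)^{\sigma/2}\hat{f}\Vert_{L^1_kL^\infty_TL^2_{x_3,v}}$ into Proposition~\ref{prop:linfty} produces a bound for $\Vert (1+t)^{\sigma/2}w\hat{f}\Vert_{L^1_kL^\infty_{T,x_3,v}}$ in exactly the form asserted (after once more using the key reduction inside the quadratic term from Proposition~\ref{prop:linfty}). Finally, \eqref{Lpk_energy} in Lemma~\ref{lemma: energy}, combined with the key reduction, bounds $\Vert \hat{f}\Vert_{L^p_kL^\infty_TL^2_{x_3,v}}$ in the same style. Adding this to the previous inequality yields the first conclusion; the second conclusion then follows immediately from Proposition~\ref{prop: full_energy_decay} since its left-hand side already contains the dissipation norms listed in the statement, with nonlinear contributions bounded by the same two products.

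The main obstacle is not conceptual but organizational: one must keep track of which norms are produced at each step and verify that the chain of substitutions never generates a term outside the two allowed nonlinear shapes $\Vert (1+t)^{\sigma/2}w\hat{f}\Vert^2_{L^1_kL^\infty_{T,x_3,v}}$ and $\Vert \hat{f}\Vert_{L^p_kL^\infty_TL^2_{x_3,v}}\Vert (1+t)^{\sigma/2}w\hat{f}\Vert_{L^1_kL^\infty_{T,x_3,v}}$. Note that the a priori finiteness hypothesis \eqref{ad.prop.ap} is used only to ensure that the quadratic right-hand side is finite so that the substitutions are legitimate; no absorption into the left-hand side is performed here, and the closure of the resulting quadratic inequality by smallness of \eqref{initial_assumption} is a separate continuity argument carried out downstream in Section~\ref{sec:proof_thm1}.
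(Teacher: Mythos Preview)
Your proposal is correct and follows exactly the same approach as the paper: the paper's proof consists of the single key reduction $\Vert w\hat{f}\Vert_{L^1_k L^2_T L^\infty_{x_3,v}}\lesssim \Vert (1+t)^{\sigma/2}w\hat{f}\Vert_{L^1_k L^\infty_{T,x_3,v}}$ (valid because $\sigma>1$) and then the sentence ``Combining this with Proposition~\ref{prop:linfty}, Proposition~\ref{prop: full_energy_decay} and Lemma~\ref{lemma: energy}, we conclude Proposition~\ref{prop:aprioi_f}.'' You have simply spelled out the bookkeeping that the paper leaves implicit, including the embedding $\Vert \hat{f}_0\Vert_{L^1_k L^2_{x_3,v}}\lesssim \Vert w\hat{f}_0\Vert_{L^1_k L^\infty_{x_3,v}}$ and the reduction of the intermediate product $\Vert (1+t)^{\sigma/2}\hat{f}\Vert_{L^1_kL^\infty_TL^2_{x_3,v}}\Vert (1+t)^{\sigma/2}w\hat{f}\Vert_{L^1_kL^\infty_{T,x_3,v}}$ to the allowed quadratic shape via the same $L^2_{x_3,v}\lesssim w\text{-}L^\infty_{x_3,v}$ embedding.
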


\begin{proof}

Since $\sigma = 2(1-1/p)-2\e$ for $p>2$, we have $\sigma>1$, and thus
\begin{align}
    &    \Vert w\hat{f}\Vert_{L^1_k L^2_T L^\infty_{x_3,v}} \leq \Vert (1+t)^{\sigma/2}w \hat{f}\Vert_{L^1_k L^\infty_{T,x_3,v}} \Big(\int_0^T (1+t)^{\sigma/2} \dd t\Big)^{1/2} \lesssim \Vert (1+t)^{\sigma/2}w \hat{f}\Vert_{L^1_k L^\infty_{T,x_3,v}} . \label{time_decay_nonlinear}
\end{align}

Combining this with Proposition \ref{prop:linfty}, Proposition \ref{prop: full_energy_decay} and Lemma \ref{lemma: energy}, we conclude Proposition \ref{prop:aprioi_f}.
\end{proof}

\begin{proof}[\textbf{Proof of Theorem \ref{thm:l1k_lpk}}]
With the a priori estimate in Proposition \ref{prop:aprioi_f}, we can apply the standard sequential argument to construct a unique solution to \eqref{f_eqn} that satisfies \eqref{f_estimate} and \eqref{f_estimate_2}. The positivity also follows from a standard sequential argument, we refer detail to \cite{duan_SIMA}. Note that the a priori assumption \eqref{ad.prop.ap} can be closed due to the smallness of initial data as in \eqref{initial_assumption}. 
\end{proof}

\section{Time derivative estimate and time-weighted dissipation estimate of $\hat{\mathbf{b}},\hat{c}$}\label{sec:time_derivative}

To conclude Theorem \ref{thm:bc_refined}, we need to obtain the time-weighted dissipation for the low-frequency regime of $\hat{b}_3,\hat{c}$ \eqref{b3_c_refined}, see Lemma \ref{lemma:b_c_no_time_derivative} for $\hat{b}_1$ and $\hat{b}_2$.

We can apply similar arguments to obtain the following estimate to $\p_t \hat{f}$ in \eqref{p_t_f_eqn}.

\begin{proposition}\label{prop:time_derivative}
Let the assumptions in Theorem \ref{thm:l1k_lpk} be satisfied. If we further assume the condition \eqref{t_derivative_assumption}, then there exists a unique solution $\p_t \hat{f}(t,k,x_3,v)$ to \eqref{p_t_f_eqn} and the following estimate is satisfied:
   \begin{align*}
    &    \Vert (1+t)^{\sigma/2} w\p_t \hat{ f} \Vert_{L^1_k L^\infty_{T,x_3,v}} + \Vert \p_t \hat{f}\Vert_{L^p_k L^\infty_T L^2_{x_3,v}} \\
& \lesssim \Vert w \p_t \hat{f}_0\Vert_{L^1_k L^\infty_{x_3,v}} + \Vert \p_t \hat{f}_0\Vert_{L^p_k L^2_{x_3,v}} + \Vert (1+t)^{\sigma/2} w \p_t \hat{f}\Vert_{L^1_k  L^\infty_{T,x_3,v}}\Vert (1+t)^{\sigma/2} w  \hat{f}\Vert_{L^1_k  L^\infty_{T,x_3,v}} \\
& + \Vert \p_t \hat{f}\Vert_{L^p_k L^\infty_T L^2_{x_3,v}} \Vert (1+t)^{\sigma/2} w\hat{f}\Vert_{L^1_k  L^\infty_{T,x_3,v}} + \Vert  \hat{f}\Vert_{L^p_k L^\infty_T L^2_{x_3,v}} \Vert (1+t)^{\sigma/2} w\p_t \hat{f}\Vert_{L^1_k  L^\infty_{T,x_3,v}},
\end{align*}
for any $T>0$. Moreover, it also holds that
\begin{align*}
    & \Vert (1+t)^{\sigma/2} \p_t \hat{f}\Vert_{L^1_k L^\infty_T L^2_{x_3,v}} + \Vert (1+t)^{\sigma/2} (\mathbf{I}-\mathbf{P}) \p_t \hat{f} \Vert_{L^1_k L^2_T L^2_{x_3,\nu}} + |(1+t)^{\sigma/2}(I-P_\gamma)\p_t \hat{f}|^2_{L^1_k L^2_{T,\gamma_+}}  \\
    &  + \Big\Vert (1+t)^{\sigma/2} \frac{|k|}{\sqrt{1+|k|^2}}\p_t(\hat{a},\hat{\mathbf{b}},\hat{c})\Big\Vert_{L^1_k L^2_{T,x_3}} \\
    & \lesssim \Vert w \p_t \hat{f}_0\Vert_{L^1_k L^\infty_{x_3,v}} + \Vert \p_t \hat{f}_0\Vert_{L^p_k L^2_{x_3,v}} + \Vert (1+t)^{\sigma/2} w \p_t \hat{f}\Vert_{L^1_k  L^\infty_{T,x_3,v}}\Vert (1+t)^{\sigma/2} w  \hat{f}\Vert_{L^1_k  L^\infty_{T,x_3,v}} \\
& + \Vert \p_t \hat{f}\Vert_{L^p_k L^\infty_T L^2_{x_3,v}} \Vert (1+t)^{\sigma/2} w\hat{f}\Vert_{L^1_k  L^\infty_{T,x_3,v}} + \Vert  \hat{f}\Vert_{L^p_k L^\infty_T L^2_{x_3,v}} \Vert (1+t)^{\sigma/2} w\p_t \hat{f}\Vert_{L^1_k  L^\infty_{T,x_3,v}}.
\end{align*}

\end{proposition}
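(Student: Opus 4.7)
The plan is to observe that \eqref{p_t_f_eqn} has exactly the same linear transport--collision structure and the same diffuse boundary condition as \eqref{f_eqn}; only the source term changes, from $\hat{\Gamma}(\hat{f},\hat{f})$ to the bilinear form $\hat{\Gamma}(\p_t\hat{f},\hat{f})+\hat{\Gamma}(\hat{f},\p_t\hat{f})$, and the initial datum changes accordingly. Hence every linear estimate established in Sections \ref{sec:energy}--\ref{sec:time_decay} for $\hat{f}$ applies verbatim to $\p_t\hat{f}$: the basic $L^p_k L^\infty_T L^2_{x_3,v}$ energy estimate (Lemma \ref{lemma: energy}), the macroscopic dissipation estimate via the weak formulation \eqref{weak_formulation} with the same test functions $\psi_a,\psi_b,\psi_c$ built from the elliptic problems \eqref{elliptic_a}, \eqref{phi_b_k}, \eqref{phi_c_k} (Lemma \ref{lemma:macroscopic}), the time-weighted versions (Lemmas \ref{lemma:energy_decay}--\ref{lemma:macro_time}), and the $L^1_k L^\infty_{T,x_3,v}$ bootstrap through the characteristic representation (Proposition \ref{prop:linfty}).

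The only new ingredient is the treatment of the bilinear source. For the nonlinear macroscopic and energy estimates, I will apply the bilinear bounds \eqref{gamma_est}, \eqref{gamma_est_time} of Lemma \ref{lemma:gamma_est} separately to each of the two pieces $\hat{\Gamma}(\p_t\hat{f},\hat{f})$ and $\hat{\Gamma}(\hat{f},\p_t\hat{f})$. Placing the time weight $(1+t)^{\sigma/2}$ either on the first or on the second argument of $\hat{\Gamma}$, and the remaining weight on the test component $(\mathbf{I}-\mathbf{P})\p_t\hat{f}$, produces crossed products of the form
\begin{equation*}
\Vert \p_t\hat{f}\Vert_{L^p_k L^\infty_T L^2_{x_3,v}}\Vert w\hat{f}\Vert_{L^1_k L^2_T L^\infty_{x_3,v}}+\Vert \hat{f}\Vert_{L^p_k L^\infty_T L^2_{x_3,v}}\Vert w\p_t\hat{f}\Vert_{L^1_k L^2_T L^\infty_{x_3,v}},
\end{equation*}
and their time-weighted analogues. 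For the $L^1_k L^\infty_{T,x_3,v}$ step I will apply the pointwise bound \eqref{gamma_est_time_linfty} in the same bilinear fashion, which yields $\Vert (1+t)^{\sigma/2} w\p_t\hat{f}\Vert_{L^1_k L^\infty_{T,x_3,v}}\Vert (1+t)^{\sigma/2} w\hat{f}\Vert_{L^1_k L^\infty_{T,x_3,v}}$ after Young's convolution inequality. Using $\sigma>1$ as in \eqref{time_decay_nonlinear} converts the $L^2_T L^\infty_{x_3,v}$ norm of $w\p_t\hat f$ (and of $w\hat f$) into its time-weighted $L^\infty_{T,x_3,v}$ counterpart.

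Assembling these pieces exactly as in Proposition \ref{prop:aprioi_f}, and using Theorem \ref{thm:l1k_lpk} to know that the factors involving $\hat{f}$ alone are already $O(\delta)$, the a priori estimate for $\p_t\hat{f}$ reads schematically
\begin{equation*}
\mathcal{E}(\p_t\hat{f})\le C\mathcal{E}(\p_t\hat{f}_0)+C\delta\,\mathcal{E}(\p_t\hat{f})+C\mathcal{E}(\p_t\hat{f})^2,
\end{equation*}
where $\mathcal{E}$ collects all the norms appearing in the two displayed inequalities of the proposition. The smallness of $\delta$ from \eqref{initial_assumption} absorbs the first nonlinear term into the left-hand side, and the assumption \eqref{t_derivative_assumption} renders $\mathcal{E}(\p_t\hat{f}_0)$ small enough to close the quadratic term. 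Existence and uniqueness of $\p_t\hat{f}$ then follow by the same sequential/iteration argument used for $\hat{f}$ in the proof of Theorem \ref{thm:l1k_lpk}, noting that $\p_t\hat f$ can also be obtained as the limit of difference quotients of $\hat f$ given the $L^p_k$ and $L^1_k L^\infty$ control.

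The main technical obstacle will be the bilinear bookkeeping at the macroscopic-dissipation stage: in the weak formulation \eqref{weak_formulation} applied to $\p_t\hat{f}$, the time-derivative term $\eqref{weak_formulation}_4$ requires the auxiliary elliptic functions $\Phi_a,\Phi_b,\Phi_c$ with right-hand side $\p_t^2(\hat{a},\hat{\mathbf{b}},\hat{c})$, which we must rewrite through the conservation laws \eqref{conservation_momentum}, \eqref{momentum_conservation}, \eqref{energy_conservation} differentiated in $t$. This introduces $\p_t(\mathbf{I}-\mathbf{P})\hat{f}$ in the elliptic source, and one must verify that the corresponding $H^2_{x_3}$ bounds (analogous to \eqref{Phi_a_estimate}, \eqref{Phi_b_estimate}, \eqref{Phi_c_estimate}) still close the argument; this works because every step only uses integration by parts in $x_3$ and does not require additional regularity in $t$ beyond what is provided by the $\p_t\hat{f}$ equation itself. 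Once these bounds are in place, repeating verbatim the interpolation between $L^1_k$ and $L^p_k$ used in Lemmas \ref{lemma:energy_decay}--\ref{lemma:macro_time} yields the claim.
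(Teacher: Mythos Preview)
Your proposal is correct and follows exactly the approach of the paper, which simply says that the proof is almost identical to that of Theorem \ref{thm:l1k_lpk}, with the only difference being that $\hat{\Gamma}(\hat{f},\hat{f})$ is replaced by $\hat{\Gamma}(\p_t\hat{f},\hat{f})+\hat{\Gamma}(\hat{f},\p_t\hat{f})$ and is controlled via Lemma \ref{lemma:gamma_est}. Your write-up is in fact more detailed than the paper's two-sentence sketch, and your remark about the auxiliary elliptic functions $\Phi_a,\Phi_b,\Phi_c$ now involving $\p_t^2(\hat{a},\hat{\mathbf{b}},\hat{c})$ (handled through the time-differentiated conservation laws) is a correct elaboration of a point the paper leaves implicit.
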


\begin{proof}
The proof is almost identical to the proof of Theorem \ref{thm:l1k_lpk}. The only difference is that the nonlinear operator $\hat{\Gamma}(\hat{f},\hat{f})$ now becomes $\hat{\Gamma}(\p_t \hat{f}, \hat{f}) + \hat{\Gamma}(\hat{f},\p_t \hat{f})$. The contribution of this term can be controlled using Lemma \ref{lemma:gamma_est}.
\end{proof}

We estimate $\hat{b}_3$ and $\hat{c}$ in Section \ref{sec:b_3_c}. This will conclude Theorem \ref{thm:bc_refined} in Section \ref{sec:proof_thm2}.

\subsection{Time-weighted dissipation estimate of $\hat{\mathbf{b}},\hat{c}$}\label{sec:b_3_c}

First, we derive the dissipation estimate of $\hat{\mathbf{b}},\hat{c}$ without weight in time.
\begin{lemma}\label{lemma:bc_l2}
Under the assumption in Proposition \ref{prop:time_derivative}, it holds that
\begin{align*}
    &  \Vert (\hat{\mathbf{b}},\hat{c}) \Vert_{L^1_k L^2_{T,x_3}} \lesssim \Vert (\mathbf{I}-\mathbf{P})\hat{f}\Vert_{L^1_k L^2_{T,x_3,v}} + \Vert (\mathbf{I}-\mathbf{P}) \p_t \hat{f}\Vert_{L^1_k L^2_{T,x_3,v}} \\
    & + \Vert \hat{f}\Vert_{L^1_k L^\infty_T L^2_{x_3,v}} \Vert w\hat{f}\Vert_{L^1_k L^2_T L^\infty_{x_3,v}} + |(I-P_\gamma)\hat{f}|_{L^1_k L^2_{T,\gamma_+}} .
\end{align*}
\end{lemma}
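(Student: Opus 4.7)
The plan is to revisit the dual argument of Lemma \ref{lemma:macroscopic} using a \emph{variant} weak formulation in which no integration by parts in time is performed. Testing \eqref{f_eqn} against a test function $\psi$ directly produces
\begin{equation*}
\int_0^T\!\!\!\int_{-1}^1\!\!\!\int_{\R^3}\p_t\hat f\,\psi + \int_0^T\!\!\!\int_{-1}^1\!\!\!\int_{\R^3} i\bar v\cd k\,\hat f\,\psi - \int_0^T\!\!\!\int_{-1}^1\!\!\!\int_{\R^3} v_3\hat f\,\p_{x_3}\psi + \mathrm{(bdr)} + \int_0^T\!\!\!\int_{-1}^1\!\!\!\int_{\R^3}\mathcal L\hat f\,\psi = \int_0^T\!\!\!\int_{-1}^1\!\!\!\int_{\R^3}\hat\Gamma(\hat f,\hat f)\,\psi.
\end{equation*}
Two crucial consequences follow. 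First, no initial/terminal pairing $\int\hat f\psi|_0^T$ appears, so neither $\Vert\hat f_0\Vert$ nor $\Vert\hat f\Vert_{L^\infty_T L^2_{x_3,v}}$ will contaminate the right-hand side as they did in Lemma \ref{lemma:b_c_no_time_derivative}. Second, if $\psi(t,k,x_3,\cdot)\in(\ker\mathcal L)^\perp$ in $L^2_v$ pointwise, the commutativity of the projection $\mathbf P$ with $\p_t$ yields $\int\p_t\hat f\,\psi=\int(\mathbf I-\mathbf P)\p_t\hat f\,\psi$, which is precisely the new dissipation quantity supplied by Proposition \ref{prop:time_derivative}. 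The conservation-law manipulation with the auxiliary $\Phi$ used in Lemma \ref{lemma:macroscopic} is thereby completely bypassed.

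For each of $\hat b_1,\hat b_2,\hat b_3,\hat c$ I reuse the test functions from the proofs of \eqref{b_1_b_2}, \eqref{b_3} and \eqref{c_less_k}---all lying in $(\ker\mathcal L)^\perp$ by the moment identities verified there---but I replace each elliptic problem by its \emph{unweighted} Dirichlet analogue, e.g.\ $[|k|^2-2\p_{x_3}^2]\phi_{b_3}=\bar{\hat b}_3$ for $\hat b_3$ and $[|k|^2-\p_{x_3}^2]\phi_c=\bar{\hat c}$ for $\hat c$, with $\phi|_{x_3=\pm 1}=0$. The Poincar\'e inequality then yields the $k$-uniform bound
\begin{equation*}
\Vert(1+|k|+|k|^2)\phi\Vert_{L^2_{x_3}}+\Vert(1+|k|)\p_{x_3}\phi\Vert_{L^2_{x_3}}+\Vert\p_{x_3}^2\phi\Vert_{L^2_{x_3}}\lesssim\Vert(\hat b_i,\hat c)\Vert_{L^2_{x_3}},
\end{equation*}
together with the corresponding trace estimates, with no $|k|/\sqrt{1+|k|^2}$ loss anywhere. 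The transport terms $\int i\bar v\cd k\,\hat f\,\psi-\int v_3\hat f\p_{x_3}\psi$ then produce $\Vert\hat b_i\Vert^2_{L^2_{T,x_3}}$ and $\Vert\hat c\Vert^2_{L^2_{T,x_3}}$ directly.

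The remaining terms are handled exactly as in Lemma \ref{lemma:macroscopic}: the $\mathcal L\hat f$-contribution and the transport remainders are absorbed through $\Vert(\mathbf I-\mathbf P)\hat f\Vert_{L^2_{T,x_3,v}}$ via the $H^2_{x_3}$ bound above; the boundary term splits into a $P_\gamma\hat f$ piece that vanishes by oddness in $v_3$ and a $(I-P_\gamma)\hat f$ piece controlled by $|(I-P_\gamma)\hat f|_{L^2_{T,\gamma_+}}$ through the trace bound; and the nonlinear piece is handled via \eqref{gamma_est}, yielding the cross term $\Vert\hat f\Vert_{L^1_kL^\infty_TL^2_{x_3,v}}\Vert w\hat f\Vert_{L^1_kL^2_TL^\infty_{x_3,v}}$ after taking $L^1_k$. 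The new time-derivative piece is bounded by Cauchy--Schwarz as $o(1)\Vert(\hat b_i,\hat c)\Vert^2_{L^2_{T,x_3}}+\Vert(\mathbf I-\mathbf P)\p_t\hat f\Vert^2_{L^2_{T,x_3,v}}$. Absorbing the small pieces, taking square roots, integrating in $k$ (with Young's convolution on the nonlinear term), and summing the four scalar estimates yields the claim. The principal obstacle I anticipate is confirming, for $\hat b_3$ and $\hat c$, that all boundary integrals on $\{x_3=\pm 1\}$ close without the regularising weight $|k|/\sqrt{1+|k|^2}$ that was essential in Lemma \ref{lemma:macroscopic}: this is exactly rescued by the Dirichlet condition on $\phi$, which annihilates every boundary contribution from integration by parts in $x_3$ and produces $k$-uniform trace bounds.
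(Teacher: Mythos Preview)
Your proposal is correct and follows essentially the same route as the paper: both use the variant weak formulation \eqref{weak_formulation_2} in which $\p_t\hat f$ is kept on the left (no time integration by parts), choose the same $(\ker\mathcal L)^\perp$-valued test functions with \emph{unweighted} Dirichlet elliptic problems $[|k|^2-\p_{x_3}^2]\phi=\bar{\hat c}$ etc., exploit Poincar\'e for $k$-uniform $H^2_{x_3}$ and trace bounds, and reduce the $\p_t\hat f$ pairing to $\Vert(\mathbf I-\mathbf P)\p_t\hat f\Vert_{L^2_{T,x_3,v}}$ via orthogonality. The only cosmetic difference is that the paper invokes Lemma \ref{lemma:b_c_no_time_derivative} to dispense with $\hat b_1,\hat b_2$ rather than rerunning the argument, and for $\hat c$ the vanishing of the $P_\gamma\hat f$ boundary contribution requires the moment identity $\int v_3^2(|v|^2-5)\mu\,\dd v=0$ in addition to oddness---which you alluded to but did not state explicitly.
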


\begin{proof}
We rewrite the weak formulation \eqref{weak_formulation} as
\begin{align}
    &  \underbrace{\int_0^{T}\int_{-1}^{1}\int_{\mathbb{R}^3} i\bar{v}\cdot k \hat{f} \psi \dd v \dd x_3 \dd t}_{\eqref{weak_formulation_2}_1} \underbrace{- \int_0^T \int_{-1}^1 \int_{\mathbb{R}^3} v_3 \hat{f} \p_{x_3}\psi \dd v \dd x_3 \dd t}_{\eqref{weak_formulation_2}_2} \notag\\
    &+ \underbrace{ \int_0^T \int_{\mathbb{R}^3} v_3 [\hat{f}(k,1)  \psi(1) - \hat{f}(k,-1) \psi(-1) ]\dd v \dd t}_{\eqref{weak_formulation_2}_3} \underbrace{- \int_0^T \int_{\mathbb{R}^3} \int_{-1}^1 \p_t \hat{f} \psi \dd v \dd x_3 \dd t}_{\eqref{weak_formulation_2}_4} \notag  \\
    &+ \underbrace{\int_0^T \int_{-1}^1 \int_{\mathbb{R}^3} \mathcal{L}(\hat{f}) \psi \dd v \dd x_3 \dd t}_{\eqref{weak_formulation_2}_5}  = \underbrace{\int_0^T\int_{-1}^1 \int_{\mathbb{R}^3} \hat{\Gamma}(\hat{f},\hat{f}) \psi \dd v \dd x_3 \dd t}_{\eqref{weak_formulation_2}_6}. \label{weak_formulation_2}
\end{align}

\textit{Estimate of $\hat{b}_3$.} 

From Lemma \ref{lemma:b_c_no_time_derivative}, we only need to estimate $\hat{b}_3$. We choose a test function as
\begin{align}
\begin{cases}
    &\dis  \psi_b = -v_1 v_3 \sqrt{\mu} i k_1\phi_b - v_2 v_3 \sqrt{\mu} i k_2 \phi_b + \frac{3}{2} \Big( |v_3|^2 - \frac{|v|^2}{3} \Big) \sqrt{\mu} \p_{x_3} \phi_b, \\
    &\dis  [|k_1|^2 + |k_2|^2 - 2\p_{x_3}^2]\phi_b = \bar{\hat{b}}_3, \\
    & \phi_b = 0 \text{ when } x_3 = \pm 1.    
\end{cases}\label{psi_b}
\end{align}

Similar to \eqref{phib_l2_2}, \eqref{phib_h2_2} and \eqref{phib_trace_2}, we have
\begin{align*}
    & \Vert (1+|k|)\phi_b\Vert^2_{L^2_{x_3}}+ \Vert \p_{x_3}\phi_b\Vert^2_{L^2_{x_3}} \lesssim \Vert \hat{b}_3\Vert^2_{L^2_{x_3}},  
\end{align*}
\begin{align}
    & \Vert |k|\p_{x_3}\phi_b\Vert^2_{L^2_{x_3}} \lesssim \Vert \hat{b}_3\Vert^2_{L^2_{x_3}},  \notag  \\
    & \Vert \p_{x_3}^2 \phi_b\Vert_{L^2_{x_3}}^2 \lesssim \Vert |k|^2 \phi_b\Vert_{L^2_{x_3}}^2 + \Vert \hat{b}_3\Vert_{L^2_{x_3}}^2 \lesssim \Vert \hat{b}_3\Vert^2_{L^2_{x_3}}. \notag
\end{align}
\begin{align}
    & | |k|\phi_b(k,\pm 1) |^2 \lesssim \Vert \hat{b}_3\Vert^2_{L^2_{x_3}}, \ | \p_{x_3}\phi_b(k,\pm 1)|^2 \lesssim \Vert \hat{b}_3\Vert_{L^2_{x_3}}^2. \notag
\end{align}

The computations for the terms in \eqref{weak_formulation_2} are all the same as those in the proof of Lemma \ref{lemma:b_c_no_time_derivative}, except $\eqref{weak_formulation_2}_4$. We only compute this term.

Due to the choice of $\psi_b$ in \eqref{psi_b}, we have
\begin{align}
    &  | \eqref{weak_formulation_2}_4 |= \Big|\int_0^T \int_{\mathbb{R}^3} \int_{-1}^1 \p_t (\mathbf{I}-\mathbf{P})\hat{f} \psi_b \dd v \dd x_3 \dd t \Big| \notag \\
    & \lesssim \Vert  (\mathbf{I}-\mathbf{P}) \p_t\hat{f} \Vert_{L^2_{T,x_3,v}}^2 + o(1)[\Vert |k|\phi_b\Vert_{L^2_{T,x_3}}^2 + \Vert \p_{x_3}\phi_b \Vert_{L^2_{T,x_3}}^2 ]  \lesssim o(1)\Vert \hat{b}_3\Vert_{L^2_{T,x_3}}^2 + \Vert  (\mathbf{I}-\mathbf{P}) \p_t\hat{f} \Vert_{L^2_{T,x_3,v}}^2 .  \notag
\end{align}

We conclude that
\begin{align}
    &  \Vert \hat{b}_3\Vert_{L^2_{T,x_3}} \lesssim |(I-P_\gamma)\hat{f}|_{L^2_{T,\gamma_+}} + \Vert (\mathbf{I}-\mathbf{P})\hat{f}\Vert_{L^2_{T,x_3,v}} + \Vert (\mathbf{I}-\mathbf{P})\p_t \hat{f}\Vert_{L^2_{T,x_3,v}} + \Vert \nu^{-1/2}\hat{\Gamma}(\hat{f},\hat{f})\Vert_{L^2_{T,x_3,v}}.  \label{b_bdd_l2}
\end{align}

\medskip
\textit{Estimate of $\hat{c}$.}

We choose a test function as
\begin{align}
    & \psi_c = (-ik_1v_1 \phi_c - ik_2 v_2 \phi_c + v_3 \p_{x_3}\phi_c)(|v|^2-5)\sqrt{\mu}, \label{psi_c}
\end{align}
with $\phi_c$ satisfying
\begin{align}
\begin{cases}
    &   |k|^2 \phi_c - \p_{x_3}^2 \phi_c = \bar{\hat{c}},  \\
    & \phi_c = 0 \ \text{ when } x_3 = \pm 1.    
\end{cases}\label{phi_c}
\end{align}

Multiplying \eqref{phi_c} by $\bar{\phi}_c$ and taking integration in $x_3$ we obtain
\begin{align*}
    &   \Vert |k| \phi_c \Vert_{L^2_{x_3}}^2 + \Vert \p_{x_3} \phi_c \Vert_{L^2_{x_3}}^2 \lesssim o(1) \Vert \phi_c\Vert_{L^2_{x_3}}^2 + \Vert \hat{c}\Vert_{L^2_{x_3}}^2.
\end{align*}
From the Poincar\'e inequality, we further have
\begin{align}
    & \Vert (1+|k|)\phi_c\Vert^2_{L^2_{x_3}}+ \Vert \p_{x_3}\phi_c\Vert^2_{L^2_{x_3}} \lesssim \Vert \hat{c}\Vert^2_{L^2_{x_3}}.  \label{phic_l2}
\end{align}
Multiplying \eqref{phi_c} by $|k|^2 \bar{\phi}_c$ we obtain
\begin{align*}
    &  \Vert |k|^2\phi_c \Vert_{L^2_{x_3}}^2 + \Vert |k|\p_{x_3}\phi_c\Vert^2_{L^2_{x_3}} \lesssim o(1) \Vert |k|^2 \phi_c\Vert_{L^2_{x_3}}^2 +  \Vert \hat{c}\Vert_{L^2_{x_3}}^2.
\end{align*}
Thus we conclude
\begin{align}
    & \Vert |k|\p_{x_3}\phi_c\Vert^2_{L^2_{x_3}} \lesssim \Vert \hat{c}\Vert^2_{L^2_{x_3}},  \notag  \\
    & \Vert \p_{x_3}^2 \phi_c\Vert_{L^2_{x_3}}^2 \lesssim \Vert |k|^2 \phi_c\Vert_{L^2_{x_3}}^2 + \Vert \hat{c}\Vert_{L^2_{x_3}}^2 \lesssim \Vert \hat{c}\Vert^2_{L^2_{x_3}}. \notag   
\end{align}

By trace theorem, we have
\begin{align}
    & | |k|\phi_c(k,\pm 1) |^2 \lesssim \Vert \hat{c}\Vert^2_{L^2_{x_3}}, \ | \p_{x_3}\phi_c(k,\pm 1)|^2 \lesssim \Vert \hat{c}\Vert_{L^2_{x_3}}^2. \label{phic_trace}
\end{align}

By the same computation of Lemma \ref{lemma:macroscopic}, we have
\begin{align}
    &     \eqref{weak_formulation_2}_1 + \eqref{weak_formulation_2}_2 = 5\int_0^T \int_{-1}^1 [ |k|^2 - \p_{x_3}^2]\phi_c \hat{c} \dd x_3 \dd t + E_3 + E_4  = 5\Vert \hat{c}\Vert_{L^2_{T,x_3}}^2 + E_3 + E_4, \label{est_c_LHS}
\end{align}
with 
\begin{align}
    &   |E_3| + |E_4| \lesssim o(1)[\Vert |k|\phi_c\Vert_{L^2_{T,x_3,v}}^2 + \Vert \p_{x_3}\phi_c \Vert_{L^2_{T,x_3}}^2] + \Vert (\mathbf{I}-\mathbf{P})\hat{f}\Vert_{L^2_{T,x_3}}^2  \notag\\
    & \lesssim o(1)\Vert \hat{c}\Vert^2_{L^2_{T,x_3}} + \Vert (\mathbf{I}-\mathbf{P})\hat{f}\Vert^2_{L^2_{T,x_3,v}}. \label{est_c_E1}
\end{align}

Then we compute the boundary term $\eqref{weak_formulation_2}_3$. For the contribution of $P_\gamma \hat{f}$, by the same computation of Lemma \ref{lemma:macroscopic} we have
\begin{align*}
    &   \int_0^T \int_{\mathbb{R}^3} v_3   P_\gamma \hat{f}(k,1) \psi_c(1) \dd v \dd t = 0.
\end{align*}

For the part with $(I-P_\gamma)\hat{f}$, we derive that
\begin{align}
    & \int_0^T \int_{v_3>0} |(I-P_\gamma)\hat{f}(k,1) (-ik_1v_1 \phi_c - ik_2 v_2 \phi_c + v_3 \p_{x_3}\phi_c)(|v|^2-5)\sqrt{\mu} |\dd v \dd t \notag \\
    & \lesssim o(1) [| |k| \phi_c(k,1) |^2 +  | \p_{x_3} \phi_c(k,1)|^2  ]   +    |(I-P_\gamma)\hat{f}|_{L^2_{T,\gamma_+}}^2 \notag \\
    & \lesssim o(1) \Vert \hat{c} \Vert_{L^2_{T,x_3}}^2 + |(I-P_\gamma)\hat{f}|_{L^2_{T,\gamma_+}}^2.  \notag
\end{align}
In the last line, we have used the trace estimate \eqref{phic_trace}.

Similarly, for $x_3=-1$ we have the same estimate. Thus we conclude that
\begin{align}
    &  |\eqref{weak_formulation_2}_3| \lesssim o(1) \Vert \hat{c} \Vert_{L^2_{T,x_3}}^2 + |(I-P_\gamma)\hat{f}|_{L^2_{T,\gamma_+}}^2.   \label{est_c_bdr}
\end{align}

Then we compute the contribution of the time derivative $\eqref{weak_formulation_2}_4$. Due to the choice of $\psi_c$ in \eqref{psi_c}, we have
\begin{align}
    &   |\eqref{weak_formulation_2}_4| \leq \int_0^T \int_{\mathbb{R}^3} \int_{-1}^1 |\p_t (\mathbf{I}-\mathbf{P})\hat{f} \psi_c| \dd v \dd x_3 \dd t \notag \\
    & \lesssim \Vert  (\mathbf{I}-\mathbf{P}) \p_t\hat{f} \Vert_{L^2_{T,x_3,v}}^2 + o(1)[\Vert |k|\phi_c\Vert_{L^2_{T,x_3}}^2 + \Vert \p_{x_3}\phi_c \Vert_{L^2_{T,x_3}}^2 ]  \lesssim o(1)\Vert \hat{c}\Vert_{L^2_{T,x_3}}^2 + \Vert  (\mathbf{I}-\mathbf{P}) \p_t\hat{f} \Vert_{L^2_{T,x_3,v}}^2 .  \label{est_c_RHS_4}
\end{align}

Last we compute $\eqref{weak_formulation_2}_5$ and $\eqref{weak_formulation_2}_6$ as
\begin{align}
    &    |\eqref{weak_formulation_2}_5| \lesssim o(1)[\Vert k \phi_c \Vert_{L^2_{T,x_3}}^2 + \Vert \p_{x_3} \phi_c \Vert_{L^2_{T,x_3}}^2 ] + \Vert (\mathbf{I}-\mathbf{P})\hat{f}\Vert_{L^2_{T,x_3,v}}^2   \lesssim o(1)\Vert \hat{c}  \Vert^2_{L^2_{T,x_3}}  + \Vert (\mathbf{I}-\mathbf{P})\hat{f}\Vert^2_{L^2_{T,x_3,v}},   \label{est_c_RHS_5}
\end{align}
and
\begin{align}
    & |\eqref{weak_formulation_2}_6| \leq   \int_0^T   \int_{-1}^1 \int_{\mathbb{R}^3} |\hat{\Gamma}(\hat{f},\hat{f})\psi_c| \dd v \dd x_3 \dd t \notag \\
    &\lesssim o(1)[\Vert k \phi_c \Vert_{L^2_{T,x_3}}^2 + \Vert \p_{x_3} \phi_c \Vert_{L^2_{T,x_3}}^2 ] + \Big(\int_{\mathbb{R}^2} \Vert \hat{f}(k-\ell)\Vert_{L^\infty_T L^2_{x_3,v}} \Vert \hat{f}(\ell)\Vert_{L^2_T L^\infty_{x_3}L^2_\nu} \dd \ell\Big)^2 \notag \\
    &\lesssim  o(1)\Vert \hat{c}  \Vert^2_{L^2_{T,x_3}} + \Big(\int_{\mathbb{R}^2} \Vert \hat{f}(k-\ell)\Vert_{L^\infty_T L^2_{x_3,v}} \Vert \hat{f}(\ell)\Vert_{L^2_T L^\infty_{x_3}L^2_\nu} \dd \ell\Big)^2.   \label{est_c_RHS_6}
\end{align}

We combine \labelcref{est_c_LHS,est_c_E1,est_c_bdr,est_c_RHS_4,est_c_RHS_5,est_c_RHS_6} to conclude the estimate for $\hat{c}$:
\begin{align}
    &  \Vert \hat{c}\Vert_{L^2_{T,x_3}} \lesssim |(I-P_\gamma)\hat{f}|_{L^2_{T,\gamma_+}} + \Vert (\mathbf{I}-\mathbf{P})\hat{f}\Vert_{L^2_{T,x_3,v}} + \Vert (\mathbf{I}-\mathbf{P})\p_t \hat{f}\Vert_{L^2_{T,x_3,v}} + \Vert \nu^{-1/2}\hat{\Gamma}(\hat{f},\hat{f})\Vert_{L^2_{T,x_3,v}}.  \label{c_bdd_l2}
\end{align}

Combining \eqref{b_bdd_l2} and \eqref{c_bdd_l2}, we take the $k$-integration and obtain
\begin{align*}
    &   \Vert (\hat{\mathbf{b}},\hat{c})\Vert_{L^1_k L^2_{T,x_3}} \lesssim |(I-P_\gamma)\hat{f}|_{L^1_k L^2_{T,\gamma_+}} + \Vert (\mathbf{I}-\mathbf{P})\hat{f}\Vert_{L^1_k L^2_{T,x_3,v}} + \Vert (\mathbf{I}-\mathbf{P})\p_t \hat{f}\Vert_{L^1_k L^2_{T,x_3,v}} \\
    & +\Vert \hat{f} \Vert_{L^1_k L^\infty_T L^2_{x_3,v}} \Vert w\hat{f}\Vert_{L^1_k L^2_T L^\infty_{x_3,v}}.
\end{align*}
Here we have used the same computation in \eqref{convolution}.

We conclude the proof of Lemma \ref{lemma:bc_l2}.
\end{proof}

In the next lemma, we further include the weight in time.
 
\begin{lemma}[\textbf{Time-weighted dissipation estimate of $\hat{\mathbf{b}}$ and $\hat{c}$}]\label{lemma:bc_l2_time}
The time-weight can be included into Lemma \ref{lemma:bc_l2} as follows:
\begin{align*}
    &  \Vert (1+t)^{\sigma/2}(\hat{\mathbf{b}},\hat{c}) \Vert_{L^1_k L^2_{T,x_3}} \lesssim \Vert (1+t)^{\sigma/2}(\mathbf{I}-\mathbf{P})\hat{f}\Vert_{L^1_k L^2_{T,x_3,v}} + \Vert (1+t)^{\sigma/2}(\mathbf{I}-\mathbf{P}) \p_t \hat{f}\Vert_{L^1_k L^2_{T,x_3,v}} \\
    & + \Vert (1+t)^{\sigma/2}\hat{f}\Vert_{L^1_k L^\infty_T L^2_{x_3,v}} \Vert w\hat{f}\Vert_{L^1_k L^2_T L^\infty_{x_3,v}} + |(1+t)^{\sigma/2}(I-P_\gamma)\hat{f}|_{L^1_k L^2_{T,\gamma_+}} .
\end{align*}

\end{lemma}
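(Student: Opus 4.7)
The argument will parallel the proof of Lemma~\ref{lemma:bc_l2} line-by-line, modifying each integral in the weak formulation \eqref{weak_formulation_2} by inserting the temporal weight $(1+t)^\sigma$. The key structural observation is that \eqref{weak_formulation_2} keeps the time derivative on $\hat{f}$ rather than transferring it onto the test function by integration by parts in $t$. Consequently, multiplying the integrand by $(1+t)^\sigma$ does \emph{not} generate any extra term of the form $\sigma(1+t)^{\sigma-1}\hat{f}\,\psi$, in contrast to what happened in Lemma~\ref{lemma:macro_time}. This eliminates the need for the $L^1_k\cap L^p_k$ interpolation argument in the low-frequency regime, making the refinement relatively direct.

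\textbf{Outline of steps.} First, I would take the test functions $\psi_b$ and $\psi_c$ exactly as in \eqref{psi_b} and \eqref{psi_c}, together with the associated elliptic estimates for $\phi_b,\phi_c$ (including \eqref{phic_l2}--\eqref{phic_trace} and their $\phi_b$ analogues). Then I apply the weak formulation \eqref{weak_formulation_2} after multiplication of the integrand by $(1+t)^\sigma$, and process term-by-term. The bulk terms $\eqref{weak_formulation_2}_1+\eqref{weak_formulation_2}_2$ produce $5\Vert(1+t)^{\sigma/2}\hat{c}\Vert_{L^2_{T,x_3}}^2$ (respectively $\Vert(1+t)^{\sigma/2}\hat{b}_3\Vert_{L^2_{T,x_3}}^2$) after invoking the elliptic PDE for $\phi_c$ (resp.\ $\phi_b$), modulo $o(1)$-absorbable errors plus $\Vert(1+t)^{\sigma/2}(\mathbf{I}-\mathbf{P})\hat{f}\Vert_{L^2_{T,x_3,v}}^2$. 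The boundary term $\eqref{weak_formulation_2}_3$ yields $|(1+t)^{\sigma/2}(I-P_\gamma)\hat{f}|_{L^2_{T,\gamma_+}}^2$ via the same trace estimates, while $\eqref{weak_formulation_2}_5$ is absorbed into the $(\mathbf{I}-\mathbf{P})\hat{f}$ dissipation norm. The nonlinear term $\eqref{weak_formulation_2}_6$ is controlled by the time-weighted estimate \eqref{gamma_est_time} in Lemma~\ref{lemma:gamma_est} applied with $h=\psi_b$ or $\psi_c$.

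\textbf{The key term.} The delicate term is $\eqref{weak_formulation_2}_4$ equipped with the time weight, namely $\int_0^T\int(1+t)^\sigma\p_t\hat{f}\,\psi\,dv\,dx_3\,dt$. A direct computation using the explicit structure of $\psi_b$ in \eqref{psi_b} and $\psi_c$ in \eqref{psi_c} verifies that $\psi_b,\psi_c\perp\ker\mathcal{L}$ in $L^2_v$ pointwise in $(t,k,x_3)$, so that $\int\p_t\mathbf{P}\hat{f}\,\psi\,dv=0$. The term thus reduces to $\int(1+t)^\sigma\p_t(\mathbf{I}-\mathbf{P})\hat{f}\,\psi\,dv\,dx_3\,dt$, which Cauchy--Schwarz and Young's inequality bound by
\[
o(1)\Vert(1+t)^{\sigma/2}\psi\Vert_{L^2_{T,x_3,v}}^2+C\Vert(1+t)^{\sigma/2}(\mathbf{I}-\mathbf{P})\p_t\hat{f}\Vert_{L^2_{T,x_3,v}}^2.
\]
The $\psi$-factor is then controlled via the time-independent elliptic estimates by $o(1)\Vert(1+t)^{\sigma/2}\hat{b}_3\Vert_{L^2_{T,x_3}}^2$ (resp.\ $\hat{c}$), which is absorbed into the LHS. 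Finally, taking $L^1_k$-integration and applying Young's convolution inequality to the nonlinear term as in \eqref{convolution} yields the stated estimate.

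\textbf{Main obstacle.} The principal point requiring care is the orthogonality $\psi_b,\psi_c\perp\ker\mathcal{L}$, which is essential for converting $\p_t\hat{f}$ into $\p_t(\mathbf{I}-\mathbf{P})\hat{f}$; this must be checked by direct parity and moment computations on the explicit forms in \eqref{psi_b} and \eqref{psi_c}. Beyond this, all estimates are essentially pointwise-in-$t$ consequences of those already obtained for Lemma~\ref{lemma:bc_l2}, and the uniform multiplication by the smooth factor $(1+t)^\sigma$ preserves them without introducing any new frequency-weight degeneracy.
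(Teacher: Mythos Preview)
Your proposal is correct and matches the paper's own proof essentially line for line: both multiply the weak formulation \eqref{weak_formulation_2} by $(1+t)^\sigma$ without integrating by parts in $t$, exploit the orthogonality $\psi_b,\psi_c\perp\ker\mathcal{L}$ to reduce the $\p_t\hat f$ term to $(\mathbf{I}-\mathbf{P})\p_t\hat f$, and handle the nonlinear term via \eqref{gamma_est_time}. The paper's proof is in fact just the sketch you have given, and your identification of the key structural point---that no $\sigma(1+t)^{\sigma-1}$ commutator term arises---is exactly what the paper emphasizes.
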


\begin{proof}
The proof is the same by rewriting \eqref{weak_formulation_2} into the following form:
\begin{align*}
    &  \int_0^{T}\int_{-1}^{1}\int_{\mathbb{R}^3} i\bar{v}\cdot k (1+t)^\sigma \hat{f} \psi \dd v \dd x_3 \dd t - \int_0^T \int_{-1}^1 \int_{\mathbb{R}^3} v_3 (1+t)^\sigma\hat{f} \p_{x_3}\psi \dd v \dd x_3 \dd t \notag\\
    &+  \int_0^T \int_{\mathbb{R}^3} v_3 [(1+t)^\sigma\hat{f}(k,1)  \psi(1) - (1+t)^\sigma\hat{f}(k,-1) \psi(-1) ]\dd v \dd t \\
    &- \int_0^T  \int_{-1}^1 \int_{\mathbb{R}^3} (1+t)^\sigma\p_t \hat{f} \psi \dd v \dd x_3 \dd t \notag  \\
    &+ \int_0^T \int_{-1}^1 \int_{\mathbb{R}^3} \mathcal{L}((1+t)^\sigma\hat{f}) \psi \dd v \dd x_3 \dd t  = \int_0^T\int_{-1}^1 \int_{\mathbb{R}^3} (1+t)^\sigma\hat{\Gamma}(\hat{f},\hat{f}) \psi \dd v \dd x_3 \dd t. 
\end{align*}
As in the proof of Lemma \ref{lemma:bc_l2}, we still choose the same test functions $\psi_b,\psi_c \perp \ker \mathcal{L}$. Then the third line becomes
\begin{align*}
    & -\int_0^T  \int_{-1}^1 \int_{\mathbb{R}^3} (1+t)^\sigma (\mathbf{I}-\mathbf{P}) \p_t \hat{f} \psi \dd v \dd x_3 \dd t .
\end{align*}
Here we note that we do not make commutation of the time weight $(1+t)^\sigma$ with the $\p_t$-differentiation, and thus such term is bounded by 
\begin{align*}
    & \Vert (1+t)^{\sigma/2} (\mathbf{I}-\mathbf{P})\p_t \hat{f}\Vert_{L^2_{T,x_3,v}}^2 + o(1)\Vert (1+t)^{\sigma/2} \psi\Vert_{L^2_{T,x_3,v}}^2.
\end{align*}

For the nonlinear term $\hat{\Gamma}(\hat{f},\hat{f})$ we can apply \eqref{gamma_est_time}.

The proof of the rest terms is the same.
\end{proof}

\subsection{Proof of Theorem \ref{thm:bc_refined}}\label{sec:proof_thm2}

The proof of Theorem \ref{thm:bc_refined} follows from combining Lemma \ref{lemma:b_c_no_time_derivative} and Lemma \ref{lemma:bc_l2_time}.

\section{$L^2_{x,v}$-$L^\infty_{x,v}$ argument in physical space for two-dimensional problem}\label{sec:2d}

In this section, we will prove Theorem \ref{thm:2d}. For this purpose, we consider the two-dimensional infinite layer problem \eqref{2dprob} or equivalently \eqref{f_eqn_2d} on the Boltzmann equation for rarefied gas in $\O=\R\times (-1,1)$ confined between two parallel plates. Through this section we use the simplified notations for convenience: $x=(x_1,x_3)\in \O$,  $\partial_i=\partial_{x_i}$ for $i=1,3$, $\nabla=(\p_1,\p_3),$ and $\Delta = \p_{11}+\p_{33}$. Since $x$ is a two-dimensional variable and $v$ is a three-dimensional variable, along the characteristic, we will use the notation $x-(t-s)(v_1,v_3)$ in Section \ref{sec:linfty_2d}.

To establish the global-in-time existence of solutions with the corresponding estimates \eqref{thm:2d:e1} and \eqref{thm:2d:e2} under the smallness condition \eqref{f_initial_2d}, we define the norm of solutions as
\begin{align*}
    & \Vert f\Vert_T := \Vert f\Vert_{L^\infty_T L^2_{x,v}}^2 + \Vert \p_t f\Vert_{L^\infty_T L^2_{x,v}}^2 +  |(I-P_\gamma)f|_{L^2_{T,\gamma_+}}^2  +  |(I-P_\gamma)\p_t f|_{L^2_{T,\gamma_+}}^2   \\
    & +  \Vert \nu^{1/2}(\mathbf{I}-\mathbf{P})f\Vert_{L^2_{T,x,v}}^2  +  \Vert \nu^{1/2}(\mathbf{I}-\mathbf{P})\p_t f\Vert_{L^2_{T,x,v}}^2  + \Vert \mathbf{b}\Vert_{L^2_{T,x}}^2 + \Vert c\Vert_{L^2_{T,x}}^2 +  \Vert wf\Vert_{L^\infty_{T,x,v}}^2 +  \Vert w\p_t f\Vert_{L^\infty_{T,x,v}}^2,
\end{align*}
with $T>0$.

We mainly focus on the following a priori estimate.

\begin{proposition}\label{prop:apriori}
Suppose $f,\p_t f$ are the solutions to \eqref{f_eqn_2d} and \eqref{p_t_eqn_2d} such that $\Vert f\Vert_T < \infty$ for any $T$. Then there exist $0<\delta\ll 1$ and $C$, which are independent of $T$, such that if
\begin{align*}
    & \Vert f(0)\Vert_{L^2_{x,v}}^2 + \Vert \p_t f(0)\Vert_{L^2_{x,v}}^2 + \Vert wf(0)\Vert_{L^\infty_{x,v}}^2 + \Vert w\p_t f(0)\Vert_{L^\infty_{x,v}}^2 <\delta,
\end{align*}
then the solution $f$ satisfies the uniform estimate
\begin{align*}
    & \Vert f\Vert_T \leq  C\delta + C\Vert f\Vert_T^2.
\end{align*}

\end{proposition}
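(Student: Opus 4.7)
The plan is to combine four ingredients: (i) a basic $L^\infty_T L^2_{x,v}$ energy estimate for $f$ and for $\p_t f$, (ii) a macroscopic dissipation estimate for $\mathbf{b}$ and $c$ obtained by a test-function (weak formulation) argument in the physical variable $x \in \Omega$, (iii) an $L^\infty_{T,x,v}$ estimate via the method of characteristics along the $x_3$ direction with repeated diffuse reflections, carried out for both $f$ and $\p_t f$, and finally (iv) nonlinear estimates on $\Gamma(f,f)$, $\Gamma(\p_t f, f)$, $\Gamma(f, \p_t f)$ which absorb into the quadratic remainder term $C\|f\|_T^2$ using the interplay between $L^2_{x,v}$ and $L^\infty_{x,v}$ norms. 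Since we work directly in physical space (no Fourier), the energy estimates are obtained by testing \eqref{f_eqn_2d} against $f$ (respectively \eqref{p_t_eqn_2d} against $\p_t f$), yielding
\begin{align*}
\Vert f\Vert_{L^\infty_T L^2_{x,v}}^2 + \Vert \nu^{1/2}(\mathbf{I}-\mathbf{P})f\Vert_{L^2_{T,x,v}}^2 + |(I-P_\gamma)f|_{L^2_{T,\gamma_+}}^2 \lesssim \Vert f(0)\Vert_{L^2_{x,v}}^2 + \text{nonlinear},
\end{align*}
and likewise for $\p_t f$.

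The key technical step is the macroscopic dissipation for $\mathbf{b}$ and $c$. Notice that the conservation laws for $\mathbf{b}$ and $c$ involve $a$, and since $\Omega$ is unbounded horizontally there is no spatial Poincar\'e inequality recovering $a$ from $\nabla a$ globally. To bypass $a$ altogether we adopt the variant of the weak formulation used in Section \ref{sec:time_derivative}: for each component of $\mathbf{b}$ and for $c$ we choose a test function $\psi$ lying in $(\ker \mathcal{L})^\perp$ with the elliptic potential $\phi$ satisfying a Dirichlet problem in $x_3 \in (-1,1)$ with the transverse Laplacian $-\Delta_x$ on $\Omega$ and right-hand side $\bar{\mathbf{b}}$ or $\bar{c}$. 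Since $\psi \perp \ker \mathcal{L}$, the time-derivative term contributes only $\int \p_t(\mathbf{I}-\mathbf{P})f\,\psi$, which is controlled by $\|\p_t(\mathbf{I}-\mathbf{P})f\|_{L^2_{T,x,v}}$ (and hence by the $\p_t f$-energy estimate), while the boundary conditions on $\phi$ eliminate the trace of $a$. The Poincar\'e inequality in $x_3$ (recall $\phi\vert_{x_3=\pm1}=0$) then controls $\|\phi\|_{H^2_{x_3}}$ by $\|\mathbf{b}\|_{L^2}$ or $\|c\|_{L^2}$. The resulting bound reads
\begin{align*}
\Vert \mathbf{b}\Vert_{L^2_{T,x}}^2 + \Vert c\Vert_{L^2_{T,x}}^2 \lesssim \Vert (\mathbf{I}-\mathbf{P})f\Vert_{L^2_{T,x,\nu}}^2 + \Vert \p_t(\mathbf{I}-\mathbf{P})f\Vert_{L^2_{T,x,\nu}}^2 + |(I-P_\gamma)f|_{L^2_{T,\gamma_+}}^2 + \text{nonlinear} + \Vert f(0)\Vert_{L^2_{x,v}}^2,
\end{align*}
with all nonmacroscopic terms on the right already controlled by the energy estimates above.

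For the $L^\infty_{T,x,v}$ piece I follow the same stochastic-cycles / characteristics scheme as in Section \ref{sec:linfty} with $w(v)f$, now in the two-dimensional physical variable $x = (x_1,x_3)$: the trajectory bounces off $x_3 = \pm 1$ with diffuse reflection and travels freely in $x_1$. The iteration produces an almost-exponential-decay boundary contribution (controlled by $\|wf(0)\|_{L^\infty_{x,v}}$), a small factor times $\|wf\|_{L^\infty_{T,x,v}}$ (absorbed), and a term bounded by $\|f\|_{L^\infty_T L^2_{x,v}}$ obtained via the standard change of variables in the collision kernel $\mathbf{k}(v,u)$; because there is no time decay here, the argument is actually simpler than in Section \ref{sec:linfty}. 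The same runs for $\p_t f$ from \eqref{p_t_eqn_2d}. The nonlinear terms $\Gamma(f,f)$ and $\Gamma(\p_t f, f) + \Gamma(f, \p_t f)$ are closed using the bilinear bounds $\|\nu^{-1/2}\Gamma(f,g)\|_{L^2_v} \lesssim \|wf\|_{L^\infty_v}\|\nu^{1/2}g\|_{L^2_v}$ and $\|\nu^{-1}w\Gamma(f,g)\|_{L^\infty_v} \lesssim \|wf\|_{L^\infty_v}\|wg\|_{L^\infty_v}$, which, after integrating in $(t,x)$, are all dominated by $\|f\|_T^2$.

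The main obstacle I anticipate is the macroscopic dissipation step: in the two-dimensional unbounded geometry the absence of a global Poincar\'e inequality and the absence of any dissipation estimate on $a$ would be fatal for the standard test-function method. Using the time derivative to bypass $a$ (which forces the extra smallness assumption on $\p_t f_0$ in \eqref{f_initial_2d}) is the crucial structural idea, and the elliptic potentials $\phi_b$, $\phi_c$ must be chosen with Dirichlet data in $x_3$ so that the boundary contribution of $a$ at $x_3 = \pm 1$ vanishes. Once this macroscopic estimate is in hand, combining it with the $L^2$ and $L^\infty$ estimates for both $f$ and $\p_t f$ and absorbing the small quadratic terms under the smallness of $\delta$ yields the desired bound $\|f\|_T \leq C\delta + C\|f\|_T^2$.
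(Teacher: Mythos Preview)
Your proposal is correct and follows essentially the same route as the paper: the proof combines the basic $L^2$ energy estimates for $f$ and $\p_t f$ (Lemma~\ref{lemma:basic_energy_2d}), the macroscopic dissipation for $\mathbf{b},c$ via test functions $\psi\perp\ker\mathcal{L}$ so that only $\p_t(\mathbf{I}-\mathbf{P})f$ survives in the time-derivative term (Lemma~\ref{lemma:b_c}), and the $L^\infty_{T,x,v}$ bootstrap by stochastic cycles (Lemma~\ref{lemma:linfty_2d}). One small stylistic difference: the paper carries out the weak formulation \eqref{weak_formula} at a \emph{fixed} time rather than integrated over $[0,T]$, so no endpoint term $\|f(0)\|_{L^2_{x,v}}^2$ appears in the macroscopic step itself; your time-integrated version would also work but produces the extra $\|f\|_{L^\infty_T L^2_{x,v}}^2$ term that must then be fed back through the energy estimate.
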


This section is organized as follows: Section \ref{sec:energy_2d} is devoted to the $L^2_{x,v}$ energy estimate of both $f$ and $\p_t f$, where only $\mathbf{b},c$ dissipation estimates are computed. Section \ref{sec:linfty_2d} is devoted to the $L^\infty$ estimate of $f,\p_t f$ using the method of characteristics.
We conclude Proposition \ref{prop:apriori} and Theorem \ref{thm:2d} in Section \ref{sec:thm_2d_proof}.

\subsection{$L^\infty_T L^2_{x,v}$ energy estimate and dissipation estimate of $\mathbf{b},c$}\label{sec:energy_2d}
In this subsection, we construct the energy estimate to both $f$ and $\p_t f$. The estimate is given by the following lemma.

\begin{lemma}\label{lemma:energy}
Suppose the assumptions in Proposition \ref{prop:apriori} are satisfied, then
\begin{align*}
    &    \Vert \p_t f(T)\Vert_{L^2_{x,v}}^2 + \Vert f(T)\Vert_{L^2_{x,v}}^2 + \int_0^T |(I-P_\gamma)f|_{L^2_{\gamma_+}}^2 \dd t + \int_0^T |(I-P_\gamma)\p_t f|_{L^2_{\gamma_+}}^2 \dd t  \\
    &+ \int_0^T \Vert \nu^{1/2}(\mathbf{I}-\mathbf{P})f\Vert_{L^2_{x,v}}^2 \dd t+ \int_0^T \Vert \nu^{1/2}(\mathbf{I}-\mathbf{P})\p_t f\Vert_{L^2_{x,v}}^2 \dd t + \int_0^T [\Vert \mathbf{b}\Vert_{L^2_{x}}^2 + \Vert c\Vert_{L^2_{x}}^2] \dd t  \\
    & \lesssim \Vert \p_t f(0)\Vert_{L^2_{x,v}}^2 + \Vert f(0)\Vert_{L^2_{x,v}}^2 +[\Vert wf\Vert_{L^\infty_{T,x,v}}^2 + \Vert w\p_t f\Vert_{L^\infty_{T,x,v}}^2]\\
    & \times \Big[\int_0^T \Vert \nu^{1/2}(\mathbf{I}-\mathbf{P})f\Vert_{L^2_{x,v}}^2 \dd t + \int_0^T \Vert \nu^{1/2}(\mathbf{I}-\mathbf{P})\p_t f\Vert_{L^2_{x,v}}^2 \dd t + \int_0^T [\Vert \mathbf{b}\Vert_{L^2_{x}}^2 + \Vert c\Vert_{L^2_{x}}^2] \dd t  \Big].
\end{align*}

\end{lemma}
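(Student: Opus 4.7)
The plan is to combine a basic $L^2_{x,v}$ energy estimate for both $f$ and $\p_t f$ with a macroscopic dissipation estimate for $(\mathbf{b},c)$ obtained through a test-function argument. For the energy estimates, I would multiply \eqref{f_eqn_2d} by $f$ and integrate over $[0,T]\times \O\times \R^3$. Because the diffuse reflection condition is mass-preserving, an integration by parts in $v_3\p_{x_3}$ produces the positive boundary dissipation $|(I-P_\gamma)f|_{L^2_{T,\gamma_+}}^2$ via the standard computation, and the coercivity of $\mathcal{L}$ on $(\mathbf{I}-\mathbf{P})f$ yields the microscopic dissipation $\Vert \nu^{1/2}(\mathbf{I}-\mathbf{P})f\Vert_{L^2_{T,x,v}}^2$. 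The nonlinear term is paired against $(\mathbf{I}-\mathbf{P})f$ using $\Gamma(f,f)\perp \ker\mathcal{L}$ and controlled via $\Vert \nu^{-1/2}\Gamma(f,f)\Vert_{L^2_v}\lesssim \Vert wf\Vert_{L^\infty_v}\Vert \nu^{1/2}f\Vert_{L^2_v}$ followed by Cauchy--Schwarz in $(t,x)$. The identical procedure applied to \eqref{p_t_eqn_2d} gives the analogous estimate for $\p_t f$, with the bilinear term $\Gamma(\p_t f,f)+\Gamma(f,\p_t f)$ bounded by $\Vert wf\Vert_{L^\infty}\Vert \nu^{1/2}(\mathbf{I}-\mathbf{P})\p_t f\Vert_{L^2}+\Vert w\p_t f\Vert_{L^\infty}\Vert \nu^{1/2}(\mathbf{I}-\mathbf{P})f\Vert_{L^2}$.

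For the dissipation of $(\mathbf{b},c)$, following the strategy in Section \ref{sec:proof_strategy}, I construct test functions $\psi_{\mathbf{b}},\psi_c$ orthogonal to $\ker\mathcal{L}$ in analogy with Lemma \ref{lemma:b_c_no_time_derivative}, now in the physical variables $x=(x_1,x_3)\in \R\times(-1,1)$ rather than the mixed $(k,x_3)$. For instance, for $c$ one takes $\psi_c=(v_1\p_{x_1}\phi_c+v_3\p_{x_3}\phi_c)(|v|^2-5)\sqrt{\mu}$ with $-\Delta \phi_c = c$ in $\O$ and $\phi_c|_{x_3=\pm 1}=0$; analogous choices are made for the components of $\mathbf{b}$. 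Because the boundary condition is homogeneous Dirichlet in $x_3$, the Poincar\'e inequality in $x_3$ gives the full $H^2$ bound $\Vert \phi\Vert_{H^2_x}\lesssim \Vert (\mathbf{b},c)\Vert_{L^2_x}$ without any degenerate Fourier weight. Plugging these into the weak formulation of \eqref{f_eqn_2d}, the transport terms yield the leading positive contribution $\Vert (\mathbf{b},c)\Vert_{L^2_{T,x}}^2$; the linear collision, boundary, and nonlinear terms are absorbed by the right-hand side dissipation (with the nonlinear piece handled exactly as above). Crucially, since $\psi\perp\ker\mathcal{L}$, the time-derivative contribution $-\int \p_t f\,\psi$ equals $-\int \p_t(\mathbf{I}-\mathbf{P})f\cdot\psi$, which is then absorbed by $\Vert \nu^{1/2}(\mathbf{I}-\mathbf{P})\p_t f\Vert_{L^2_{T,x,v}}^2$ after Young's inequality.

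The main obstacle is precisely the treatment of this time-derivative term. In the physical-space setting, the conservation laws for $b_3$ and $c$ contain $\p_{x_3}a$ and $\p_{x_3}b_3$ respectively, with no additional weight available to dominate the $a$-contribution --- in contrast with Sections \ref{sec:energy}--\ref{sec:time_decay}, where the Fourier factor $|k|/\sqrt{1+|k|^2}$ provided exactly such a gain. The remedy is to bypass the conservation-law identity entirely by keeping $\p_t$ acting on the full $f$ in the weak formulation, and letting $\psi\perp\ker\mathcal{L}$ convert it into a $\p_t(\mathbf{I}-\mathbf{P})f$ contribution. This is the reason the estimate on $\p_t f$ appears alongside that on $f$ in the lemma, and why the hypothesis on $\p_t f_0$ in \eqref{f_initial_2d} is needed. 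Summing the basic $L^2$ estimates for $f$ and $\p_t f$ together with the $(\mathbf{b},c)$-dissipation estimate (scaled by a small constant so that $\Vert \nu^{1/2}(\mathbf{I}-\mathbf{P})\p_t f\Vert_{L^2}$ is absorbed) closes the argument.
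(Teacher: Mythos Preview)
Your overall architecture---basic $L^2$ energy identities for $f$ and $\p_t f$, plus a test-function argument for $(\mathbf{b},c)$ using Poincar\'e and the orthogonality $\psi\perp\ker\mathcal{L}$ to convert $\int\p_t f\,\psi$ into $\int\p_t(\mathbf{I}-\mathbf{P})f\,\psi$---matches the paper exactly. The gap is in the nonlinear estimate.

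Your bound $\Vert \nu^{-1/2}\Gamma(f,f)\Vert_{L^2_v}\lesssim \Vert wf\Vert_{L^\infty_v}\Vert \nu^{1/2}f\Vert_{L^2_v}$, integrated in $(t,x)$, produces a term $\Vert wf\Vert_{L^\infty_{T,x,v}}^2\int_0^T\Vert a\Vert_{L^2_x}^2\,\dd t$ from the macroscopic part of $\nu^{1/2}f$. But the right-hand side of the lemma contains no $a$-dissipation, and in this two-dimensional problem none is available: there is no test function yielding control of $\Vert a\Vert_{L^2_{T,x}}$ without a Fourier weight. So the estimate does not close as written. Your stated bound for $\Gamma(\p_t f,f)+\Gamma(f,\p_t f)$ has the same issue---it is not true without first splitting off the macroscopic part, and even then it is missing a term.

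The missing ingredient is an algebraic cancellation. Decompose $\Gamma(f,f)=\Gamma(\mathbf{P}f,\mathbf{P}f)+\Gamma(f,(\mathbf{I}-\mathbf{P})f)+\Gamma((\mathbf{I}-\mathbf{P})f,f)$; the last two already carry $(\mathbf{I}-\mathbf{P})f$. For the first, write $\mathbf{P}f=a\sqrt{\mu}+\tilde f$ with $\tilde f=(\mathbf{b}\cdot v+c\tfrac{|v|^2-3}{2})\sqrt{\mu}$ and use $\Gamma(\sqrt{\mu},\sqrt{\mu})=\mu^{-1/2}Q(\mu,\mu)=0$ together with $\Gamma(\sqrt{\mu},g)+\Gamma(g,\sqrt{\mu})=-\mathcal{L}g=0$ for $g\in\ker\mathcal{L}$. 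All terms containing $a$ drop out, leaving $\Gamma(\mathbf{P}f,\mathbf{P}f)=\Gamma(\tilde f,\tilde f)$, hence
\[
\Vert \nu^{-1/2}\Gamma(\mathbf{P}f,\mathbf{P}f)\Vert_{L^2_{x,v}}^2\lesssim \Vert wf\Vert_{L^\infty_{T,x,v}}^2\big[\Vert \mathbf{b}\Vert_{L^2_x}^2+\Vert c\Vert_{L^2_x}^2\big],
\]
which is exactly the structure of the lemma's right-hand side. The same splitting applied to $\Gamma(\p_t f,f)+\Gamma(f,\p_t f)$ gives the microscopic pieces you wrote \emph{plus} the residual macro--macro term $\Gamma(\tilde f,\p_t\tilde f)+\Gamma(\p_t\tilde f,\tilde f)$, which contributes $\Vert w\p_t f\Vert_{L^\infty}^2[\Vert \mathbf{b}\Vert_{L^2_x}^2+\Vert c\Vert_{L^2_x}^2]$ and again avoids $a$. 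With this correction your plan goes through.
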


This lemma follows from a basic energy estimate in Lemma \ref{lemma:basic_energy_2d} and the macroscopic dissipation estimate in Lemma \ref{lemma:b_c}.

\begin{lemma}\label{lemma:basic_energy_2d}
Under the assumption in Lemma \ref{lemma:energy}, we have the following basic $L^2$ energy estimates to $f$ and $\p_t f$:
\begin{align}
    &     \Vert f(T)\Vert_{L^2_{x,v}}^2 + \int_0^T |(I-P_\gamma)f|_{L^2_{\gamma_+}}^2 \dd t + \int_0^T \Vert \nu^{1/2} (\mathbf{I}-\mathbf{P})f\Vert_{L^2_{x,v}}^2 \dd t \notag\\
    & \lesssim \Vert f(0)\Vert_{L^2_{x,v}}^2 + \Vert wf\Vert_{L^\infty_{T,x,v}}^2 \int_0^T [\Vert \mathbf{b}\Vert_{L^2_{x}}^2 + \Vert c\Vert_{L^2_{x}}^2] \dd t + \Vert wf\Vert_{L^\infty_{T,x,v}}^2\int_0^T \Vert \nu^{1/2}(\mathbf{I}-\mathbf{P})f\Vert_{L^2_{x,v}}^2 \dd t, \label{f_energy}
\end{align}
and
\begin{align}
    &  \Vert \p_t f(T)\Vert_{L^2_{x,v}}^2 + \int_0^T |(I-P_\gamma)\p_t f|_{L^2_{\gamma_+}}^2 \dd t + \int_0^T \Vert \nu^{1/2}(\mathbf{I}-\mathbf{P})\p_t f\Vert_{L^2_{x,v}}^2 \dd t  \notag\\
    & \lesssim   \Vert \p_t f(0)\Vert_{L^2_{x,v}}^2 + \Vert w\p_t f\Vert_{L^\infty_{T,x,v}}^2\int_0^T \Vert \nu^{1/2}(\mathbf{I}-\mathbf{P})f\Vert_{L^2_{x,v}}^2 \dd t    \label{p_t_energy}\\
    & + \Vert wf\Vert_{L^\infty_{T,x,v}}^2 \int_0^T \Vert \p_t (\mathbf{I}-\mathbf{P})f\Vert_{L^2_{x,v}}^2 + \Vert w\p_t f\Vert_{L^\infty_{T,x,v}}^2 \int_0^T [\Vert \mathbf{b}\Vert_{L^2_{x}}^2 + \Vert c\Vert_{L^2_{x}}^2] \dd t. \notag
\end{align}
\end{lemma}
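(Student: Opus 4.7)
The plan is to treat the two bounds in parallel: for \eqref{f_energy} test \eqref{f_eqn_2d} against $f$ and integrate over $\Omega\times\R^3\times[0,T]$; for \eqref{p_t_energy} test \eqref{p_t_eqn_2d} against $\partial_t f$ and do the same. The transport term $\int v\cdot\nabla_x f\cdot f\,\dd v\,\dd x$ is handled by integration by parts in $x$: the contribution from $\partial_1$ vanishes since $\Omega$ is flat in $x_1$, while the $v_3\partial_{x_3}$ contribution becomes a boundary integral at $x_3=\pm 1$. Decomposing $f=P_\gamma f+(I-P_\gamma)f$ and using that $P_\gamma f$ is proportional to $\sqrt{\mu}$ and hence produces no flux via the oddness of $v_3$, this boundary term is exactly $|(I-P_\gamma)f|_{L^2_{\gamma_+}}^2$ (and likewise for $\partial_t f$). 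The linear collision term yields, via the standard coercivity $\langle\mathcal{L}f,f\rangle\gtrsim\|\nu^{1/2}(\mathbf{I}-\mathbf{P})f\|_{L^2_v}^2$, the dissipation in \eqref{f_energy}, and analogously $\|\nu^{1/2}(\mathbf{I}-\mathbf{P})\partial_t f\|^2$ for \eqref{p_t_energy}.

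The key remaining task is to control the nonlinear terms so that the resulting bound matches the right-hand sides. For \eqref{f_energy}, write
\begin{equation*}
\int_\Omega\int_{\R^3}\Gamma(f,f)\,f\,\dd v\,\dd x=\int_\Omega\int_{\R^3}\Gamma(f,f)(\mathbf{I}-\mathbf{P})f\,\dd v\,\dd x,
\end{equation*}
using $\Gamma(f,f)\in(\ker\mathcal{L})^\perp$, and then split
\begin{equation*}
\Gamma(f,f)=\Gamma(\mathbf{P}f,\mathbf{P}f)+\Gamma(\mathbf{P}f,(\mathbf{I}-\mathbf{P})f)+\Gamma((\mathbf{I}-\mathbf{P})f,f).
\end{equation*}
The crucial algebraic point (already noted in the proof strategy) is that $\Gamma(\sqrt{\mu},\sqrt{\mu})=0$ and $\mathcal{L}G=0$ for $G\in\ker\mathcal{L}$, so the pure macroscopic piece reduces to $\Gamma(\mathbf{P}f,\mathbf{P}f)=\Gamma(G,G)$ with $G=(\mathbf{b}\cdot v+c(|v|^2-3)/2)\sqrt{\mu}$, eliminating any appearance of $a$. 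The bilinear inequality $\|\nu^{-1/2}\Gamma(g_1,g_2)\|_{L^2_v}\lesssim\|wg_1\|_{L^\infty_v}\|\nu^{1/2}g_2\|_{L^2_v}$ (or the symmetric placement) then gives the velocity-pointwise bound $\lesssim \|wf\|_{L^\infty_v}(|\mathbf{b}|+|c|+\|\nu^{1/2}(\mathbf{I}-\mathbf{P})f\|_{L^2_v})\|\nu^{1/2}(\mathbf{I}-\mathbf{P})f\|_{L^2_v}$. Integrating in $x$, using $\|(|\mathbf{b}|+|c|)\|_{L^\infty_x}\lesssim\|wf\|_{L^\infty_{x,v}}$ to pull one factor out, then Cauchy–Schwarz in $(t,x)$ and Young's inequality, yields terms of the form $\|wf\|_{L^\infty_{T,x,v}}^2(\|\mathbf{b}\|_{L^2_{T,x}}^2+\|c\|_{L^2_{T,x}}^2)$ together with $\|wf\|_{L^\infty_{T,x,v}}\,\|\nu^{1/2}(\mathbf{I}-\mathbf{P})f\|_{L^2_{T,x,v}}^2$; the latter, being multiplied by the small quantity $\|wf\|_{L^\infty}$, is absorbed into the dissipation on the left-hand side and then reinserted as the second term on the right of \eqref{f_energy}.

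For \eqref{p_t_energy}, the same template applies to \eqref{p_t_eqn_2d}, the only change being the nonlinearity $\Gamma(\partial_t f,f)+\Gamma(f,\partial_t f)$. Tested against $\partial_t f=\mathbf{P}\partial_t f+(\mathbf{I}-\mathbf{P})\partial_t f$, $\Gamma(\partial_t f,f)$ is estimated by placing $\partial_t f$ in $L^\infty_v$ and $f$ in $L^2_v$, using again the reduction $\Gamma(\mathbf{P}\partial_t f,\mathbf{P}f)\to\Gamma(G_{\partial_t},G)+\text{(terms that combine into }\mathcal{L})$ to drop the pure-$a$ contribution and produce $\|w\partial_t f\|^2_{L^\infty}(\|\mathbf{b}\|^2+\|c\|^2+\|\nu^{1/2}(\mathbf{I}-\mathbf{P})f\|^2)$, while $\Gamma(f,\partial_t f)$, tested with $f$ in $L^\infty_v$ and $\partial_t f$ in $L^2_\nu$, produces $\|wf\|_{L^\infty}^2\,\|\partial_t(\mathbf{I}-\mathbf{P})f\|^2$ after splitting $\partial_t f=\mathbf{P}\partial_t f+\partial_t(\mathbf{I}-\mathbf{P})f$ and using the same cancellations to eliminate the $\partial_t a$ contribution.

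The main obstacle is the bookkeeping in the nonlinear step: without the algebraic cancellations $\Gamma(\sqrt{\mu},\sqrt{\mu})=0$ and $\mathcal{L}|_{\ker\mathcal{L}}=0$, the estimates would retain $\|a\|_{L^2_x}$ (and $\|\partial_t a\|_{L^2_x}$) factors that are not controlled by our energy norm. Making these cancellations explicit at the bilinear level, so that the macroscopic factor entering the $L^2$ slot is genuinely only $(\mathbf{b},c)$ (resp.\ $(\partial_t\mathbf{b},\partial_t c)$, which we further swap out for $\partial_t(\mathbf{I}-\mathbf{P})f$ via the equation \eqref{p_t_eqn_2d} when needed), is the heart of the proof; the rest is standard absorption using smallness of $\|wf\|_{L^\infty}$ and $\|w\partial_t f\|_{L^\infty}$.
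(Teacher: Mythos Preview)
Your approach is essentially the paper's: multiply \eqref{f_eqn_2d} (resp.\ \eqref{p_t_eqn_2d}) by $f$ (resp.\ $\partial_t f$), integrate, use the diffuse boundary condition for the $|(I-P_\gamma)\cdot|_{L^2_{\gamma_+}}^2$ term and the coercivity of $\mathcal{L}$ for the microscopic dissipation, then bound $\|\nu^{-1/2}\Gamma\|_{L^2_{x,v}}^2$ after the macro/micro split, exploiting the identity $\Gamma(\mathbf{P}f,\mathbf{P}f)=\Gamma(\tilde h,\tilde h)$ with $\tilde h=(\mathbf{b}\cdot v+c\frac{|v|^2-3}{2})\sqrt\mu$ to eliminate $a$.

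One clarification: your final parenthetical about ``swapping out $(\partial_t\mathbf{b},\partial_t c)$ for $\partial_t(\mathbf{I}-\mathbf{P})f$ via \eqref{p_t_eqn_2d}'' is neither needed nor a valid step. Those terms never arise: for the pure macro sum $\Gamma(\mathbf{P}f,\mathbf{P}\partial_t f)+\Gamma(\mathbf{P}\partial_t f,\mathbf{P}f)=\Gamma(\tilde h,\partial_t\tilde h)+\Gamma(\partial_t\tilde h,\tilde h)$ (note the cancellation of the $a$ and $\partial_t a$ cross terms requires the \emph{sum}, not either piece alone), one places $\partial_t\tilde h$ in the $L^\infty_v$ slot---bounded by $\|w\partial_t f\|_{L^\infty_{x,v}}$---and $\tilde h$ in the $L^2_v$ slot, producing $\|w\partial_t f\|_{L^\infty}^2(\|\mathbf{b}\|_{L^2_x}^2+\|c\|_{L^2_x}^2)$ directly, exactly as you wrote earlier in the proposal. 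The term $\|wf\|_{L^\infty}^2\int_0^T\|\partial_t(\mathbf{I}-\mathbf{P})f\|^2$ on the right of \eqref{p_t_energy} comes instead from the mixed pieces $\Gamma(f,(\mathbf{I}-\mathbf{P})\partial_t f)$ and $\Gamma((\mathbf{I}-\mathbf{P})\partial_t f,f)$ with $f$ in $L^\infty$.
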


\begin{proof}
The $L^2$ energy estimate of \eqref{f_eqn_2d} leads to
\begin{align*}
    &      \Vert f(T)\Vert_{L^2_{x,v}}^2 + \int_0^T |(I-P_\gamma)f|_{L^2_{\gamma_+}}^2 \dd t +  \int_0^T \Vert \nu^{1/2} (\mathbf{I}-\mathbf{P})f\Vert_{L^2_{x,v}}^2 \dd t \\
    &\lesssim \Vert f(0)\Vert_{L^2_{x,v}}^2 + \int_0^T \int_{\mathbb{R}^3}\int_{\O} |\Gamma(f,f)(\mathbf{I}-\mathbf{P})f| \dd x \dd v \dd t \\
    & \lesssim \Vert f(0)\Vert_{L^2_{x,v}}^2 + \int_0^T \Vert \nu^{-1/2}\Gamma(f,f)\Vert_{L^2_{x,v}}^2 \dd t + o(1) \int_0^T \Vert \nu^{1/2}(\mathbf{I} - \mathbf{P})f \Vert_{L^2_{x,v}}^2 \dd t. 
\end{align*}

We compute the nonlinear term as
\begin{align}
    &    \Vert \nu^{-1/2}\Gamma(f,f)\Vert_{L^2_{x,v}}^2 \leq \Vert \nu^{-1/2}\Gamma(\mathbf{P}f,\mathbf{P}f)\Vert_{L^2_{x,v}}^2 + \Vert \nu^{-1/2}\Gamma(f,(\mathbf{I}-\mathbf{P})f) \Vert_{L^2_{x,v}}^2 + \Vert \nu^{-1/2}\Gamma((\mathbf{I}-\mathbf{P})f,f)\Vert_{L^2_{x,v}}^2 \notag\\
    & \lesssim \Vert wf\Vert_{L^\infty_{T,x,v}}^2[\Vert \mathbf{b}\Vert_{L^2_{x}}^2 + \Vert c\Vert_{L^2_{x}}^2 + \Vert \nu^{1/2}(\mathbf{I}-\mathbf{P})f\Vert_{L^2_{x,v}}^2]. \label{gamma_l2_est}
\end{align}
This concludes \eqref{f_energy}.

The $L^2$ energy estimate of $\p_t f$ leads to
\begin{align*}
    &    \Vert \p_t f(T)\Vert_{L^2_{x,v}}^2 + \int_0^T |(I-P_\gamma)\p_t f|_{L^2_{\gamma_+}}^2 \dd t + \int_0^T \Vert \nu^{1/2} (\mathbf{I}-\mathbf{P})\p_t f\Vert_{L^2_{x,v}}^2 \dd t \\
    & \lesssim \Vert \p_t f(0)\Vert_{L^2_{x,v}}^2 + \int_0^T \int_{\mathbb{R}^3}\int_{\O} |[\Gamma(f,\p_t f) + \Gamma(\p_t f,f)](\mathbf{I}-\mathbf{P})\p_t f| \dd x \dd v \dd t \\
    & \lesssim \Vert \p_t f(0)\Vert_{L^2_{x,v}}^2 + \int_0^T  \Vert \nu^{-1/2}[\Gamma(f,\p_t f) + \Gamma(\p_t f,f)] \Vert_{L^2_{x,v}}^2 \dd t \\
    &+ o(1)\int_0^T  \Vert \nu^{1/2}(\mathbf{I}-\mathbf{P})\p_t f\Vert_{L^2_{x,v}}^2 \dd t         .
\end{align*}

We control the nonlinear operator as
\begin{align*}
    &  \Vert \nu^{-1/2}[\Gamma(f,\p_t f) + \Gamma(\p_t f,f)]\Vert_{L^2_{x,v}}^2 \\
    &\lesssim \Vert \nu^{-1/2} [\Gamma((\mathbf{I}-\mathbf{P})f, \p_t f) + \Gamma(f, \p_t (\mathbf{I}-\mathbf{P})f) + \Gamma((\mathbf{I}-\mathbf{P})\p_t f,  f) \\
    & \ \ + \Gamma(\p_t f,  (\mathbf{I}-\mathbf{P})f) + \Gamma(\mathbf{P}f , \p_t \mathbf{P}f) + \Gamma(\p_t \mathbf{P}f, \mathbf{P}f) ]\Vert_{L^2_{x,v}}^2 \\
    & \lesssim \Vert w\p_t f\Vert_{L^\infty_{T,x,v}}^2 \Vert \nu^{1/2}(\mathbf{I}-\mathbf{P})f\Vert_{L^2_{x,v}}^2 + \Vert wf\Vert_{L^\infty_{T,x,v}}^2 \Vert \p_t (\mathbf{I}-\mathbf{P})f\Vert_{L^2_{x,v}}^2 + [\Vert \mathbf{b}\Vert_{L^2_{x}}^2 + \Vert c\Vert_{L^2_{x}}^2] \Vert w\p_t f\Vert_{L^\infty_{T,x,v}}^2.
\end{align*}
This concludes \eqref{p_t_energy}.
\end{proof}

In RHS of the basic energy estimates \eqref{f_energy} and \eqref{p_t_energy}, it suffices to control the dissipation estimate for $\mathbf{b}$ and $c$. We note that we do not need to estimate $\p_t \mathbf{b}, \p_t c$. In the following lemma, we derive these estimates by using the Poincar\'e inequality in the weak formulation.

\begin{lemma}\label{lemma:b_c}
It holds that
    \begin{align*}
    &    \Vert c\Vert_{L^2_{x}}^2+  \Vert \mathbf{b}\Vert_{L^2_{x}}^2 \lesssim \Vert (\mathbf{I}-\mathbf{P})f\Vert_{L^2_{x,v}}^2 + |(I-P_\gamma)f|_{L^2_{\gamma_+}}^2 + \Vert \nu^{-1/2}\Gamma(f,f)\Vert_{L^2_{x,v}}^2 + \Vert \p_t (\mathbf{I}-\mathbf{P})f\Vert_{L^2_{x,v}}^2.
\end{align*}

From \eqref{gamma_l2_est}, this implies that
\begin{align}
    &    \int_0^T [\Vert \mathbf{b}\Vert_{L^2_{x}}^2 + \Vert c\Vert_{L^2_{x}}^2] \dd t \lesssim [1+\Vert wf\Vert_{L^\infty_{T,x,v}}^2]\int_0^T \Vert (\mathbf{I}-\mathbf{P})f\Vert_{L^2_{x,v}}^2 \dd t + \int_0^T |(I-P_\gamma)f|_{L^2_{\gamma_+}}^2 \dd t \notag \\
    & \ \ \ \ \ \ \ \ \ \ \ \ \ \  \ \ \ \ \ \ \ \ \ \ \ \ + \Vert wf\Vert_{L^\infty_{T,x,v}}^2 \int_0^T [\Vert \mathbf{b}\Vert_{L^2_{x}}^2 + \Vert c\Vert_{L^2_{x}}^2] \dd t + \int_0^T \Vert \p_t (\mathbf{I}-\mathbf{P})f\Vert_{L^2_{x,v}}^2 \dd t.   \notag
\end{align}

\end{lemma}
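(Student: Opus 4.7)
The plan is to adapt to physical space the dual/test-function argument from Lemma~\ref{lemma:bc_l2}. At a fixed time $t$, I would pair \eqref{f_eqn_2d} with a test function $\psi(t,x,v)\in(\ker\mathcal{L})^\perp$ in $v$ and integrate over $\O\times\R^3_v$; after integrating the transport term by parts in $x$ one obtains the weak identity
\begin{align*}
&\int_{\O}\!\int_{\R^3}\p_t (\mathbf{I}-\mathbf{P})f\,\psi\,\dd v\,\dd x - \int_{\O}\!\int_{\R^3} f\,v\cdot\nabla_x\psi\,\dd v\,\dd x + \mathcal{B}[\psi]\\
&\qquad + \int_{\O}\!\int_{\R^3}\mathcal{L}f\,\psi\,\dd v\,\dd x = \int_{\O}\!\int_{\R^3}\Gamma(f,f)\,\psi\,\dd v\,\dd x,
\end{align*}
where $\mathcal{B}[\psi]$ denotes the boundary contribution on $x_3=\pm 1$. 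Crucially $\int\p_t f\,\psi=\int\p_t(\mathbf{I}-\mathbf{P})f\,\psi$ since $\psi\perp\ker\mathcal{L}$, which is precisely what lets us bypass the conservation laws \eqref{conservation_momentum}--\eqref{energy_conservation} whose $a$-contribution would otherwise demand an extra $k$-weight (cf.~the refined argument in Theorem~\ref{thm:bc_refined}).

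\textbf{Test functions via elliptic problems.} For the $c$-estimate I would take the physical-space analogue of \eqref{psi_c}, namely $\psi_c=(-v_1\p_{x_1}\phi_c + v_3\p_{x_3}\phi_c)(|v|^2-5)\sqrt{\mu}$, with $\phi_c\in H^2(\O)$ solving $-\Delta\phi_c=c$ on $\O=\R\times(-1,1)$ with $\phi_c|_{x_3=\pm 1}=0$. The Poincar\'e inequality in $x_3$ (uniform in $x_1$) gives $\Vert\phi_c\Vert_{H^2}\lesssim\Vert c\Vert_{L^2_x}$ and a matching trace bound on $x_3=\pm 1$. By $v$-parity the $a$- and $\mathbf{b}$-contributions to $\int f\,v\cdot\nabla_x\psi_c\,\dd v$ vanish, and the leading piece becomes $-5\int_\O c\,\Delta\phi_c\,\dd x = 5\Vert c\Vert_{L^2_x}^2$. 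For $b_1,b_2,b_3$ I would use the physical-space analogues of \eqref{psi_b_k} and \eqref{psi_b}, with matched elliptic operators (for instance $(-2\p_{x_1}^2-\p_{x_3}^2)\phi_{b_1}=b_1$ and its variants for $b_2,b_3$) and Dirichlet conditions on $x_3=\pm 1$, arranged so that each $\psi_{b_i}\perp\ker\mathcal{L}$ and $\Vert\phi_{b_i}\Vert_{H^2}\lesssim\Vert b_i\Vert_{L^2_x}$ via Poincar\'e.

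\textbf{Termwise bounds and main obstacle.} Assembling the pieces: the $\mathcal{L}f$-term and the $(\mathbf{I}-\mathbf{P})f$-piece of $\int f\,v\cdot\nabla_x\psi$ are bounded by $\Vert (\mathbf{I}-\mathbf{P})f\Vert_{L^2_{x,v}}^2 + o(1)(\Vert c\Vert_{L^2_x}^2+\Vert\mathbf{b}\Vert_{L^2_x}^2)$; the $\p_t(\mathbf{I}-\mathbf{P})f$-term by $\Vert\p_t(\mathbf{I}-\mathbf{P})f\Vert_{L^2_{x,v}}^2+o(1)(\cdots)$; and the $\Gamma$-term by $\Vert\nu^{-1/2}\Gamma(f,f)\Vert_{L^2_{x,v}}^2+o(1)(\cdots)$, all through Cauchy--Schwarz paired with the $H^2$-bounds on $\phi$. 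In $\mathcal{B}[\psi]$ the $P_\gamma f$ contribution vanishes by $v_3$-parity (together with $\int v_3^2(|v|^2-5)\mu\,\dd v=0$ in the $c$-case), while the $(I-P_\gamma)f$ part is absorbed into $|(I-P_\gamma)f|_{L^2_{\gamma_+}}^2+o(1)(\cdots)$ via trace bounds on $\phi$ at $x_3=\pm 1$. Summing the $c,b_1,b_2,b_3$ estimates and absorbing the $o(1)$-pieces finishes the proof. The main technical obstacle I anticipate is the $b_3$ test function: because $x_3$ is distinguished both by the domain geometry and by the velocity structure, the coefficient balance in the elliptic operator (analogous to the factor $2$ on $\p_{x_3}^2$ in \eqref{psi_b}) must be tuned so that (i) the matched quadratic form is coercive in $b_3$, and (ii) the $P_\gamma f$ contribution to $\mathcal{B}[\psi_{b_3}]$ is identically zero by parity, leaving only a controllable $(I-P_\gamma)f$ piece at the boundary.
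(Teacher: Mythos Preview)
Your proposal is correct and follows essentially the same approach as the paper: the weak formulation \eqref{weak_formula} with test functions $\psi\perp\ker\mathcal{L}$ built from elliptic auxiliaries with Dirichlet data on $x_3=\pm1$, the key observation $\int\p_t f\,\psi=\int\p_t(\mathbf{I}-\mathbf{P})f\,\psi$, and the Poincar\'e-based $H^2$ and trace bounds are exactly what the paper does. One minor remark: the paper treats $b_2$ with the simpler $\psi_2=v_1v_2\sqrt{\mu}\,\p_1\phi_2+v_2v_3\sqrt{\mu}\,\p_3\phi_2$ and plain Laplacian $-\Delta\phi_2=b_2$ (since $v_2$ is not tied to any spatial direction in the 2D layer), and the $b_3$ case is handled symmetrically to $b_1$ with $\Delta\phi_3+\p_{33}\phi_3=-b_3$, so the coefficient tuning you flag as the main obstacle is in fact routine and the $P_\gamma f$ boundary contributions vanish by $v$-parity in every case.
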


\begin{proof}
We use the weak formulation:
\begin{align}
  & \underbrace{-\int_{\mathbb{R}^3}\int_{\O} f (v_1\p_1 \psi + v_3 \p_3 \psi) \dd x \dd v}_{\eqref{weak_formula}_1} + \underbrace{\int_{\mathbb{R}^3} \int_{\mathbb{R}} [f \psi(x_3=1) - f\psi(x_3=-1)]v_3  \dd x_1 \dd v}_{\eqref{weak_formula}_2}  \notag \\
  &+ \underbrace{\int_{\mathbb{R}^3}\int_{\O} \mathcal{L}f \psi \dd x \dd v}_{\eqref{weak_formula}_3} = \underbrace{\int_{\mathbb{R}^3}\int_{\O} \Gamma(f,f) \psi \dd x \dd v}_{\eqref{weak_formula}_4} - \underbrace{\int_{\mathbb{R}^3}\int_{\O} \p_t f \psi \dd x \dd v}_{\eqref{weak_formula}_5}.   \label{weak_formula}
\end{align}

\textit{Estimate of $\mathbf{b}$.} 

We choose a test function as
\begin{align*}
    &   \psi =  \frac{3}{2}\Big( |v_1|^2 - \frac{|v|^2}{3} \Big)\sqrt{\mu} \p_1 \phi_1 + v_1 v_3 \sqrt{\mu} \p_3 \phi_1.
\end{align*}
Then $\eqref{weak_formula}_1$ becomes
\begin{align*}
   \eqref{weak_formula}_1 = -\int_{\mathbb{R}^3}\int_{\O} \sum_{i,j=1,3} v_i v_j \p_i \psi b_j \sqrt{\mu} \dd x \dd v \underbrace{- \int_{\mathbb{R}^3}\int_{\O} (\mathbf{I}-\mathbf{P})f (v_1\p_1 \psi + v_3\p_3 \psi) \dd x \dd v}_{E_1}.
\end{align*}
In the first term, the contribution of $a$ and $c$ vanish due to oddness.  

Again from the oddness, for the first term, we have
\begin{align*}
    &     \int_{\mathbb{R}^3}\int_{\O} \big[\frac{3}{2}v_1^2 \Big(|v_1|^2 - \frac{|v|^2}{3} \Big) \mu \p_{11} \phi_1 b_1  + v_1^2 v_3^2  \mu\p_{13} \phi_1 b_3 \\
    & + v_1^2 v_3^2 \mu \p_{33} \phi_1 b_1 + \frac{3}{2} v_3^2 \Big( |v_1|^2 - \frac{|v|^2}{3}\Big) \mu \p_{13} \phi_1 b_3\big] \dd x \dd v \\
    & = 2 \p_{11}\phi_1 b_1+  \p_{33}\phi_1 b_1 +  \p_{13}\phi_1 b_3  - \p_{13}\phi_1 b_3  = (\Delta \phi_1 + \p_{11}\phi_1) b_1.
 \end{align*}

\hide
We choose a test function as
\begin{align*}
    &   \psi =  \frac{3}{2}\Big( |v_1|^2 - \frac{|v|^2}{3} \Big)\sqrt{\mu} \p_1 \phi_1 + v_1 v_2  \sqrt{\mu} \p_2 \phi_1 + v_1 v_3 \sqrt{\mu} \p_3 \phi_1.
\end{align*}
Then $\eqref{weak_formula}_1$ becomes
\begin{align*}
   \eqref{weak_formula}_1 = \int_{\mathbb{R}^3}\int_{\O} \sum_{i,j=1}^3 v_i v_j \p_i \psi b_j \sqrt{\mu} \dd x \dd v + E_1.
\end{align*}
In the first term, the contribution of $a$ and $c$ vanish due to oddness. Here $E_1:= \int_{\mathbb{R}^3}\int_{\O} (\mathbf{I}-\mathbf{P})f (v\cdot \nabla_x \psi) \dd x \dd v $. 

Again from the oddness, for the first term, we have
\begin{align*}
    &     \int_{\mathbb{R}^3}\int_{\O} \frac{3}{2}v_1^2 \Big(|v_1|^2 - \frac{|v|^2}{3} \Big) \mu \p_{11} \phi_1 b_1 + v_1^2 v_2^2 \mu \p_{12} \phi_1 b_2 + v_1^2 v_3^2  \mu\p_{13} \phi_1 b_3 \\
    & + v_1^2 v_2^2 \mu \p_{22}\phi_1 b_1 + \frac{3}{2} v_2^2 \Big(|v_1|^2 - \frac{|v|^2}{3} \Big) \mu \p_{12}\phi_1 b_2  \\
    & + v_1^2 v_3^2 \mu \p_{33} \phi_1 b_1 + \frac{3}{2} v_3^2 \Big( |v_1|^2 - \frac{|v|^2}{3}\Big) \mu \p_{13} \phi_1 b_3 \\
    & = 2 \p_{11}\phi_1 b_1+ \p_{22}\phi_1 b_1 + \p_{33}\phi_1 b_1 + \p_{12}\phi_{1}b_2 + \p_{13}\phi_1 b_3 - \p_{12}\phi_1 b_2 - \p_{13}\phi_1 b_3 \\
    & = (\Delta \phi_1 + \p_{11}\phi_1) b_1.
 \end{align*}
\unhide

We let $\phi_1$ solve the elliptic equation
\begin{align*}
\begin{cases}
    &     \Delta \phi_1 + \p_{11}\phi_1  =  -b_1, \\
    &     \phi_1 = 0 \text{ when }x_3 = \pm 1.    
\end{cases}
\end{align*}
Note that such elliptic equation can be reduced to the Poisson equation by setting $x_1 \to \sqrt{2}x_1$. From the Poincar\'e inequality, there exists a unique $\phi_1$ such that
\begin{align*}
    &    \Vert \phi_1 \Vert_{H^2_x} \lesssim    \Vert b_1\Vert_{L^2_{x}}.
\end{align*}

We conclude that
\begin{align*}
    &    \eqref{weak_formula}_1 = \Vert b_1\Vert_{L^2_{x}}^2 + E_1,
\end{align*}
with
\begin{align*}
    & |E_1| \lesssim o(1)\Vert b_1\Vert_{L^2_{x}}^2 + \Vert (\mathbf{I}-\mathbf{P})f\Vert_{L^2_{x,v}}^2.
\end{align*}

For the boundary term $\eqref{weak_formula}_2$, we apply the diffuse boundary condition to have
\begin{align*}
    & \int_{\mathbb{R}^3}\int_{\mathbb{R}} f\psi(x_3=1) v_3 \dd x_1 \dd v =  \int_{\mathbb{R}^3}\int_{\mathbb{R}} P_\gamma f \psi v_3 \dd x_1 \dd v + \int_{v_3>0}\int_{\mathbb{R}} (I-P_\gamma)f \psi(1) v_3 \dd x_1 \dd v \\
    & = \int_{v_3>0}\int_{\mathbb{R}} (I-P_\gamma)f \psi(1) v_3 \dd x_1 \dd v  \lesssim |(I-P_\gamma)f|_{L^2_{\gamma_+}}^2 + o(1) \int_{\mathbb{R}} |\nabla \phi(x_1,1)|^2 \dd x_1 \\
    & \lesssim |(I-P_\gamma)f|_{L^2_{\gamma_+}}^2 + o(1)\Vert \phi\Vert^2_{H^2_x} \lesssim |(I-P_\gamma)f|_{L^2_{\gamma_+}}^2 + o(1)\Vert b_1\Vert_{L^2_{x}}^2 .
\end{align*}
In the second line, the contribution of $P_\gamma f$ vanished due to oddness. In the third line, we applied the trace theorem.

For $x_3=-1$ we have a similar estimate, and thus we conclude that
\begin{align*}
    & \eqref{weak_formula}_2 \lesssim |(I-P_\gamma)f|_{L^2_{\gamma_+}}^2 + o(1)\Vert b_1\Vert_{L^2_{x}}^2.
\end{align*}

Note that $\eqref{weak_formula}_3$ is bounded as
\begin{align*}
    &   | \eqref{weak_formula}_3 |\lesssim \Vert (\mathbf{I}-\mathbf{P})f\Vert_{L^2_{x,v}}^2 + o(1)\Vert \phi_1\Vert_{H^1_x}^2 \lesssim \Vert (\mathbf{I}-\mathbf{P})f\Vert_{L^2_{x,v}}^2 + o(1)\Vert b_1\Vert_{L^2_{x}}^2,
\end{align*}
and
$\eqref{weak_formula}_4$ is bounded as
\begin{align*}
    &    |\eqref{weak_formula}_4| \lesssim \Vert \nu^{-1/2}\Gamma(f,f)\Vert_{L^2_{x,v}}^2 + o(1)\Vert \phi_1\Vert_{H^1_x}^2 \lesssim \Vert \nu^{-1/2}\Gamma(f,f)\Vert_{L^2_{x,v}}^2 + o(1)\Vert b_1\Vert_{L^2_{x}}^2.
\end{align*}

For $\eqref{weak_formula}_5$, since $\psi \perp \ker\mathcal{L}$, we have
\begin{align*}
    & |\eqref{weak_formula}_5|  = \int_{\mathbb{R}^3}\int_{\O} \p_t (\mathbf{I}-\mathbf{P})f \psi \dd x \dd v \lesssim \Vert \p_t (\mathbf{I}-\mathbf{P})f\Vert_{L^2_{x,v}}^2 + o(1)\Vert \phi\Vert_{H^1_x}^2 \\
    & \lesssim \Vert \p_t (\mathbf{I}-\mathbf{P})f\Vert_{L^2_{x,v}}^2 + o(1)\Vert \mathbf{b}\Vert_{L^2_x}^2.
\end{align*}

In summary, we obtain
\begin{align*}
    & \Vert b_1\Vert_{L^2_{x}}^2 \lesssim \Vert (\mathbf{I}-\mathbf{P})f\Vert_{L^2_{x,v}}^2 + |(I-P_\gamma)f|_{L^2_{\gamma_+}}^2 + \Vert \nu^{-1/2}\Gamma(f,f)\Vert_{L^2_{x,v}}^2 + \Vert \p_t (\mathbf{I}-\mathbf{P})f\Vert_{L^2_{x,v}}^2.
\end{align*}

For $b_3$, we can construct test function as
\begin{align*}
& \begin{cases}
         & \psi = \frac{3}{2} \Big(|v_3|^2 - \frac{|v|^2}{3} \Big)\sqrt{\mu}\p_3 \phi_3 + v_1v_3 \sqrt{\mu} \p_1 \phi_3  , \\
    & \Delta \phi_3 + \p_{33} \phi_3 = -b_3, \\
    &\phi_3  = 0 \text{ when }x_3 = \pm 1.
 \end{cases}
\end{align*}
Applying the same computation as $b_1$, we can obtain
\begin{align*}
    &  \Vert b_3\Vert_{L^2_{x}}^2 \lesssim \Vert (\mathbf{I}-\mathbf{P})f\Vert_{L^2_{x,v}}^2 + |(I-P_\gamma)f|_{L^2_{\gamma_+}}^2 + \Vert \nu^{-1/2}\Gamma(f,f)\Vert_{L^2_{x,v}}^2 + \Vert \p_t (\mathbf{I}-\mathbf{P})f\Vert_{L^2_{x,v}}^2.
\end{align*}

For $b_2$, we construct a test function as
\begin{align*}
    & \begin{cases}
        &\psi_2 = v_1 v_2 \sqrt{\mu}\p_1 \phi_2+v_2 v_3 \sqrt{\mu} \p_3 \phi_2, \\
        & \p_{11}\phi_2 + \p_{33}\phi_2 = -b_2, \\
        &\phi_2 = 0 \text{ when } x_3 = \pm 1.
    \end{cases}   
\end{align*}
From the Poincar\'e inequality, we have 
\begin{align*}
    & \Vert \phi_2\Vert_{H^2_x} \lesssim \Vert b_2\Vert_{L^2_x}.
\end{align*}
Since $\psi \perp \ker \mathcal{L}$ and $\psi$ is odd in $v_2$, the difference of the $b_2$ estimate only lies in $\eqref{weak_formula}_1$. We compute that
\begin{align*}
    &   \eqref{weak_formula}_1 = - \int_{\mathbb{R}^3} \int_{\O} (b_2v_1^2 v_3^2 \mu \p_{11} \phi_2 +b_2 v_2^2 v_3^2 \mu \p_{33} \phi_2) \dd x \dd v \underbrace{- \int_{\mathbb{R}^3}\int_{\O} (\mathbf{I}-\mathbf{P})f (v_1\p_1 \psi_2 + v_3\p_3 \psi_2) \dd x \dd v}_{E_2}\\
    & = -\int_\O  b_2 (\p_{11}\phi_2+\p_{33}\phi_2)  \dd x + E_2 = \Vert b_2\Vert_{L^2_x}^2 + E_2 , \\
    & |E_2| \lesssim \Vert (\mathbf{I}-\mathbf{P})f\Vert_{L^2_{x,v}}^2 + o(1)\Vert \phi_2\Vert_{H^2_x}^2 \lesssim \Vert (\mathbf{I}-\mathbf{P})f\Vert_{L^2_{x,v}}^2 + o(1)\Vert b_2\Vert_{L^2_x}^2.
\end{align*}
We can conclude the estimate for $b_2$ as
\begin{align*}
    &\Vert b_2\Vert_{L^2_x}^2 \lesssim \Vert (\mathbf{I}-\mathbf{P})f\Vert_{L^2_{x,v}}^2 + |(I-P_\gamma)f|_{L^2_{\gamma_+}}^2 + \Vert \nu^{-1/2}\Gamma(f,f)\Vert_{L^2_{x,v}}^2 + \Vert \p_t (\mathbf{I}-\mathbf{P})f\Vert_{L^2_{x,v}}^2.
\end{align*}

Hence, we conclude the $\mathbf{b}$ estimate as
\begin{align*}
    & \Vert \mathbf{b}\Vert_{L^2_{x}}^2 \lesssim \Vert (\mathbf{I}-\mathbf{P})f\Vert_{L^2_{x,v}}^2 + |(I-P_\gamma)f|_{L^2_{\gamma_+}}^2 + \Vert \nu^{-1/2}\Gamma(f,f)\Vert_{L^2_{x,v}}^2 + \Vert \p_t (\mathbf{I}-\mathbf{P})f\Vert_{L^2_{x,v}}^2.
\end{align*}

\textit{Estimate of $c$.}

We let $\phi_c$ be a solution of the following problem
\begin{align}
  -\Delta \phi_c   & = c \text{ in } \O , \ \phi_c = 0 \text{ when }x_3 = \pm 1, \label{poisson_c}
\end{align}
and we choose 
\begin{align}
  \psi_c  &  = \sum_{i=1,3} \p_i \phi_c v_i(|v|^2-5) \mu^{1/2} \perp \ker \mathcal{L}. \notag
\end{align}
A direct computation leads to
\begin{align*}
   v_1 \p_1 \psi_c + v_3\p_3 \psi_c &  = 5\Delta \phi_c \frac{|v|^2-3}{2}\mu^{1/2}  - \sum_{i,j=1,3} \p_{ij}^2\phi_c (\mathbf{I}-\P)(v_iv_j (|v|^2-5)\mu^{1/2}).
\end{align*}
Thus $\eqref{weak_formula}_1$ is
\begin{align}
  \eqref{weak_formula}_1  &  = 5 \int_{\O} c^2 \dd x \underbrace{-    \int_{\O}\int_{\mathbb{R}^3}  (\mathbf{I}-\mathbf{P})f (v_1\p_1 \psi_c + v_3 \p_3 \psi_c) \dd x  \dd v }_{E_3} , \notag
\end{align}
where, from the Poincar\'e inequality and elliptic estimate to \eqref{poisson_c},
\[|E_3|\lesssim o(1) \Vert c\Vert_{L^2_x}^2 + \Vert (\mathbf{I}-\P)f\Vert_{L^2_{x,v}}^2 .\]

Then we apply boundary condition of $\phi_c$ and $f$ to compute $\eqref{weak_formula}_2$:
\begin{align}
& \int_{\mathbb{R}^3}\int_{\mathbb{R}} f\psi_c(x_3=1) v_3 \dd x_1 \dd v  \notag \\
 & =\int_{\mathbb{R}} \bigg[ \int_{v_3>0} + \int_{v_3<0}\bigg] \big(|v|^2 -5\big)\sqrt{\mu} (v_1 \p_1 \phi_c + v_3 \p_3 \phi_c) v_3 f  \dd v \dd x_1 \notag \\
 & = \int_{\mathbb{R}} \int_{v_3>0} \big(|v|^2 -5 \big) \sqrt{\mu} (v_1 \p_1 \phi_c + v_3 \p_3 \phi_c) v_3(f-P_\gamma f) \dd v \dd x_1 \notag\\
  & + 2 \int_{\mathbb{R}}\int_{v_3>0} \big(|v|^2 -5 \big) \sqrt{\mu} |v_3|^2 \p_3 \phi_c P_\gamma f \dd v \dd x_1 \notag    \\
  & = \int_{\mathbb{R}} \int_{v_3>0} \big(|v|^2 -5 \big) \sqrt{\mu} (v_1 \p_1 \phi_c + v_3 \p_3 \phi_c) v_3(f-P_\gamma f) \dd v \dd x_1 \notag \\
  & \lesssim o(1) \int_{\mathbb{R}} |\nabla \phi_c(x_1,1)|^2 \dd x_1 +   |(I-P_\gamma)f|_{L^2_{\gamma_+}}^2  \lesssim o(1) \Vert c\Vert_{L^2_x}^2 + |(I-P_\gamma)f|_{L^2_{\gamma_+}}^2   .\notag
\end{align}
In the third line, we have applied the change of variable $v\to v-2(0,0,v_3)$. The fourth line vanished by $\int_{v_3>0} (|v|^2-5)|v_3|^2\mu \dd v= 0$. In the last inequality, we applied elliptic estimate to~\eqref{poisson_c} with the trace theorem:
\begin{align*}
    & \int_{\mathbb{R}} |\nabla \phi_c(x_1,1)|^2 \dd x_1 \lesssim \Vert \phi_c \Vert^2_{H^2_x} \lesssim \Vert c\Vert_{L^2_x}^2.
\end{align*}

The estimate for $x=-1$ is similar. We conclude the estimate for $\eqref{weak_formula}_2$ as
\begin{equation*}
\eqref{weak_formula}_2 \lesssim o(1) \Vert c\Vert_{L^2_x}^2  +  |(I-P_\gamma)f|_{L^2_{\gamma_+}}^2 .
\end{equation*}

For the rest $\eqref{weak_formula}_3, \eqref{weak_formula}_4, \eqref{weak_formula}_5$, by the same computation as the estimate of $\mathbf{b}$, with the property $\psi \perp \ker \mathcal{L}$, we obtain
\begin{align*}
  &  |\eqref{weak_formula}_3 + \eqref{weak_formula}_4 + \eqref{weak_formula}_5 | \lesssim o(1)\Vert c\Vert_{L^2_{x}}^2 + \Vert (\mathbf{I}-\mathbf{P})f\Vert_{L^2_{x,v}}^2 + \Vert \nu^{-1/2}\Gamma(f,f)\Vert_{L^2_{x,v}}^2 + \Vert \p_t (\mathbf{I}-\mathbf{P})f\Vert_{L^2_{x,v}}^2. 
\end{align*}
  
In summary, for $c$ we conclude
\begin{align*}
    & \Vert c\Vert_{L^2_{x}}^2 \lesssim \Vert (\mathbf{I}-\mathbf{P})f\Vert_{L^2_{x,v}}^2 + |(I-P_\gamma)f|_{L^2_{\gamma_+}}^2 + \Vert \nu^{-1/2}\Gamma(f,f)\Vert_{L^2_{x,v}}^2 + \Vert \p_t (\mathbf{I}-\mathbf{P})f\Vert_{L^2_{x,v}}^2.
\end{align*}

We conclude the lemma. 
\end{proof}

\subsection{$L^\infty_{T,x,v}$ estimate}\label{sec:linfty_2d}

We control the $L^\infty_{T,x,v}$ estimate in the following lemma.
\begin{lemma}\label{lemma:linfty_2d}
For any $T\geq 0$, it holds that
\begin{align*}
    &    \Vert wf\Vert_{L^\infty_{T,x,v}} \lesssim \Vert wf(0)\Vert_{L^\infty_{x,v}} + \Vert f\Vert_{L^\infty_T L^2_{x,v}} + \Vert wf\Vert_{L^\infty_{T,x,v}}^2, \\
    &   \Vert w\p_t f\Vert_{L^\infty_{T,x,v}} \lesssim \Vert w\p_t f(0)\Vert_{L^\infty_{x,v}} + \Vert \p_t f\Vert_{L^\infty_T L^2_{x,v}} + \Vert wf\Vert_{L^\infty_{T,x,v}} \Vert w\p_t f\Vert_{L^\infty_{T,x,v}}.
\end{align*}

\end{lemma}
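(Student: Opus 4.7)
The plan is to adapt the $L^2$-$L^\infty$ scheme from the proof of Proposition~\ref{prop:linfty} to the two-dimensional physical setting, running the same argument in parallel for $f$ and for $\partial_t f$. I would first write the Duhamel formula for $wf(t,x,v)$ along the straight characteristics in $(x_1,x_3)$, splitting into an initial contribution, a boundary contribution, a $K$-contribution, and a $\Gamma(f,f)$-contribution exactly as in \eqref{chara:initial}--\eqref{chara:Gamma}. The boundary term would then be iterated via the stochastic cycles of Section~\ref{sec:linfty}: since the backward exit time $t_{\mathbf{b}}$ depends only on $x_3\in(-1,1)$, Lemma~\ref{lemma:tk} applies verbatim, and the arguments of Lemma~\ref{lemma:linfty_bdr} (now without any $(1+t)^{-\sigma/2}$ factor) produce control by $\|wf_0\|_{L^\infty_{x,v}}$, $o(1)\|wf\|_{L^\infty_{T,x,v}}$, $\|\nu^{-1}w\Gamma(f,f)\|_{L^\infty_{T,x,v}}$, and $\|f\|_{L^\infty_T L^2_{x,v}}$.

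The main obstacle is the $K$-term, which requires the double Duhamel iteration of \eqref{chara:K}--\eqref{chara_K_K_4_bdd}. After absorbing the small-time, large-velocity, and near-diagonal regimes into $o(1)\|wf\|_{L^\infty_{T,x,v}}$ by means of Lemma~\ref{lemma:k_theta}, the remaining nontrivial piece lives in $|v|,|u|,|u'|\le N$, $|v-u|,|u-u'|>1/N$, and $s-s'>\varepsilon$. There I would apply the change of variables $(u_1,u_3)\mapsto (y_1,y_3)$ with $y_i=x_i-(t-s)v_i-(s-s')u_i$, whose Jacobian is $(s-s')^2\ge\varepsilon^2$. The new point relative to the $(x_3,v)$-only analysis of Section~\ref{sec:linfty} is that the $y_1$ image lies \emph{a priori} in $\mathbb{R}$; however, the cutoff $|u_1|\le N$ combined with $s-s'\le T_0$ confines it to an interval of length $\lesssim_{T_0,N} 1$, while $y_3$ is trivially contained in $(-1,1)$. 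Cauchy--Schwarz in $(y_1,y_3,u')$ over this bounded region then extracts precisely $\|f(s')\|_{L^2_{x,v}}$, which is the target norm.

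The nonlinear contribution is controlled directly by $\|\nu^{-1}w\Gamma(f,f)\|_{L^\infty_{x,v}}\lesssim \|wf\|_{L^\infty_{x,v}}^2$ from the standard bilinear estimate, together with the computation in \eqref{chara:gamma_bdd}. Iterating the resulting one-slab estimate over intervals of length $T_0$ in the spirit of \eqref{mT0}---now with a straightforward geometric summation since no $(1+t)^{\sigma/2}$ factor is present---yields the first bound of the lemma. The estimate for $\partial_t f$ follows identically, since $\partial_t f$ obeys \eqref{p_t_eqn_2d}, whose source $\Gamma(\partial_t f,f)+\Gamma(f,\partial_t f)$ satisfies $\|\nu^{-1}w[\Gamma(\partial_t f,f)+\Gamma(f,\partial_t f)]\|_{L^\infty_{x,v}}\lesssim \|wf\|_{L^\infty_{x,v}}\|w\partial_t f\|_{L^\infty_{x,v}}$; all other steps of the characteristic and stochastic-cycle iteration carry over verbatim, producing the second bound of the lemma.
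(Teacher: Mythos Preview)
Your proposal is correct and follows essentially the same approach as the paper: Duhamel along characteristics, stochastic-cycle iteration for the boundary term (the paper restates Lemma~\ref{lemma:tk} as Lemma~\ref{lemma:tk_2d} with the same observation you make that $t_{\mathbf b}$ depends only on $x_3$), double iteration for the $K$-term with the change of variables $(u_1,u_3)\mapsto y$ of Jacobian $(s-s')^2$, and geometric summation over slabs of length $T_0$. Your remark that the $y_1$-image stays in a bounded interval of length $\lesssim_{T_0,N}1$ thanks to $|u_1|\le N$ and $s-s'\le T_0$ is exactly the mechanism behind the paper's constant $C_{T_0,N,\delta,\Omega}$ in \eqref{other_case_bdd_2d} and \eqref{u_K1_other_bdd}, though the paper leaves this implicit.
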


To prove the lemma we first define the stochastic cycle. We use standard notations for the backward exit time and backward exit position:
\begin{equation*}
\begin{split}
\tb(x,v) :   &  = \sup\{s\geq 0, x-s(v_1,v_3) \in \O\} , \\
  \xb(x,v)  : & = x - \tb(x,v)(v_1,v_3).
\end{split}
\end{equation*}

We denote $t_0=T_0$ to be a fixed starting time. First, we define the stochastic cycle as
\begin{definition}
We define a stochastic cycles as $(x^0,v^0)= (x,v) \in \bar{\O} \times \R^3$ and inductively
\begin{align}
&x^1:= \xb(x,v), \   v^1 \in \mathcal{V}_1:=\{v^1\in \mathbb{R}^3: v^1_3 \times \text{sign}(x_3^1)>0\} , \notag\\
&  v^n\in \mathcal{V}_n:= \{v^n\in \mathbb{R}^3: v^n_3 \times \text{sign}(x_3^1)>0\}, \ \ \text{for} \  n \geq 1,
\notag
\\
 &x^{n+1} := \xb(x^n, v^n) , \ \tb^{n}:= \tb(x^n,v^n) \ \ \text{for} \  v^n \in \mathcal{V}_n. \notag
\end{align}
\begin{equation*}
t^n =  t_0-  \{ \tb + \tb^1 + \cdots + \tb^{n-1}\},  \ \ \text{for} \  n \geq 1.    
\end{equation*}

\end{definition}

With the stochastic cycles defined, we apply the method of characteristics to have
\begin{align}
  &w(v) f(T_0,x,v)  \notag \\
  & =  \mathbf{1}_{t^1\leq 0} w(v) e^{- \nu(v) T_0} f(0,x-T_0 (v_1,v_3), v) \label{initial}\\
  & + \mathbf{1}_{t^1 \leq 0} \int_0^{T_0} e^{-\nu(v) (T_0-s)}  w(v) \int_{\mathbb{R}^3}   f(s,x-(t-s)(v_1,v_3),u) \mathbf{k}(v,u) \dd u \dd s \label{K_0}\\
  & +\mathbf{1}_{t^1>0} \int^{T_0}_{t^1} e^{-\nu(v) (T_0 - s)}  w(v)  \int_{\mathbb{R}^3}  f(s,x-(t-s)(v_1,v_3),u) \mathbf{k}(v,u) \dd u \dd s  \label{K_1} \\
  &  + \mathbf{1}_{t^1 \leq 0} \int_0^{T_0} e^{-\nu(v)(T_0-s)}  w(v)  \Gamma(f,f)(s,x-(t-s)(v_1,v_3),v) \dd s  \label{g_0} \\
  &  +\mathbf{1}_{t^1>0} \int^{T_0}_{t^1} e^{-\nu(v) (T_0 - s)} w(v) \Gamma(f,f)(s,x-(t-s)(v_1,v_3),v) \dd s   \label{g_1} \\
  & + \mathbf{1}_{t^1>0} e^{-\nu(v) (T_0 - t^1)} w(v)f(t^1,x^1,v), \label{f_bdr}
\end{align}
where the contribution of the boundary is bounded as
\begin{align}
   & |\eqref{f_bdr}|\leq   e^{-\nu(v) (T_0-t^1)} w(v) \sqrt{\mu(v)}  \notag\\
   & \times \int_{\prod_{j=1}^{n}\mathcal{V}_j} \bigg\{ \sum_{i=1}^{n}\mathbf{1}_{t^{i+1}\leq 0 < t^i} e^{-\nu(v^i) t^i} w(v^i)|f(0, x^i - t^i (v^i_1,v^i_3), v^i)| \dd \Sigma_i     \label{bdr_initial_2d}\\
  & + \mathbf{1}_{t^{n+1}>0}  w(v^{n})|f(t^{n+1},x^{n+1},v^{n})| \dd \Sigma_{n} \label{bdr_tk_2d}\\
  & + \sum_{i=1}^{n} \mathbf{1}_{t^{i+1}\leq 0 < t^i}  \int^{t^i}_0 e^{-\nu(v^i) (t^i-s)}   w(v^i)  \int_{\mathbb{R}^3}   \mathbf{k}(v^i,u)|f(s,x^i-(t^i-s)(v^i_1,v^i_3), u)|  \dd u \dd s \dd \Sigma_i   \label{bdr_K_0_2d} \\
  & + \sum_{i=1}^{n} \mathbf{1}_{t^{i+1}>0} \int_{t^{i+1}}^{t^i} e^{-\nu(v^i) (t^i-s)}  w(v^i)
  \int_{\mathbb{R}^3} \mathbf{k}(v^i,u) |f(s,x^i-(t^i-s)(v^i_1,v^i_3), u)| \dd u \dd s \dd \Sigma_i   \label{bdr_K_i_2d} \\
  &  + \sum_{i=1}^{n} \mathbf{1}_{t^{i+1}\leq 0 < t^i}  \int^{t^i}_0 e^{-\nu(v^i) (t^i-s)} w(v^i) |\Gamma(f,f)(s,x^i-(t^i-s)(v^i_1,v^i_3),v^i)| \dd \Sigma_i     \label{bdr_g_0_2d}   \\
  &  + \sum_{i=1}^{n} \mathbf{1}_{t^{i+1}>0} \int_{t^{i+1}}^{t^i} e^{-\nu(v^i) (t^i-s)} w(v^i) |\Gamma(f,f)(s,x^i-(t^i-s)(v^i_1,v^i_3),v^i)| \dd \Sigma_i  \bigg\}.  \label{bdr_g_i_2d}  
\end{align}
Here $\dd \Sigma_i$ is defined as
\begin{equation}
\dd \Sigma_i = \Big\{\prod_{j=i+1}^{n} \dd \sigma_j \Big\}  \times  \Big\{  \frac{1}{w(v^i)\sqrt{\mu(v^i)}}   \dd \sigma_i \Big\} \times \Big\{\prod_{j=1}^{i-1}  e^{-\nu(v^j)(t^j-t^{j+1})}   \dd \sigma_j \Big\} , \label{Sigma_2d}
\end{equation}
where $\dd \sigma_i$ is a probability measure in $\mathcal{V}_i$ given by
\begin{equation}
\dd \sigma_i =  \sqrt{2\pi}\mu(v^i) |v^i_3|\dd v^i. \label{d_sigma_i_2d}
\end{equation}

\eqref{bdr_tk_2d} corresponds to the scenario that the backward trajectory interacts with the diffuse boundary portion a large number of times. This term is controlled by the following lemma.

\begin{lemma}\label{lemma:tk_2d}
For $T_0>0$ sufficiently large, there exist constants $C_1,C_2>0$ independent of $T_0$ such that for $n = C_1 T_0^{5/4}$ and $(t^0,x^0,v^0) = (t,x,v)\in [0,T_0]\times \bar{\O}\times \mathbb{R}^3$,
\begin{equation*}
\int_{\prod_{j=1}^{n-1} \mathcal{V}_j}\mathbf{1}_{t^n>0}  \prod_{j=1}^{n-1} \dd \sigma_j \leq \Big( \frac{1}{2}\Big)^{C_2 T_0^{5/4}}.
\end{equation*}

\end{lemma}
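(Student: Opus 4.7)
The plan is to follow the stochastic-cycle argument originally developed by Guo \cite{G} and already invoked in Lemma \ref{lemma:tk} for the three-dimensional infinite layer. The key observation is that although the tangent variable is now $x_1\in\R$ rather than $\bar{x}\in\R^2$, the bouncing dynamics between the parallel plates $x_3=\pm 1$ depends only on the normal component $v_3^j$ and on the fixed slab thickness $2$, so the combinatorial structure is identical to Lemma \ref{lemma:tk}.

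First, I would split each velocity space $\mathcal{V}_j$ into a ``good'' part
\[
\mathcal{V}_j^{g}:=\{v^j\in\mathcal{V}_j:\ |v^j|\leq N,\ |v^j_3|\geq 1/N\}
\]
and its complement $\mathcal{V}_j^{b}=\mathcal{V}_j\setminus\mathcal{V}_j^{g}$, where $N$ is a parameter to be chosen later. Because $\dd\sigma_j=\sqrt{2\pi}\mu(v^j)|v^j_3|\dd v^j$ is a probability measure concentrated by the Gaussian $\mu$, a direct computation using $\mu(v^j)=(2\pi)^{-3/2}e^{-|v^j|^2/2}$ yields
\[
\int_{\mathcal{V}_j^{b}}\dd\sigma_j\ \lesssim\ e^{-N^2/8}+\frac{1}{N^2}\ \leq\ \frac{1}{4},
\]
provided $N$ is taken large enough (independent of $T_0$).

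Next, on the good set $\mathcal{V}_j^{g}$ the travel time between the two plates is bounded below: since $|\tb^j|=2/|v^j_3|$ if $v^j$ starts from one plate heading toward the other, one has $\tb^j\geq 2/N$. Consequently, if among the cycles $j=1,\dots,n$ at least $n/2$ indices correspond to good velocities, then
\[
\tb+\tb^1+\cdots+\tb^{n-1}\ \geq\ \frac{n}{2}\cdot\frac{2}{N}\ =\ \frac{n}{N},
\]
which forces $t^n=t_0-(\tb+\cdots+\tb^{n-1})\leq 0$ as soon as $n\geq N T_0$. I would then choose $N:=C_1^{1/5}T_0^{1/4}$ and $n=C_1 T_0^{5/4}$, so this threshold is satisfied and the event $\{t^n>0\}$ forces more than $n/2$ of the velocities to lie in the bad set.

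Finally I would reduce the remaining estimate to a combinatorial union bound: writing
\[
\mathbf{1}_{t^n>0}\ \leq\ \sum_{\substack{J\subset\{1,\dots,n-1\}\\ |J|\geq n/2}}\prod_{j\in J}\mathbf{1}_{\mathcal{V}_j^{b}},
\]
integrating against $\prod_{j=1}^{n-1}\dd\sigma_j$ (each factor a probability measure), and using $\int_{\mathcal{V}_j^{b}}\dd\sigma_j\leq 1/4$, gives
\[
\int_{\prod_{j=1}^{n-1}\mathcal{V}_j}\mathbf{1}_{t^n>0}\prod_{j=1}^{n-1}\dd\sigma_j\ \leq\ \binom{n-1}{\lceil n/2\rceil}\Big(\frac{1}{4}\Big)^{n/2}\ \leq\ \Big(\frac{1}{2}\Big)^{n/2},
\]
after absorbing the binomial coefficient via $\binom{n-1}{\lceil n/2\rceil}\leq 2^{n-1}$. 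With $n=C_1 T_0^{5/4}$ this yields $(1/2)^{C_2 T_0^{5/4}}$ for $C_2:=C_1/2$. The main technical obstacle is simply the careful choice of the parameters $N$ and $n$ so that the measure bound on $\mathcal{V}_j^{b}$ and the lower bound on $\tb^j$ are compatible, and verifying that the constants $C_1,C_2$ can be chosen independently of $T_0$; this is identical to the bookkeeping already performed in Lemma \ref{lemma:tk} and the original argument in \cite{G}, so no essentially new difficulty arises in the two-dimensional setting.
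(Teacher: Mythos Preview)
Your approach is the same as the paper's, which simply observes that the backward exit time $\tb$ depends only on $v_3$ and the fixed slab width, so the argument reduces to Lemma~\ref{lemma:tk} and hence to Guo~\cite{G}. There is, however, a quantitative slip in your last combinatorial step: bounding the bad-set measure by $\tfrac14$ is not enough, since
\[
\binom{n-1}{\lceil n/2\rceil}\Big(\frac14\Big)^{n/2}\ \leq\ 2^{n-1}\cdot 2^{-n}\ =\ \frac12,
\]
a constant rather than $(1/2)^{n/2}$. The fix is immediate: require instead $\int_{\mathcal{V}_j^{b}}\dd\sigma_j\leq \delta$ for some fixed $\delta<\tfrac14$ (e.g.\ $\delta=\tfrac18$), which still holds once $N$ is large enough; then $2^{n-1}\delta^{n/2}=\tfrac12(2\sqrt{\delta})^{n}\leq (1/2)^{n/2}$. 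In fact, with your choice $N\sim T_0^{1/4}$ the bad-set measure is of order $T_0^{-1/2}$, so for $T_0$ sufficiently large this is automatic.
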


\begin{proof}
The proof of this lemma is the same as Lemma \ref{lemma:tk}, since the backward exit time $\tb(x,v)$ in both settings are determined by $v_3$ and hence they are equivalent.
\end{proof}

To prove Lemma \ref{lemma:linfty_2d}, we need to estimate every term in the characteristic formula \labelcref{initial,K_0,K_1,g_0,g_1,f_bdr}. First, we estimate the boundary term \eqref{f_bdr} in the following lemma.

\begin{lemma}\label{lemma:bdr}
For the boundary term~\eqref{f_bdr}, it holds that
\begin{equation*}
\begin{split}
 w(v)|f(t^1,x^1,v)| \leq   &  4 e^{-\nu_0 t^1}\Vert w f_0\Vert_{L^\infty_{x,v}}+ o(1) \Vert w f\Vert_{L^\infty_{T_0,x,v}}  \\
    &+   C(T_0)    \Big\Vert \nu^{-1} w \Gamma(f,f) \Big\Vert_{L^\infty_{T_0,x,v}}  +  C(T_0)  \Vert f\Vert_{L^\infty_{T_0}L^2_{x,v}} .
\end{split}
\end{equation*}
\end{lemma}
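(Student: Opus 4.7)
\emph{Approach.} The plan for proving Lemma \ref{lemma:bdr} is to bound separately the six terms \eqref{bdr_initial_2d}--\eqref{bdr_g_i_2d} that arise from iterating the diffuse reflection boundary condition, mirroring the architecture of the proof of Lemma \ref{lemma:linfty_bdr} but adapted to the non-Fourier physical-space setting. The key difference is that the recovery of the $L^2_{x,v}$ norm through a change of variables will require converting \emph{both} $v^i_1$ and $v^i_3$ to position variables, rather than only $v^i_3$ as in Lemma \ref{lemma:linfty_bdr}.

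\emph{The easier terms.} For \eqref{bdr_initial_2d} I would use that each $\dd\sigma_i$ in \eqref{d_sigma_i_2d} is a probability measure on $\mathcal{V}_i$ and that the factors $e^{-\nu(v^j)(t^j - t^{j+1})}$ telescope to give $e^{-\nu_0 t^1}$, producing the bound $4 e^{-\nu_0 t^1}\Vert w f_0\Vert_{L^\infty_{x,v}}$, with the constant $4$ coming from $\sqrt{2\pi}\int_{\mathcal{V}_i} |v^i_3|\sqrt{\mu(v^i)} w^{-1}(v^i) \dd v^i \leq 4$. For \eqref{bdr_tk_2d}, choosing $n = C_1 T_0^{5/4}$ and invoking Lemma \ref{lemma:tk_2d} supplies the small factor $(1/2)^{C_2 T_0^{5/4}}$, yielding $o(1)\Vert w f\Vert_{L^\infty_{T_0, x, v}}$. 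For \eqref{bdr_g_0_2d} and \eqref{bdr_g_i_2d} I would pull out $\Vert \nu^{-1} w \Gamma(f, f)\Vert_{L^\infty_{T_0, x, v}}$ and bound the remaining integrals uniformly using $\int_0^{t^i} e^{-\nu(v^i)(t^i - s)/2}\nu(v^i) \dd s \leq C$, contributing $C\,n\Vert \nu^{-1} w \Gamma(f,f)\Vert_{L^\infty_{T_0, x, v}}$.

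\emph{Kernel terms.} The delicate step is the estimate of \eqref{bdr_K_0_2d} and \eqref{bdr_K_i_2d}. Setting $\mathbf{k}_\theta(v^i, u) := \mathbf{k}(v^i, u)w(v^i)/w(u)$ and applying Lemma \ref{lemma:k_theta}, I would split the inner $(s, u)$ integration into four disjoint regions:
\begin{itemize}
\item[(i)] $s \in [t^i - \delta, t^i]$: using $\int \mathbf{k}_\theta(v^i, u) \dd u \leq C/(1 + |v^i|)$ from \eqref{k_theta} and the exponential decay $e^{-\nu_0(t^i - s)/2}$, this contributes $C\delta \Vert w f\Vert_{L^\infty_{T_0, x, v}}$;
\item[(ii)] $|v^i| > N$: the weight $w^{-1}(v^i)\sqrt{\mu(v^i)}|v^i_3|$ produces an $O(1/N)$ factor;
\item[(iii)] $|u| > N$ or $|v^i - u| \leq 1/N$: by \eqref{K_N_small}, the $u$-integral of $\mathbf{k}_\theta$ restricted to this set is $O(1/N)$;
\item[(iv)] The bulk region $|v^i| \leq N$, $|u| \leq N$, $|v^i - u| > 1/N$, $s < t^i - \delta$: here \eqref{k_N_upper_bdd} gives $\mathbf{k}_\theta(v^i, u) \lesssim C_N$, and I would perform the two-dimensional change of variables $(v^i_1, v^i_3) \mapsto (z_1, z_3) := x^i - (t^i - s)(v^i_1, v^i_3)$ with Jacobian $(t^i - s)^2 \geq \delta^2$, then apply Cauchy--Schwarz in $(z_1, z_3, u)$ to convert the resulting $L^1_{z,u}$ integral into $C(N, \delta, T_0)\Vert f\Vert_{L^\infty_{T_0} L^2_{x, v}}$.
\end{itemize}

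\emph{Main obstacle.} The hard part is the two-dimensional change of variables in step (iv): one must simultaneously convert $v^i_1$ and $v^i_3$ into position variables, in contrast to the one-dimensional change $v^i_3 \mapsto z_3$ used in Lemma \ref{lemma:linfty_bdr}. That $x_1 \in \mathbb{R}$ is unbounded is not a real obstruction, since the $z_1$-integration can always be enlarged to all of $\mathbb{R}$, recovering the full $L^2_x$ norm of $f$; the cutoff $|v^i_1| \leq N$ is needed only to secure $\mathbf{k}_\theta \lesssim C_N$. Combining the four regional bounds with $n = n(T_0)$ fixed and then choosing $\delta$ small and $N$ large so that $(\delta + 1/N)n \leq o(1)$ absorbs (i)--(iii) into $o(1)\Vert w f\Vert_{L^\infty}$, leaving only $C(T_0)\Vert f\Vert_{L^\infty_{T_0} L^2_{x, v}}$ from (iv). Finally, summing the six pieces and using the uniform bound $e^{-\nu(v)(T_0 - t^1)} w(v)\sqrt{\mu(v)} \lesssim 1$ for the overall prefactor in \eqref{f_bdr} yields the stated inequality.
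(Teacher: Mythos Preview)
Your proposal is correct and follows essentially the same approach as the paper: the same term-by-term treatment of \eqref{bdr_initial_2d}--\eqref{bdr_g_i_2d}, the same four-region decomposition for the kernel terms, and the same two-dimensional change of variables $(v^i_1,v^i_3)\mapsto x^i-(t^i-s)(v^i_1,v^i_3)$ with Jacobian $(t^i-s)^2\geq\delta^2$, followed by Cauchy--Schwarz (the paper says H\"older) to land on $\Vert f\Vert_{L^\infty_{T_0}L^2_{x,v}}$. Your observation that the unboundedness of $x_1$ is harmless because the $z_1$-integral can be enlarged to all of $\mathbb{R}$ is exactly right and matches the paper's implicit treatment.
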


\begin{proof}
Since $\dd \sigma_i$ in~\eqref{d_sigma_i_2d} is a probability measure, \eqref{bdr_initial_2d} is directly bounded as
\begin{equation}\label{bdr_initial_bdd_2d}
\eqref{bdr_initial_2d}\leq 4e^{-\nu_0 t^1}   \Vert w f_0\Vert_{L^\infty_{x,v}}.
\end{equation}
Here the constant $4$ comes from $\sqrt{2\pi} \int_{\mathcal{V}_i}   |v^i_3| \sqrt{\mu(v^i)} w^{-1}(v^i) \dd v^i < 4.$
The exponential decay factor $e^{-\nu_0 t^1}$ comes from the decay factor from \eqref{Sigma_2d}, and the computation
\begin{align*}
    & e^{-\nu_0 t^i} e^{-\nu_0 (t^{i-1}-t^i)} \leq e^{-\nu_0 t^{i-1}}, \ e^{-\nu_0 t^{i-1}} e^{-\nu_0 (t^{i-2}-t^{i-1})} \leq e^{-\nu_0 t^{i-2}} \cdots.
\end{align*}

For~\eqref{bdr_tk_2d}, with $n=C_1 T_0^{5/4}$, we apply Lemma \ref{lemma:tk_2d} to have
\begin{align}
 |\eqref{bdr_tk_2d} |  &  \leq  \int_{\prod_{j=1}^{n-1} \mathcal{V}_j} \int_{\mathcal{V}_n} \mathbf{1}_{t^{n+1}>0}  | w(v^{n}) f(t^{n+1},x^{n+1},v^{n})| w^{-1}(v^n)\sqrt{\mu(v^n)}|v^n_3| \dd v^n \prod_{j=1}^{n-1}\dd \sigma_j  \notag\\
 & \lesssim   \Vert wf\Vert_{L^\infty_{T_0,x,v}} \int_{\prod_{j=1}^{n-1} \mathcal{V}_j}  \mathbf{1}_{t^n>0}  \prod_{j=1}^{n-1}\dd \sigma_j  \leq o(1)  \Vert wf\Vert_{L^\infty_{T_0,x,v}}.     \label{bdr_tk_bdd_2d}
\end{align}

\eqref{bdr_g_0_2d} and \eqref{bdr_g_i_2d} are directly bounded as
\begin{equation}\label{bdr_g_bdd_2d}
\begin{split}
|\eqref{bdr_g_0_2d} + \eqref{bdr_g_i_2d}| & \leq      Cn \Vert \nu^{-1}w \Gamma(f,f) \Vert_{L^\infty_{T_0,x,v}}   \int_0^{T_0} e^{-\nu(v^i)(T_0-s)/2} \nu(v^i) \dd s \\ 
& \leq  Cn \Vert \nu^{-1}w \Gamma(f,f)\Vert_{L^\infty_{T_0,x,v}} .
\end{split}
\end{equation}

Then we estimate \eqref{bdr_K_i_2d}. Recall the notation $\mathbf{k}_\theta(v,u) = \mathbf{k}(v,u)\frac{e^{\theta|v|^2}}{e^{\theta |u|^2}}.$ We focus on estimating
\begin{equation}\label{iteration_i_2d}
\begin{split}
    & \int_{\prod_{j=1}^{i-1} \mathcal{V}_j}        \prod_{j=1}^{i-1}e^{-\nu(v^j)(t^j-t^{j+1})}\dd \sigma_j \int_{\mathcal{V}_i}\dd v_i\mathbf{1}_{t^{i+1}>0 }\mu^{1/2}(v^i) w^{-1}(v^i) |v_3^i|    \\
    & \times     \int_{t^{i+1}}^{t^i} e^{-\nu(v^i)(t^i -s)} \int_{\mathbb{R}^3} \mathbf{k}_\theta(v^i,u) w(u)f(s,x^i-(t^i-s)(v^i_1,v^i_3),u) \dd s .
\end{split}
\end{equation}

First we decompose the $\dd s$ integral into $\mathbf{1}_{s\geq t^i-\delta} + \mathbf{1}_{s< t^i-\delta}$. By \eqref{k_theta} in Lemma \ref{lemma:k_theta}, the contribution of the first term reads
\begin{align}
& |\eqref{iteration_i_2d} \mathbf{1}_{s\geq t^i-\delta} |    \leq \int_{\prod_{j=1}^{i-1}\mathcal{V}_j} \prod_{j=1}^{i-1} e^{-\nu(v^j)(t^j-t^{j+1})}  \dd \sigma_j \int_{\mathcal{V}_i}\dd v_i \mu^{1/2}(v^i) w^{-1}(v^i)|v_3^i|  \notag\\ 
 & \times \int^{t^i}_{\max\{t^{i+1},t^i-\delta\}} e^{-\nu(v^i) (t^i-s)} \int_{\mathbb{R}^3} \mathbf{k}_\theta(v^i,u) w(u) |f(s,x^i-(t^i-s)(v^i_1,v^i_3),u)| \dd u \dd s  \leq o(1) \Vert wf\Vert_{L^\infty_{T_0,x,v}} . \label{bdr_s_small_bdd_2d}
\end{align}

Next we decompose the $v^i$ integral into $\mathbf{1}_{|v^i|\geq N} + \mathbf{1}_{|v^i|<N}$. By \eqref{k_theta} in Lemma \ref{lemma:k_theta}, the contribution of the first term reads
\begin{align}
    & |\eqref{iteration_i_2d} \mathbf{1}_{|v^i|\geq N} | \leq \int_{\prod_{j=1}^{i-1}\mathcal{V}_j} \prod_{j=1}^{i-1}e^{-\nu(v^j)(t^j-t^{j+1})}\dd \sigma_j  \int_{\mathcal{V}_i} \mathbf{1}_{|v^i|\geq N} \sqrt{\mu(v^i)} w^{-1}(v^i)|v^i_3| \dd v^i   \notag \\
    &\ \ \ \ \ \ \ \ \  \times \int_{t^{i+1}}^{t^i} e^{-\nu(v^i)(t^i -s)} \int_{\mathbb{R}^3} \mathbf{k}_\theta(v^i,u) w(u) |f(s,x^i-(t^i-s)(v^i_1,v^i_3),u)| \dd s  \leq o(1)  \Vert wf\Vert_{L^\infty_{T_0,x,v}}.   \label{bdr_v_large_bdd_2d}
\end{align}

Then we decompose the $u$ integral into $\mathbf{1}_{|u|\geq N \text{ or } |v^i-u|\leq \frac{1}{N}} + \mathbf{1}_{|u|<N, \ |v^i-u|> \frac{1}{N}}$. By \eqref{K_N_small} in Lemma \ref{lemma:k_theta}, the contribution of the first term reads
\begin{align}
    & |\eqref{iteration_i_2d} \mathbf{1}_{|u|\geq N \text{ or } |v^i-u|\leq \frac{1}{N}} | \leq \int_{\prod_{j=1}^{i-1}\mathcal{V}_j} \prod_{j=1}^{i-1}e^{-\nu(v^j)(t^j-t^{j+1})}\dd \sigma_j \int_{\mathcal{V}_i} \dd v^i  \mu^{1/2}(v^i) w^{-1}(v^i)|v_3^i|  \notag\\
    &\ \ \times  \int_{t^{i+1}}^{t^i} e^{-\nu(v^i)(t^i -s)} \int_{\mathbb{R}^3}  \mathbf{1}_{|u|\geq N \text{ or } |v^i-u|\leq \frac{1}{N}} \mathbf{k}_\theta(v^i,u) w(u) |f(s,x^i-(t^i-s)(v^i_1,v^i_3),u)| \dd s \leq o(1)  \Vert wf\Vert_{L^\infty_{T_0,x,v}}.   \label{bdr_u_large_bdd_2d}
\end{align}

Now we consider the intersection of all other cases, where we have $|v^i|\leq N, \ s<t^i-\delta$, and $|u|<N, \ |v^i-u|>\frac{1}{N}$. The conditions of $v^i$ and $u$ imply that $\mathbf{k}(v^i,u)\leq C_N$ from \eqref{k_theta_bdd} in Lemma \ref{lemma:k_theta}.

In the last line, we have applied the change of variable $(v^i_1,v^i_3) \to y = x^i-(t^i-s)(v^i_1,v^i_3) \in \O$ with Jacobian 
\[\Big|\det\Big(\frac{\p (x^i-(t^i-s)(v^i_1,v^i_3))}{\p (v^i_1,v^i_3)} \Big) \Big| = (t^i-s)^2 \geq \delta^2 .\]

Then we apply the H\"older inequality to have
\begin{align}
 & |\eqref{iteration_i_2d} \mathbf{1}_{|u|< N , |v^i-u|> \frac{1}{N}, s<t^i-\delta, |v^i|\leq N}   |  \notag\\
 & \leq \frac{1}{\delta^2}\int_{\prod_{j=1}^{i-1}\mathcal{V}_j} \prod_{j=1}^{i-1}e^{-\nu(v^j)(t^j-t^{j+1})}\dd \sigma_j  \int^{t^i-\delta}_{0}  e^{-\nu_0(t^i-s)}     \int_{|u|\leq N} \int_{\O} |f(s,y,u)| \dd u \dd y \dd s   \notag\\
 & \leq  C_{N,\delta,T_0,\O}   \int_{\prod_{j=1}^{i-1}\mathcal{V}_j} \prod_{j=1}^{i-1}e^{-\nu(v^j)(t^j-t^{j+1})}\dd \sigma_j \times    \int_{0}^{t^1} e^{-\nu_0(t^1-s)} \Vert f(s)\Vert_{L^2_{x,v}} \dd s  \leq  C_{N,\delta,T_0,\O}  \Vert f\Vert_{L^\infty_{T_0}L^2_{x,v}}  . \label{other_case_bdd_2d}
\end{align}

Collecting \labelcref{bdr_s_small_bdd_2d,bdr_v_large_bdd_2d,bdr_u_large_bdd_2d,other_case_bdd_2d}, we conclude that
\begin{equation}\label{bdr_K_i_bdd_2d}
|\eqref{bdr_K_i_2d}| \lesssim \Vert wf\Vert_{L^\infty_{T_0,x,v}} + C_{N,\delta,n,T_0,\O}  \Vert f\Vert_{L^\infty_{T_0}L^2_{x,v}}. 
\end{equation}

By the same computation, we have the same bound for \eqref{bdr_K_0_2d}:
\begin{equation}\label{bdr_K_0_bdd_2d}
|\eqref{bdr_K_0_2d}| \lesssim \Vert wf\Vert_{L^\infty_{T_0,x,v}} + C_{N,\delta,n,T_0,\O}  \Vert f\Vert_{L^\infty_{T_0}L^2_{x,v}}.  
\end{equation}

Summarizing \labelcref{bdr_initial_bdd_2d,bdr_tk_bdd_2d,bdr_g_bdd_2d,bdr_K_i_bdd_2d,bdr_K_0_bdd_2d}, we conclude the lemma.
\end{proof}

\begin{proof}[\textbf{Proof of Lemma \ref{lemma:linfty_2d}}]
First of all, \eqref{initial}, \eqref{g_0} and \eqref{g_1} are bounded as
\begin{equation}\label{initial_g_bdd}
|\eqref{initial}| + |\eqref{g_0}| + |\eqref{g_1}| \leq  e^{-\nu_0 T_0}\Vert w f_0\Vert_{L^\infty_{x,v}} + C\Vert \nu^{-1}w \Gamma(f,f)\Vert_{L^\infty_{T_0,x,v}}.
\end{equation}
Moreover, \eqref{f_bdr} is bounded by Lemma \ref{lemma:bdr} as
\begin{equation}\label{f_bdr_bdd}
\begin{split}
 |\eqref{f_bdr}| \leq   &  4e^{-\nu_0 T_0}  \Vert w f_0\Vert_{L^\infty_{x,v}} + o(1)\Vert wf\Vert_{L^\infty_{T_0,x,v}}  \\
    &  + C(T_0) \big[ \Vert \nu^{-1}w \Gamma(f,f)\Vert_{L^\infty_{T_0,x,v}} +  \Vert f\Vert_{L^\infty_{T_0}L^2_{x,v}} \big].
\end{split}
\end{equation}

Then we focus on~\eqref{K_1}. We expand $f(s,x-(t-s)(v_1,v_3),u)$ using the characteristic form \labelcref{initial,K_0,K_1,g_0,g_1,f_bdr} again along $u$. Denoting $t^u_1:= s - \tb(x-(t-s)(v_1,v_3),u)$ and $y:=x-(t-s)(v_1,v_3)$, we have
\begin{align}
    & \eqref{K_1} = \mathbf{1}_{t^1>0} \int^{T_0}_{t^1} \dd s e^{-\nu(v) (T_0 -s)}  \int_{\mathbb{R}^3} \dd u \frac{w(v)}{w(u)}\mathbf{k}(v,u) \notag \\
    &\times \Big\{ \mathbf{1}_{t_1^u \leq 0} e^{-\nu(u) s} w(u)f(0,y-s(u_1,u_3),u)       \label{u_initial} \\
    &  +\mathbf{1}_{t_1^u\leq 0} \int_0^{s}  e^{-\nu(u)(s-s')} \dd s'\int_{\mathbb{R}^3} w(u)\mathbf{k}(u,u') f(s',y-(s-s')(u_1,u_3), u')    \dd u' \label{u_K0} \\
    &  +\mathbf{1}_{t_1^u> 0} \int_{t_1^u}^{s}  e^{-\nu(u)(s-s')} \dd s'\int_{\mathbb{R}^3} w(u) \mathbf{k}(u,u') f(s',y-(s-s')(u_1,u_3), u')  \dd u' \label{u_K1}\\
    & + \mathbf{1}_{t_1^u\leq 0} \int_{0}^{s} e^{-\nu(u)(s-s')} w(u) \Gamma(f,f)(s',y-(s-s')(u_1,u_3),u) \dd s' \label{u_g0} \\
    & + \mathbf{1}_{t_1^u> 0} \int_{t_1^u}^{s} e^{-\nu(u)(s-s')} w(u) \Gamma(f,f)(s',y-(s-s')(u_1,u_3),u) \dd s' \label{u_g1} \\
    & + \mathbf{1}_{t_1^u > 0} e^{-\nu(u)(s-t_1^u)} w(u) f(t_1^u,y-\tb(y,u)(u_1,u_3),u) \Big\}.   \label{u_bdr}
\end{align}

The contribution of \eqref{u_initial} in \eqref{K_1} is bounded by
\begin{align}
    & \int^{T_0}_{t^1} \dd s e^{-\nu_0 (T_0-s)} \int_{\mathbb{R}^3} \dd u \mathbf{k}_\theta(v,u) e^{-\nu_0 s} \Vert w f_0\Vert_{L^\infty_{x,v}}    \notag \\
    &\leq C_\theta\int_{t^1}^{T_0} \dd s e^{-\nu_0(T_0-s)}e^{-\nu_0 s}  \Vert wf_0\Vert_{L^\infty_{x,v}}  \leq C_\theta e^{-\nu_0 T_0/2}\Vert w f_0\Vert_{L^\infty_{x,v}} . \label{u_initial_bdd}
\end{align}
In the second line, we have used Lemma \ref{lemma:k_theta}.

The contribution of \eqref{u_g0} and \eqref{u_g1} in \eqref{K_1} are bounded by
\begin{align}
    & \int_{t^1}^{T_0} \dd s e^{-\nu_0 (T_0-s)} \int_{\mathbb{R}^3} \dd u  \mathbf{k}_\theta(v,u) \int^s_0 \dd s' e^{-\nu(u)(s-s')} \nu(u)  \Vert \nu^{-1}w \Gamma(f,f)\Vert_{L^\infty_{T_0,x,v}}  \notag  \\
    & \leq C \Vert \nu^{-1}w \Gamma(f,f)\Vert_{L^\infty_{T_0,x,v}} \int_{t^1}^{T_0} \dd s e^{-\nu_0 (T_0-s)} \int_{\mathbb{R}^3} \dd u  \mathbf{k}_\theta(v,u) \leq C_\theta \Vert \nu^{-1}w \Gamma(f,f)\Vert_{L^\infty_{T_0,x,v}}. \label{u_g_bdd}
\end{align}
In the second line, we have used \eqref{nu_bdd} and Lemma \ref{lemma:k_theta}.

The contribution of the boundary term in~\eqref{u_bdr} can be bounded by Lemma \ref{lemma:bdr} as
\begin{align}
  |\eqref{u_bdr} | & \leq \int_{t^1}^{T_0} \dd s e^{-\nu_0(T_0-s)} \int_{\mathbb{R}^3} \dd u \mathbf{k}_\theta(v,u) e^{-\nu_0 (s-t_1^u)} \notag \\
 & \times \Big\{   C_\theta\Vert w f_0\Vert_{L^\infty_{x,v}} + o(1)\Vert wf\Vert_{L^\infty_{T_0,x,v}} + Cn \Big[\Vert \nu^{-1}w \Gamma(f,f)\Vert_{L^\infty_{T_0,x,v}}  +   \Vert f\Vert_{L^\infty_{T_0}L^2_{x,v}}  \Big] \Big\} \notag\\
  &\leq   C_\theta  \Vert w f_0\Vert_{L^\infty_{x,v}} + o(1) \Vert wf\Vert_{L^\infty_{T_0,x,v}}   + C(T_0)\Big[\Vert \nu^{-1}w \Gamma(f,f)\Vert_{L^\infty_{T_0,x,v}}  +   \Vert f\Vert_{L^\infty_{T_0}L^2_{x,v}} \Big].   \label{u_bdr_bdd}
\end{align}
We have used Lemma \ref{lemma:k_theta}.

Then we focus on the contribution of \eqref{u_K1} in \eqref{K_1}. First we decompose the $\dd s' $ integral into $\mathbf{1}_{s-s'< \delta} + \mathbf{1}_{s-s'\geq \delta}$. Applying \eqref{k_theta} in Lemma \ref{lemma:k_theta} twice, the contribution of the first term reads
\begin{align}
    & |\eqref{u_K1} \mathbf{1}_{s-s'<\delta} |\notag \\
   & \leq   \int_{t^1}^{T_0} \dd s e^{-\nu(v)(T_0-s)} \int_{\mathbb{R}^3} \dd u \mathbf{k}_\theta(v,u) \int^s_{\max\{s-\delta,t_1^u\}} e^{-\nu(u)(s-s')} \dd s' \int_{\mathbb{R}^3} \dd u' \mathbf{k}_\theta(u,u')     \Vert wf\Vert_{L^\infty_{T_0,x,v}}   \notag \\
   & \leq \int_{t^1}^{T_0} \dd s e^{-\nu(v)(T_0-s)} \int_{\mathbb{R}^3} \dd u \mathbf{k}_\theta(v,u)  \Vert wf\Vert_{L^\infty_{T_0,x,v}} \leq    o(1)\Vert wf\Vert_{L^\infty_{T_0,x,v}}. \label{u_K1_s_small}
\end{align}

Next we decompose the $\dd u$ integral into $\mathbf{1}_{|u|> N \text{ or } |v-u|\leq \frac{1}{N}} + \mathbf{1}_{|u|\leq N, \ |v-u|>\frac{1}{N}}$. Applying \eqref{k_theta} and \eqref{K_N_small} in Lemma \ref{lemma:k_theta}, the contribution of the first term reads
\begin{align}
   &|\eqref{u_K1}\mathbf{1}_{|u|>N \text{ or }|v-u| \leq \frac{1}{N}} |\notag\\
   & \leq \int_{t^1}^{T_0} \dd s e^{-\nu(v)(T_0-s)} \int_{|u|>N \text{ or }|v-u|\leq \frac{1}{N} } \dd u \mathbf{k}_\theta(v,u) \times \int^s_{t_1^u} e^{-\nu(u)(s-s')}   \dd s' \Vert wf\Vert_{L^\infty_{T_0,x,v}} \notag \\
   & \leq o(1)\int_{t^1}^{T_0} \dd s e^{-\nu_0(T_0-s)}  \Vert wf\Vert_{L^\infty_{T_0,x,v}}  \leq o(1)  \Vert wf\Vert_{L^\infty_{T_0,x,v}}. \label{u_K1_u_small}
\end{align}

Next we decompose the $\dd u'$ integral into $\mathbf{1}_{|u'|\geq N \text{ or } |u-u'|\leq \frac{1}{N}} + \mathbf{1}_{|u'|\leq N, \ |u'-u|>\frac{1}{N}}$. The contribution of the first term reads
\begin{align}
   &|\eqref{u_K1}\mathbf{1}_{|u'|\geq N \text{ or }|u'-u|\leq\frac{1}{N}} |\notag\\
   & \leq o(1)\int_{t^1}^{T_0} \dd s e^{-\nu(v)(T_0-s)} \int_{\mathbb{R}^3} \dd u \mathbf{k}_\theta(v,u) \int^s_{t_1^u} e^{-\nu(u)(s-s')}  \dd s' \Vert wf\Vert_{L^\infty_{T_0,x,v}}   \notag\\
   & \leq   o(1)\int_{t^1}^{T_0} \dd s e^{-\nu_0(T_0-s)}   \Vert wf\Vert_{L^\infty_{T_0,x,v}}     \leq  o(1)  \Vert wf\Vert_{L^\infty_{T_0,x,v}}. \label{u_K1_u_prime_small}
\end{align}

Now we consider the intersection of all other cases, where we have $|u-v|>\frac{1}{N}, \ |u|\leq N,$ $s'<s-\delta$ and $|u'|<N, \  |u-u'|>\frac{1}{N}$. In such case by \eqref{k_N_upper_bdd} we have
$$
\mathbf{k}_\theta(v,u)w(u)\mathbf{k}(u,u')\leq C_N.
$$
We compute such contribution in \eqref{u_K1} as
\begin{align}
    &  |\eqref{u_K1}\mathbf{1}_{|u-v|>\frac{1}{N}, \ |u|\leq N,\ s'<s-\delta, \ |u'|<N, \ |u-u'|>\frac{1}{N}} |\notag\\
    &  \leq C_N  \int_{t^1}^{T_0} \dd s e^{-\nu(v) (T_0 -s)} \int_{|u|\leq N} \dd u \times \int^{s-\delta}_{t_1^u} e^{-\nu(u)(s-s')} \dd s'\notag \\
    &\times \int_{|u'|<N} \dd u'   |f(s', x-(t-s)(v_1,v_3)-(s-s')(u_1,u_3),u')| . \label{u_K1_other}
\end{align}
With $|u'|<N$, we apply the same argument in \eqref{other_case_bdd_2d}. We apply the change of variable $(u_1,u_3)\to x-(t-s)(v_1,v_3)-(s-s')(u_1,u_3)$ with Jacobian 
\[\Big|\det\Big(\frac{\p (x-(t-s)(v_1,v_3)-(s-s')(u_1,u_3))}{\p (u_1,u_3)} \Big) \Big| = (s-s')^2 \geq \delta^2 \]
to derive that
\begin{align}
 & |\eqref{u_K1_other}|  \leq C_{T_0,N,\delta,\O} \int_{t^1}^{T_0} \dd s e^{-\nu(v)(T_0-s)}  \int_{0}^{s-\delta} e^{-\nu(u)(s-s')} \Vert f(s')\Vert_{L^2_{x,v}} \dd s'     \notag \\
  &\leq  C_{T_0,N,\delta,\O}  \Vert f\Vert_{L^\infty_{T_0}L^2_{x,v}} .  \label{u_K1_other_bdd}
\end{align}

Collecting \eqref{u_K1_s_small}, \eqref{u_K1_u_small}, \eqref{u_K1_u_prime_small} and \eqref{u_K1_other_bdd}, we have
\begin{equation}\label{u_K1_bdd}
|\eqref{u_K1}| \leq \Vert wf\Vert_{L^\infty_{T_0,x,v}} + C(T_0)  \Vert f\Vert_{L^\infty_{T_0}L^2_{x,v}} .
\end{equation}

By the same computation, we have
\begin{equation}\label{u_K0_bdd}
 |\eqref{u_K0} |\leq  \Vert wf\Vert_{L^\infty_{T_0,x,v}} + C(T_0) \Vert f\Vert_{L^\infty_{T_0}L^2_{x,v}}.   
\end{equation}

We combine \eqref{u_initial_bdd}, \eqref{u_g_bdd}, \eqref{u_bdr_bdd},  \eqref{u_K1_bdd} and \eqref{u_K0_bdd}  to conclude the estimate for \eqref{K_1}:
\begin{equation}\label{K1_bdd}
\begin{split}
   |\eqref{K_1}| & \leq C(\theta) e^{-\nu_0 T_0/2}\Vert w f_0\Vert_{L^\infty_{x,v}} + o(1)\Vert wf\Vert_{L^\infty_{T_0,x,v}} \\
   &+ C(T_0) \Vert \nu^{-1}w \Gamma(f,f)\Vert_{L^\infty_{T_0,x,v}}   +C(T_0) \Vert f\Vert_{L^\infty_{T_0}L^2_{x,v}}.
\end{split}
\end{equation}

Similarly, we can have the same estimate for \eqref{K_0} as
\begin{equation}\label{K0_bdd}
\begin{split}
   |\eqref{K_0}| & \leq C_\theta e^{-\nu_0 T_0/2}\Vert w f_0\Vert_{L^\infty_{x,v}} + o(1)\Vert wf\Vert_{L^\infty_{T_0,x,v}}    \\
    & + C(T_0) \Vert \nu^{-1}w \Gamma(f,f)\Vert_{L^\infty_{T_0,x,v}} +C(T_0)  \Vert f\Vert_{L^\infty_{T_0}L^2_{x,v}}.
\end{split}
\end{equation}

Last we collect \eqref{initial_g_bdd}, \eqref{f_bdr_bdd}, \eqref{K1_bdd} and \eqref{K0_bdd} to conclude that
\begin{align}
 &w(v) |f(T_0,x,v)|     \leq [C_\theta+5] e^{-\nu_0 T_0/2} \Vert w f_0\Vert_{L^\infty_{x,v}} +o(1)\Vert wf\Vert_{L^\infty_{T_0,x,v}}     \label{C_theta} \\
  &  + C(T_0) \Vert \nu^{-1}w \Gamma(f,f)\Vert_{L^\infty_{T_0,x,v}} +C(T_0) \Vert f\Vert_{L^\infty_{T_0}L^2_{x,v}}.  \notag
\end{align}

Since the constant $C_\theta>0$  does not depend on $T_0$, we choose $T_0$ to be large enough such that  $[C_\theta+5]e^{-\frac{\nu_0 T_0}{2}} \leq e^{-\frac{\nu_0T_0}{4}} $. Then we further have
\begin{align}
   \Vert  wf(T_0)\Vert_{L^\infty_{x,v}}  &  \leq  e^{-\nu_0 T_0/4} \Vert wf_0\Vert_{L^\infty_{x,v}}    + o(1)\Vert wf\Vert_{L^\infty_{T_0,x,v}} \notag \\
   &+  C(T_0)  \Vert \nu^{-1}w \Gamma(f,f)\Vert_{L^\infty_{T_0,x,v}} + C(T_0)\Vert f\Vert_{L^\infty_{T_0}L^2_{x,v}}.  \label{est_T0}
\end{align}

For given $0\leq T<\infty$, we denote
\begin{align*}
    \mathcal{R}_T := \Vert wf_0\Vert_{L^\infty_{x,v}} + \sup_{0\leq t\leq T}\Vert f(t)\Vert_{L^2_{x,v}} + \sup_{0\leq t\leq T}\Vert  \nu^{-1}w \Gamma(f,f)(t)\Vert_{L^\infty_{x,v}}.
\end{align*}

For $0\leq T\leq T_0$, with the same choice of $n=C_1 T_0^{5/4}$, it is straightforward to apply the same argument for $w(v)|f(T,x,v)|$ to have
\begin{align}
  \Vert wf(T)\Vert_{L^\infty_{x,v}}  & \leq [C_\theta+5] e^{-\frac{\nu_0 T}{2}}\Vert wf_0\Vert_{L^\infty_{x,v}} + o(1)  \Vert  wf\Vert_{L^\infty_{T,x,v}} \notag\\
  & + C(T_0) \Vert \nu^{-1}w \Gamma(f,f)\Vert_{L^\infty_{T,x,v}} + C(T_0)\Vert f\Vert_{L^\infty_T L^2_{x,v}}. \label{est_t}
\end{align} 

For $T=mT_0$, we apply \eqref{est_T0} to have
\begin{align}
  &\Vert wf(mT_0)\Vert_{L^\infty_{x,v}}  \notag\\
  &  \leq  e^{-\nu_0 T_0/4}\Vert wf((m-1)T_0)\Vert_{L^\infty_{x,v}} + C(T_0) \sup_{0 \leq t\leq T_0} \Vert \nu^{-1}w\Gamma(f,f)((m-1)T_0+t) \Vert_{L^\infty_{x,v}}   \notag\\
  & + o(1) \sup_{0 \leq t\leq T_0} \Vert w f((m-1)T_0+t) \Vert_{L^\infty_{x,v}} + C(T_0)  \sup_{0\leq t\leq T_0}\Vert f((m-1)T_0+t)\Vert_{L^2_{x,v}}  \notag\\
  & \leq  e^{-\nu_0 T_0/4}\Vert wf((m-1)T_0)\Vert_{L^\infty_{x,v}} +o(1) \sup_{0 \leq t\leq mT_0} \Vert w f(t)\Vert_{L^\infty_{x,v}} + C(T_0)  \mathcal{R}_{mT_0} \notag\\
  & \leq e^{-2\frac{\nu_0 T_0}{4}} \Vert wf((m-2)T_0)\Vert_{L^\infty_{x,v}} +   \Big[o(1)\sup_{0\leq t\leq mT_0} \Vert w f(t)\Vert_{L^\infty_{x,v}} + C(T_0) \mathcal{R}_{mT_0} \Big]\times \big[1 + e^{-\frac{\nu_0 T_0}{4}} \big] \notag\\
  & \leq \cdots \leq e^{-\frac{m\nu_0T_0}{4}} \Vert wf_0\Vert_{L^\infty_{x,v}} +  \Big[o(1)\sup_{0\leq t\leq mT_0} \Vert w f(t)\Vert_{L^\infty_{x,v}} + C(T_0) \mathcal{R}_{mT_0} \Big]\times \sum_{i=0}^{m-1}  e^{-\frac{i\nu_0T_0}{4}} \notag\\
  & \leq  o(1)C(\nu_0) \sup_{0\leq t\leq mT_0}\Vert wf(t)\Vert_{L^\infty_{x,v}} + C(T_0) \mathcal{R}_{mT_0}. \notag
\end{align}

For any $T>0$, we can choose $m$ such that $mT_0\leq T\leq (m+1)T_0$. Writing $T=mT_0+t$ with $0\leq t\leq T_0$, we apply \eqref{est_t} to have
\begin{align}
  & \Vert wf(T)\Vert_{L^\infty_{x,v}}  = \Vert wf(mT_0 + t)\Vert_{L^\infty_{x,v}} \notag\\
  & \leq [C_\theta+5]e^{\frac{-\nu_0 t}{2}}\Vert wf(mT_0)\Vert_{L^\infty_{x,v}}  + o(1) \sup_{0\leq s\leq t} \Vert wf(mT_0+s)\Vert_{L^\infty_{x,v}} \notag\\
  &+ C(T_0) \sup_{0\leq s\leq t}   \Vert \nu^{-1}w\Gamma(f,f)(mT_0+s) \Vert_{L^\infty_{x,v}} + C(T_0)  \sup_{0\leq s\leq t}\Vert f(mT_0+s)\Vert_{L^2_{x,v}} \notag\\
  & \leq o(1)C(\nu_0,\theta) \sup_{0\leq t\leq mT_0}\Vert wf(t)\Vert_{L^\infty_{x,v}} + C(T_0) \mathcal{R}_{mT_0+t}  \leq o(1) \sup_{0\leq t\leq T}\Vert wf(t)\Vert_{L^\infty_{x,v}} + C(T_0)  \mathcal{R}_T. \label{f_t_bdd_2d}
\end{align}

Since \eqref{f_t_bdd_2d} holds for all $T$, we conclude that
\begin{align*}
  \Vert wf(T)\Vert_{L^\infty_{x,v}}  & \lesssim \Vert wf_0\Vert_{L^\infty_{x,v}} + \sup_{0\leq t\leq T}\Vert \nu^{-1}w\Gamma(f,f)(t)\Vert_{L^\infty_{x,v}} + \sup_{0\leq t\leq T}\Vert f(t)\Vert_{L^2_{x,v}} \\
  & \lesssim \Vert wf_0\Vert_{L^\infty_{x,v}} + \Vert wf\Vert_{L^\infty_{T,x,v}}^2 + \Vert f\Vert_{L^\infty_T L^2_{x,v}}.
\end{align*}
In the second line, we applied the standard estimate to the nonlinear operator:
\begin{align*}
    &  \Vert \nu^{-1}w\Gamma(f,f) \Vert_{L^\infty_{T,x,v}} \lesssim \Vert wf\Vert_{L^\infty_{T,x,v}}^2.
\end{align*}

We conclude the first part of Lemma \ref{lemma:linfty_2d}.  

The proof of the second part is the same, with replacing $\Gamma(f,f)$ by $\p_t \Gamma(f,f) = \Gamma(f,\p_t f) + \Gamma(\p_t f,f)$. Such term can be controlled as
\begin{align*}
    & \Vert \nu^{-1}w[\Gamma(\p_t f , f )+ \Gamma(f,\p_t f)]\Vert_{L^\infty_{T,x,v}} \lesssim \Vert w\p_t f\Vert_{L^\infty_{T,x,v}} \Vert wf\Vert_{L^\infty_{T,x,v}}. 
\end{align*}

We conclude the lemma.
\end{proof}

\subsection{Proof of Theorem \ref{thm:2d}}\label{sec:thm_2d_proof}

The proof of Theorem \ref{thm:2d} follows from a standard sequential argument together with the a priori estimate in Proposition \ref{prop:apriori}. The positivity also follows from a standard sequential argument approach; we refer detailed construction to \cite{EGKM}. Then we just need to prove Proposition \ref{prop:apriori}.

\begin{proof}[\textbf{Proof of Proposition \ref{prop:apriori}}]
Combining Lemma \ref{lemma:energy} and Lemma \ref{lemma:linfty_2d}, we obtain that for any $T\geq 0$,
\begin{align*}
    & \Vert f\Vert_T \lesssim \Vert w f(0)\Vert_{L^\infty_{x,v}}^2 + \Vert w \p_t f(0)\Vert_{L^\infty_{x,v}}^2 + \Vert f(0)\Vert_{L^2_{x,v}}^2 + \Vert \p_t f(0)\Vert_{L^2_{x,v}}^2 + \Vert f\Vert_T^2.
\end{align*}
We conclude the proposition.
\end{proof}

\noindent {\bf Acknowledgment:}\,
The research of Renjun Duan was partially supported by the General Research Fund (Project No.~14303321) from RGC of Hong Kong and the Direct Grant (4053652) from CUHK. The authors would like to thank the anonymous referees for valuable and helpful comments on the manuscript. Hongxu Chen thanks Professor Chanwoo Kim for his interest and thanks Professor Jiaxin Jin for helpful discussion.

\medskip
\noindent{\bf Conflict of Interest:} The authors declare that they have no conflict of interest.

\medskip
\noindent{\bf Data Availability Statement:} Data sharing not applicable to this article as no datasets were generated or analyzed during the current study.

\bibliographystyle{siam}

\end{document}